\tikzset{math3d/.style=
    {x= {(-0.353cm,-0.353cm)}, z={(0cm,1cm)},y={(1cm,0cm)}}}
\tikzset{JLL3d/.style=
    {x= {(0.4cm,-0.2cm)}, z={(0cm,1cm)},y={(-1cm,0cm)}}}
\definecolor{Chocolat}{rgb}{0.36, 0.2, 0.09}
\definecolor{BleuTresFonce}{rgb}{0.215, 0.215, 0.36}
\definecolor{BleuMinuit}{RGB}{0, 51, 102}
\newtheorem{lemma}{Lemma}[section]
\newtheorem{theorem}[lemma]{Theorem}
\newtheorem{corollary}[lemma]{Corollary}
\newtheorem{proposition}[lemma]{Proposition}
\newtheorem*{notation}{Notation}
\theoremstyle{definition}
\newtheorem{definition}[lemma]{Definition}
\newtheorem{defprop}[lemma]{Definition-Proposition}
\newtheorem{remark}[lemma]{Remark}
\newtheorem{example}[lemma]{Example}
\newtheorem{problem}{Problem}
\newcommand{\colim}[1]{\underset{#1}{\mathrm{colim}}~}
\newcommand{\kk}{\Bbbk}
\newcommand{\qi}{\xrightarrow{
   \,\smash{\raisebox{-0.65ex}{\ensuremath{\scriptstyle\sim}}}\,}}
\newcommand{\cob}{\mathcal{C}o\mathcal{B}}
\newcommand{\pab}{\mathcal{P}a\mathcal{B}}
\newcommand{\pacd}{\mathcal{P}a\mathcal{CD}}
\author{Damien Calaque and Victor Roca i Lucio}
\title{Associators from an operadic point of view}
\date{\today}
\address{Damien Calaque, IMAG, Universit\'e de Montpellier, CNRS, Montpellier, France}
\email{\href{mailto:damien.calaque@umontpellier.fr}{damien.calaque@umontpellier.fr}}
\address{Victor Roca i Lucio, Ecole Polytechnique Fédérale de Lausanne, EPFL,
CH-1015 Lausanne, Switzerland}
\email{\href{mailto:victor.rocalucio@epfl.ch}{victor.rocalucio@epfl.ch}}
\thanks{
2020 \emph{Mathematics Subject Classification.} 18M70, 18N40, 18N50, 18N60, 22E60, 55P62.\\ 
\indent$\,\! $}
\keywords{Mathematics.}
\begin{document}

\begin{abstract}
This is a survey on Drinfeld associators and their generalizations, where we focus on operadic aspects. 
\end{abstract}

\maketitle

\makeatletter
\def\@tocline#1#2#3#4#5#6#7{\relax
	\ifnum #1>\c@tocdepth 
	\else
	\par \addpenalty\@secpenalty\addvspace{#2}%
	\begingroup \hyphenpenalty\@M
	\@ifempty{#4}{%
		\@tempdima\csname r@tocindent\number#1\endcsname\relax
	}{%
		\@tempdima#4\relax
	}%
	\parindent\z@ \leftskip#3\relax \advance\leftskip\@tempdima\relax
	\rightskip\@pnumwidth plus4em \parfillskip-\@pnumwidth
	#5\leavevmode\hskip-\@tempdima
	\ifcase #1
	\or\or \hskip 1em \or \hskip 2em \else \hskip 3em \fi%
	#6\nobreak\relax
	\hfill\hbox to\@pnumwidth{\@tocpagenum{#7}}\par
	\nobreak
	\endgroup
	\fi}
\makeatother

\setcounter{tocdepth}{1}
\tableofcontents


\section{Introduction: Drinfeld associators}\label{sec-intro}

\subsection{A deformation quantization problem}

One of the original motivations for the introduction of associators by Drinfeld in \cite{Dri90} was a problem of deformation quantization, occuring in quantum group theory. 
In short terms, it asks for a universal quantization of infinitesimally braided monoidal categories; we are going to formulate the problem precisely. 

\medskip

Let $\mathfrak{g}$ be a Lie algebra over $\kk$, a field of characteristic zero. 
The category $\mathsf{Rep}(\mathfrak{g})$ of $\mathfrak{g}$-representations is a $\kk$-linear 
symmetric monoidal category, with monoidal product the tensor product of representations, and monoidal unit the trivial representation $\kk$. 

\medskip

Let now $t$ be a Casimir element, that is, an element in $S^2(\mathfrak{g})^\mathfrak{g}$. 
\begin{remark}\label{remark inf braiding}
Notice that $S^2(\mathfrak{g})^\mathfrak{g} \subset (\mathfrak{g} \otimes \mathfrak{g})^\mathfrak{g} \subset (\mathfrak{U}(\mathfrak{g})^{\otimes 2})^\mathfrak{g}~$, 
therefore $t$ acts naturally on any tensor product of two representations of $\mathfrak{g}$. The induced natural transformation from $\otimes$ to itself is called an \textit{infinitesimal braiding}, 
and turns $\mathsf{Rep}(\mathfrak{g})$ into an \textit{infinitesimally braided monoidal category}. Infinitesimally braided monoidal categories are braided monoidal $(\kk[\hbar]/\hbar^2)$-linear categories 
that are symmetric monoidal mod $\hbar$; i.e.~they are first order braided deformations of symmetric monoidal $\kk$-linear categories. 
They can be viewed as Gerstenhaber algebras (also known as $\mathbb{P}_2$-algebra) inside a suitable $2$-category of $\kk$-linear categories. 
\end{remark}

\begin{problem}\label{Problem 1}
Given a pair $(\mathfrak{g},t)$ as above, find a formal deformation of the symmetric monoidal category 
$(\mathsf{Rep}(\mathfrak{g}),\otimes, \kk)$ as a braided monoidal category over $\kk[[\hbar]]$ such that: 

\begin{enumerate}
\item The braiding $\sigma$ can be written as $\sigma=R\tau$, where $\tau$ is the natural symmetry isomorphism of $\mathsf{Rep}(\mathfrak{g})$ and 
\[
R = 1 \otimes 1 + \frac{t}{2}\hbar + O(\hbar^2) \quad \text{in} \quad (\mathfrak{U}(\mathfrak{g})^{\otimes 2})^\mathfrak{g}[[\hbar]] ~. 
\]
\item The associator $\Phi$ can be written as\footnote{Strictly speaking, the associator should be $\Phi a$, where $a$ is the natural associativity isomorphism of $\mathsf{Rep}(\mathfrak{g})$. 
But thanks to the Mac Lane coherence theorem, we can pretend that $a$ is the identity. \label{footnote_1}} 
\[
\Phi = 1 \otimes 1 \otimes 1 + O(\hbar) \quad \text{in} \quad (\mathfrak{U}(\mathfrak{g})^{\otimes 3})^\mathfrak{g}[[\hbar]] ~. 
\]
\end{enumerate}
\end{problem}

\begin{remark}
In view of \cref{remark inf braiding}, \cref{Problem 1} asks for the existence of a formal extension to all orders of the first order braided deformation of 
$\mathsf{Rep}(\mathfrak{g})$ given by $t$. 
Note that braided monoidal categories can be viewed as $\mathbb{E}_2$-algebras in the $2$-category of categories. Moreover, the Gerstenhaber operad being the 
homology of the $\mathbb{E}_2$ operad, one can state the problem of quantizing a Gerstenhaber algebra into an $\mathbb{E}_2$-algebra. 
In other words, \cref{Problem 1} amounts to searching for a deformation quantization of the Gerstenhaber algebra $\mathsf{Rep}(\mathfrak{g})$ as an 
$\mathbb{E}_2$-algebra in a suitable $2$-category of $\kk$-linear categories. 
\end{remark}

\subsection{Universal reformulation}\label{ssec-1.2-universal}

Drinfeld noticed in \cite{Dri90} that one could restrict to the case where $R = e^{\frac{\hbar t}{2}}$ and where $\Phi$ is given by a universal formula involving only 
copies of $t$ and iterated Lie brackets. This is analogous to what happens with the Baker--Campbell--Hausdorff formula, that only involves iterated Lie brackets 
of two elements of a given Lie algebra. 
In order to reformulate this formal deformation problem in a universal manner, one first introduces the family $(\mathfrak{t}_n)_{n\geq 0}$ of \textit{Drinfeld--Kohno} 
Lie algebras. In analogy with the Baker--Campbell--Hausdorff universal formula living in the free complete Lie algebra on two generators, a universal formula for 
$\Phi$ shall live in a completion of the universal enveloping algebra of $\mathfrak{t}_3$. 

\begin{definition}[Drinfeld--Kohno Lie algebras]\label{Kohno-Drinfeld algebras}
The $n$-th \textit{Drinfeld--Kohno} Lie algebra $\mathfrak{t}_n$ is given by the following presentation:
\[
\mathfrak{t}_n \coloneqq \frac{\mathcal{L}ie\{t_{ij}\,|\,1\leq i,j \leq n,i \neq j\}}{\left(t_{ij} = t_{ji}~, ~ [t_{ij},t_{ik}+t_{kj}] = 0~,~ [t_{ij},t_{kl}] = 0\right)}~.
\]
Assigning degree $1$ to generators turns $\mathfrak{t}_n$ into a graded Lie algebra. 
\end{definition}
Note that $\mathfrak{t}_0=\mathfrak{t}_1=\{0\}$. 

The $n$-th Drinfeld--Kohno Lie algebra is indeed universal in the following sense: 
\begin{lemma}\label{lemma: morphisms pushing forward}
Let $\mathfrak{g}$ be a Lie algebra together with a Casimir element
\[
t = \sum_{\alpha \in \mathrm{I}} a_{\alpha} \otimes b_{\alpha}\in S^2(\mathfrak{g})^\mathfrak{g}~.
\]
The assignment 
\[
\begin{tikzcd}[column sep=3pc,row sep=0pc]
\varphi_t^n: \mathfrak{t}_n \arrow[r]
& (\mathfrak{U}(\mathfrak{g})^{\otimes n})^{\mathfrak{g}} \\
t_{ij} \arrow[r,mapsto]
& \displaystyle \sum_{\alpha \in \mathrm{I}} a^{(i)}_{\alpha}b^{(j)}_{\alpha}
\end{tikzcd}
\] 
defines a morphism of Lie algebras. Here, $x^{(i)}:= 1^{\otimes i -1 } \otimes x \otimes 1^{\otimes n-i}$, and $\mathfrak{U}(\mathfrak{g})^{\otimes n}$ is 
endowed with the Lie bracket given by the commutator of the associative product. 
\end{lemma}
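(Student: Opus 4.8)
The plan is to verify directly that the assignment $\varphi_t^n$ respects all three families of defining relations of $\mathfrak{t}_n$, since a Lie algebra presented by generators and relations admits a morphism to any target Lie algebra as soon as the images of the generators satisfy the relations. So first I would fix the notation: write $t = \sum_\alpha a_\alpha \otimes b_\alpha$, and note that since $t \in S^2(\mathfrak{g})^{\mathfrak{g}}$ we may and do assume the symmetry $\sum_\alpha a_\alpha \otimes b_\alpha = \sum_\alpha b_\alpha \otimes a_\alpha$ (as elements of $\mathfrak{g}^{\otimes 2}$), and that $\mathfrak{g}$-invariance means $\sum_\alpha [x, a_\alpha]\otimes b_\alpha + a_\alpha \otimes [x, b_\alpha] = 0$ for all $x \in \mathfrak{g}$. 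I would also record the basic commutation fact that $x^{(i)}$ and $y^{(j)}$ commute in $\mathfrak{U}(\mathfrak{g})^{\otimes n}$ whenever $i \neq j$, which makes expressions like $\sum_\alpha a_\alpha^{(i)} b_\alpha^{(j)}$ unambiguous and symmetric in the two tensor legs.

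Next I would treat the three relations in turn. The symmetry relation $t_{ij} = t_{ji}$ is immediate: $\sum_\alpha a_\alpha^{(i)} b_\alpha^{(j)} = \sum_\alpha b_\alpha^{(i)} a_\alpha^{(j)}$ follows from the symmetry of $t$ together with the fact that factors in slots $i$ and $j$ commute. The locality relation $[t_{ij}, t_{kl}] = 0$ for $\{i,j\}\cap\{k,l\} = \emptyset$ is also straightforward, because $\varphi_t^n(t_{ij})$ is built entirely from factors sitting in slots $i$ and $j$, while $\varphi_t^n(t_{kl})$ uses only slots $k$ and $l$; with all four indices distinct these tensor factors commute, so the commutator vanishes. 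The interesting relation is the infinitesimal braid relation $[t_{ij}, t_{ik} + t_{kj}] = 0$ with $i,j,k$ distinct. Here I would expand $\big[\sum_\alpha a_\alpha^{(i)} b_\alpha^{(j)},\ \sum_\beta a_\beta^{(i)} b_\beta^{(k)} + \sum_\beta a_\beta^{(k)} b_\beta^{(j)}\big]$; the factors in slots $j$ and $k$ in the first and second summands respectively commute through, so the bracket collapses onto commutators happening within a single tensor slot, and one obtains an expression of the shape $\sum_{\alpha,\beta}\big([a_\alpha, a_\beta]^{(i)} b_\beta^{(k)} b_\alpha^{(j)} + a_\beta^{(i)}[b_\alpha, \cdot]\cdots\big)$ — precisely the kind of double sum that is killed by applying the $\mathfrak{g}$-invariance identity for $t$ (applied with $x = a_\beta$ or $x = b_\beta$ and then summed over $\beta$, using symmetry of $t$ to match terms).

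The step I expect to be the main obstacle is this last computation: organizing the expansion of $[t_{ij}, t_{ik}+t_{kj}]$ so that the cancellation via $\mathfrak{g}$-invariance is transparent. The bookkeeping involves several double sums over the index set $\mathrm{I}$, each a product of three factors in slots $i, j, k$, and one has to pair them up correctly — typically by writing the invariance of $t$ as $\sum_\beta [a_\beta, y]\otimes b_\beta = -\sum_\beta a_\beta\otimes[b_\beta, y] = \sum_\beta a_\beta \otimes [y, b_\beta]$ and substituting $y = a_\alpha$ or $y = b_\alpha$, then reindexing and using the symmetry $a\leftrightarrow b$. It is a standard but slightly delicate manipulation; everything else is formal. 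Once all three relation-checks are in place, the universal property of the presentation in \cref{Kohno-Drinfeld algebras} yields the desired Lie algebra morphism $\varphi_t^n$, and compatibility with the gradings is clear since each $t_{ij}$ has degree $1$ and maps to a degree-$1$ element (a sum of products of one $a_\alpha$ and one $b_\alpha$, each of degree $1$ in $\mathfrak{U}(\mathfrak{g})$ with $\mathfrak{g}$ in degree $1$) — though I would double-check whether the paper intends $\mathfrak{U}(\mathfrak{g})^{\otimes n}$ to carry such a grading here, and if not, simply omit that last remark.
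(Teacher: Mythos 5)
Your proposal is correct and follows essentially the same route as the paper's proof: one checks that the images of the generators satisfy the three defining relations of $\mathfrak{t}_n$, with the symmetry of $t$ giving $t_{ij}=t_{ji}$, the disjointness of tensor slots giving $[t_{ij},t_{kl}]=0$, and the $\mathfrak{g}$-invariance of $t$ giving the infinitesimal braid relation (the paper states this last computation without spelling out the pairing of double sums, which you describe correctly). Your closing remark about gradings is harmless but not needed for the lemma as stated.
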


\begin{proof}
It is enough to check that the images of the generators $t_{ij}$ satisfy the relations defining Drinfeld--Kohno Lie algebra. 
Since $t$ is symmetric, $\varphi_t^n(t_{ij}) = \varphi_t^n(t_{ji})$. 
One can check that since $t$ is $\mathfrak{g}$-invariant, we have 

\[
[\varphi_t^n(t_{ij}),\varphi_t^n(t_{ik})+\varphi_t^n(t_{kj})] 
= \sum_{\alpha, \beta \in \mathrm{I}} \left( \left[ a_{\alpha}^{(i)}b_{\alpha}^{(j)}, a_{\beta}^{(i)}b_{\beta}^{(k)} \right] 
+ \left[ a_{\alpha}^{(i)}b_{\alpha}^{(j)}, a_{\beta}^{(j)}b_{\beta}^{(k)} \right] \right) = 0~.
\]

The relation $[\varphi_t^n(t_{ij}),\varphi_t^n(t_{kl})] = 0$ follows from the definition of $\varphi_t^n$.
\end{proof}

Since the universal enveloping algebra is left adjoint, we get a universal morphism 
$\varphi_t^n: \mathfrak{U}(\mathfrak{t}_n) \longrightarrow (\mathfrak{U}(\mathfrak{g})^{\otimes n})^{\mathfrak{g}}$ of associative algebras 
for any pair $(\mathfrak{g},t)$. This indicates that a universal reformulation of \cref{Problem 1} can be made using the Drinfeld--Kohno Lie algebras. 

\begin{remark}\label{completed variant}
A useful variant of the above, where one sends $t_{ij}$ to 
\[
\hbar  \sum_{\alpha \in \mathrm{I}} a^{(i)}_{\alpha}b^{(j)}_{\alpha}
\]
gives an algebra morphism $\varphi_{\hbar t}^n:\hat{\mathfrak{U}}(\mathfrak{t}_n)\to (\mathfrak{U}(\mathfrak{g})^{\otimes n})^{\mathfrak{g}}[[\hbar]]$, 
where $\hat{\mathfrak{U}}(\mathfrak{t}_n)$ is the degree completion with respect to the grading from Definition \ref{Kohno-Drinfeld algebras} (given by assigning 
degree $1$ to the generators $t_{ij}$). 
\end{remark}

The family of Drinfeld--Kohno Lie algebras $(\mathfrak{t}_n)_{n\geq0}$ comes equipped with an extra structure, often called \textit{insertion-coproduct morphisms}. 
\begin{defprop}[Insertion-coproduct morphisms]\label{def: insertion-coproduct morphisms}
For every partially defined map $f:\{1,\dots,m\}\to\{1,\dots,n\}$, that one can define as a pointed map $\{*,1,\dots,m\}\to \{*,1,\dots,m\}$, there is a Lie algebra morphism $(-)^f:\mathfrak{t}_n \longrightarrow \mathfrak{t}_m$
that sends $t_{ij}$ to 
\[
t_{ij}^f:=\sum_{\substack{k\in f^{-1}(i) \\ l\in f^{-1}(j)}}t_{kl}~.
\]
\end{defprop}
Insertion-coproduct morphisms are compatible with composition, making $n\mapsto \mathfrak{t}_n$ a presheaf of graded Lie algebras on (the skeleton of) 
the category $\mathrm{Fin}_*$ of finite pointed sets
\begin{notation}
Since a partially defined map $f:\{1,\dots,m\}\to\{1,\dots,n\}$ is completely determined by the familly of level sets $(f^{-1}(i))_{1\leq i\leq n}$, one sometimes 
write $(-)^{f^{-1}(1),\dots,f^{-1}(n)}=(-)^f$. 
For instance, if $x\in\mathfrak{t}_{3}$, then $x^{13,2,5}\in \mathfrak{t}_{6}$ corresponds to $x^f$ for $f:\{1,\dots,6\}\to\{1,2,3\}$ defined by 
$f^{-1}(1)=\{1,3\}$, $f^{-1}(2)=\{2\}$, and $f^{-1}(3)=\{5\}$. 
\end{notation}
Observe that $(\mathfrak{U}(\mathfrak{g})^{\otimes n})^{\mathfrak{g}}$ is isomorphic to the algebra $\mathsf{End}(\otimes^n)$ of natural endomorphisms of 
the $n$-fold tensor product functor $\otimes^n:\mathsf{Rep}(\mathfrak{g})^n\to \mathsf{Rep}(\mathfrak{g})$.\footnote{Strictly speaking, one should fix a choice 
of parenthesization on $\otimes^n$. \label{footnote_2}} 
For every partially defined map $f:\{1,\dots,m\}\to\{1,\dots,n\}$ there is a functor $f_*:\mathsf{Rep}(\mathfrak{g})^m\to \mathsf{Rep}(\mathfrak{g})^n$ 
taking $(V_1,\dots,V_m)$ to $(W_1,\dots, W_n)$ with $W_i:=\otimes_{j\in f^{-1}(i)}V_j$. Precomposing with $f_*$ gives an algebra map 
$(-)^f:\mathsf{End}(\otimes^n)\to \mathsf{End}(\otimes^m)$, making 
$n \mapsto \mathsf{End}(\otimes^n)\cong(\mathfrak{U}(\mathfrak{g})^{\otimes n})^{\mathfrak{g}}$ a presheaf of associative algebras on $\mathrm{Fin}_*$. 

We let the reader check the following properties: 
\begin{itemize}
\item The morphisms $(-)^f:(\mathfrak{U}(\mathfrak{g})^{\otimes n})^{\mathfrak{g}}\to (\mathfrak{U}(\mathfrak{g})^{\otimes m})^{\mathfrak{g}}$ can 
be expressed using coproducts, insertion of $1$'s, and counits (that one can see as degenerate iterated coproducts), whence the name 
\textit{insertion-coproduct morphisms};
\item The morphisms $\varphi_t^n: \mathfrak{U}(\mathfrak{t}_n) \longrightarrow (\mathfrak{U}(\mathfrak{g})^{\otimes n})^{\mathfrak{g}}$ from 
Lemma \ref{lemma: morphisms pushing forward} commute with insertion-coproduct morphisms, thus defining a morphism of presheaves of algebras on 
$\mathrm{Fin}_*$. The same remains true with the variant $\varphi_{\hbar t}^n$ from Remark \ref{completed variant}. 
\end{itemize}
Going back to \cref{Problem 1}, the conditions that $R\in \mathsf{Aut}(\otimes^2)$ and $\Phi\in \mathsf{Aut}(\otimes^3)$ must satisfy\footnote{Following footnotes \ref{footnote_1} and \ref{footnote_2}, 
stricly speaking, one shall have $\Phi\in \mathsf{Aut}((-\otimes-)\otimes-)$. } to define a braided monoidal structure on $\mathsf{Rep}(\mathfrak{g})$, where the monoidal product 
functor $\otimes$ and the monoidal unit remain the same, can be rephrased in terms of insertion-coproduct morphisms. We refer to \cite{joyalbraided} for the original conditions that we now state in this ``new'' way: 
\begin{enumerate}
\item Unit condition: $\Phi^{1,\emptyset,2}=\mathrm{id}$, in $\mathsf{End}(\otimes^2)$. 
\item Inverse condition: $\Phi^{-1}=\Phi^{3,2,1}$, in $\mathsf{End}(\otimes^3)$. 
\item Pentagon equation: $\Phi^{2,3,4}\Phi^{1,23,4} \Phi^{1,2,3} = \Phi^{1,2,34}\Phi^{12,3,4}$, in $\mathsf{End}(\otimes^4)$. 
\item Hexagon equations: $\Phi^{2,3,1}R^{1,23}\Phi^{1,2,3}=R^{1,3}\Phi^{2,1,3}R^{1,2}$, and the same with $(R^{2,1})^{-1}$ instead of $R$, in $\mathsf{End}(\otimes^3)$.  
\end{enumerate}

\medskip 

This naturally leads to the following universal version of \cref{Problem 1}: 
\begin{problem}\label{Problem 2}
Find an invertible element $\Phi\in 1+\hat{\mathfrak{U}}(\mathfrak{t}_3)^{\geq1}$ such that 
\begin{enumerate}
\item Unit condition: $\Phi^{1,\emptyset,2}=1$, in $\hat{\mathfrak{U}}(\mathfrak{t}_2)$. 
\item Inverse condition: $\Phi^{-1}=\Phi^{3,2,1}$, in $\hat{\mathfrak{U}}(\mathfrak{t}_3)$. 
\item Pentagon equation: $\Phi^{2,3,4}\Phi^{1,23,4} \Phi^{1,2,3} = \Phi^{1,2,34}\Phi^{12,3,4}$, in $\hat{\mathfrak{U}}(\mathfrak{t}_4)$. 
\item Hexagon equations: $\Phi^{2,3,1}e^{\pm\frac{t_{12}+t_{13}}{2}}\Phi^{1,2,3}=e^{\pm\frac{t_{13}}{2}}\Phi^{2,1,3}e^{\pm\frac{t_{12}}{2}}$ in $\hat{\mathfrak{U}}(\mathfrak{t}_3)$.  
\end{enumerate}
\end{problem}
It is clear from the above discussion that every solution to \cref{Problem 2} is sent, using the morphism $\varphi_{\hbar t}^\bullet$ of presheaves of algebras on 
$\mathrm{Fin}_*$, to a solution of Problem \ref{Problem 1} such that $R=e^{\frac{\hbar t}{2}}$. 

\subsection{Drinfeld associators} \label{subsection: drinfeld associator}
Observe that the center of $\mathfrak{t}_3$ is one dimensional and generated by $c:=t_{12} + t_{13} + t_{23}$. Hence we have a Lie algebra isomorphism 
\[
\mathfrak{t}_3\cong \mathfrak{f}_2\oplus \kk c\,,
\]
where $\mathfrak{f}_2$ is the free Lie algebra on two generators $x=t_{12}$ and $y=t_{23}$. 
Through this identification, the Lie algebra morphism $(-)^{1,\emptyset,2}:\mathfrak{t}_3\to \mathfrak{t}_2$ sends $x$ and $y$ to $0$ and $c$ to $t_{12}$.

Given a solution $\Phi$ of \cref{Problem 2}, the unit condition (1) is equivalent to requiring that 
\[
\Phi=\Phi(x,y)\in \hat{\mathfrak{U}}(\mathfrak{f}_2)\subset \hat{\mathfrak{U}}(\mathfrak{t}_3)\,.
\]
Then the inverse condition (2) reads as $\Phi(x,y)^{-1}=\Phi(y,x)$, and the pentagon equation (3) as 
\begin{align}\label{eq-pentagon}\tag{\pentagon}
\Phi(t_{23},t_{34})\Phi(t_{12}+t_{13},t_{24}+t_{34})\Phi(t_{12},t_{23}) \\ 
= \Phi(t_{12}, t_{23}+t_{24})\Phi(t_{13}+t_{23},t_{34})\,. \nonumber
\end{align}
Finally, the hexagon equations (4) become equivalent to the single equation 
\begin{equation}\label{eq-hexagon}\tag{\hexagon}
e^{x/2}\Phi(y,x)e^{y/2}\Phi(z,y)e^{z/2}\Phi(x,z)=1
\end{equation}
in the complete associative algebra $\kk\langle\!\langle x,y,z\rangle\!\rangle/(x+y+z)$. 

\begin{definition}[Drinfeld associator]\label{def: classical associator}
A Drinfeld $1$-associator (with coefficients in $\kk$) is a group-like element $\Phi(x,y)\in \exp(\widehat{\mathfrak{f}}_2)$  such that $\Phi(x,y)^{-1}=\Phi(y,x)$ 
and sastisfying \eqref{eq-pentagon} and \eqref{eq-hexagon}. The set of Drinfeld $1$-associators (with coefficients in $\kk$) is denoted $\mathrm{Assoc}_1(\kk)$.
\end{definition}
From the above discussion, one sees that Drinfeld $1$-associators are in bijection with group-like solutions to \cref{Problem 2}. 

\begin{remark}
One can more generally define Drinfeld $\lambda$-associators for any $\lambda\in\kk$: all conditions remain the same except for equation \eqref{eq-hexagon}, 
that becomes 
\[
e^{\lambda x/2}\Phi(y,x)e^{\lambda y/2}\Phi(z,y)e^{\lambda z/2}\Phi(x,z)=1\,. 
\]
For every $\lambda\neq0$, rescaling $x$ and $y$ (to $\lambda x$ and $\lambda y$) gives a bijection between $\mathrm{Assoc}_1(\kk)$ and the set 
$\mathrm{Assoc}_\lambda(\kk)$ of Drinfeld $\lambda$-associators. 
\end{remark}

\begin{theorem}[{\cite{Dri90}}]
Let $\kk$ be a field of characteristic zero. The set of Drinfeld $1$-associators with coefficients in $\kk$ is non empty. 
\end{theorem}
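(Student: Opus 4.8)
The plan is to establish the non-emptiness of $\mathrm{Assoc}_1(\kk)$ in two stages: first over $\kk=\mathbb{Q}$ (or at least over $\mathbb{R}$ and $\mathbb{C}$, where explicit constructions are available), and then bootstrap to an arbitrary field of characteristic zero by a base-change/Tannakian argument. The classical route, following Drinfeld, is to produce the \emph{Knizhnik--Zamolodchikov associator} $\Phi_{\mathrm{KZ}}$ over $\kk=\mathbb{C}$. Concretely, one considers the KZ connection $\nabla = d - \big(\tfrac{x}{z} + \tfrac{y}{z-1}\big)dz$ on $\mathbb{P}^1\setminus\{0,1,\infty\}$ with values in the completed free algebra $\kk\langle\!\langle x,y\rangle\!\rangle$, and defines $\Phi_{\mathrm{KZ}}$ as the renormalized holonomy (regularized parallel transport) from $0$ to $1$ along the real interval, using the asymptotic behavior $G_0(z)\sim z^{x}$ near $0$ and $G_1(z)\sim (1-z)^{y}$ near $1$ to kill the divergences. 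One then checks that $\Phi_{\mathrm{KZ}}$ is group-like (because $\nabla$ is a connection valued in primitives), satisfies the duality $\Phi(x,y)^{-1}=\Phi(y,x)$ (from reversing the path), the pentagon \eqref{eq-pentagon} (from the fact that the rank-one KZ local systems on $\mathcal{M}_{0,4}$ and $\mathcal{M}_{0,5}$ glue compatibly — equivalently, flatness of the universal KZ connection over the moduli of genus-zero curves), and the hexagon \eqref{eq-hexagon} (a monodromy computation around the three punctures, using $e^{i\pi x}$ factors). This last set of compatibilities is really the statement that the $\mathfrak{t}_n$ assemble into the operad encoding the braided monoidal structure and that the flat KZ connection furnishes a point of the associated torsor.

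An alternative, more algebraic route — which avoids complex analysis and works directly over $\mathbb{Q}$ — is the cohomological/obstruction-theoretic argument: one solves \eqref{eq-pentagon} and \eqref{eq-hexagon} order by order in the grading on $\widehat{\mathfrak{U}}(\mathfrak{f}_2)$, showing that the obstruction to extending a degree-$n$ solution to degree $n+1$ lives in a certain cohomology group (built from the Harrison/Hochschild-type complex computing deformations of the symmetric monoidal structure, i.e. the cohomology controlling the Grothendieck--Teichmüller situation), and that this group vanishes. The vanishing is where the characteristic-zero hypothesis enters crucially, via averaging over symmetric groups and the formality-type input that $H^\bullet(\mathfrak{t}_n)$ is concentrated in the expected degrees. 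Drinfeld's original proof essentially combines both: he uses $\Phi_{\mathrm{KZ}}$ to get existence over $\mathbb{C}$, then a Galois-descent argument — the set of associators is a torsor under a prounipotent group scheme defined over $\mathbb{Q}$, and a torsor under a prounipotent group over a field of characteristic zero is trivial, hence has a point over any such field — to descend to $\mathbb{Q}$ and thus to any $\kk\supseteq\mathbb{Q}$.

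For this survey I would present the $\Phi_{\mathrm{KZ}}$ construction in detail over $\mathbb{C}$, since it is the most illuminating and connects to the operadic picture (the KZ connection being the universal flat connection on the operad of parenthesized braids), and then invoke the torsor/descent principle to reach general $\kk$ of characteristic zero. The main obstacle — and the part that deserves the most care — is verifying the pentagon equation for $\Phi_{\mathrm{KZ}}$: this requires the nontrivial input that the KZ connections on the configuration spaces of points on the line are \emph{flat} and mutually compatible under the insertion-coproduct morphisms $(-)^f$, which is precisely the integrability of the universal KZ connection and amounts to checking the infinitesimal braid relations (the defining relations of $\mathfrak{t}_n$ in \cref{Kohno-Drinfeld algebras}) together with a limit/associativity argument for iterated regularized holonomies. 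The hexagons, by contrast, are a comparatively short monodromy computation, and group-likeness plus duality are essentially formal.
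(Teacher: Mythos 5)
Your proposal follows essentially the same route as the paper: construct $\Phi_{KZ}$ over $\mathbb{C}$ as the renormalized holonomy of the KZ connection, check group-likeness, duality, pentagon and hexagon, and then pass to an arbitrary $\kk$ of characteristic zero by the descent/torsor argument (the set of associators being a torsor under the prounipotent (graded) Grothendieck--Teichm\"uller group). The one detail to flag is that $\Phi_{KZ}$ is a $2\pi\mathrm{i}$-associator (hence your $e^{\mathrm{i}\pi x}$ factors in the hexagon), so one must first use the rescaling bijection $\mathrm{Assoc}_{2\pi \mathrm{i}}(\mathbb{C})\cong\mathrm{Assoc}_1(\mathbb{C})$ to get a $1$-associator over $\mathbb{C}$, exactly as the paper indicates.
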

In \cite{Dri90}, Drinfeld first constructs a $2\pi\mathrm{i}$-associator $\Phi_{KZ}$ with coefficients in $\mathbb{C}$, given as the renormalized holonomy from $0$ 
to $1$ of a differential equation known as the Knizhnik--Zamolodchikov equation. Then, one can rescale it to get a $1$-associator with coefficients in $\mathbb{C}$. 
One can finally use descent methods to prove existence over $\mathbb{Q}$ (this uses the fact that associators actually form a torsor). 

\vspace{2pc}

\subsection{Motivations and perspectives} 

\subsubsection*{The operadic approach}

The insertion-coproduct morphisms defined above can be used to endow the family $(\mathfrak{t}_n)_{n\geq0}$ with an operad structure in the symmetric 
monoidal category of Lie algebras with the direct sum $(\mathsf{Lie},\oplus,0)$. This will allow us to restate the above definition in an operadic fashion 
(see \cref{Section: Drinfeld associators}). Indeed, using this operad structure, one can write the set of associators as a set of isomorphisms between two 
different operads. This set is thus a torsor over the respective automorphisms groups, which 
are none other than the \textit{Grothendieck--Teichmüller} group and the \textit{graded Grothendieck--Teichmüller} group. It is Tamarkin who, inspired by Bar-Natan's work 
\cite{BarNatan99}, first came up with this operadic picture in his proof \cite{Tamarkin03} of the rationnal formality of the little disks operad se also \cite{Tamarkin02}). 

The work of Tamarkin \cite{Tamarkin03,Tamarkin02} (see also \cite{Kontsevich99}) actually suggests a homotopical version of this operadic approach to the torsor of associators 
and the \textit{Grothendieck--Teichmüller} groups, that has been achieved in \cite{FresseGT2}. 

\subsubsection*{Motivic aspects}

Drinfeld's proof \cite{Dri90} of the existence of an associator with complex coefficients is of motivic nature, in the 
following sense. An associator, called the Knizhnik--Zamolodchikov (KZ) associator, is constructed as the regularized holonomy of an algebraic 
flat connection defined on $\mathbb{P}^1-\{0,1,+\infty\}$ (the KZ connection). The coefficients of the KZ associator $\Phi_{KZ}$ are therefore periods (in the 
sense of \cite{Kontsevich-Zagier}); in fact, $\Phi_{KZ}$ is a generating series for very specific periods known as multiple zeta values. 
We refer to \cite{Le-Murakami} for a proof, and \cite{Zagier} for generalities on multiple zeta values (MZV). 
The proof that $\Phi_{KZ}$ satisfies the defining equations of a Drinfeld ($2\pi i$-)associator only involves ``natural'' relations between these 
periods (because they are all proven by saying that the holonomy of a flat connection along a contractible loop is trivial); 
natural (actually, \textit{motivic}) relations are linearity, change of variables and Stokes formula (see e.g.~\cite{Kontsevich-motives}). 

\medskip

Brown \cite{Brown} proved that MZV linearly span (over $\mathbb{Q}[\frac{1}{2\pi i}]$) all periods of mixed Tate motives that are 
unramified over $\mathbb{Z}$. This implies that there is an injective morphism 
$G_{\mathcal M_T(\mathbb{Z})_{\mathbb{Q}}}\hookrightarrow \mathrm{GRT}(\mathbb{Q})$, where 
\begin{itemize}
\item $G_{\mathcal M_T(\mathbb{Z})_{\mathbb{Q}}}$ is the Galois group of the tannakian category $\mathcal M_T(\mathbb{Z})_{\mathbb{Q}}$ of mixed Tate motives 
that are unramified over $\mathbb{Z}$ (see \cite{goncharov2001multiple,Deligne-Goncharov}); 
\item $\mathrm{GRT}(\mathbb{Q})$ is the graded Grothendieck--Teichmüller group already mentionned above (see also \S\ref{ssec-1.5} below). 
\end{itemize}
It turns out that both $G_{\mathcal M_T(\mathbb{Z})_{\mathbb{Q}}}$ and $\mathrm{GRT}(\mathbb{Q})$ are semi-direct products of the multiplicative group 
$\mathbb{G}_m$ with a pro-unipotent $\mathbb{Q}$-group. More importantly, Goncharov \cite{goncharov2001multiple} proved that in the case of $G_{\mathcal M_T(\mathbb{Z})_{\mathbb{Q}}}$ the graded Lie algebra of the associated pro-unipotent 
$\mathbb{Q}$-group is free with one generator in each odd degree $\geq3$. 

It is conjectured that the morphism $G_{\mathcal M_T(\mathbb{Z})_{\mathbb{Q}}}\to \mathrm{GRT}(\mathbb{Q})$ is an isomorphism. 
This would imply that the relations 
among MZV given by the defining equations of Drinfeld associators imply all the relations of motivic nature between MZV. 

\medskip

There is another set of relations between MZV, called (regularized) double shuffle relations. 
They are essentially combinatorial relations obtained from formally manipulating the two standard ways of defining MZV: 
the one \textit{via} infinite sums and the one \textit{via} iterated integrals. Indeed, the product of two Euler sums is a 
linear combination of Euler sums (indexed by shuffles) and the product of two iterated integrals is a linear combination 
of iterated integrals (indexed by quasi-shuffles). These relations were systematically studied by Racinet \cite{Racinet}, 
who proved that they are implied by the motivic relations. Furusho \cite{Furusho} proved they are also implied by the associator relations. 

\subsubsection*{Known examples of Drinfeld associators}

Apart from the KZ associator $\Phi_{KZ}$ (and its ``complex conjugate'' $\Phi_{\overline{KZ}}$), another associator $\Phi_{AT}$ 
has been constructed by Alekseev and Torossian \cite{Alekseev-Torossian,Severa-Willwacher} using integrals over compactified configuration spaces 
of points in the plane. Using similar techniques, a whole family $\{\Phi^t\}_{t\in[0,1]}$ of Drinfeld associators such that $\Phi^0=\Phi_{KZ}$, 
$\Phi^{1/2}=\Phi_{AT}$ and $\Phi^1=\Phi_{\overline{KZ}}$ was constructed by Rossi and Willwacher \cite{Rossi-Willwacher}. 

\subsubsection*{Generalizations}

The whole story of Drinfleld associators is very much related to the geometry of configuration spaces of points on a genus zero surface. 
Variants of the KZ connection for a higher genus surface $\Sigma$ have been constructed, as well as twisted versions by a finite group $\Gamma$
acting on the surface. In several cases, one can guess the correct definition of an associator for such a data by inspecting the natural 
relations satisfied by holonomies of the corresponding connection. A more systematic approach for defining and studying associators uses operads 
(operads and their siblings provide a consistent way of organizing abstract relations satisfied by these holonomies). 
The table below summarizes the state of the art, leaving aside the motivic aspects (that are not our main focus in this survey).  

\vspace{1pc}
\hspace{-2.5pc}
\begin{tabular}{|c|c|c|c|c|} 
\hline  
Genus & Group & KZ connect. & Associator type & Operadic def.\\  
\hline 
0 & 1 & \cite{Dri90} & Drinfeld \cite{Dri90} & \cite{Tamarkin03,FresseGT1} \\  
\hline
0 & $\mathbb{Z}/n\mathbb{Z}$ & \cite{Enriquez07} & cyclotomic \cite{Enriquez07} & \cite{calaque20moperadic} \\
\hline
0 & $\Gamma \subset \mathrm{PSU}(2)$ & \cite{Maassarani} & \textit{unknown} & \textit{unknown}  \\
\hline
1 & 1 & \cite{UniversalKZB09} & elliptic \cite{Enriquez14} & \cite{calaque2020ellipsitomic}  \\
\hline 
1 & $\mathbb{Z}/n\mathbb{Z} \times \mathbb{Z}/m\mathbb{Z}$ & \cite{EllipsitomicKZB} & ellipsitomic \cite{calaque2020ellipsitomic} & \cite{calaque2020ellipsitomic}  \\
\hline
$\geq1$ & 1 & \cite{Enriquez14b} & higher genus \cite{GonzalezHigherGenus} & \cite{GonzalezHigherGenus,campos2020configuration}  \\ 
\hline 
\end{tabular}
\vspace{1pc}

Note that in genus $g>1$, since the surface doesn't have a framing, one needs to add the framing data to configuration spaces of points 
(each point now has a unit tangent vector attached to it). To this day, it is still an open problem to prove the existence of an associator 
(in the sense of \cite{GonzalezHigherGenus} in this case) using the connection from \cite{Enriquez14b} (or, better, a framed version). It is nevertheless expected that 
the data of a genuine Drinfeld associator should be enough to construct higher genus associators ; this is the case in genus $1$ (see \cite{UniversalKZB09,Enriquez14}). 
For higher genus associators, this willl appear in \cite{BrochierHigherGenus}. 
A similar result is proven in \cite{campos2020configuration} for a more homotopical definition of higher genus associators that should be compared with the one from 
\cite{GonzalezHigherGenus}.

\bigskip

\begin{center}\fbox{
\begin{minipage}{0.75\textwidth}
\begin{center}
\textbf{The goal of this survey is to review the operadic approach, in genus $0$ and in genus $1$ }
\end{center}
\end{minipage}}
\end{center}

\medskip

\subsection{Plan of the paper}
\cref{Section: Drinfeld associators} is devoted to the first line of the above table. We start with some recollections on braid groups, configurations 
spaces, operads and groupoids, in \S\ref{subsec-1.1} and \S\ref{subsec-1.2}. We then introduce the two main players in our story: 
\begin{enumerate}
\item the (pro-unipotent completion of the) operad of parenthezised braids (\S\ref{subsection: pab}). 
\item the (degree completion of the) operad of parenthezised chord diagrams (\S\ref{ssec-1.4}). 
\end{enumerate}
We move on in \S\ref{ssec-1.5} with the operadic definition of an associator, which is simply an isomorphism from the first player to the second one. 
The set of associators naturally becomes a bitorsor, with the two acting groups being the automorphisms groups of both players. 
The presentation by generators and relations of the operad of parenthezised braids given in \S\ref{ssec-1.6} allows to give an explicit description of the 
bitorsor of associators, and in particular to show that it is consistent with the Definition \ref{def: classical associator} motivated by \cref{Problem 2}. 
Finally, \S\ref{Subsection: Description topo} provides a more topological description of the operad of parenthezised braids. 

\medskip

The aim of \cref{section-cyclotomic} is to tell a parallel story for the second line of the above table: Enriquez's cyclotomic associators \cite{Enriquez07}. 
We start in \S\ref{ssec-cyclotomic-motivation} with some motivations; the reader can view it as a condensed version of our introductory \cref{sec-intro} 
for the cyclotomic case. We then review moperads in \S\ref{ssec-Moperads}, which were introduced by Willwacher in \cite{Willwachermoperads}, and which play an analogue role to operads in the cyclotomic picture. Furthermore, we prove that this structure naturally appears when one considers the derivative (in the sense of species) of an operad. This construction recovers many of the examples that appear in this text. In \S\ref{ssec-3.3} we construct the moperad of parenthesized cyclotomic braids, which will be the first player of this story. And in \S\ref{subsec: inf cyclo braids}, we construct the moperad of infinitesimal cyclotomic braids, which is our second player. This allows us to define cyclotomic associators purely in terms of isomorphisms of moperads in \S\ref{ssec-Cyclo associator and gts}. In Subsection \ref{ssec-more concrete description of cyclos}, we give an explicit description of these cyclotomic associators and finally in \S\ref{subsec: topological cyclotomic} we give a topological interpretation of some of the constructions performed. 

\medskip

Finally, in Section \ref{Section: Elliptic}, we tell the story of the fourth line of the table and give a quick overview of the fifth line. The structure remains the same. We start with some motivation in \S\ref{ssec: Motivations elliptic}. We introduce the first player, the right module of parenthesized elliptic braids in \S\ref{ssec: parenthesized elliptic braids}; then we introduce the second player, the right module of infinitesimal elliptic braids in \S\ref{ssec-4.3-ellipticCD}. We then define elliptic associators as right module isomorphisms of the two in \S\ref{ssec: elliptic associators}. Using a presentation of the right module of parenthesized elliptic braids, we give a more concrete description of elliptic associators in \S\ref{ssec: more concrete elliptic}. We then present the topological point of view on these objects in \S\ref{ssec: topological description elliptic}. Finally, we give a quick overview on the ellipsitomic case in \S\ref{ssec: overview of the ellipsitomic case}: this case can be essentially thought as a combination of the elliptic case with the cyclotomic case. 

\subsection{Acknowledgements}
This survey emerged from a series of lectures given in June 2021 by DC at the ``workshop on higher structures and operadic calculus''. 
Both authors thank the organizers and the participants for their enthusiasm, as well as Adrien Brochier for his comments and suggestions. 

DC has received funding from the European Research Council (ERC) under the European Union’s Horizon 2020 research and innovation programme 
(grant agreement No 768679).

\subsection{Conventions}\label{ssec-conventions}
Let us state here some of the conventions we will adopt throughout this survey.

\medskip

\begin{enumerate}

\item \textit{Convention for the generators of the pure braid groups:} We choose the following generators for the pure braid groups $\mathrm{PB}_n$: $x_{ij}$ is the pure braid that goes \textit{from} the $i$-th strand \textit{in front} of the others, does a loop around the $j$-th strand, and comes back \textit{in front} of the other strands:  
\[
\includegraphics[width=50mm,scale=1]{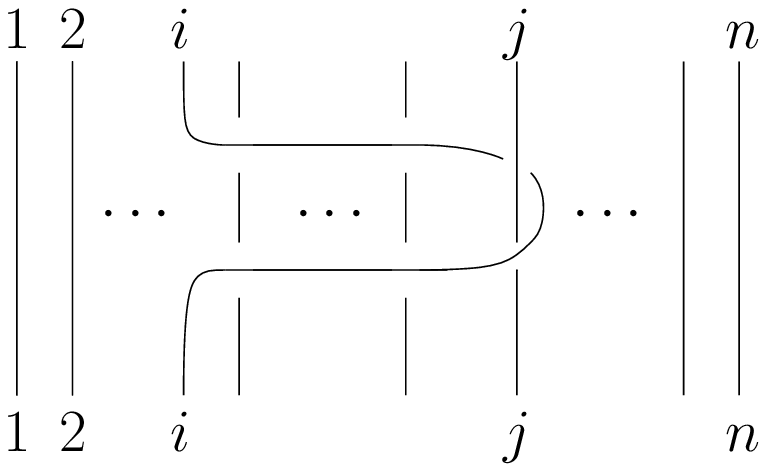}
\]

\medskip

\item \textit{Topological convention for categories:} We view braid groups as homotopy groups, where the multiplication is given by the concatenation of paths. 

\medskip

This entails that, when working with categories, we \textit{concatenate} arrows instead of composing them. If $a,b,c$ are objects in a category $\mathcal{C}$, we denote 
\[
\begin{tikzcd}[column sep=1.5pc,row sep=0pc]
a \arrow[r,"f"]
&b \arrow[r,"g"]
&c 
\end{tikzcd}
\]
by $f~g$ instead of $g~f$. In other words, $f~ g:=g\circ f$. 

\medskip

\item \textit{Symmetric group actions:} If $M$ is a symmetric sequence, every $M(n)$ is endowed with a \textit{right} $\mathbb{S}_n$-action, where $\mathbb{S}_n$ stands for the $n$-th symmetric group. Nevertheless, we will often consider a \textit{left} $\mathbb{S}_n$-action, as it simplifies diagrams. In order to pass pass from one to the other, one merely needs to pre-compose the action by the involution which sends an element to its inverse.
\end{enumerate}

\newpage


\section{Operadic approach to Drinfeld associators}\label{Section: Drinfeld associators}

\subsection{Braid groups and configuration spaces}\label{subsec-1.1}

The space of configurations of $n$ ordered points in the complex plane $\mathrm{Conf}_n(\mathbb{C})$ is given by 
\[
\mathrm{Conf}_n(\mathbb{C}) \coloneqq \left\{ (x_1, \cdots, x_n) \in \mathbb{C}^n~~|~~x_i \neq x_j~~\text{if}~~i \neq j \right\}~.
\]
for $n \geq 0$. It is a path-connected topological space, equipped with an action of the symmetric group $\mathfrak{S}_n$ 
(which permutes the indices of the points) for all $n$ in $\mathbb{N}$.

\begin{definition}[Pure braid groups]
The \textit{pure braid group} $\mathrm{PB}_n$ on $n$ strands is given by the fundamental group
\[
\mathrm{PB}_n \coloneqq \pi_1(\mathrm{Conf}_n(\mathbb{C}))~.
\]
\end{definition}

Elements of the pure braid group $\mathrm{PB}_n$ can be represented as braids. One can choose a point $(x_1, \cdots, x_n)$ in $\mathrm{Conf}_n(\mathbb{C})$ 
(since it is path-connected, two choices yield isomorphic groups), and represent elements in 
$\mathrm{PB}_n$ as braids with $n$-strands. Notice that the $i$-th strand of a braid in the pure braid group starts at $i$-th spot and must also finish at $i$-th spot. 
For instance, the following braid 

\begin{center}
\includegraphics[width=20mm,scale=1]{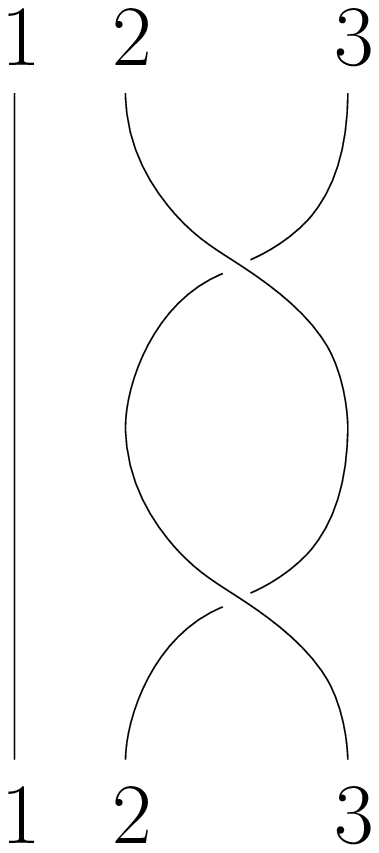}
\end{center}

is an element of $\mathrm{PB}_3$.

\begin{theorem}\label{thm: Artin's presentation}
The pure braid group $\mathrm{PB}_n$ on $n$ strands admits the following presentation
\begin{enumerate}
\item It is generated by $\{x_{ij}\}$ for $1 \leq i < j \leq n$. These \textit{elementary pure braids} can be represented as

\[
\includegraphics[width=50mm,scale=1]{elementxij.eps}
\]

\item The generators are subject to the following relations 

\begin{enumerate}
\item $[x_{ij},x_{kl}] = 1~$ for $i < j < k < l$,

\item $[x_{il},x_{jk}] = 1~$ for $i < j < k < l$,

\item $x_{ik}x_{jk}x_{ij} = x_{jk}x_{ij}x_{ik} = x_{ij}x_{ik}x_{jk}~$ for $i < j < k$,

\item $[x_{kl}x_{ik}x_{kl}^{-1},x_{jl}] = 1~$ for $i < j < k < l$.
\end{enumerate}
\end{enumerate}
\end{theorem}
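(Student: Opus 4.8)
The plan is to argue by induction on $n$ using the Fadell--Neuwirth fibration. Forgetting the last point gives a locally trivial fibration
\[
\mathbb{C}\setminus\{n-1\text{ points}\}\ \hookrightarrow\ \mathrm{Conf}_n(\mathbb{C})\ \xrightarrow{\ p\ }\ \mathrm{Conf}_{n-1}(\mathbb{C})\,,
\]
whose fiber is homotopy equivalent to a wedge of $n-1$ circles, hence has free fundamental group $F_{n-1}$ on the classes of the loops $x_{1n},\dots,x_{n-1,n}$ (these are exactly the elementary pure braids pictured in the statement). Iterating the fibration shows inductively that $\mathrm{Conf}_{n-1}(\mathbb{C})$ is a $K(\mathrm{PB}_{n-1},1)$, so the long exact sequence of the fibration collapses to a short exact sequence
\[
1\ \longrightarrow\ F_{n-1}\ \longrightarrow\ \mathrm{PB}_n\ \longrightarrow\ \mathrm{PB}_{n-1}\ \longrightarrow\ 1\,.
\]
This sequence splits, a section being induced by the map $\mathrm{Conf}_{n-1}(\mathbb{C})\to\mathrm{Conf}_n(\mathbb{C})$ that inserts an $n$-th point inside a small disk near the first one; hence $\mathrm{PB}_n\cong F_{n-1}\rtimes\mathrm{PB}_{n-1}$.

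Next I would invoke the standard presentation of a semidirect product $N\rtimes Q$: it is generated by generators of $N$ together with generators of $Q$, subject to the relations of $N$, the relations of $Q$, and one relation $q\,a\,q^{-1}=w_{q,a}$ per pair of generators, where $w_{q,a}\in N$ expresses the conjugate. Plugging in the inductive presentation of $\mathrm{PB}_{n-1}$ (on the $x_{ij}$ with $j\le n-1$) and the free presentation of $F_{n-1}$ (on the $x_{in}$), everything is reduced to computing the conjugation action of each generator $x_{ij}$, $j\le n-1$, on each $x_{in}$. This is the classical Artin--Magnus computation of the monodromy of loops in $\mathrm{Conf}_{n-1}(\mathbb{C})$ on $\pi_1$ of the punctured plane: drawing the braids one reads off that $x_{ij}$ either commutes with $x_{kn}$ (the non-interleaving and nesting cases) or conjugates it in a controlled way, and one then checks that these conjugation formulas, modulo the relations already available at stage $n-1$, are equivalent to the four families (a)--(d) specialized to triples/quadruples involving the index $n$, together with the triangle relations $x_{ik}x_{jk}x_{ij}=x_{jk}x_{ij}x_{ik}=x_{ij}x_{ik}x_{jk}$ for $i<j<k=n$.

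The step I expect to be the main obstacle is precisely this last bookkeeping: showing that the a priori larger set of conjugation relations produced by the semidirect-product recipe can be rewritten, with no redundancy and nothing missing, into exactly the symmetric list (a)--(d). Verifying that the Artin action is compatible with the triangle relation, and that the commutation relations (a), (b), (d) capture exactly the non-interleaving/nesting configurations, is where one genuinely uses the geometry of braids rather than formal group theory. An alternative route that avoids fibrations altogether is to apply the Reidemeister--Schreier rewriting process to the inclusion $\mathrm{PB}_n=\ker(\mathrm{B}_n\to\mathfrak{S}_n)\le\mathrm{B}_n$, starting from Artin's presentation of $\mathrm{B}_n$ and a lexicographic Schreier transversal; there the combinatorial simplification of the resulting presentation down to (a)--(d) is the analogous core difficulty.
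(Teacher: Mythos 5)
The paper does not actually prove this statement: it is Artin's classical presentation, stated as a known result, and the only related material in the text is the Fadell--Neuwirth fibration and the split short exact sequence $1\to\mathbb{F}_n\to\mathrm{PB}_{n+1}\to\mathrm{PB}_n\to1$ recorded immediately afterwards. So your route is not in conflict with anything in the paper; it is the standard argument (Artin, Birman), and its skeleton is sound: the fiber of the forgetful map is an $(n-1)$-punctured plane whose fundamental group is free on the classes of $x_{1n},\dots,x_{n-1\,n}$, configuration spaces are aspherical (by induction up the tower of fibrations), the sequence splits (your ``insert a point near $x_1$'' section works, though one must make the insertion radius depend continuously on the configuration; the paper's own section places the new point far away, which is simpler), and a presentation of $\mathrm{PB}_n\cong\mathbb{F}_{n-1}\rtimes\mathrm{PB}_{n-1}$ is obtained from presentations of kernel and quotient together with one conjugation relation per pair of generators.

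The problem is that, as written, the proposal stops exactly where the theorem begins. The entire content of Artin's theorem is the explicit form of the conjugation action of the generators $x_{ij}$ ($j\le n-1$) on the free generators $x_{kn}$, and the verification that the resulting relations, modulo the inductively known ones, are \emph{equivalent} to the four families (a)--(d): one must compute that $x_{ij}$ fixes $x_{kn}$ when $k<i$ or $k>j$, conjugates $x_{in}$ and $x_{jn}$ by explicit words in $x_{in},x_{jn}$ (this is what relation (c) with $k=n$ packages), and conjugates $x_{kn}$ for $i<k<j$ by a commutator in $x_{in},x_{jn}$ (which is what (d) packages), and then check both directions of the equivalence with the stated list. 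You explicitly flag this bookkeeping as ``the main obstacle'' and do not carry it out, so what you have is a correct plan rather than a proof. Two further cautions: the precise conjugation formulas depend on the convention for $x_{ij}$ fixed in \S\ref{ssec-conventions} (the strand passes \emph{in front}), and cannot simply be copied from a reference using a different convention (compare the remark following Theorem \ref{presentation of GT} about the generators used in Fresse's book); and the alternative Reidemeister--Schreier route you mention has exactly the same unexecuted core, namely the simplification of the rewritten presentation down to (a)--(d).
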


There is a fibration of topological spaces 
\[
\mathbb{C} - \{x_1,\cdots,x_n\} \hookrightarrow \mathrm{Conf}_{n+1}(\mathbb{C}) \twoheadrightarrow \mathrm{Conf}_n(\mathbb{C})~,
\]
given by forgetting the last point of the configuration. It is split by the morphism $(x_1,\dots,x_n) \mapsto (x_1,\dots,x_n, 1 + \sum_{i = 1}^n |x_i|)$. 
This induces a split short exact sequence of homotopy groups 
\[
\begin{tikzcd}[column sep=1.5pc,row sep=0pc]
1 \arrow[r]
&\mathbb{F}_n \arrow[r]
&\mathrm{PB}_{n+1} \arrow[r]
&\mathrm{PB}_n \arrow[r]
&1~,
\end{tikzcd}
\]
since all the higher homotopy groups are trivial. Here and below, $\mathbb{F}_n$ denotes the free group on $n$ generators. 
In particular, there is the following decomposition
\[
\mathrm{PB}_n \cong \mathbb{F}_n \rtimes (\mathbb{F}_{n-1} \rtimes \cdots (\mathbb{F}_2 \rtimes \mathbb{F}_1)))~,
\]
which follows inductively from the previous split short exact sequence.

\begin{lemma}\label{lemma: PB_3 iso au produit de libres}
There is an isomorphism of groups 
\[
\mathrm{PB}_3 \cong \mathbb{F}_2 \times \mathbb{F}_1~,
\]
where the generators of $\mathbb{F}_2$ are sent to $x_{12}$ and $x_{23}$ and where the generator of $\mathbb{F}_1$ is sent to $x_{12}x_{13}x_{23}$. 
\end{lemma}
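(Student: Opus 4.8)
The plan is to read off the isomorphism directly from Artin's presentation of $\mathrm{PB}_n$ (\cref{thm: Artin's presentation}) by means of a Tietze transformation. First I would specialize that presentation to $n=3$, where it simplifies drastically: the only generators are $x_{12},x_{13},x_{23}$, and among the relations (a)--(d) the only ones that survive are the ``triangle'' relations (c) for $(i,j,k)=(1,2,3)$, since relations (a), (b) and (d) each require four distinct indices. Hence
\[
\mathrm{PB}_3 \cong \langle x_{12},x_{13},x_{23} \mid x_{13}x_{23}x_{12} = x_{23}x_{12}x_{13} = x_{12}x_{13}x_{23}\rangle .
\]

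Next I would introduce $z := x_{12}x_{13}x_{23}$ as a new generator together with its defining relation, which lets one eliminate $x_{13} = x_{12}^{-1} z\, x_{23}^{-1}$. Rewriting the two triangle relations in the form $x_{13}x_{23}x_{12} = z$ and $x_{23}x_{12}x_{13} = z$ and substituting this expression for $x_{13}$ collapses them to $x_{12}^{-1}z\,x_{12} = z$ and $x_{23}\,z\,x_{23}^{-1} = z$, i.e. $[x_{12},z] = [x_{23},z] = 1$. (Using instead the form $z = x_{13}x_{23}x_{12}$ one sees at once that $z$ also commutes with $x_{13}$, so $z$ is central; this is the group-level counterpart of the one-dimensionality of the centre of $\mathfrak{t}_3$.) Therefore
\[
\mathrm{PB}_3 \cong \langle x_{12},x_{23},z \mid [x_{12},z] = [x_{23},z] = 1\rangle ,
\]
which is precisely a presentation of $\mathbb{F}_2 \times \mathbb{Z}$ with the $\mathbb{F}_2$ factor freely generated by $x_{12},x_{23}$ and the $\mathbb{Z} = \mathbb{F}_1$ factor generated by $z = x_{12}x_{13}x_{23}$. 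Since in any direct product $A\times B$ the first factor embeds as $A$, the subgroup $\langle x_{12},x_{23}\rangle \subseteq \mathrm{PB}_3$ is automatically free of rank $2$, so no separate freeness argument is needed, and the claimed identification of generators is exactly what the computation produces.

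There is no serious obstacle here; the only points demanding care are verifying that the Tietze moves (adjoining $z$, eliminating $x_{13}$) are performed correctly and that the three-term chain of equalities in relation (c) is faithfully captured by the two relations used. As an independent check one may invoke the topological picture: the affine group $\mathbb{C} \rtimes \mathbb{C}^\times$ acts freely on $\mathrm{Conf}_3(\mathbb{C})$ by normalizing $(x_1,x_2)$ to $(0,1)$, yielding a diffeomorphism $\mathrm{Conf}_3(\mathbb{C}) \cong (\mathbb{C}\rtimes\mathbb{C}^\times)\times(\mathbb{C}\smallsetminus\{0,1\})$; applying $\pi_1$ gives $\mathbb{Z}\times\mathbb{F}_2$, with the $\mathbb{Z}$ generated by the ``global rotation'' braid $x_{12}x_{13}x_{23}$ and the $\mathbb{F}_2$ by the loops of the third strand around the first two.
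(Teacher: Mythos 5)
Your argument is correct, but it takes a genuinely different route from the paper. The paper's proof is topological: it replaces the standard fibration by the split fibration $\mathbb{C}-\{x_1,x_3\}\hookrightarrow \mathrm{Conf}_3(\mathbb{C})\twoheadrightarrow \mathrm{Conf}_2(\mathbb{C})$ that forgets the \emph{middle} point, with splitting given by inserting the midpoint; this yields $\mathrm{PB}_3\cong \mathbb{F}_2\rtimes\mathbb{F}_1$ with the $\mathbb{F}_1$-factor generated by the full twist $x_{12}x_{13}x_{23}$, and the semi-direct product is then observed to be direct because the full twist is central (a fact taken as known). Your proof instead works purely algebraically from \cref{thm: Artin's presentation}: specializing to $n=3$ correctly leaves only the triangle relations, and the Tietze moves (adjoin $z=x_{12}x_{13}x_{23}$, eliminate $x_{13}=x_{12}^{-1}zx_{23}^{-1}$) are carried out correctly, giving the presentation $\langle x_{12},x_{23},z\mid [x_{12},z]=[x_{23},z]=1\rangle\cong\mathbb{F}_2\times\mathbb{Z}$ with exactly the generator identification asserted in the lemma. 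What your route buys is that the centrality of the full twist and the freeness of $\langle x_{12},x_{23}\rangle$ come out of the computation rather than being inputs, and the map on generators is completely explicit; what it costs is reliance on the full Artin presentation (which the paper states but does not prove), whereas the paper's fibration argument only needs the long exact sequence of the fibration plus centrality of the full twist. Your closing topological check (quotienting $\mathrm{Conf}_3(\mathbb{C})$ by the free action of $\mathbb{C}\rtimes\mathbb{C}^\times$) is a third, also valid, way to see the isomorphism type, though as you implicitly acknowledge it by itself does not pin down the generators named in the statement.
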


\begin{proof}
In this particular case, one can choose another split fibration 
\[
\mathbb{C} - \{x_1,x_3\} \hookrightarrow \mathrm{Conf}_{3}(\mathbb{C}) \twoheadrightarrow \mathrm{Conf}_2(\mathbb{C})~.
\]
Here we forget the second point, and the splitting is given by sending a configuration in $\mathrm{Conf}_2(\mathbb{C})$ to the configuration given 
by keeping the two original points and adding a point in the middle of the two. Given this splitting, one can directly check that the corresponding semi-direct product 
\[
\mathrm{PB}_3 \cong \mathbb{F}_2 \rtimes \mathbb{F}_1
\]
is in fact a direct product (this is because the full twist $x_{12}x_{13}x_{23}$ is central). 
\end{proof}

\subsection{Conventions on operads and groupoids}\label{subsec-1.2}

Let $(\mathcal{E},\otimes,\mathbb{1})$ be symmetric monoidal category where the tensor product commutes with colimits. The category of symmetric sequences $\mathbb{S}\text{-}\mathsf{mod}_{\mathcal{E}}$ is given by collections of objects $(e(n))_{n \geq 0}$ for $n \geq 0$, where $e(n)$ is endowed with a right action of $\mathfrak{S}_n$. 

\medskip

The category $\mathbb{S}\text{-}\mathsf{mod}_{\mathcal{E}}$ can be endowed with a monoidal product $\circ$ called the plethysm product. The category $(\mathbb{S}\text{-}\mathsf{mod}_{\mathcal{E}},\circ,\mathbb{1}_0)$ forms a monoidal category, where $\mathbb{1}_\circ$ is the symmetric sequence given by $\mathbb{1}$ in arity one and the initial object $\emptyset$ elsewhere. \textit{Operads} are defined to be unital monoids in $(\mathbb{S}\text{-}\mathsf{mod}_{\mathcal{E}},\circ,\mathbb{1}_\circ)$.

\medskip

One can also consider the category of sequences in $\mathcal{E}$, indexed by $\mathbb{N}$. This category, denoted by $\mathbb{N}\text{-}\mathsf{mod}_{\mathcal{E}}$, also has a plethysm product which endows it with a monoidal structure. Monoids in this category are called \textit{non-symmetric operads}.

\begin{example}
The category of small sets $\mathsf{Sets}$ together with the cartesian product of sets satisfies these hypothesis. Similarly, the categories of small groups 
$\mathsf{Grp}$, small groupoids $\mathsf{Grpd}$, or topological spaces $\mathsf{Top}$ all satisfy them with the cartesian product again. \hfill$\triangle$
\end{example}

\begin{remark}
Notice that the hypothesis that the tensor product commutes with colimits is too strong in order to include the symmetric monoidal category $(\mathsf{Lie},\oplus,0)$. Nevertheless, since it is cocomplete, one can still define operads as monoids in the normal oplax monoidal category $(\mathbb{S}\text{-}\mathsf{mod}_{\mathsf{Lie}},\circ,\mathbb{1}_0)$. We refer to \cite{chingcomposition} for more details.
\end{remark}

From now on, we restrict to operads $\mathcal{P}$ which are \textit{pointed reduced}, meaning $\mathcal{P}(1) = \mathcal{P}(0) = \mathbb{1}$. Let $\mathcal{U}nit$ be the symmetric sequence given by $\mathcal{U}nit(0) = \mathcal{U}nit(1) = \mathbb{1}$ and $\emptyset$ elsewhere. It has a unique operad structure which is given by the obvious compositions maps. An operad $\mathcal{P}$ is pointed reduced if and only if it admits a morphism of operads $f: \mathcal{U}nit \longrightarrow \mathcal{P}$ 
which is an isomorphism in arities zero and one.

\begin{example}\leavevmode

\begin{enumerate}
\item The collection of the pure braid groups $(\mathrm{PB}_n)_{n\geq0}$ forms a non symmetric operad in the category of $(\mathsf{Grp},\times,\{*\})$, 
where the structure is given by the insertion of braids. Notice that the insertion of the empty braid in $\mathrm{PB}_0 \cong \{*\}$ suppresses the strand into which 
it is inserted. One can check that the natural action of $\mathfrak{S}_n$ is \textit{not} compatible with the insertion of braids.

\medskip

\item The \textit{little disks operad} $\mathbb{E}_2$ forms an operad in the category of topological spaces $(\mathsf{Top},\times,\{*\})$ (see \cite{May72}). 
It does not form an operad in the category of \textit{pointed} topological spaces 
$(\mathsf{Top}_*,\times,\{*\})$. Indeed, if it did, since we know that for all $n$ there is a weak equivalence $\mathbb{E}_2(n) \qi \mathrm{Conf}_n(\mathbb{C})$, 
then one could apply the strong monoidal functor $\pi_1(-)$ and obtain a symmetric operad structure on the collection $(\mathrm{PB}_n)_{n\geq0}$. 
\hfill$\triangle$
\end{enumerate}
\end{example}

One can "try to symmetrize" the non-symmetric operad of pure braid groups $(\mathrm{PB}_n)_{n\geq0}$ by passing to the larger category of groupoids and 
defining the following operad in it.

\begin{example}\label{example: cob}
Let $\cob$ be the operad in groupoids given by the following description:

\medskip

\begin{enumerate}
\item The objects of $\cob(n)$ are elements $\sigma$ of the symmetric group $\mathbb{S}_n$. For example, $(321)$ is in $\mathrm{Ob}(\cob(3))$. 

\medskip

\item The set of morphisms between two permutations $\sigma$ and $\tau$ in $\mathbb{S}_n$ is given by the set of braids going from $\sigma$ to $\tau$. Pictorially, we have: 

\begin{center}
\includegraphics[width=55mm,scale=1]{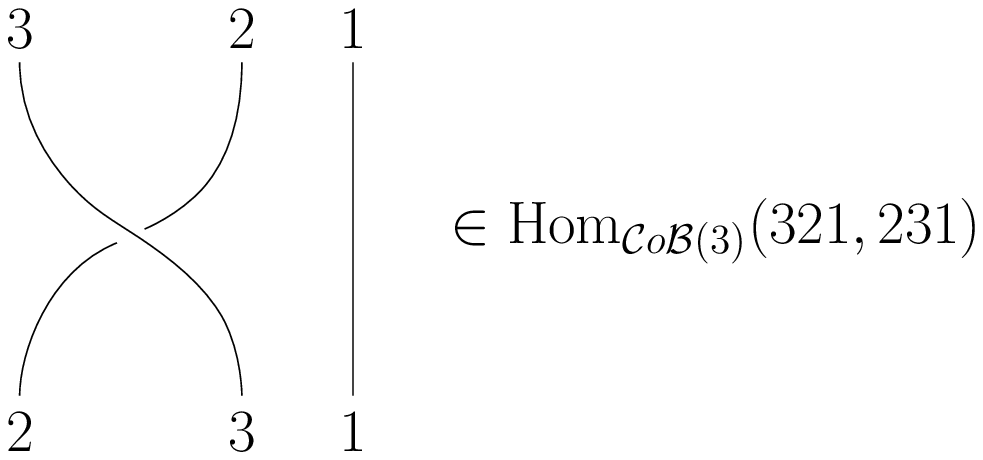}
\end{center}

\item The operadic composition is given by the insertion of braids together with the relabeling of strands according to the insertion spot. Pictorially, we have: 
\medskip

\hspace{3.5pc}
\includegraphics[width=60mm,scale=1]{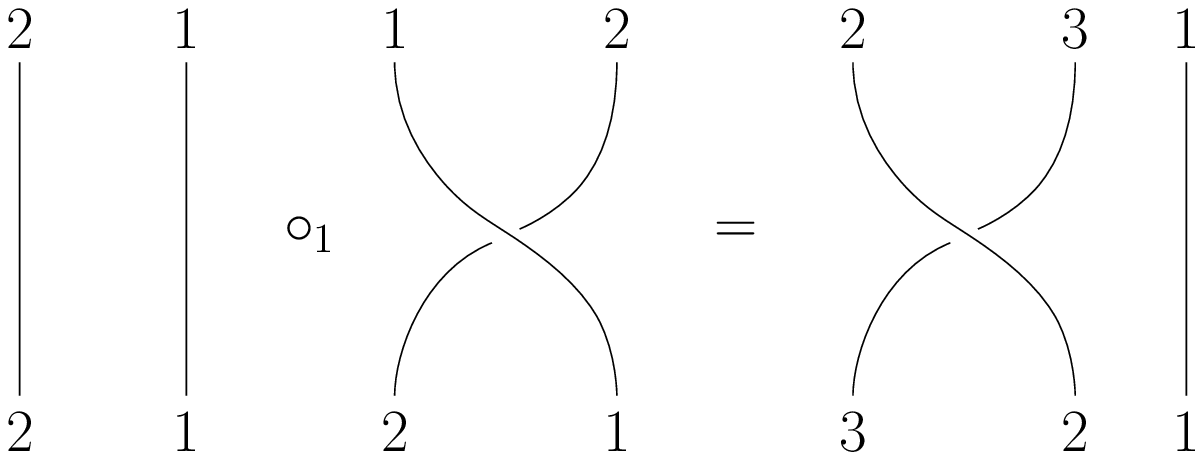}~.
\hfill$\triangle$
\end{enumerate}
\end{example}

\begin{remark}
Since the operad $\mathbb{E}_2$ is not pointed, one could take the \textit{fundamental groupoid} $\Pi_1(\mathbb{E}_2)$ instead of the fundamental group as before. Since the functor $\Pi_1(-)$ is strong monoidal, 
this defines an operad in the category of groupoids. Nevertheless, the operad of objects of $\Pi_1(\mathbb{E}_2)$ is huge. 

\medskip

One can ask oneself if the operad $\cob$ defined above is \textit{a model} for $\Pi_1(\mathbb{E}_2)$, that is, if these two operads are equivalent as operads in the category of groupoids. It is indeed a model for $\Pi_1(\mathbb{E}_2)$. 
Nevertheless, it is not a \textit{cofibrant} operad in the category of groupoids, as the operad of objects of $\cob$ is not free. 
\end{remark}

\subsection{The operad of parenthesized braids}\label{subsection: pab}

\begin{definition}[Fake pull-back]\label{def: fake-pullback}
Let $\mathcal{O},\mathcal{P}$ be two operads in $(\mathsf{Grpd},\times,\{*\})$ and suppose there is a morphism 
$f: \mathrm{Ob}(\mathcal{O}) \longrightarrow \mathrm{Ob}(\mathcal{P})$ of operads in the category $(\mathsf{Sets},\times,\{*\})$. 
The \textit{fake pull-back} of $\mathcal{P}$ along $f$, denoted $f^*\mathcal{P}$, is the operad in the category of groupoids defined as follows:

\begin{enumerate}
\medskip

\item The objects of $f^*\mathcal{P}(n)$ are given by $\mathrm{Ob}(f^*\mathcal{P}(n)) \coloneqq \mathrm{Ob}(\mathcal{O}(n))~.$ 

\medskip

\item The morphisms between $a,b$ in $\mathrm{Ob}(f^*\mathcal{P}(n))$ are given by 
\[
\mathrm{Hom}_{f^*\mathcal{P}(n)}(a,b) \coloneqq \mathrm{Hom}_{\mathcal{P}(n)}(f(a),f(b))~.
\]
\end{enumerate}
\end{definition}

\begin{remark}
It is straightforward to check that $f^*\mathcal{P}$ forms an operad in the category of groupoids endowed with the composition of the operad $\mathcal{O}$ 
on objects and the composition of the operad $\mathcal{P}$ on morphisms. 
\end{remark}

Let $\mathcal{P}a$ be the free operad in sets generated by an arity two operation. The set $\mathcal{P}a(n)$ is the set of maximally parenthesized permutations of 
$\mathfrak{S}_n$. One can view this operad in sets inside the category of operads in groupoids by declaring that all sets of morphisms are trivial (the empty set 
between two different objects, the identity as the unique endomorphism). There is an obvious morphism of operads in sets 
$\varphi: \mathrm{Ob}(\mathcal{P}a) \longrightarrow \mathrm{Ob}(\cob)$ given by forgetting the parenthesis on permutations. 

\begin{definition}[Operad of parenthesized braids]
The operad in the category of groupoids of parenthesized braids $\pab$ is defined to be the fake pull-back of $\cob$ along the morphism $\varphi$; 
$\pab:=\varphi^*\cob$. 
\end{definition}

\begin{example}
Objects in $\pab(n)$ are fully parenthesized permutations $\sigma$ in $\mathbb{S}_n$. Morphisms are again given by braids going from one permutation to another. 
Composition is given by the insertion of braids together with the composition of parenthesized permutations. Pictorially, we have for instance

\begin{center}
\includegraphics[width=55mm,scale=1]{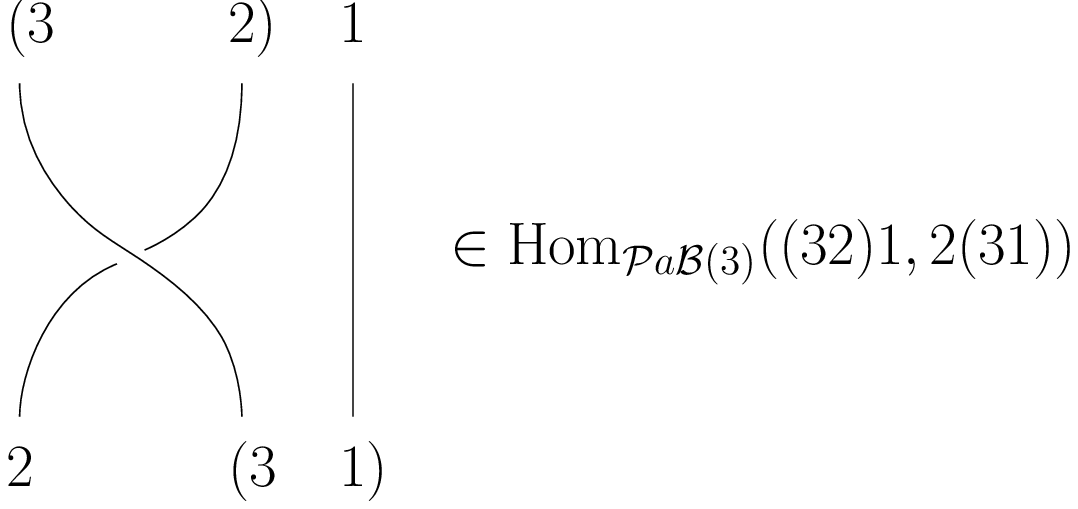}~.
\end{center}

The operadic composition is given by the insertion of braids together with the insertion of parenthesis

\medskip

\hspace{7pc}
\includegraphics[width=60mm,scale=1]{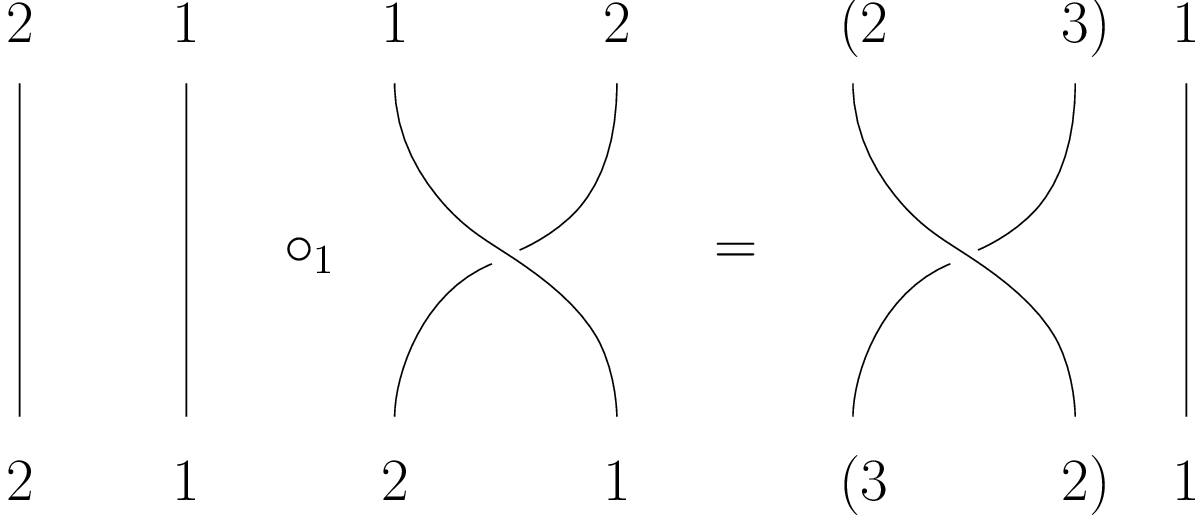}~.\hfill$\triangle$
\end{example}

\begin{remark}
The operad of objects of $\pab$ is free. It provides us with a cofibrant model for $\Pi_1(\mathbb{E}_2)$ in the category of operads in groupoids. 
\end{remark}

Since the Malcev completion $\widehat{(-)}(\kk)$ is a strong monoidal functor, we can complete the operad $\pab$ and still get an operad. See the Appendix \ref{Completion pro-unipotente des groupoides} for more details on the Malcev completions and pro-unipotent $\kk$-groupoids. 

\medskip

The resulting operad $\widehat{\pab}(\kk)$ in the category of pro-unipotent $\kk$-groupoids will be the \textit{first player} in our story. 

\subsection{The operad of chord diagrams}\label{ssec-1.4}

Recall the Drinfeld--Kohno family of Lie algebras $(\mathfrak{t}_n)_{n \geq 0}$ from Definition \ref{Kohno-Drinfeld algebras}. The insertion-coproduct maps will 
endow this family of Lie algebras with an operad structure. This operad can be understood as the holonomy Lie operad (see \cite[\S5.3]{ManinVallette19}) of the little disks operad $\mathbb{E}_2$. 

\begin{proposition}\label{prop: operad structure on Kohno-Drinfeld}
The family of Lie algebras $(\mathfrak{t}_n)_{n\geq0}$ can be endowed with the following operad structure in the symmetric 
monoidal category $(\mathsf{Lie},\oplus,0)$: 

\begin{enumerate}
\item The action of $\sigma$ in $\mathbb{S}_n$ on $\mathfrak{t}_n$ is given by $\sigma \bullet t_{ij}^n \coloneqq t^n_{\sigma(i)\sigma(j)}~.$

\item The partial composition maps 
\[
\{\circ_p: \mathfrak{t}_n \oplus \mathfrak{t}_m \longrightarrow \mathfrak{t}_{n+m-1}\}_{1 \leq p \leq n}
\]
only needs to be specified on each component of the direct sum. For $i<j$, they are given as 
\begin{eqnarray*}
\mathfrak{t}_n \ni t_{ij} & \longmapsto & 
\begin{cases*}
t_{i+m-1\hspace{1pt}j+m-1} & if~~p < i~.\\
{\displaystyle \sum_{k=i}^{i+m-1}t_{k\hspace{1pt}j+m-1}} & if~~p = i~.\\
t_{i\hspace{1pt}j+m-1} & if~~ i < p < j~.\\
{\displaystyle \sum_{k=j}^{j+m-1}t_{i\hspace{1pt}k}} &  if~~ p = j~.\\
t_{i\hspace{1pt}j} & if~~ j < p~.
\end{cases*}\\[0.35cm]
\mathfrak{t}_m \ni t_{ij} & \longmapsto & t_{i+p-1 \hspace{1pt} j+p-1}
\end{eqnarray*}

\end{enumerate}
\end{proposition}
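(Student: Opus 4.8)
The plan is to verify directly that the proposed data defines an operad in $(\mathsf{Lie},\oplus,0)$, by checking each axiom in turn. First I would note that the symmetric group action is well-defined: since the defining relations of $\mathfrak{t}_n$ (symmetry $t_{ij}=t_{ji}$, the infinitesimal braid relations $[t_{ij},t_{ik}+t_{kj}]=0$, and the locality relations $[t_{ij},t_{kl}]=0$) are permuted among themselves by any relabelling $\sigma\in\mathbb{S}_n$, the assignment $t_{ij}\mapsto t_{\sigma(i)\sigma(j)}$ extends to a Lie algebra automorphism of $\mathfrak{t}_n$, and these assemble into a right action. (One must also be mindful of the left-versus-right convention flagged in \S\ref{ssec-conventions}.)

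Next I would check that each partial composition $\circ_p\colon\mathfrak{t}_n\oplus\mathfrak{t}_m\to\mathfrak{t}_{n+m-1}$ is a morphism of Lie algebras. Since the source is a direct sum, it suffices to check that each of the two restrictions is a Lie algebra morphism \emph{and} that the two images commute. On the $\mathfrak{t}_m$ factor the map is just the shift $t_{ij}\mapsto t_{i+p-1\,j+p-1}$, which is manifestly a morphism (it is an instance of an insertion map). On the $\mathfrak{t}_n$ factor one must verify the three families of relations: symmetry is immediate from symmetry in $\mathfrak{t}_{n+m-1}$; the locality relations $[t_{ij},t_{kl}]=0$ hold because, after expansion into sums over blocks, the index sets $\{i,\dots\}$ and $\{k,\dots\}$ remain disjoint; and the infinitesimal braid relation $[t_{ij},t_{ik}+t_{kj}]=0$ must be checked case by case according to the relative positions of $p$ and the triple $i<k<j$ — in each case the image of $t_{ik}+t_{kj}$ is a sum of generators $t_{ab}+t_{cd}$ over pairs of a block, and the braid relations in $\mathfrak{t}_{n+m-1}$ combine to give the desired vanishing. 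Finally the commutation of the two images $(\mathfrak{t}_n\text{ part})$ and $(\mathfrak{t}_m\text{ part})$: the $\mathfrak{t}_m$-generators land in the block $\{p,\dots,p+m-1\}$, and one checks that for $t_{ij}\in\mathfrak{t}_n$ the image either avoids this block entirely, or, in the boundary cases $p=i$ or $p=j$, produces sums $\sum_k t_{k\,\bullet}$ (resp.\ $\sum_k t_{\bullet\,k}$) over $k$ in that block; the infinitesimal braid relations then force the relevant commutators with the $\mathfrak{t}_m$-generators to vanish (this is the standard fact that $\sum_{k} t_{k j}$ is $\mathfrak{t}_m$-invariant, mirroring the $\mathfrak{g}$-invariance argument in the proof of \cref{lemma: morphisms pushing forward}).

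The remaining task is the associativity and equivariance axioms of an operad: the two sequential-composition identities ($\circ_q$-then-$\circ_p$ equals the appropriately reindexed $\circ_p$-then-$\circ_q$, both for nested and for disjoint insertions) and the compatibility of $\circ_p$ with the $\mathbb{S}_n$-, $\mathbb{S}_m$-actions. Because everything is determined on generators, these all reduce to bookkeeping identities among the index-shifting and block-summation formulas: for instance, a nested composition $t_{ij}\mapsto\sum_k t_{k\,\bullet}\mapsto\sum_{k,l}t_{\cdots}$ must be compared with the single insertion map $(-)^f$ for the composite partially-defined map $f$, and the two genuinely agree because level sets of a composite are unions of level sets. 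I would organize this by observing that the partial compositions defined here are exactly the restrictions of the insertion-coproduct morphisms $(-)^f$ of \cref{def: insertion-coproduct morphisms} to the appropriate maps $f$, and that the operad axioms then follow from the already-stated compatibility of insertion-coproduct morphisms with composition (the presheaf-on-$\mathrm{Fin}_*$ structure).

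The main obstacle I expect is purely combinatorial stamina rather than conceptual difficulty: the case analysis for the infinitesimal braid relation under $\circ_p$ (five positions of $p$ relative to $i<j$, further subdivided when a third index $k$ enters) and the verification that the two summands of $\mathfrak{t}_n\oplus\mathfrak{t}_m$ have commuting images are the places where one can slip, and they are exactly the steps where the specific shape of the Drinfeld--Kohno relations is used. Everything else — symmetry, locality, equivariance, associativity — is either immediate or reduces to reindexing identities that one can safely leave to the reader.
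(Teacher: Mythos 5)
Your proposal is correct and follows essentially the same route as the paper: identify the two restrictions of $\circ_p$ with insertion-coproduct morphisms $(-)^f$ from \cref{def: insertion-coproduct morphisms}, deduce the operad axioms from the compatibility of these morphisms with composition (the presheaf structure on $\mathrm{Fin}_*$, i.e.\ the mechanism of \cref{appendixB}), and descend from the coproduct to the direct sum by observing that the two images commute. The only difference is that you additionally verify by hand that the block-summation formulas respect the Drinfeld--Kohno relations, which the paper's proof obtains for free by invoking \cref{def: insertion-coproduct morphisms}.
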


\begin{proof}
Let us construct the restriction of $\circ_p$ to each of the sources components using the insertion-coproduct morphisms of Definition 
\ref{def: insertion-coproduct morphisms}. There is a ``coproduct type'' Lie algebra morphism 
\[
\circ_p(1): \mathfrak{t}_n \longrightarrow \mathfrak{t}_{n+m-1}
\]
associated to the well-defined map $\{1,...,n+m-1\} \to \{1,...,n\}$ which sends $\{p,...,p+m-1\}$ to $p$, and there is a ``insertion type'' Lie algebra morphism 
\[
\circ_p(2): \mathfrak{t}_m \longrightarrow \mathfrak{t}_{n+m-1}
\]
associated to the partially defined map $\{1,...,n+m-1\} \to \{1,...m\}$ which sends $k\in\{p,...,p+m-1\}$ to $k-p+1$. 
This induces by universal property a Lie algebra morphism
\[
\circ_p(1) \amalg \circ_p(2): \mathfrak{t}_n \amalg \mathfrak{t}_m \longrightarrow \mathfrak{t}_{n+m-1}~.
\]
This induces an operad structure on $(\mathfrak{t}_n)_{n\geq0}$ for the coproduct of Lie algebras, see Appendix \ref{Appendix B1: operads}. One can notice that the images of these two maps commute, hence this map descends to the direct sum and gives the partial composition map $\circ_p$ defined in the proposition, which satisfy the axioms of an operad.
\end{proof}

\begin{remark}
Notice that in the above proposition, we have written the \textit{left} $\mathbb{S}_n$-action on $\mathfrak{t}_n$. The right $\mathbb{S}_n$-action is given by 
\[
t_{ij}^n \bullet \sigma \coloneqq t^n_{\sigma^{-1}(i)\sigma^{-1}(j)}~, 
\]
and in general one can pass from a left to a right action by applying the involution $(-)^{-1}$. We have chosen to do so since it simplifies the diagrams in Theorem \ref{thm: presentation pab}. The same will apply for Proposition \ref{cyclotomic moperad structure on Kohno-Drinfeld} and Theorem \ref{thm: presentation moperad pab Gamma}; and for Proposition \ref{prop: right mod structure} and Theorem \ref{thm: presentation pab elliptic}.
\end{remark}

\begin{remark}
The above proof is a particular instance of a general phenomenon that we describe in \cref{appendixB}. 
\end{remark}

Since the universal enveloping algebra functor $\mathfrak{U}(-)$ is a strong monoidal functor, it sends operads in $(\mathsf{Lie},\oplus,0)$ to operads in 
$(\mathsf{Hopf}\text{-}\mathrm{alg},\otimes,\kk)$.

\begin{definition}[Chord diagrams operad]
The operad of \textit{chord diagrams} $\mathcal{CD}$ is the operad given by applying the universal enveloping algebra functor to the Drinfeld--Kohno operad.
\end{definition}

Unwrapping this definition, the space of arity $n$ operations of $\mathcal{CD}$ is given by the universal enveloping algebra of $\mathfrak{t}_n$, the $n$-th 
Drinfeld--Kohno algebra. Since the latter admits an explicit presentation, the former also admits the following presentation
\[
\mathfrak{U}(\mathfrak{t}_n) =  \frac{\kk\langle t_{ij}\,|\,1\leq i,j \leq n,i \neq j\rangle}{\left( t_{ij} = t_{ji}~,~[t_{ij},t_{ik}+t_{kj}] = 0~,~ [t_{ij},t_{kl}] = 0\right)}~.
\]

The algebras $\big(\mathfrak{U}(\mathfrak{t}_n)\big)_{n\geq0}$ are called the \textit{chord diagram algebras} since pictorially one can represent the generators 
$t_{ij}$ as chords (or infinitesimal braids) in the following way

\[
\includegraphics[width=50mm,scale=1]{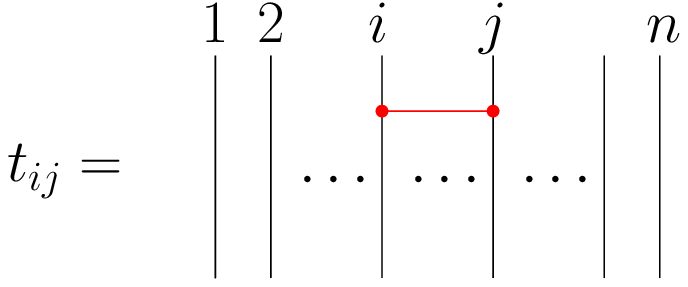}~.
\]

This way, one can represent the relations between the generators as the following relations between chord diagrams.

\begin{enumerate}
\item The first relation can be depicted as
\[
\includegraphics[width=65mm,scale=1]{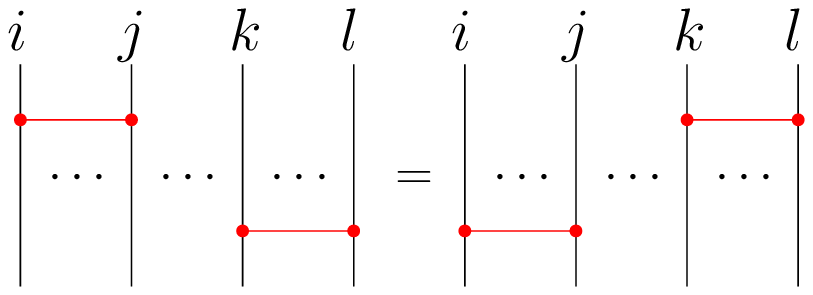}~.
\]

\item The second relation can be depicted as

\[
\includegraphics[width=115mm,scale=1]{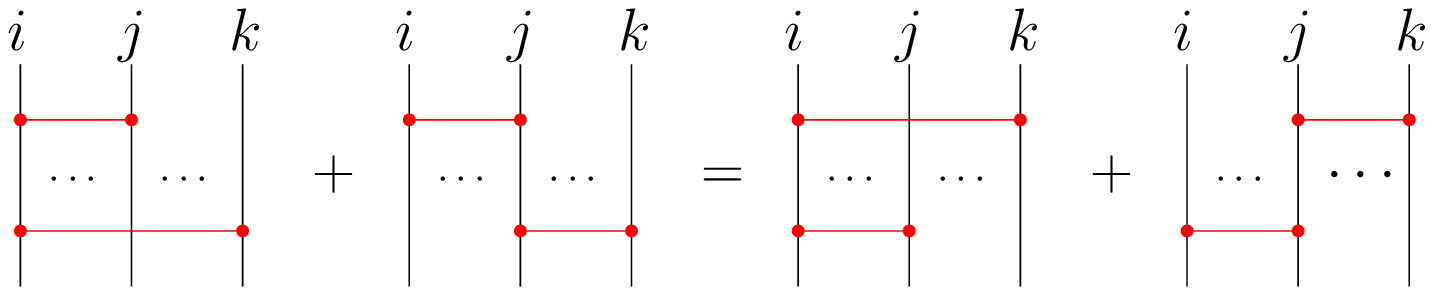}~.
\]

\end{enumerate}

\begin{remark}[Weight-filtration]\label{rmk: weight filtration}
The relations that define the Lie algebras $\mathfrak{t}_n$ are homogeneous for all $n \geq 2$. Hence the algebras $\mathfrak{U}(\mathfrak{t}_n)$ are endowed with a weight filtration where the generators $t_{ij}$ are in weight one.
\end{remark}

Let $\mathsf{Cat}(\mathsf{Coalg}_\kk)$ denote the category of small categories enriched over cocommutative $\kk$-coalgebras. It comes equipped with a symmetric monoidal structure given as follows: let $\mathcal{E}$ and $\mathcal{F}$ be two such categories, their product $\mathcal{E} \otimes \mathcal{F}$ is the category given by the following presentation
\begin{enumerate}
\medskip

\item The objects of $\mathcal{E} \otimes \mathcal{F}$ are given by pairs in $\mathrm{Ob}(\mathcal{E})\times \mathrm{Ob}(\mathcal{F})~.$

\medskip

\item The set of morphisms between two pairs $(e,f)$ and $(e',f')$ is
\[
\mathrm{Hom}_{\mathcal{E} \otimes \mathcal{F}}((e,f),(e',f')) \coloneqq \mathrm{Hom}_{\mathcal{E}}(e,e') \otimes \mathrm{Hom}_{\mathcal{F}}(f,f')~,
\]
where the last tensor denotes simply the tensor product of two cocommutative $\kk$-coalgebras. 
\end{enumerate}

One checks that it endows $\mathsf{Cat}(\mathsf{Coalg}_\kk)$ with a symmetric monoidal structure. The unit is given by the category with one object whose 
endomorphisms are given by $\kk$, which we denote by $\{*\}$. Thus one can define operads in the symmetric monoidal category 
$(\mathsf{Cat}(\mathsf{Coalg}_\kk),\otimes,\{*\})~.$

\begin{example}
The chord diagram operad $\mathcal{CD}$ is an operad in $\mathsf{Cat}(\mathsf{Coalg}_\kk)$ where the category $\mathcal{CD}(n)$ only has one object whose 
algebra of endomorphisms is given by $\mathfrak{U}(\mathfrak{t}_n)$. \hfill$\triangle$
\end{example}

\begin{remark}
Let $\mathcal{E}$ be in $\mathsf{Cat}(\mathsf{Coalg}_\kk)$. Notice that for any object $e$ in $\mathcal{E}$, its endomorphisms $\mathrm{Hom}_{\mathcal{E}}(e,e)$ 
form a cocommutative bialgebra, where the multiplication is given by the composition of endomorphisms.
\end{remark}

\begin{definition}[$\kk$-linear extension]\label{def: k-extension}
The $\kk$-\textit{extension} is a strong monoidal functor 
\[
\begin{tikzcd}[column sep=1.5pc,row sep=0pc]
(\mathsf{Set},\times,\{*\}) \arrow[r]
&(\mathsf{Cat}(\mathsf{Coalg}_\kk),\otimes,\{*\}) \\
S \arrow[r,mapsto] 
&S_\kk~.
\end{tikzcd}
\]
It associates to a set $S$ the category $S_\kk$ which is defined as follows:
\begin{enumerate}
\medskip
\item The set of objects of $S_\kk$ is given by $S$.
\medskip
\item Let $s,s'$ be in $S$, the space of morphisms is given by 
\[
\mathrm{Hom}_{S_\kk}(s,s') \coloneqq \kk~.
\]
\end{enumerate}
\end{definition}

The set-theoretical operad $\mathcal{P}a$ can therefore be promoted into an operad in $\mathsf{Cat}(\mathsf{Coalg}_\kk)$ by considering its $\kk$-linear extension 
$\mathcal{P}a_\kk$.

\begin{definition}[Parenthesized chord diagrams]
The \textit{parenthesized chord diagrams} operad $\pacd$ is given by Hadamard product of $\mathcal{P}a_\kk$ and $\mathcal{CD}$ in the category of operads defined 
above $\mathsf{Cat}(\mathsf{Coalg}_\kk)$. That is, we have:
\[
\pacd(n) \coloneqq \mathcal{P}a_\kk(n) \otimes \mathcal{CD}(n)~,
\]
where $\otimes$ denotes the tensor product in $\mathsf{Cat}(\mathsf{Coalg}_\kk)$.
\end{definition} 

\begin{example}\label{example: elements X H A}
The object of $\pacd(n)$ are therefore given by parenthesized permutations in $\mathfrak{S}_n$. The morphisms in $\mathrm{Hom}_{\pacd(n)}(\sigma,\tau)$ between 
two parenthesized permutations can be depicted as chord diagrams on the unique strand that goes from $\sigma$ to $\tau$. For instance, the element 
$1 \otimes (t_{24}.t_{34})$ in $\kk \otimes \mathfrak{U}(\mathfrak{t}_4) \cong \mathrm{Hom}_{\pacd(4)}\left(((31)2)4,(4(13))2\right)$ can be depicted as:

\[
\includegraphics[width=80mm,scale=1]{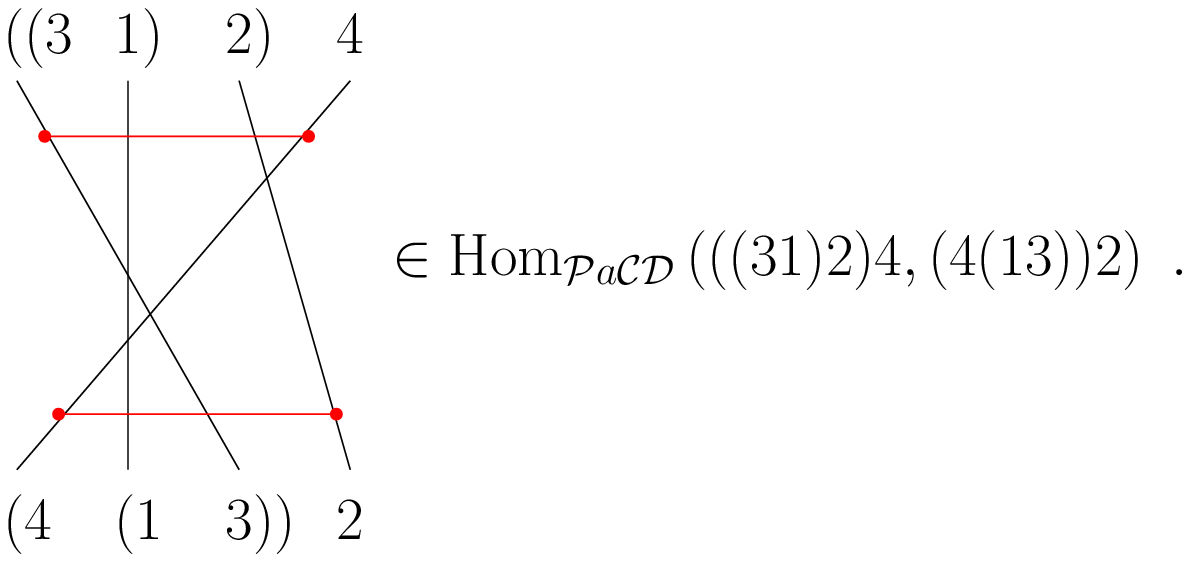}
\]

Composition operations in the operad 

\[
\circ_i: \pacd(n) \otimes \pacd(m) \longrightarrow \pacd(n+m-1)~
\]

are defined as follows. On objects, the composition of two parenthesized permutations is given by the insertion the $i$-th spot. On morphisms, that is, on chord diagrams, 
its given by the sum of possible distributions of the existing chords along the inserted strands. For instance, the composition of 
$1 \otimes t_{12} \circ_2 1 \otimes \mathrm{id}$ gives 

\[
\includegraphics[width=80mm,scale=1]{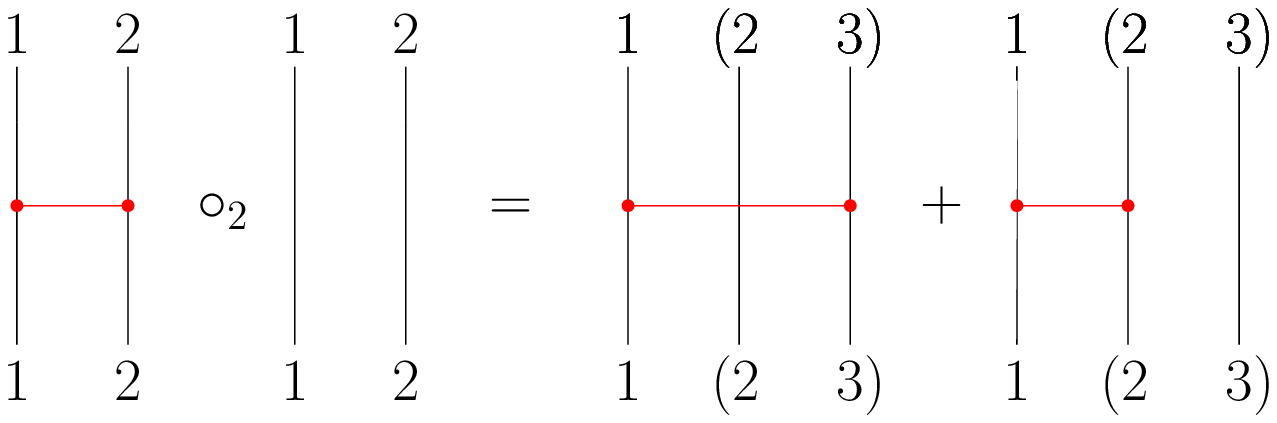}
\]

which is an endomorphism of $1(23)$ in $\pacd(3)$. Here are some examples of distinguished morphisms in the operad $\pacd$: 

\begin{enumerate}
\item The element $H \coloneqq 1 \otimes t_{12}$ in $\mathrm{Hom}_{\pacd(2)}(12,12)$ depicted as

\[
\includegraphics[width=20mm,scale=1]{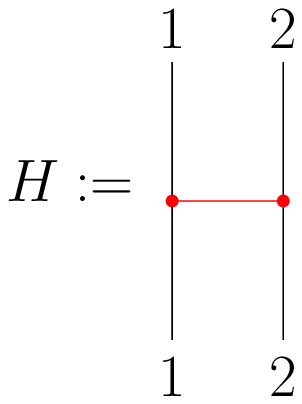}
\]

\item The element $X \coloneqq 1 \otimes 1$ in $\mathrm{Hom}_{\pacd(2)}(12,21)$ depicted as

\[
\includegraphics[width=20mm,scale=1]{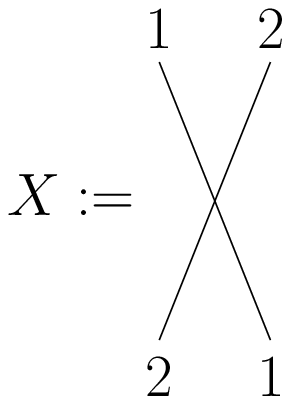}
\]

\item The element $\alpha \coloneqq 1 \otimes 1$ in $\mathrm{Hom}_{\pacd(3)}((12)3,1(23))$ depicted as

\[
\includegraphics[width=30mm,scale=1]{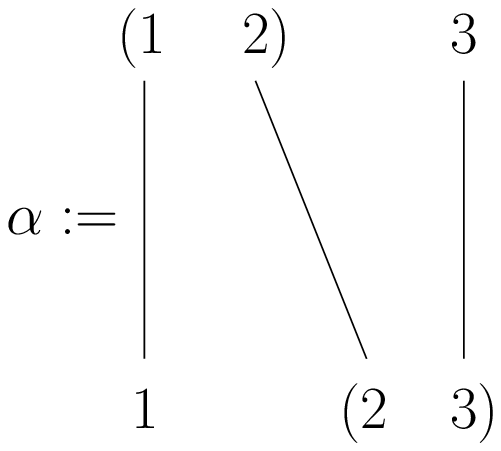}
\]
\hfill$\triangle$
\end{enumerate}
\end{example}

\begin{remark}\label{rmk: presentation of PaCD}
The operad $\pacd$ is in fact generated, as an operad in the category $\mathsf{Cat}(\mathsf{Coalg}_\kk)$, by the object $12$ in $\pacd(2)$ and by the morphisms 
$\alpha$, $X$ and $H$. They satisfy some relations but, to the best of our knowledge, there is no explicit presentation for $\pacd$. See \cite[Section 10.2]{FresseGT1} 
for more details.
\end{remark}

We define $\widehat{\pacd}$ as the operad in categories enriched in complete cocommutative coalgebras obtained by completing $\pacd$ with respect to the filtration 
defined in \cref{rmk: weight filtration}. Indeed, the weigh filtration on each algebra $\mathfrak{U}(\mathfrak{t}_n)$ induces a weight filtration on each 
hom-coalgebra $\mathrm{Hom}_{\pacd(n)}(\sigma,\tau)$. This operad forms an operad in complete Hopf groupoids. See Appendix \ref{Completion pro-unipotente des groupoides} for more details on these notions.

\medskip

The operad  $\mathrm{Grp}(\widehat{\pacd})$, obtained by applying the group-like element functor to each complete Hopf groupoid, forms an operad in the category of 
pro-unipotent $\kk$-groupoids. This will be our \textit{second player}.

\subsection{Operadic definition of associators}\label{ssec-1.5}

Now we may give a fully operadic definition of an associator.

\begin{definition}[Operadic associators] \label{Def: operadic associator}
The set of \textit{operadic associators} is given by 
\[
\mathrm{OpAssoc}(\kk) \coloneqq \mathrm{Iso}^+_{\mathsf{Op}(\mathsf{p.u}\text{-}\mathsf{Grpd}_{\kk})}\left(\widehat{\pab}(\kk),\mathrm{Grp}(\widehat{\pacd})\right)~,
\]
where we consider only isomorphisms of operads in the category of groupoids which are given by the identity morphism on the objects of these operads.
\end{definition}

This definition entails that the set of operadic associators is a bitorsor over the automorphisms groups of each operad that are the identity morphism on objects. These 
automorphisms groups are of primordial importance.

\begin{definition}[Grothendieck--Teichmüller group]
The \textit{Grothendieck--Teichmüller group} over $\kk$ is given by 
\[
\mathrm{GT}(\kk) \coloneqq \mathrm{Aut}_{\mathsf{Op}(\mathsf{p.u}\text{-}\mathsf{Grpd}_{\kk})}^+\left(\widehat{\pab}(\kk)\right)~,
\]
where we consider only automorphisms of operads in the category of groupoids which are given by the identity morphism on the objects.
\end{definition}

\begin{remark}[Fifty shades of Grothendieck--Teichmüller]
One can define many other versions of the Grothendieck--Teichmüller group. There is a \textit{pro-finite} version, denoted $\widehat{\mathrm{GT}}$, 
where one considers the automorphisms of operads that fix the objects of the pro-finite completion of $\pab$. It was shown in \cite{Ihara} that the absolute 
Galois group $\mathrm{Gal}(\overline{\mathbb{Q}}/\mathbb{Q})$ embeds into $\widehat{\mathrm{GT}}$. These two groups are conjecturally isomorphic. 
For more on this, see \cite{Marcy}. In the same spirit, one can defined a \textit{pro-$\ell$} version of the Grothendieck--Teichmüller group. 
Notice that if one does not complete in any way $\pab$, its automorphisms group is simply $\mathbb{Z}/2\mathbb{Z}$. This corresponds heuristically to the 
complex conjugation in $\mathrm{Gal}(\overline{\mathbb{Q}}/\mathbb{Q})$. 
\end{remark}

\begin{definition}[Graded Grothendieck--Teichmüller group]
The \textit{graded Grothendieck--Teichmüller group} over $\kk$ is given by 
\[
\mathrm{GRT}(\kk) \coloneqq \mathrm{Aut}_{\mathsf{Op}(\mathsf{p.u}\text{-}\mathsf{Grpd}_{\kk})}^+\left(\mathrm{Grp}(\widehat{\pacd})\right) \cong \mathrm{Aut}_{\mathsf{OpCat}(\mathsf{Coalg}_\kk)}^+\left(\widehat{\pacd}\right)~,
\]
where we consider only automorphisms of operads in the category of groupoids (or equivalently, of operad in the category of categories enriched in 
counital cocommutative coalgebras) which are given by the identity morphism on the objects.
\end{definition}

\begin{remark}
In the three above definitions, one could replace isomorphisms that are the identity on objects by isomorphisms in the homotopy category of a suitable 
model category of operads in pro-unipotent groupoids. It appears that this gives the same result, as is very well-explained (for the pro-finite case) in \cite{Horel}. 
It roughly relies on the fact that any morphism between these operads is homotopic to one being the identity on objects. 
\end{remark}

\subsection{More concrete descriptions}\label{ssec-1.6}

\begin{theorem}\label{thm: presentation pab}
The operad $\pab$, as an operad in groupoids admits the following presentation. It is generated by the object $12$ in $\pab(2)$ and by the following morphisms:
\begin{enumerate}
\item The \textit{braiding} $R$ which pictorially is given by the following braid

\[
\includegraphics[width=50mm,scale=1]{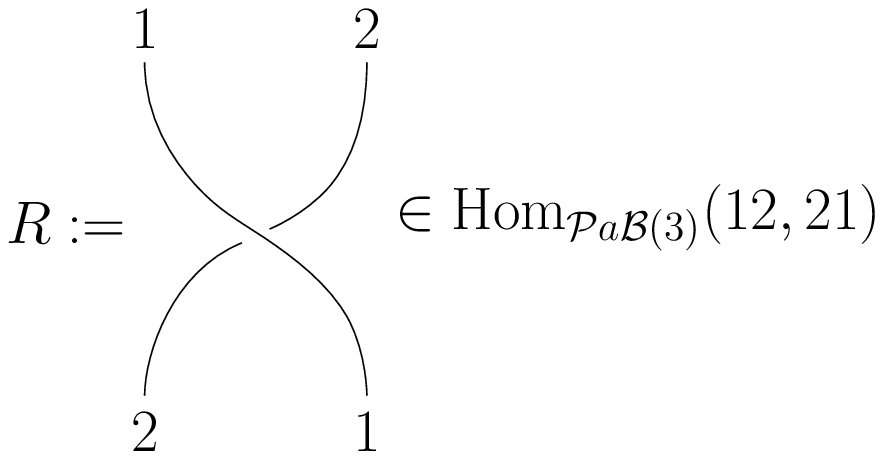}
\]

\item The \textit{associator} $\Phi$ which pictorially is given by the following braid

\[
\includegraphics[width=70mm,scale=1]{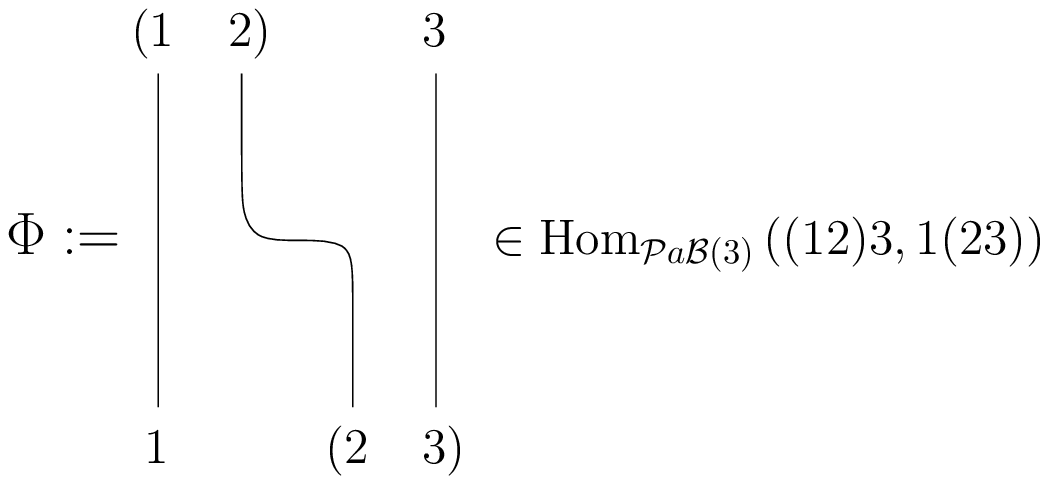}
\]

\end{enumerate}

They satisfy the following relations

\begin{enumerate}
\item Recall that $\pab(0)$ is the trivial groupoid $\{*\}$. The \textit{unital relation} states that
\[
\Phi \circ_2 \mathrm{id}_{\{*\}} = \mathrm{id}_{12}~. 
\]

\item The \textit{pentagon relation} amounts to the commutativity of the following diagram
\[
\begin{tikzcd}[column sep=2.5pc,row sep=2pc]
&((12)3)4   \arrow[dl,"\mathrm{id}_{12} ~ \circ_1 ~\Phi",swap] \arrow[dr,"\Phi ~\circ_1 ~\mathrm{id}_{12}"]
& \\
(1(23))4 \arrow[d,"\Phi ~\circ_2 ~\mathrm{id}_{12}",swap] 
&
&(12)(34) \arrow[d,"\Phi ~\circ_3 ~\mathrm{id}_{12}"] \\
1((23)4)  \arrow[rr,"\mathrm{id}_{12} ~\circ_2 ~\Phi"]
&
&1(2(34))
\end{tikzcd}
\]

\item The \textit{first hexagon relation} amounts to the commutativity of the following diagram
\[
\begin{tikzcd}[column sep=5pc,row sep=3pc]
&(12)3 \arrow[r,"\Phi"] \arrow[ld,"\mathrm{id}_{12} ~\circ_1 ~ R",swap]
&1(23) \arrow[rd,"\mathrm{id}_{12} ~\circ_2 ~R"] 
& \\
(21)3 \arrow[rd,"(213) ~\bullet ~ \Phi" swap] 
&
&
&1(32) \arrow[dl,"(132)~ \bullet ~\Phi^{-1}"]\\
&2(13) \arrow[r,"(132) ~\bullet ~ (R ~ \circ_2 ~ \mathrm{id}_{12})"]
&(13)2
&
\end{tikzcd}
\]

\item The \textit{second hexagon relation} amounts to the commutativity of the above diagram when replacing $R$ by $\tilde{R} \coloneqq ((21) \bullet R)^{-1}~.$
\end{enumerate} 
\end{theorem}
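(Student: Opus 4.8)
The plan is to combine the known group-theoretic presentation of the pure braid groups (Theorem \ref{thm: Artin's presentation}) with the combinatorial/operadic structure of $\pab = \varphi^*\cob$, following the strategy pioneered by Bar-Natan and Tamarkin and worked out in detail by Fresse. First I would observe that since the operad $\pab$ has free operad of objects (it is $\mathcal{P}a$, the free operad on one binary generator $12$), presenting $\pab$ as an operad in groupoids amounts to two things: (i) identifying a generating set of morphisms in the free operad-in-groupoids sense, and (ii) checking that the listed relations generate all relations. For (i), the key point is that every object of $\pab(n)$ is obtained from $12 \in \pab(2)$ by iterated operadic composition (this is freeness of $\mathcal{P}a$), and every morphism decomposes, using the groupoid structure and the functoriality of $\circ_i$, into a composite of "elementary" moves: applications of $R$ (a transposition of two adjacent blocks) and of $\Phi$ (a re-bracketing), each operadically inserted and conjugated by a symmetric-group element. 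Concretely, one shows that the morphisms $R$ and $\Phi$, together with the isomorphisms of objects coming from $\mathcal{P}a$, generate $\pab$ as a suboperad-in-groupoids; this is essentially the statement that $\mathrm{PB}_n$ is generated by the elementary braids $x_{ij}$, re-expressed operadically via the identification of $x_{ij}$ with a word in inserted copies of $R$.

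The heart of the argument is (ii): completeness of the relations. Here I would argue by induction on arity $n$. For small $n$ one checks directly: $\pab(2)$ has morphism groups all isomorphic to $\mathrm{PB}_2 \cong \mathbb{Z}$, generated by $H = R^2$ (or rather $R\tilde R^{-1}$ in the present normalization), with no relations; $\pab(3)$ involves $\mathrm{PB}_3 \cong \mathbb{F}_2 \times \mathbb{F}_1$ by Lemma \ref{lemma: PB_3 iso au produit de libres}, and one verifies that the hexagon relations plus the $\pab(2)$-data account exactly for the relations $x_{ik}x_{jk}x_{ij} = x_{jk}x_{ij}x_{ik} = \cdots$ of Artin's presentation in the three-strand case. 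The inductive step is the serious one: given a relation holding in $\pab(n)$, one must show it is a consequence of the operadic composites of the unital, pentagon and hexagon relations (and the $n=2,3$ relations). The mechanism is that the split fibration $\mathrm{Conf}_{n+1}(\mathbb{C}) \to \mathrm{Conf}_n(\mathbb{C})$, which on $\pi_1$ gives $1 \to \mathbb{F}_n \to \mathrm{PB}_{n+1} \to \mathrm{PB}_n \to 1$, is operadic in nature: the kernel $\mathbb{F}_n$ is generated by the elementary braids $x_{i,n+1}$, each of which is an operadic insertion of $R^{\pm}$'s; relations among them that are not already in $\mathrm{PB}_n$ are precisely relations of type (d) in Theorem \ref{thm: Artin's presentation}, and one checks these are operadic consequences of the hexagon and pentagon. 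This is exactly the content of Fresse's analysis \cite{FresseGT1} (see also \cite{Tamarkin03}), which I would cite for the detailed bookkeeping, reproducing here only the reduction: every generator of $\pab(n)$ lifts a generator of $\pab(n-1)$ or lies in the free part, and every relation either descends to $\pab(n-1)$ (handled by induction) or is a commutation/conjugation relation among the $x_{i,n+1}$ deducible from pentagon + hexagon applied to four strands.

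The main obstacle is precisely this last bookkeeping: verifying that the Artin relations of types (a)–(d), when translated into statements about inserted copies of $R$ and $\Phi$, follow from the operadic consequences of pentagon and hexagon alone — in particular, that no extra relation is needed beyond the four listed. The cleanest way to handle it, rather than a brute-force rewriting, is to set up the proof as a comparison of two operads in groupoids: let $\mathcal{Q}$ be the operad in groupoids \emph{presented} by the generators and relations in the statement, with an evident morphism $\mathcal{Q} \to \pab$ which is the identity on objects and sends generators to generators. Surjectivity on morphisms is (i) above. Injectivity is proved arity by arity by producing a section on morphism-groups: one exhibits, for each $n$, a generating-and-relations presentation of $\mathrm{Hom}_{\mathcal{Q}(n)}(-,-) $ matching Artin's presentation of $\mathrm{PB}_n$, using the induced (split) short exact sequence $1 \to \mathbb{F}_n \to \mathcal{Q}(n+1)(\bar\sigma,\bar\sigma) \to \mathcal{Q}(n)(\sigma,\sigma)\to 1$ that the pentagon and hexagon relations force to exist inside $\mathcal{Q}$. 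Granting earlier results (Theorem \ref{thm: Artin's presentation}, Lemma \ref{lemma: PB_3 iso au produit de libres}, freeness of $\mathcal{P}a$), the proof then assembles by induction, and I would refer to \cite[Chapter 6]{FresseGT1} for the complete verification of the inductive step.
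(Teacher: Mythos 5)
Your plan is sound and ultimately rests on the same source as the paper does: the paper gives no independent argument for this theorem, simply citing \cite[Theorem 6.2.4]{FresseGT1} (originally due to Bar-Natan \cite{BarNatan99}), and your sketch --- generation by $R$ and $\Phi$, then completeness of the relations checked against Artin's presentation of the pure braid groups, with the decisive inductive bookkeeping delegated to Fresse --- is exactly the Bar-Natan--Fresse strategy that citation points to. So this is essentially the same approach, and there is no gap beyond what the paper itself leaves to the reference.
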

The proof can be found in \cite[Theorem 6.2.4]{FresseGT1} (although originally proven in \cite{BarNatan99} using a different language). 
\begin{remark}
Notice that the unitality relation, together with the pentagon relation, implies the two other unitality relations 
$\Phi \circ_1 \mathrm{id}_{\{*\}} = \Phi \circ_3 \mathrm{id}_{\{*\}} = \mathrm{id}_{12}$ as well. 
Indeed, if one applies twice $-\circ_1\mathrm{id}_{\{*\}}$ to the pentagon relation, one finds $(\Phi \circ_1 \mathrm{id}_{\{*\}})^2=\Phi \circ_1 \mathrm{id}_{\{*\}}$; 
this implies that $\Phi \circ_1 \mathrm{id}_{\{*\}}=\mathrm{id}_{12}$. One proves along the same lines that $\Phi \circ_3 \mathrm{id}_{\{*\}} = \mathrm{id}_{12}$. 
\end{remark}
\begin{corollary}
There is a one to one correspondence between classical associators as in \cref{def: classical associator} and operadic associators as in \cref{Def: operadic associator}.
\end{corollary}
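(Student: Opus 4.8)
The plan is to compare the two operads through the presentation of $\pab$ recorded in \cref{thm: presentation pab}. Since the Malcev completion functor $\widehat{(-)}(\kk)$ is strong monoidal, that presentation descends to a presentation of $\widehat{\pab}(\kk)$ by the object $12$, the generating morphisms $R,\Phi$, and the unital, pentagon and two hexagon relations, now read inside pro-unipotent $\kk$-groupoids. Hence an operad morphism $F\colon\widehat{\pab}(\kk)\to\mathrm{Grp}(\widehat{\pacd})$ which is the identity on objects (an operadic associator in the sense of \cref{Def: operadic associator}, once we also know $F$ is invertible) is the same datum as a choice of morphisms $F(R)\in\mathrm{Hom}_{\mathrm{Grp}(\widehat{\pacd})(2)}(12,21)$ and $F(\Phi)\in\mathrm{Hom}_{\mathrm{Grp}(\widehat{\pacd})(3)}\big((12)3,\,1(23)\big)$ for which the images of those four relations hold. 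The strategy is to pin down these two morphisms and then rewrite each relation as an identity in a chord diagram algebra $\mathfrak{U}(\widehat{\mathfrak{t}}_n)$.

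First I would describe the two relevant hom-groupoids of $\mathrm{Grp}(\widehat{\pacd})$. Every hom-coalgebra of $\widehat{\pacd}(n)$ is, after choosing a base point coming from $\mathcal{P}a_\kk$, isomorphic to $\mathfrak{U}(\widehat{\mathfrak{t}}_n)$, whose group-like part is $\exp(\widehat{\mathfrak{t}}_n)$. Since $\widehat{\mathfrak{t}}_2=\kk\,t_{12}$, the morphism $F(R)$ must equal $\exp\!\big(\tfrac{\lambda}{2}t_{12}\big)\cdot X$ for a unique $\lambda\in\kk$; and the requirement that $F$ be an \emph{isomorphism} (not merely a morphism) forces $\lambda\in\kk^\times$, because on arity-two automorphisms $F$ acts as multiplication by $\lambda$ under the identifications $\mathrm{Hom}_{\widehat{\pab}(\kk)(2)}(12,12)\cong\mathbb{G}_a\cong\exp(\kk\,t_{12})$. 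As in \S\ref{subsection: drinfeld associator}, rescaling then reduces us to $\lambda=1$, the normalization of \cref{def: classical associator}. Likewise $F(\Phi)=\varphi\cdot\alpha$ for a unique group-like $\varphi\in\exp(\widehat{\mathfrak{t}}_3)$, where $\alpha$ is the canonical associativity cell of $\pacd$. So an operadic associator is the same as such a pair $(\lambda,\varphi)$, and it remains to determine which pairs satisfy the four relations.

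The heart of the argument is this translation. The crucial point is that $\alpha$ and $X$ come from the $\kk$-linearization $\mathcal{P}a_\kk$ of a free operad in sets, so they already satisfy Mac Lane coherence strictly: the pentagon for $\alpha$ and the hexagons for $X$ against the trivial symmetry. Substituting $F(R)=\exp(\tfrac12 t_{12})X$ and $F(\Phi)=\varphi\,\alpha$ into the diagrams of \cref{thm: presentation pab} and cancelling these coherence contributions turns each relation into a genuine equality in some $\mathfrak{U}(\widehat{\mathfrak{t}}_n)$, where the insertion-coproduct operations of \cref{def: insertion-coproduct morphisms} implement the operadic compositions $\circ_i$ and the $\mathbb{S}_n$-actions. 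Concretely: the unital relation becomes $\varphi^{1,\emptyset,2}=1$, which via the splitting $\mathfrak{t}_3\cong\mathfrak{f}_2\oplus\kk c$ (together with the fact that $(-)^{1,\emptyset,2}$ kills $\mathfrak{f}_2$) says exactly that $\varphi=\Phi(t_{12},t_{23})\in\exp(\widehat{\mathfrak{f}}_2)$; the pentagon relation becomes \eqref{eq-pentagon}; and the two hexagon relations, using $\tilde R=((21)\bullet R)^{-1}$, become the $\pm$ hexagon identities of \cref{Problem 2}(4), which together amount to the single equation \eqref{eq-hexagon} and also force the inverse condition $\Phi(x,y)^{-1}=\Phi(y,x)$ (this is not an independent relation in \cref{thm: presentation pab}, but it is implied by the two hexagons). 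Thus $F$ yields exactly a Drinfeld $1$-associator $\Phi$, and conversely every $\Phi\in\mathrm{Assoc}_1(\kk)$ produces a unique such $F$; the last point to verify is that this $F$ is invertible, which holds because $F$ is the identity on objects and restricts to an isomorphism on the generating hom-groups (the braiding going to the exponential braiding with invertible parameter, $\varphi$ group-like), and these generate the target.

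The main obstacle lies entirely in this third step: one must carefully match the operadic compositions $\circ_i$ and the left $\mathbb{S}_n$-actions appearing in the pentagon and hexagon diagrams of \cref{thm: presentation pab} with the insertion-coproduct notation $(-)^{f}$ of \S\ref{ssec-1.2-universal}, and keep precise track of how the coherence cells $\alpha$ (and the permutation twists) propagate through the various composites so that they cancel and leave the clean identities \eqref{eq-pentagon} and \eqref{eq-hexagon}. None of this is conceptually hard, but it is where all the care is needed; the rest of the proof is formal.
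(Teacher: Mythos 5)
Your proposal follows the paper's own sketch essentially step for step: reduce everything to the presentation of $\pab$ in \cref{thm: presentation pab} (transported through the strong monoidal Malcev completion), observe that $F$ is determined by $F(R)=e^{\lambda t_{12}/2}X$ with $\lambda\in\kk^\times$ and $F(\Phi)=\varphi\,\alpha$ with $\varphi\in\exp(\widehat{\mathfrak{t}}_3)$, use the splitting $\mathfrak{t}_3\cong\mathfrak{f}_2\oplus\kk c$ together with the unital relation $\Phi\circ_2\mathrm{id}_{\{*\}}=\mathrm{id}_{12}$ to kill the central component and land in $\exp(\widehat{\mathfrak{f}}_2)$, and then translate the pentagon and hexagon diagrams through the insertion-coproduct dictionary. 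So the approach is the same as the paper's and the outline is sound.

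Two caveats. First, your claim that after cancelling the coherence cells the relations become \emph{exactly} \eqref{eq-pentagon} and \eqref{eq-hexagon} conflicts with the paper's concatenation convention (\cref{rmk: algebraic vs topological conventions}): with arrows concatenated rather than composed, the pentagon of \cref{thm: presentation pab} translates to the reversed-order equation $\Phi^{1,2,3}\Phi^{1,23,4}\Phi^{2,3,4}=\Phi^{12,3,4}\Phi^{1,2,34}$, which is \eqref{eq-pentagon} for $\Phi^{-1}$; accordingly the paper's bijection is $\Phi\mapsto\Phi^{-1}$, not the identity. This does not threaten the corollary (a bijection composed with inversion is still a bijection), but your explicit correspondence is off by this inversion. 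Second, your parenthetical that the duality $\Phi(x,y)^{-1}=\Phi(y,x)$ is forced by the two hexagons is exactly the kind of statement the paper does not verify by hand but delegates, together with the full matching of relations, to \cite[Theorem 10.2.9]{FresseGT1}; as written you assert it without argument, so either prove it or cite it (alternatively, one can note that the corresponding identity already holds for the generating braid $\Phi$ in $\pab$ itself, hence for its image). Finally, on the parameter: the paper keeps $\lambda\in\kk^\times$ as part of the operadic datum, so strictly the operadic side corresponds to pairs $(\lambda,\Phi)$; your ``rescale to $\lambda=1$'' papers over the same imprecision that the paper's own sketch does, and is not literally a bijection onto $\mathrm{Assoc}_1(\kk)$ unless one records $\lambda$ separately.
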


\begin{proof}[Sketch of proof]
Let $F: \widehat{\pab}(\kk) \qi \mathrm{Grp}(\widehat{\pacd})$ be an isomorphism which is the identity on objects. By the above theorem, it is completely determined by the image of the generators $R$ and $\Phi$. 

\medskip

The image of $R$ by $F$ has to be of the form
\[
F(R) = e^{\lambda.t_{12}/2}X~, \quad \mathrm{in} \quad \mathrm{Hom}_{\pacd(2)}(12,21) ~,
\]
where $e^{\lambda.t_{12}/2}$ is in $\mathrm{exp}(\widehat{\mathfrak{t}}_2) \cong \kk$ and where $X$ is the element of $\pacd$ given in \cref{example: elements X H A}. Furthermore, $F$ is an isomorphism if and only if $\lambda$ is in $\kk^\times$. 

\medskip

The image of $\Phi$ by $F$ has to be of the form
\[
F(\Phi) = f(\Phi)\alpha~, \quad \mathrm{in} \quad \mathrm{Hom}_{\pacd(3)}((12)3,1(23))~,
\]
where $f(\Phi)$ is in $\mathrm{exp}(\widehat{\mathfrak{t}}_3)$ and where $\alpha$ is the element of $\widehat{\pacd}$ given in \cref{example: elements X H A}. Recall from Subsection \ref{subsection: drinfeld associator} that there is a canonical isomorphism of Lie algebras
\[
\mathfrak{t}_3 \cong \kk c \oplus \mathfrak{f}_2 \,,
\]
where $c = t_{12} + t_{23} + t_{13}$ and where $\mathfrak{f}_2$ is the free Lie algebra generated by $t_{12}$ and $t_{23}$.

\medskip

This gives a canonical isomorphism $\mathrm{exp}(\widehat{\mathfrak{t}}_3) \cong \kk \times \widehat{\mathbb{F}}_2(\kk)$. Therefore we can decompose $f(\Phi)$ as 
\[
f(\Phi) = (\Xi(c),\Phi(t_{12},t_{23})) \quad \mathrm{in} \quad \mathrm{exp}(\widehat{\mathfrak{t}}_3) \cong \kk \times \widehat{\mathbb{F}}_2(\kk)~.
\]
Now one can check that the morphism
\[
- \circ_2 \mathrm{id}_{\{*\}}: \mathrm{Hom}_{\pacd(3)}((12)3,1(23)) \longrightarrow \mathrm{Hom}_{\pacd(2)}(12,12)
\]
is the one determined by the morphism $\mathfrak{t}_3 \twoheadrightarrow \mathfrak{t}_2$ which sends $t_{13}$ to $t_{12}$ and $t_{12},t_{23}$ to zero. By pre-composing it with the isomorphism $\kk c \oplus \mathfrak{f}_2 \cong \mathfrak{t}_3$, the resulting morphism sends $t_{12},t_{23}$ to zero and $c$ to $t_{12}$. The unital relation 
\[
\Phi \circ_2 \mathrm{id}_{\{*\}} = \mathrm{id}_{12}
\]

therefore gives that $\Xi(c) = 1$, and thus that $f(\Phi)$ is given by $\Phi(t_{12},t_{23})$ in $\mathrm{exp}(\widehat{\mathfrak{f}}_2)$.

\medskip

So far, we have identified the data present in \cref{def: classical associator} and in \cref{Def: operadic associator}. Let us briefly illustrate how the 
relations in both definitions are sent to each other for the particular case of the pentagon equation. Notice that there is a dictionary between the notation used in the 
introduction and the operadic notation. The morphism  $\Phi ~\circ_2 ~\mathrm{id}_{12}$ rewrites as $\Phi^{1,23,4}$ since it is the associator where $2$ and $3$ are 
considered as "a single object". The morphism $\mathrm{id}_{12} ~\circ_2 ~\Phi$ rewrites as $\Phi^{2,3,4}$ since the object $1$ "plays no role". The pentagon relation 
imposed on $\Phi$ then rewrites in this particular notation as
\[
\Phi^{1,2,3}\Phi^{1,23,4} \Phi^{2,3,4} = \Phi^{12,3,4}\Phi^{1,2,34}~,
\]
which is the same equation as in the introduction, but where the products are in the opposite order. 
More generally, one can check (see \cite[Theorem 10.2.9]{FresseGT1} for a detailled proof) that this dictionary identifies the relations imposed on Drinfeld associators 
with the relations imposed on the operadic associators (with reversed order). 
The one to one correspondence between classical associators as in \cref{def: classical associator} and operadic associators as in \cref{Def: operadic associator} 
is therefore given by $\Phi\mapsto \Phi^{-1}$. 
\end{proof}

\begin{remark}[Algebraic and topological conventions]\label{rmk: algebraic vs topological conventions}
As observed in the above proof, the pentagon equation for a Drinfeld associator is written as 
\[
\Phi^{2,3,4}\Phi^{1,23,4} \Phi^{1,2,3} = \Phi^{1,2,34}\Phi^{12,3,4}
\]
in the \cref{Problem 2}, whereas the pentagon equation of \cref{thm: presentation pab}, translated into the same notation, are 
in the opposite order
\[
\Phi^{1,2,3}\Phi^{1,23,4} \Phi^{2,3,4} = \Phi^{12,3,4}\Phi^{1,2,34}~.
\]
This is due to our unusual choice of convention, that leads us to view the product $ab$ of two elements of an algebra as a concatenation of arrows, 
and thus as the \textit{opposite} of their composition (when one sees the algebra as a category with only one object). 
\end{remark}

Similarly, this presentation of $\pab$ allows us to give an explicit description of the Grothendieck--Teichmüller group.

\begin{theorem}\label{presentation of GT}
Elements of the Grothendieck--Teichmüller group $\mathrm{GT}$ are in bijection with pairs $(\mu,f)$ in $\kk^\times \times \widehat{\mathbb{F}}_2(\kk)$, where 
$\widehat{\mathbb{F}}_2(\kk)$ denotes the Malcev completion of the free group on two generators, which satisfy the following conditions
\begin{enumerate}
\medskip

\item $f(x,y)^{-1} = f(y,x)~$ in $\widehat{\mathbb{F}}_2(\kk)~.$

\medskip

\item $x^\nu f(x,y) y^\nu f(y,z) z^\nu f(z,x) = 1$ if $xyz = 1$ in $\widehat{\mathbb{F}}_3(\kk)$, where $\nu = \frac{\mu - 1}{2}~.$

\medskip

\item Let $x_{ij}$ denote the standard generator of $\widehat{PB}_4(\kk)$, the Malcev completion of the pure braid group on four strands, defined in 
\cref{thm: Artin's presentation}. Then the element $f$ satisfies 
\[
f(x_{12},x_{23})f(x_{12}x_{13},x_{24}x_{34})f(x_{23},x_{34}) = f(x_{13}x_{23},x_{34})f(x_{12},x_{23}x_{24})
\]
in $\widehat{PB}_4(\kk)$.
\end{enumerate}

\medskip

Under this bijection, the group structure of the Grothendieck--Teichmüller group is given by: 
\[
(\mu_1,f_1) \star (\mu_2,f_2) = \Big(\mu_1\mu_2, f_1\big(x^{\mu_2},f_2(x,y)y^{\mu_2}f_2(y,x)\big)f_2(x,y)\Big)~.
\]
Under this bijection, its action on the set of associators is given by:
\[
(\mu,f) \bullet (\lambda,\Phi) = \left(\mu \lambda, f\left(e^{\mu t_{12}}, \Phi(t_{12},t_{23})e^{\mu t_{23}}\Phi(t_{23}, t_{12})\right)\Phi(t_{12},t_{23})\right)~.
\]
\end{theorem}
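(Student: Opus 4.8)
The plan is to transport everything through the presentation of $\pab$ provided by \cref{thm: presentation pab}. Since the Malcev completion is strong monoidal, $\widehat{\pab}(\kk)$ admits the same presentation with $\widehat{\mathrm{PB}}_n(\kk)$ in place of $\mathrm{PB}_n$, so that an automorphism $F$ of $\widehat{\pab}(\kk)$ which is the identity on objects is completely determined by the two morphisms $F(R)\in\mathrm{Hom}_{\widehat{\pab}(\kk)(2)}(12,21)$ and $F(\Phi)\in\mathrm{Hom}_{\widehat{\pab}(\kk)(3)}((12)3,1(23))$, subject only to the condition that the unit, pentagon and two hexagon relations be preserved. The first step is therefore to pin down the possible values of these two morphisms.

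First I would describe the relevant hom-sets. In $\widehat{\pab}(\kk)(2)$ each hom-set is a torsor over $\mathrm{Aut}(12)=\widehat{\mathrm{PB}}_2(\kk)\cong\kk$ (additive group), generated by the full twist $x_{12}=R^2$; hence $F(R)=R\cdot x_{12}^{(\mu-1)/2}$ for a unique $\mu\in\kk$, and consequently $F(x_{12})=F(R)^2=x_{12}^{\mu}$, so $F$ can be invertible only if $\mu\in\kk^{\times}$. Likewise $\mathrm{Hom}_{\widehat{\pab}(\kk)(3)}((12)3,1(23))$ is a torsor over $\mathrm{Aut}((12)3)=\widehat{\mathrm{PB}}_3(\kk)$, which by \cref{lemma: PB_3 iso au produit de libres} splits as $\widehat{\mathbb{F}}_2(\kk)\times\kk$, with $\widehat{\mathbb{F}}_2(\kk)$ freely generated by $x_{12},x_{23}$ and the $\kk$-factor generated by the central full twist $x_{12}x_{13}x_{23}$. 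Writing $F(\Phi)=g\cdot\Phi$ with $g\in\widehat{\mathrm{PB}}_3(\kk)$, the unit relation $\Phi\circ_{2}\mathrm{id}_{\{*\}}=\mathrm{id}_{12}$ forces $g\circ_{2}\mathrm{id}_{\{*\}}$ to be trivial; since $-\circ_{2}\mathrm{id}_{\{*\}}\colon\widehat{\mathrm{PB}}_3(\kk)\to\widehat{\mathrm{PB}}_2(\kk)$ sends $x_{12},x_{23}$ to $1$ and the central generator isomorphically onto $x_{12}$, this forces $g\in\widehat{\mathbb{F}}_2(\kk)$, say $g=f(x_{12},x_{23})$. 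Thus $F\mapsto(\mu,f)$ is a well-defined injection into $\kk^{\times}\times\widehat{\mathbb{F}}_2(\kk)$.

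Next I would feed the ansatz $F(R)=R\,x_{12}^{\nu}$ with $\nu=(\mu-1)/2$, and $F(\Phi)=f(x_{12},x_{23})\cdot\Phi$, into the three remaining relations, using that $R$ and $\Phi$ themselves satisfy them. In each case the contribution of the relation satisfied by $R$ and $\Phi$ cancels, and what survives is an identity purely in the $f$'s, once the elementary braids $x_{ij}$ have been re-expressed through $R$ and $\Phi$ (which produces $\Phi$-conjugations) and the $\mathbb{S}_n$-actions in the diagrams have been unwound. I expect this to show that the two hexagon relations become equivalent to the symmetry $f(x,y)^{-1}=f(y,x)$ of condition (1) together with the hexagon-type identity $x^{\nu}f(x,y)\,y^{\nu}f(y,z)\,z^{\nu}f(z,x)=1$ of condition (2), and that the pentagon relation in $\widehat{\pab}(\kk)(4)$, upon rewriting each insertion-coproduct $f^{\,\sigma}$ via the action of $\sigma$ on the pair $(x_{12},x_{23})$ (so that $f^{1,23,4}=f(x_{12}x_{13},x_{24}x_{34})$, $f^{12,3,4}=f(x_{13}x_{23},x_{34})$, and so on), becomes condition (3); conversely, any $(\mu,f)$ satisfying (1)--(3) reassembles into a genuine object-fixing automorphism. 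This translation is the technical heart and the main obstacle: it requires simultaneously tracking the $\mathbb{S}_n$-actions in the hexagons, the expression of each generator $x_{ij}$ of $\widehat{\mathrm{PB}}_n(\kk)$ as a word in $R$ and $\Phi$ (hence the conjugations turning a relation between morphisms into a substitution inside $f$), and the concatenation convention of \S\ref{ssec-conventions}, which reverses all products. The careful execution is the one in \cite{FresseGT1}.

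Finally, I would obtain the group law and the action by composition. For the group law, given $F_i\leftrightarrow(\mu_i,f_i)$, one has $F_1(x_{12})=x_{12}^{\mu_1}$ and, writing the full twist on the last two strands as $\Phi\cdot(\mathrm{id}_{12}\circ_2 x_{12})\cdot\Phi^{-1}$ and applying $F_1$, $F_1(x_{23})=f_1(x_{12},x_{23})\,x_{23}^{\mu_1}\,f_1(x_{23},x_{12})$; substituting these into $F_2\circ F_1$ and respecting the concatenation convention reads off the stated formula for $(\mu_1,f_1)\star(\mu_2,f_2)$. For the action, recall from \cref{Def: operadic associator} and the subsequent discussion that an operadic associator is an object-fixing isomorphism $G\colon\widehat{\pab}(\kk)\to\mathrm{Grp}(\widehat{\pacd})$ with $G(R)=e^{\lambda t_{12}/2}X$ and $G(\Phi)=\Phi(t_{12},t_{23})\,\alpha$ (with $X,\alpha$ as in \cref{example: elements X H A}), and that $\mathrm{GT}$ acts by precomposition $(\mu,f)\bullet G:=G\circ F_{(\mu,f)}$. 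Then $G(F(R))=G(R)\,G(x_{12})^{\nu}=e^{\mu\lambda\,t_{12}/2}X$ (using $X^2=\mathrm{id}$ and $t_{12}=t_{21}$), which produces the new scaling $\mu\lambda$; and $G(F(\Phi))=f\big(G(x_{12}),G(x_{23})\big)\cdot G(\Phi)$, so substituting the readily computed values of $G(x_{12})$, of $G(x_{23})=\Phi(t_{12},t_{23})\,e^{\lambda t_{23}}\,\Phi(t_{23},t_{12})$, and of $G(\Phi)$ yields the displayed formula for $(\mu,f)\bullet(\lambda,\Phi)$.
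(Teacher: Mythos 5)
Your outline is correct and is essentially the paper's route: the paper itself delegates the full verification to \cite[Theorem 11.1.7]{FresseGT1}, and your argument — rigidify via the presentation of \cref{thm: presentation pab}, describe the relevant hom-sets as torsors over $\widehat{\mathrm{PB}}_2(\kk)$ and $\widehat{\mathrm{PB}}_3(\kk)\cong\widehat{\mathbb{F}}_2(\kk)\times\kk$, kill the central component with the unit relation, translate pentagon/hexagons into (1)--(3), then compute the group law and the action by composition — is exactly the scheme used there and in the paper's own sketch for the corollary identifying classical and operadic associators.

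One caveat: your computation of the action, which is the standard one ($G\circ F$ with $G(x_{12})=e^{\lambda t_{12}}$ and $G(x_{23})=\Phi(t_{12},t_{23})e^{\lambda t_{23}}\Phi(t_{23},t_{12})$), produces $f\left(e^{\lambda t_{12}},\Phi(t_{12},t_{23})e^{\lambda t_{23}}\Phi(t_{23},t_{12})\right)\Phi(t_{12},t_{23})$, i.e.\ with $\lambda$ inside $f$, whereas the displayed formula in \cref{presentation of GT} has $e^{\mu t_{12}}$ and $e^{\mu t_{23}}$. Since the cyclotomic and elliptic analogues later in the paper carry $\lambda$ in the corresponding slots, the $\mu$ in the statement is almost certainly a typo; so your derivation is fine, but you should not assert that it ``yields the displayed formula'' verbatim — rather it yields the ($\lambda$-)corrected version.
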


\begin{proof}
An explicit proof of this can be found in \cite[Theorem 11.1.7]{FresseGT1}. 
\end{proof}

\begin{remark}
Beware that \cite{FresseGT1} uses a different presentation of the pure braid group. In \textit{op.cit.}, $x_{ij}$ denotes the pure braid that goes 
\textit{from} the $j$-th strand, passing \textit{behind} other strands, does a loop around the $i$-th strand, and comes back passing \textit{behind} again. 
Hence the formula given in condition (3) of Theorem \ref{presentation of GT} does not coincide with the one appearing in \cite[Theorem 11.1.7]{FresseGT1}. 
\end{remark}

\begin{remark}
Using \cref{rmk: presentation of PaCD}, one can also give an explicit description of the \textit{graded Grothendieck--Teichmüller group} $\mathrm{GRT}(\kk)$, 
which coincides with the original definition of Drinfeld. 
\end{remark}

\subsection{Topological description of $\pab$}\label{Subsection: Description topo}

The operad in the category of groupoids $\pab$ can be obtained via a topological construction. For this, we consider the configuration spaces 

\[
\mathrm{Conf}(\mathbb{C},\mathrm{I}) \coloneqq \left\{ (x_1, \cdots, x_n) \in \mathbb{C}^\mathrm{I}~~|~~x_i \neq x_j~~\text{if}~~i \neq j \right\}~,
\]
\vspace{0.1pc}

for any finite set $\mathrm{I}$. We then consider the space

\[
\mathrm{C}(\mathbb{C},\mathrm{I}) \coloneqq \mathrm{Conf}(\mathbb{C},\mathrm{I})/\mathbb{C} \ltimes \mathbb{R}_{> 0}~,
\]
\vspace{0.1pc}

of configurations of (pairwise distinct) points modulo translations and dilations. Their Fulton--MacPherson compactifications $\overline{\mathrm{C}}(\mathbb{C},\mathrm{I})$ can be endowed with a canonical operad structure. Indeed, one can compute that the boundary of 

\[
\partial~\overline{\mathrm{C}}(\mathbb{C},\mathrm{I}) \cong \bigcup_{ k \geq 0} \bigcup_{\mathrm{J_1} \sqcup \cdots \sqcup\mathrm{J_k} = \mathrm{I}}  \overline{\mathrm{C}}(\mathbb{C},[k]) \times \left(\prod_{j =1}^k \overline{\mathrm{C}}(\mathbb{C},\mathrm{J}_j) \right)
\]

where $[k] = \{1, \cdots, k \}$. Therefore the operad structure is simply given by the inclusions 

\[
\gamma_{\mathrm{J_1}, \cdots, \mathrm{J_k}}:  \overline{\mathrm{C}}(\mathbb{C},[k]) \times \left(\prod_{j =1}^k \overline{\mathrm{C}}(\mathbb{C},\mathrm{J}_j) \right) \hookrightarrow \partial~\overline{\mathrm{C}}(\mathbb{C},\mathrm{J_1} \sqcup \cdots \sqcup \mathrm{J_k}) \hookrightarrow \overline{\mathrm{C}}(\mathbb{C},\mathrm{J_1} \sqcup \cdots \sqcup \mathrm{J_k})~.
\]

We denote this operad in topological spaces by $\overline{\mathrm{C}}(\mathbb{C})$.

\begin{remark}
There exists no choice of a family of points in $\overline{\mathrm{C}}(\mathbb{C})$ compatible with the operad structure. In other words, this operad in topological spaces can not be promoted into an operad in pointed topological spaces.
\end{remark}

\begin{remark}
There is a direct weak-equivalence of operads $\overline{\mathrm{C}}(\mathbb{C}) \qi \mathbb{E}_2$, where $\mathbb{E}_2$ is the little disks operad. See \cite{hoefel2011explicit}.
\end{remark}

Recall $\mathcal{P}a$, the operad in the category of sets generated by a single arity two operation. Elements in $\mathcal{P}a(n)$ are in bijection with maximally parenthesized permutations of $\mathfrak{S}_n$. Now lets view this operad as a topological operad where we impose the discrete topology on the sets $\mathcal{P}a(n)$.

\begin{lemma}\label{lemme: morphisme topo operades}
There is an inclusion of topological operads 

\[
\iota: \mathcal{P}a \hookrightarrow \overline{\mathrm{C}}(\mathbb{C})~. 
\]

\end{lemma}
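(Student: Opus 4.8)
The plan is to construct the inclusion $\iota\colon\mathcal{P}a\hookrightarrow\overline{\mathrm{C}}(\mathbb{C})$ explicitly on each arity and then check that it is a morphism of operads. Since $\mathcal{P}a$ is the free operad (in sets) on a single binary operation $m$, it suffices by the universal property to specify where $m$ goes: I would send $m\in\mathcal{P}a(2)$ to a chosen point of $\overline{\mathrm{C}}(\mathbb{C},\{1,2\})$, for instance the class of the configuration $(0,1)\in\mathrm{Conf}(\mathbb{C},\{1,2\})$, which is a single point since $\mathrm{C}(\mathbb{C},\{1,2\})$ is already a point (two points modulo translations and dilations). The free operad property then produces a unique morphism of operads $\iota\colon\mathcal{P}a\to\overline{\mathrm{C}}(\mathbb{C})$, automatically continuous because $\mathcal{P}a$ carries the discrete topology. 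So the only real content is that $\iota$ is \emph{injective} on each $\mathcal{P}a(n)$ and lands in the right permutation-labelled components.

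Concretely, an element of $\mathcal{P}a(n)$ is a maximally parenthesized permutation of $\{1,\dots,n\}$; under $\iota$ it should go to the point of $\overline{\mathrm{C}}(\mathbb{C},\{1,\dots,n\})$ obtained by iterated operadic insertion of the binary point, which geometrically is a nested "clustering" of the $n$ points on the real line: the parenthesization dictates a rooted binary tree, and one reads off a stratum of the Fulton--MacPherson boundary where the points collide according to that tree, with the left-to-right order on $\mathbb{R}$ recording the underlying permutation. The key steps I would carry out are: (1) recall/fix that $\overline{\mathrm{C}}(\mathbb{C},\mathrm I)$ is an operad via the boundary-inclusion maps $\gamma_{\mathrm J_1,\dots,\mathrm J_k}$ described just above the lemma; (2) invoke freeness of $\mathcal{P}a$ to get $\iota$ and continuity for free; (3) describe the image of a parenthesized permutation as the explicit boundary stratum / point determined by its binary tree and ordering, using the decomposition of $\partial\,\overline{\mathrm{C}}(\mathbb{C},\mathrm I)$; (4) prove injectivity by exhibiting a retraction-type invariant — namely, from a point in the image one recovers both the underlying linear order (hence the permutation) from the residual real-line configuration and the parenthesization from the nesting structure of the Fulton--MacPherson strata in which it lies.

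The main obstacle is step (4), injectivity: one must be careful that distinct binary trees with distinct parenthesizations really land in distinct strata, i.e. that the operadic insertion of the single point $[(0,1)]$ does not accidentally identify different nested configurations. This is where the Fulton--MacPherson (rather than naive) compactification matters, since it is precisely designed to remember the "speeds" at which points collide and hence the full nesting tree; I would argue that the stratification of $\overline{\mathrm{C}}(\mathbb{C},\mathrm I)$ by subsets of collision data is exactly indexed (for the points in the image of $\iota$) by planar rooted binary trees, and that two such trees give the same stratum only if they are equal. A clean way to phrase this is: the composite of $\iota$ with the map $\overline{\mathrm{C}}(\mathbb{C})\qi\mathbb{E}_2$ and then $\Pi_1(-)$ is the standard inclusion $\mathcal{P}a\hookrightarrow\mathrm{Ob}(\pab)$, whose injectivity on objects is already built into the definition of $\pab$ via $\varphi$; tracing this back forces $\iota$ itself to be injective. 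I would also remark that one may alternatively take the point $[(0,1)]$ versus $[(1,0)]$ and the statement is insensitive to this choice up to relabelling.
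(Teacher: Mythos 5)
Your construction is essentially the paper's: the lemma is justified there only by the explicit description of $\iota$, sending a maximally parenthesized permutation to the configuration of labelled points on $\mathbb{R}\subset\mathbb{C}$ in the prescribed order, with each pair of parentheses recording a cluster of points that are infinitesimally close in the Fulton--MacPherson compactification. Your use of the freeness of $\mathcal{P}a$ on the single binary generator (sent to $[(0,1)]$), together with the stratification argument for injectivity, is the same picture phrased operadically, and it is a perfectly good way to make the lemma precise.

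Two corrections, however. First, $\mathrm{C}(\mathbb{C},\{1,2\})$ is \emph{not} a point: the quotient is by $\mathbb{C}\ltimes\mathbb{R}_{>0}$, i.e.\ translations and positive dilations only, not rotations, so $\mathrm{C}(\mathbb{C},\{1,2\})\cong S^1$. This is not a harmless slip: if it were a point then $[(0,1)]=[(1,0)]$, the images of $12$ and $21$ would coincide, and injectivity would already fail in arity $2$ (and $\overline{\mathrm{C}}(\mathbb{C})$ could not be a model for $\mathbb{E}_2$). Your later remark that one may choose $[(0,1)]$ or $[(1,0)]$ tacitly uses that these are distinct points, so the rest of your argument survives once the claim is deleted. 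Second, your proposed ``clean'' route to injectivity --- composing with $\overline{\mathrm{C}}(\mathbb{C})\to\mathbb{E}_2$ and $\Pi_1(-)$ and appealing to the definition of $\pab$ --- does not work: the map $\varphi:\mathrm{Ob}(\mathcal{P}a)\to\mathrm{Ob}(\cob)$ forgets parenthesizations and is not injective, and the identification $\pab\cong\Pi_1(\overline{\mathrm{C}}(\mathbb{C}),\mathcal{P}a)$ of Theorem \ref{thm: pab iso topo} presupposes the present lemma, so that argument is circular. Keep instead your direct step (4): a point in the image determines the rooted tree indexing its Fulton--MacPherson stratum (the nesting of infinitesimal clusters) together with the linear order of the points on the real line at each level, and these data recover the parenthesized permutation uniquely.
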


Pictorially, the morphism $\iota$ establishes the following correspondence 

\[
\includegraphics[width=85mm,scale=1]{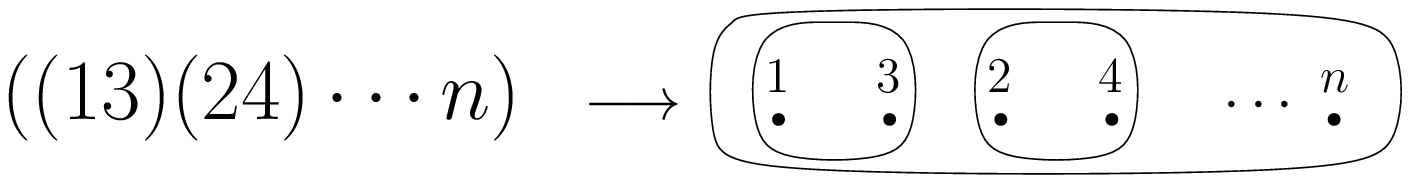}
\]

between maximally parenthesized permutations and configurations of labeled $n$ points inside the real line $\mathbb{R}\subset\mathbb{C}$, 
where the parenthesis on the right denote points which are infinitesimally close in the Fulton-MacPherson compactification.

\begin{theorem}\label{thm: pab iso topo}
There is an isomorphism of operads in the category of groupoids

\[
\pab \cong \Pi_1(\overline{\mathrm{C}}(\mathbb{C}), \mathcal{P}a)~.
\]

\end{theorem}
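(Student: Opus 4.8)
The plan is to produce the isomorphism on the nose by exhibiting $\pab$ as a relative fundamental groupoid operad in the sense of the notation $\Pi_1(-,-)$ (relative fundamental groupoid with respect to a chosen set of basepoints). First I would recall the general principle: for a topological operad $\mathcal{X}$ equipped with a morphism $\iota\colon S\hookrightarrow \mathcal{X}$ from a discrete operad $S$ (the basepoints), the relative fundamental groupoid $\Pi_1(\mathcal{X},S)$ has $S(n)$ as its set of objects in arity $n$, and $\mathrm{Hom}(a,b)$ the set of homotopy classes of paths in $\mathcal{X}(n)$ from $\iota(a)$ to $\iota(b)$; the operad structure is inherited from $\mathcal{X}$ on morphisms and from $S$ on objects (this uses that $\Pi_1$ is strong monoidal on well-pointed spaces, so it takes operads to operads). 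Applying this to $\iota\colon \mathcal{P}a\hookrightarrow\overline{\mathrm{C}}(\mathbb{C})$ of \cref{lemme: morphisme topo operades}, the object set of $\Pi_1(\overline{\mathrm{C}}(\mathbb{C}),\mathcal{P}a)$ in arity $n$ is $\mathcal{P}a(n)$, which is exactly $\mathrm{Ob}(\pab(n))$ by construction of $\pab$ as the fake pull-back $\varphi^*\cob$. So the isomorphism is already forced to be the identity on objects, and the content is entirely in the morphism spaces.

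Next I would identify the morphism spaces. Fix two parenthesized permutations $a,b$ with underlying permutations $\sigma,\tau\in\mathbb{S}_n$. On the $\pab$ side, $\mathrm{Hom}_{\pab(n)}(a,b)=\mathrm{Hom}_{\cob(n)}(\sigma,\tau)$, the set of braids from $\sigma$ to $\tau$, which by the standard identification of configuration-space $\pi_1$'s is a torsor over $\mathrm{PB}_n=\pi_1(\mathrm{Conf}_n(\mathbb{C}))$. On the topological side, $\mathrm{Hom}(a,b)$ is the set of homotopy classes of paths in $\overline{\mathrm{C}}(\mathbb{C},n)$ from $\iota(a)$ to $\iota(b)$; since $\overline{\mathrm{C}}(\mathbb{C},n)$ has the homotopy type of $\mathrm{C}(\mathbb{C},n)=\mathrm{Conf}_n(\mathbb{C})/(\mathbb{C}\ltimes\mathbb{R}_{>0})$, and the latter is a $K(\mathrm{PB}_n,1)$ (the quotient by the free, proper $\mathbb{C}\ltimes\mathbb{R}_{>0}$-action does not change the fundamental group and kills nothing else since $\mathrm{Conf}_n(\mathbb{C})$ is aspherical), the path set is again a $\mathrm{PB}_n$-torsor. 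The key geometric input is that the basepoints $\iota(a)$, being genuinely distinct real configurations (with infinitesimally-close clusters recorded by the Fulton--MacPherson strata), pick out compatible basepoints, and that a braid from $\sigma$ to $\tau$ is literally a homotopy class of paths between these configurations — this is the content of the pictorial correspondence drawn after \cref{lemme: morphisme topo operades}. I would make this precise by sending an elementary braid generator to the corresponding path that swaps two adjacent clusters, and checking it is a bijection of $\mathrm{PB}_n$-torsors (equivalently: it is $\mathrm{PB}_n$-equivariant and nonempty, hence bijective).

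Finally I would check operad-structure compatibility: the composition maps on objects agree because $\varphi\colon\mathcal{P}a\to\cob$ and $\iota\colon\mathcal{P}a\to\overline{\mathrm{C}}(\mathbb{C})$ both restrict the operad structure of $\mathcal{P}a$; the composition maps on morphisms agree because insertion of braids (with relabeling of strands) is exactly the operadic boundary-gluing $\gamma_{\mathrm{J}_1,\dots,\mathrm{J}_k}$ read at the level of $\pi_1$, the Fulton--MacPherson strata being precisely where one braid is "inserted into a strand" of another. The $\mathbb{S}_n$-equivariance is immediate since both sides carry the action permuting strands/points. The main obstacle — and the step I expect to require the most care — is the precise comparison of the composition on \emph{morphisms}: one must verify that the homotopy class obtained by gluing paths along a boundary stratum $\overline{\mathrm{C}}(\mathbb{C},[k])\times\prod_j\overline{\mathrm{C}}(\mathbb{C},\mathrm{J}_j)\hookrightarrow\overline{\mathrm{C}}(\mathbb{C},\mathrm{J}_1\sqcup\cdots\sqcup\mathrm{J}_k)$ coincides, under the torsor identification, with the combinatorially-defined insertion-of-braids operation in $\cob$; this is a compatibility between a topological collar/gluing construction and a purely diagrammatic one, and getting the conventions (orientations of braids, which side the inserted cluster sits on, the concatenation-versus-composition convention of \S\ref{ssec-conventions}) to match is where the real work lies. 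Once that diagram commutes, functoriality of $\Pi_1$ packages everything into the claimed isomorphism of operads in groupoids.
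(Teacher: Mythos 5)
Your proposal is correct and follows essentially the same route as the paper's (sketched) proof: take the identity on objects, then identify the morphism spaces with the pure braid group $\mathrm{PB}_n$ by sending the standard generators to explicit loops/paths of configurations, using that $\overline{\mathrm{C}}(\mathbb{C},n)$ has the homotopy type of $\mathrm{Conf}_n(\mathbb{C})$. The only difference is one of emphasis: you spell out the torsor identification of general hom-sets and the compatibility of boundary-stratum gluing with insertion of braids, which the paper's sketch leaves implicit.
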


\begin{proof}[Sketch of proof]
Both of these operads in groupoids have the same underlying operad of objects, so we can consider the identity morphism on objects. Therefore what is left is to 
identify the automorphism groups on each side. Take the leftmost parenthesization of the trivial permutation $(((12)3) \cdots n)$ as an object, one can construct 
an explicit isomorphism 

\[
\mathrm{PB}_n \cong \mathrm{Aut}_{\pab(n)}((((12)3) \cdots n)) \cong \pi_1(\overline{\mathrm{C}}(\mathbb{C}),\iota(((12)3) \cdots n))
\]
\vspace{0.1pc}

by sending the generator $x_{ij}$ to the loop where the $i$-th point travels around only the $j$-th point and then goes back to its original position. 
\end{proof}

\newpage


\section{Cyclotomic associators}\label{section-cyclotomic}

\subsection{Motivation}\label{ssec-cyclotomic-motivation}

\subsubsection*{A variation on the quantization problem}

Let $\mathfrak{g}$ be a Lie algebra and let $\sigma: \mathfrak{g} \to \mathfrak{g}$ be an automorphism of order $N$. It defines a Lie subalgebra 
of fixed points $\mathfrak{h} \coloneqq \mathfrak{g}^\sigma$. One can always get a decomposition $\mathfrak{g} \cong \mathfrak{h} \oplus \mathfrak{m}$ 
as an $\mathfrak{h}$-module, where $\mathfrak{m}$ is the direct sum of the other eigenspaces of $\sigma$. Let $t$ be a $\sigma$-invariant Casimir element of 
$\mathfrak{g}$; it decomposes as $t = t_\mathfrak{h} + t_\mathfrak{m}~,$ where $t_\mathfrak{h}\in S^2(\mathfrak{h})$ and 
$t_\mathfrak{m}\in S^2(\mathfrak{m})$. 

\medskip

The category $\mathsf{Rep}\left(\mathfrak{U}(\mathfrak{g})\rtimes \Gamma\right)$, where $\Gamma:=\mathbb{Z}/N\mathbb{Z}$, is a symmetric monoidal category, 
thus a braided monoidal category. One can show that the category $\mathsf{Rep}(\mathfrak{h})$ is in fact a \textit{braided module category} 
over the braided monoidal category $\mathsf{Rep}\left(\mathfrak{U}(\mathfrak{g})\rtimes \Gamma\right)$. 
See \cite{Enriquez07} and \cite{Brochier13} for more on this notion. 
Equivalently, one can say that $\mathsf{Rep}(\mathfrak{h})$ is a $\sigma$-braided module category over $\mathsf{Rep}(\mathfrak{g})$ in the sense of 
\cite{deCommer}. 

\medskip

We have already seen that $t$ can be used to define a first order deformation of 
$\mathsf{Rep}(\mathfrak{g})$ as a braided module category. 
Similarly, following \cite{Enriquez07,Brochier13,deCommer}, $t_\mathfrak{h}$ 
can be used to define a first order deformation of $\mathsf{Rep}(\mathfrak{h})$ as a $\sigma$-braided module category over (the above mentionned first order 
deformation of) $\mathsf{Rep}(\mathfrak{g})$. 
Thus, \cref{Problem 1} of the introduction can be extended to this setting, and stated very shortly as follows: can one extend this first order deformation 
into a formal deformation over $\kk[[\hbar]]$?

\medskip

\textit{Cyclotomic associators} provide a universal answer to this type of deformation quantization problems analogously 
to classical associators before.

\subsubsection*{Universal reformulation}

Let us be a bit more specific about the meaning of ``a universal answer'' in the previous paragraph. Recall the morphisms 
\[
\varphi_t^n:\mathfrak{U}(\mathfrak{t}_n)\to \big(\mathfrak{U}(\mathfrak{g})^{\otimes n}\big)^{\mathfrak{g}}\cong \mathsf{End}(\otimes^n)\,,
\]
that are compatible with insertion-coproduct morphisms. There is a cyclotomic version $\mathfrak{t}_n^\Gamma$, $\Gamma:=\mathbb{Z}/N\mathbb{Z}$, 
of the Drinfeld--Kohno graded Lie algebra (see Definition \ref{def: cyclotomic Kohno-Drinfeld} below). It comes along with algebra morphisms 
\[
\varphi_{t,\sigma}^n:\mathfrak{U}(\mathfrak{t}_n^\Gamma)\to 
\Big(\mathfrak{U}(\mathfrak{h})\otimes\big(\mathfrak{U}(\mathfrak{g})\rtimes\Gamma\big)^{\otimes n}\Big)^{\mathfrak{h}}
\cong \mathsf{End}(\odot^n)~,
\]
where the functor 
$\odot^n:\mathsf{Rep}(\mathfrak{h})\times\mathsf{Rep}\left(\mathfrak{U}(\mathfrak{g})\rtimes \mathbb{Z}/N\mathbb{Z}\right)^n\to \mathsf{Rep}(\mathfrak{h})$ 
is the $n$-fold module structure of $\mathsf{Rep}(\mathfrak{h})$. 

It turns out that the natural transformations and relations defining a braided module category sit in $\mathsf{End}(\odot^n)$ and can be expressed using 
insertion-coproduct type morphisms. Such insertion-coproduct morphisms also exist on the cyclotomic Drinfeld--Kohno Lie agebras $\mathfrak{t}_n^\Gamma$, and 
the morphisms $\varphi_{t,\sigma}^n$ are compatible with them. Therefore, a universal solution to the quantization problem is a solution that lives at the level 
of $\mathfrak{U}(\mathfrak{t}_n^\Gamma)$ (in fact, it lives in its degree completion, see \cref{completed variant}). 

\subsubsection*{Number theoretic aspects}

As mentioned in the previous Section, the absolute Galois group $\mathrm{Gal}(\overline{\mathbb{Q}}/\mathbb{Q})$ embeds into the pro-finite version 
$\widehat{\mathrm{GT}}$ 
of the Grothendieck--Teichmüller group. Cyclotomic associators provide new versions of the pro-finite Grothendieck--Teichmüller group, and the absolute Galois groups 
$\mathrm{Gal}(\overline{\mathbb{Q}}/\mathbb{Q}(\mu_N))$ embed into these. Here $\mathbb{Q}(\mu_N)$ stands for the $N$-th cyclotomic extension of 
$\mathbb{Q}$, and $\mu_N$ is the group of $N$-th roots of $1$. 

\subsubsection*{Motivic aspects}

Enriquez's proof \cite{Enriquez07} of the existence of a cyclotmic associator is also motivic, as an explicit cyclotomic associator $\Psi_{KZ}$ is constructed as 
the regularized holonomy of an algebraic flat connection defined on $\mathbb{P}^1-(\{0,+\infty\}\cup \mu_N)$ that generalizes the KZ connection. 
The cyclotomic associator $\Psi_{KZ}$ is in fact a generating series for special values of multiple polylogarithms at $N$-th roots of $1$ (see \textit{loc.cit}). 
These periods, sometimes called $N$-coloured MZV, have very interesting combinatorial \cite{Racinet} and motivic 
\cite{Goncharov-cyclotomy,Goncharov-dihedral,goncharov2001multiple,Goncharov-ICM,Deligne-Goncharov} features.   

\subsection{Moperads}\label{ssec-Moperads}

Let $(\mathcal{E},\otimes,\mathbb{1})$ be a cocomplete symmetric monoidal category such that the monoidal product $\otimes$ commutes with small colimits 
in each variable. The category of symmetric sequences $\mathbb{S}\text{-}\mathsf{mod}_{\mathcal{E}}$ has a monoidal structure other than the pleythism, which 
is symmetric and is given by
\[
 (M \circledast N)(n) \coloneqq \displaystyle \amalg_{p + q = n} \left(M(p) \otimes N(q)\right)_{\mathbb{S}_p \times \mathbb{S}_q}^{\mathbb{S}_n}~,
\]
for $M,N$ two symmetric sequence in $\mathcal{E}$. The unit for this symmetric monoidal structure is given by the symmetric sequence $\mathbb{1}_\circledast$ which 
is $\mathbb{1}$ in arity zero and zero elsewhere. This product is compatible with the $\circ$-product in the following way: there is a natural transformation
\[
\varepsilon_{M,N,L}: M \circledast (N \circ L) \longrightarrow (M \circledast N) \circ L~,
\]
for any triple of symmetric sequences $(M,N,L)$ which is given by assembling the evident maps 
\[
\begin{tikzcd}
M(p) \otimes (N(q) \otimes L(i_1) \otimes \cdots \otimes L(i_q)) \arrow[d] \\
(M(p) \otimes N(q)) \otimes L(i_1) \otimes \cdots \otimes L(i_q) \otimes L(1) \otimes \cdots \otimes L(1) 
\end{tikzcd}
\]
\begin{definition}[Moperad] 
Let $\mathcal{P}$ be an operad. A \textit{moperad} $(M, \gamma_M, \eta, \gamma_\mathcal{P})$ over $\mathcal{P}$ is the data of a symmetric sequence $M$ such that
\begin{enumerate}
\medskip

\item $(M, \gamma_M, \eta)$ forms a unital monoid for the $\circledast$ product.

\medskip

\item $(M,\gamma_\mathcal{P},\eta)$ is a right module over the operad $\mathcal{P}$.

\medskip

\end{enumerate}
These structures make the following diagram commutes
\[
\begin{tikzcd}
M \circledast (M \circ \mathcal{P}) \arrow[rr,"\gamma_\mathcal{P}"] \arrow[d,"\varepsilon_{M,M,\mathcal{P}}",swap]
&
&M \circledast M \arrow[d,"\gamma_M"] \\
(M \circledast M) \circ \mathcal{P} \arrow[r,"\gamma_M"]
&M \circ \mathcal{P} \arrow[r,"\gamma_\mathcal{P}"]
&M~.
\end{tikzcd}
\]
\end{definition}

\begin{remark}
This structure was introduced for the first time in \cite{Willwachermoperads}. See \textit{loc.cit} or \cite{calaque20moperadic}.
\end{remark}

This structure can be rephrased in terms of partial composition maps as the data of
\[
\begin{tikzcd}[column sep=1.5pc,row sep=0pc]
\left\{
    \arraycolsep=1.4pt\def\arraystretch{2.2}\begin{array}{ll}
        \circ_0: M(k) \otimes M(m) \longrightarrow M(k+m)~, \\
        \circ_i: M(k) \otimes \mathcal{P}(m) \longrightarrow M(k+m-1)~~ \text{for}~~ 1 \leq i \leq k, \\
    \end{array}
\right.
\end{tikzcd}
\]
which satisfy analogous compatibility conditions. One can visualize these compositions as follows: operations in $M(k)$ are represented as rooted trees with $k+1$-leaves, where the up most left leaf is fixed and not acted upon by the symmetric group $\mathfrak{S}_k$. This leaf is called the $0$-th leaf. The compositions $\circ_0$ amount to inserting operations in $M$ into the $0$-th leaf, and the compositions $\circ_i$ amount to inserting operations in $\mathcal{P}$ into the other, non-fixed, leaves. Pictorially it gives 

\[
\includegraphics[width=120mm,scale=1]{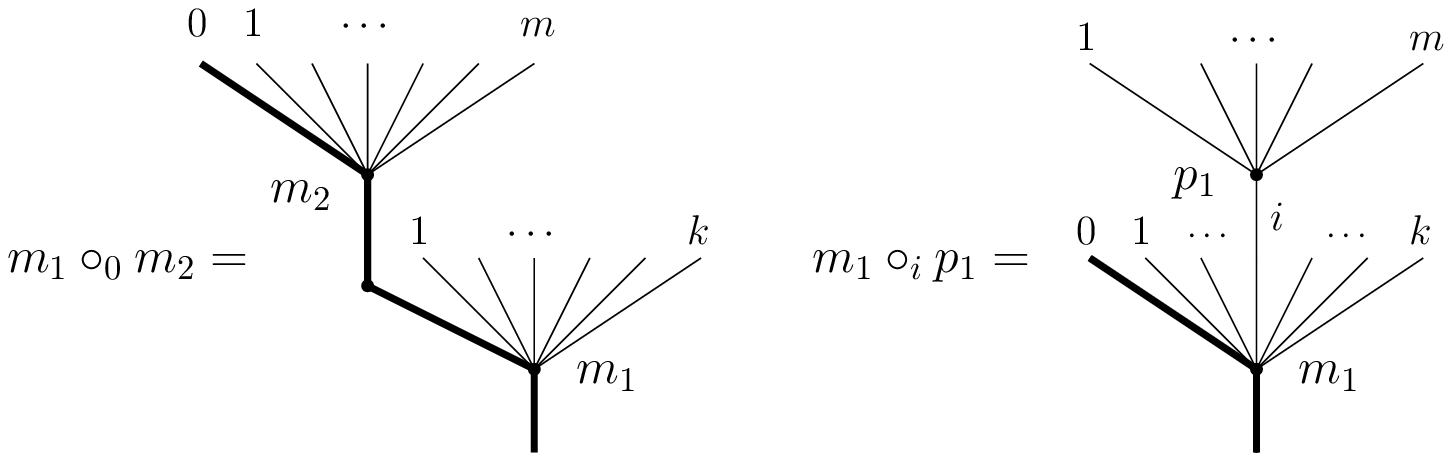}~,
\]

for $m_1$ an operation in $M(k)$, $m_2$ an operation in $M(m)$ and $p_1$ an operation in $\mathcal{P}(m)$.

\subsubsection*{Moperads as the derivatives of operads}
Symmetric sequences in $\mathcal{E}$ can equivalently be described as \textit{species of structures}, that is, as functors 
\[
M: \mathrm{Fin}_{\mathrm{bij}}^{\mathrm{op}} \longrightarrow \mathcal{E}
\]
from the category of finite sets with bijections to the category $\mathcal{E}$. The theory of species of structures was introduced by Joyal in \cite{Joyalspecies}. We refer to \cite{Bergeronspecies} for more details.  

\begin{remark}
In order to extend a symmetric sequence to a specie of structures, there is a canonical choice given by 
\[
M(I) \coloneqq \left(\coprod_{f: I \longrightarrow [n]} M(n)\right)_{\mathfrak{S}_n}~,
\]
where $I$ is a finite set with $n$ elements, $[n] = \{1,\cdots,n\}$ and the sum is over all bijections $f$. The notion of an operad and a moperad can be extended to species of structures in a straightforward way.
\end{remark}

\begin{definition}[Derivative of a specie of structures]
Let 
\[
M: \mathrm{Fin}_{\mathrm{bij}}^{\mathrm{op}} \longrightarrow \mathcal{E}
\]
be a specie of structure in $\mathcal{E}$. Its \textit{derivative} $M'$ is the specie of structures whose image is given by 
\[
M'(I) \coloneqq M(I \sqcup \{*\})
\]
for any finite set $I$.
\end{definition}

The derivatives are a classical operation on species of structures: if the specie happens to be an operad, then its derivative is in fact a moperad over it.

\begin{proposition}\label{prop: derived moperad of an operad}
Let $\mathcal{P}$ be a specie of structure in $\mathcal{E}$ endowed with an operad structure. Then $\mathcal{P}'$ is canonically endowed with a moperad structure over $\mathcal{P}$ given as follows.

\medskip

\begin{enumerate}
\item The monoid structure
\[
\circ_0: \mathcal{P}'(I) \otimes \mathcal{P}'(J) \longrightarrow \mathcal{P}'(I \sqcup J)
\]
is given by the operadic composition 
\[
\circ_{\{*\}}: \mathcal{P}(I \sqcup \{*\}) \otimes \mathcal{P}(J \sqcup \{*\}) \longrightarrow \mathcal{P}(I \sqcup J \sqcup  \{*\})
\]
at the added point $\{*\}$.

\medskip

\item The right module structure 
\[
\circ_i: \mathcal{P}'(I) \otimes \mathcal{P}(J) \longrightarrow \mathcal{P}'((I-\{i\}) \sqcup J)
\]
is given by the operadic composition 
\[
\circ_{i}: \mathcal{P}(I \sqcup \{*\}) \otimes \mathcal{P}(J) \longrightarrow \mathcal{P}((I-\{i\}) \sqcup J \sqcup \{*\})
\]
at the point $i$ in $I$.
\end{enumerate}
\end{proposition}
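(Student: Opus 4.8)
The plan is to verify directly that the two structures defined on $\mathcal{P}'$ satisfy the axioms of a moperad over $\mathcal{P}$, by reducing each axiom to an instance of the operad axioms (associativity, unitality, equivariance) for $\mathcal{P}$ itself, applied in the enlarged set $I \sqcup \{*\}$. First I would check that $(\mathcal{P}',\circ_0)$ is a unital monoid for the $\circledast$-product: associativity of $\circ_0$ is precisely the associativity of the operadic composition $\circ_{\{*\}}$ when one composes three times at the distinguished marked point, where the key observation is that the marked point of the output is again the marked point supplied by the \emph{outermost} factor, so all three compositions happen ``at $*$'' in a coherent way. The unit is the unit of $\mathcal{P}(\{*\}) = \mathcal{P}(1)$, viewed in $\mathcal{P}'(\emptyset)$, and unitality follows from the operad unit axioms. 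Equivariance with respect to bijections of $I$ (species functoriality) is immediate since all constructions are natural in the unmarked part of the finite set.

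Next I would check that $(\mathcal{P}', \circ_i)$ is a right module over $\mathcal{P}$: the axioms here are the module-associativity relations, namely $(m \circ_i p) \circ_j q$ versus $m \circ_i (p \circ_{\ast} q)$ for appropriate indices, plus the parallel/sequential compatibility for $\circ_i$ and $\circ_{i'}$ at distinct points $i, i' \in I$. Each of these is a direct translation of the corresponding operad associativity axiom for $\mathcal{P}$, performed entirely among the unmarked points of $I \sqcup \{*\}$ --- the marked point $*$ is a passive spectator in these composites, so the relations transport verbatim. Here I would simply cite the operad axioms for $\mathcal{P}$ as a species and note that the presence of the extra input $*$ does not interfere.

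Finally, and this is the only step requiring genuine care, I would verify the compatibility hexagon between $\gamma_M = \circ_0$ and $\gamma_{\mathcal{P}} = \circ_i$, i.e.\ the commutative square in the definition of a moperad, expressed via $\varepsilon_{M,M,\mathcal{P}}$. Concretely this asserts that inserting $m_2 \in \mathcal{P}'(J)$ into the $0$-th leaf of $m_1 \in \mathcal{P}'(I)$ and then grafting $p \in \mathcal{P}(K)$ onto a leaf $i \in I$ agrees with the other order of operations (graft first, then insert at $*$). In terms of $\mathcal{P}$ this is the single operad-associativity relation comparing $(x \circ_* y) \circ_i z$ with $(x \circ_i z) \circ_* y$ for inputs $x = m_1 \in \mathcal{P}(I \sqcup \{*\})$, $y = m_2 \in \mathcal{P}(J \sqcup \{*\})$, $z = p \in \mathcal{P}(K)$, where $*$ and $i$ are distinct inputs of $x$; this is exactly the ``parallel composition'' form of associativity, which holds in any operad. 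I expect the main obstacle to be purely bookkeeping: matching the index sets on the nose (keeping track of how $I - \{i\}$, $J$, $K$ and the travelling marked point $*$ reassemble on both sides) and confirming that the map $\varepsilon$ in the definition is precisely the reindexing implicit in this identification. Once the index-set dictionary is set up, every axiom is a one-line appeal to a corresponding axiom of $\mathcal{P}$, and naturality in $\mathrm{Fin}_{\mathrm{bij}}^{\mathrm{op}}$ handles equivariance throughout.
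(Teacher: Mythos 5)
Your proposal is correct and takes essentially the same route as the paper, whose proof is a one-line observation that the associativity conditions of the operad structure on $\mathcal{P}$ induce those of the moperad structure on $\mathcal{P}'$; your write-up simply makes this explicit axiom by axiom. The only minor point to watch is that in the compatibility square the $\mathcal{P}$-inputs carried by $M\circ\mathcal{P}$ sit on the leaves of the \emph{second} factor $m_2\in\mathcal{P}'(J)$ (with units inserted at the leaves of $m_1$ via $\varepsilon$), so the relevant identity is the sequential form $(x\circ_* y)\circ_j z = x\circ_*(y\circ_j z)$ with $j\in J$, alongside the parallel form you state for $i\in I$; both are instances of operad associativity in $\mathcal{P}$, so your reduction goes through unchanged.
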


\begin{proof}
It is straightforward to check that the associativity conditions on the operad structure of $\mathcal{P}$ induce those of the moperad structure on $\mathcal{P}'$.
\end{proof}

\begin{example}
Let $\mathcal{M}\mathcal{U}nit$ be the symmetric sequence given for all $n$ by $\mathcal{M}\mathcal{U}nit(n) \coloneqq \mathbb{1}$, then $\mathcal{M}\mathcal{U}nit$ forms a moperad over $\mathcal{U}nit$ with the obvious compositions maps. One can easily see that $\mathcal{M}\mathcal{U}nit$ is the derivative of the operad 
$u\mathcal{C}om$, defined by $u\mathcal{C}om(n) \coloneqq \mathbb{1}$, encoding unital commutative algebras in $\mathcal E$. \hfill$\triangle$
\end{example}

Recall our convention that all operads $\mathcal{P}$ come equipped with a morphism of operads $f: \mathcal{U}nit \longrightarrow \mathcal{P}$ which is an 
isomorphism in arities zero and one, that is, our operads are \textit{pointed reduced}. We will use the same convention for moperads: any moperad $M$ over an 
operad $\mathcal{P}$ will come with a morphism of moperads $\varphi_f: \mathcal{M}\mathcal{U}nit \longrightarrow M$ over the above mentioned morphism $f$. 
This implies that there is a distinguished morphism of symmetric sequences $\mathcal{P} \longrightarrow M$ for all the moperads considered from now on. 

\begin{example}\label{example: moperad Pa0}
The moperad $\mathcal{P}a_0$ over the operad of parenthesized permutations $\mathcal{P}a$ in the category of sets is defined as follows: the set 
$\mathcal{P}a_0(n)$ is given by the set of maximal parenthesization of $0\hspace{1pt}\sigma_1 \cdots \sigma_n$, where $\sigma_1 \cdots \sigma_n$ is a 
permutation of $\mathfrak{S}_n$. The right module structure of $\mathcal{P}a_0$ over $\mathcal{P}a$ is given by the insertion of parenthesized permutations 
on the right of the zero element: $(0\hspace{1pt}\sigma) \circ_i \tau \coloneqq 0\hspace{1pt }\sigma \circ_i \tau~ ,$ for any parenthesized permutation $\tau$. 
For example,
\[
(01)2 \circ_1 (13)2 = (0(13)2)4~.
\]
The moperad structure is given by
\[
0\hspace{1pt}\sigma \circ_0 0\hspace{1pt}\tau \coloneqq 0\hspace{1pt}\tau \sigma~,
\]
where one inserts $0\hspace{1pt}\tau$ together with its maximal parenthesization into the zero spot of the fist maximally parenthesized word 
$0\hspace{1pt}\sigma$, giving another maximally parenthesized word. For example,
\[
(01)2 \circ_0 0(3(21)) = (0(3(21))4)5~.
\]
The inclusion of $\mathcal{M}\mathcal{U}nit$ into $\mathcal{P}a_0$ sends $\mathcal{M}\mathcal{U}nit(n) = \{*\}$ into the leftmost parenthesization 
of $0\hspace{1pt}\mathrm{id}_n=01\cdots n$, where $\mathrm{id}_n$ is the trivial permutation in $\mathfrak{S}_n$. One can directly see that it forms a moperad by applying Proposition \ref{prop: derived moperad of an operad} to $\mathcal{P}a$ (indeed, $\mathcal{P}a_0=\mathcal{P}a'$). \hfill$\triangle$
\end{example}

\begin{example}
The moperad $\mathcal{C}o\mathcal{B}^{(1)}$ over the operad $\mathcal{C}o\mathcal{B}$ from \cref{example: cob}, is defined as follows: 
\begin{enumerate}
\item The objects of $\mathcal{C}o\mathcal{B}^{(1)}(n)$ are given by words $0\hspace{1pt}\sigma$, where $\sigma$ is a permutation of $\mathfrak{S}_n$. 
For instance, $0321$ is in $\mathrm{Ob}(\mathcal{C}o\mathcal{B}^{(1)}(3))$.

\medskip

\item The morphisms of $\mathcal{C}o\mathcal{B}^{(1)}(n)$ are given, for two words $0\hspace{1pt}\sigma$ and $0\hspace{1pt}\tau$, by the set of 
braids going from $0\hspace{1pt}\sigma$ to $0\hspace{1pt}\tau$, where the zeroth strand is frozen. Pictorially, we have: 

\[
\includegraphics[width=80mm,scale=1]{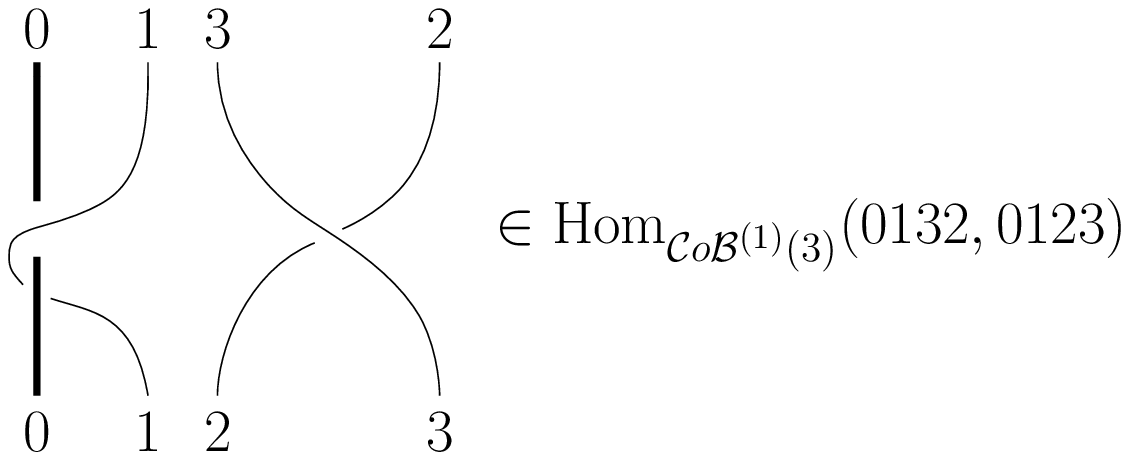}
\]

\item The right module structure of $\mathcal{C}o\mathcal{B}^{(1)}$ over $\mathcal{C}o\mathcal{B}$ is given by the composition of braids on the last $n$ spots. 
The monoid structure of $\mathcal{C}o\mathcal{B}^{(1)}$ is given by the insertion of the second braid into the zeroth spot of the first braid. The morphism from 
$\mathcal{M}\mathcal{U}nit$ to $\mathcal{C}o\mathcal{B}^{(1)}$ sends $\{*\}$ to $0 \hspace{1pt} \mathrm{id}_n$. 
\end{enumerate}
One can directly see that it forms a moperad by applying Proposition \ref{prop: derived moperad of an operad} to $\mathcal{C}o\mathcal{B}$ (indeed, $\mathcal{C}o\mathcal{B}^{(1)}=\mathcal{C}o\mathcal{B}'$). \hfill$\triangle$
\end{example}

Consider $\mathcal{P}a_0$ as a moperad over $\mathcal{P}a$ in groupoids where there are no non-trivial morphisms between objects. There is a morphism of moperads in sets $\psi: \mathrm{Ob}(\mathcal{P}a_0) \longrightarrow \mathrm{Ob}(\mathcal{C}o\mathcal{B}^{(1)})$ which lies above the morphism of operads in sets $\varphi: \mathrm{Ob}(\mathcal{P}a) \longrightarrow \mathrm{Ob}(\mathcal{C}o\mathcal{B})$. Both are given by forgetting the parenthesization. 

\begin{definition}[Moperad of parenthesized braids]
The \textit{moperad of parenthesized braids} $\pab^{(1)}$ over the operad in groupoids $\pab$ is defined to be the fake pull-back of $\mathcal{C}o\mathcal{B}^{(1)}$ along the morphism $\psi$.
\end{definition}

\begin{remark}
It is straightforward to check that the fake pull-back of \cref{def: fake-pullback} generalizes to moperads. 
\end{remark}

\begin{remark}
The moperad $\pab^{(1)}$ can also directly be obtained by applying Proposition \ref{prop: derived moperad of an operad} to $\pab$: $\pab^{(1)}=\pab'$. 
\end{remark}

\subsection{Moperads of parenthesized cyclotomic braids}\label{ssec-3.3}

\subsubsection*{Group actions on moperads}

Consider the terminal operad  $u\mathcal{C}om$ in the category of sets, given by $u\mathcal{C}om(n) = \{*\}$ for all $n \geq 0$ together with the obvious 
compositions. For any group $G$, one can defined a moperad $\underline{G}$ over $u\mathcal{C}om$ as follows
\begin{enumerate}
\medskip

\item The set $\underline{G}(n) \coloneqq G^{\times n}$, together with the evident $\mathfrak{S}_n$ action.

\medskip

\item The right module structure over $u\mathcal{C}om$ is given by 
\[
\begin{tikzcd}[column sep=1.5pc,row sep=0pc]
G^{\times n} \cong G^{\times n} \times u\mathcal{C}om(m) \arrow[r,"\circ_i^G"]
&G^{\times n + m -1} \\
(g_1,\cdots, g_n) \arrow[r,mapsto] 
&(g_1, \cdots, g_{i-1}, g_i, \cdots, g_i, g_{i+1}, \cdots, g_n)~,
\end{tikzcd}
\]
where we have $m$ copies of the element $g_i$, for $1 \leq i \leq n$.

\medskip

\item The monoid structure maps $\circ_0^G: G^{\times n} \times G^{\times m} \longrightarrow G^{\times (n+m)}$ are given by the evident isomorphism. 

\medskip
\end{enumerate}

The data of $\underline{G}$ over $u\mathcal{C}om$ actually forms a moperad in the category of groups, as all the above maps are compatible with the group structures.

\begin{definition}[$G$-action on moperads]\label{def: G-action moperad}
Let $M$ be a moperad over an operad $\mathcal{P}$ in the category of sets. The data of a $G$-action on the moperad $M$ is the data of a morphism of moperads over $\mathcal{P}$
\[
\gamma_G: \underline{G}\times M \longrightarrow M~,
\]
where $\underline{G}\times M$ is a moperad over $u\mathcal{C}om \times \mathcal{P} \cong \mathcal{P}$ under the natural identification. The structural morphism $\gamma_G$ satisfies the condition imposed by the commutativity of the following diagram
\[
\begin{tikzcd}[column sep=3pc,row sep=3pc]
\underline{G} \times \underline{G} \times M \arrow[r,"\mathrm{id} \times \gamma_G"]  \arrow[d,swap,"\mathrm{Mult}_{\underline{G}} \times \mathrm{id}_n"] 
&\underline{G} \times M \arrow[d,"\gamma_G"]\\
\underline{G} \times M \arrow[r,"\gamma_G"]
&M~,
\end{tikzcd}
\]
where the morphism $\mathrm{Mult}_{\underline{G}}$ is given in arity $n$ by the group multiplication of $G^{\times n}$.
\end{definition}

\begin{remark}
The data of a $G$-action on a moperad $M$ over $\mathcal{P}$ amounts to the data of an $\mathfrak{S}_n$-equivariant left $G^{\times n}$-action on $M(n)$ such that the right module structure maps $\{\circ_i\}$ are $\{\circ_i^G\}$-equivariant and such that the monoid maps $\{\circ_0\}$ is $G^{\times n+m}$-equivariant.
\end{remark}

\subsubsection*{Semi-direct products.}

Let $G$ be a group. A $G$-groupoid is the data of a groupoid $\mathcal{D}$ together with a morphism of groups from $G$ to the automorphisms group 
$\mathrm{Aut}_{\mathsf{Grpd}}(\mathcal{D})$ of the groupoid $\mathcal{D}$; hence each element $g$ of $G$ induces an automorphism of groupoids of 
$\mathcal{D}$ which will be denoted by $g \star - $. Together with $G$-equivariant morphisms, $G$-groupoids form a category, denoted by 
$G\text{-}\mathsf{Grpd}$. We denote $\mathrm{B}G$ the classifying groupoid of $G$, which has only one object $\{*\}$, whose automorphism group is precisely $G$.

\begin{definition}[Semi-direct product]
The \textit{semi-direct product} is a functor
\[
\begin{tikzcd}[column sep=1.5pc,row sep=0pc]
G\text{-}\mathsf{Grpd} \arrow[r]
&\mathsf{Grpd}/\mathrm{B}G \\
\mathcal{D} \arrow[r,mapsto] 
&\mathcal{D} \rtimes G~.
\end{tikzcd}
\]
The semi-direct product $\mathcal{D} \rtimes G$, which is a groupoid above the groupoid $\mathrm{B}G$, is the groupoid defined as follows:
\begin{enumerate}
\medskip

\item The object of $\mathcal{D} \rtimes G$ are the objects of $\mathcal{D}$.

\medskip

\item Let $d$ be in $\mathrm{Ob}(\mathcal{D} \rtimes G)$ and $g$ be in $G$, then 
\[
\mathrm{Hom}_{\mathcal{D} \rtimes G}(g \star d, d) \coloneqq \mathrm{Hom}_{\mathcal{D}}(g \star d, d) \amalg \{\varepsilon_g\}~,
\]
that is, there is a free added arrow $\varepsilon_g: g \star d \longrightarrow d$ for all $d$ and all $d$. The set of free added arrows satisfies the following conditions:
\begin{enumerate}
\medskip

\item For any $g,h$ in $G$ and object $d$:
\[
\left(\begin{tikzcd}[column sep=1.5pc,row sep=0pc]
g \star (h \star d) \arrow[r,"\varepsilon_g"]
&h \star d \arrow[r,"\varepsilon_h"]
&d
\end{tikzcd}\right)
~=~ 
\left(\begin{tikzcd}[column sep=1.5pc,row sep=0pc]
(g.h) \star d \arrow[r,"\varepsilon_{g.h}"]
&d~,
\end{tikzcd}\right)
\]
where $g \star d$ denotes the image of the object $d$ via the functor $g \star -$. 

\medskip

\item For any morphism $f:d \longrightarrow e$ in $\mathcal{D}$ and any $g$ in $G$:
\[
\left(\begin{tikzcd}[column sep=1.5pc,row sep=0pc]
g \star d \arrow[r,"\varepsilon_g"]
&d \arrow[r,"f"]
&e \arrow[r,"(\varepsilon_{g})^{-1}"]
&g \star e
\end{tikzcd}\right)
~=~
\left(\begin{tikzcd}[column sep=1.5pc,row sep=0pc]
g \star d \arrow[r,"g \star f"]
&g \star e~,
\end{tikzcd}\right)
\]
where $g \star f$ denotes the image of the arrow $f$ via the functor $g \star -$. 
\end{enumerate}

\medskip

\item The structural morphism $\mathcal{D} \rtimes G \longrightarrow \mathrm{B}G$ sends every object to $\{*\}$ and every arrow to $\mathrm{id}_*$ except the arrows $\{\varepsilon_g\}$, which are sent to $g$, for every $g$ in $G$. 
\end{enumerate}
\end{definition}

There is another functor, which associates to any groupoid above $\mathrm{B}G$ a groupoid endowed with a $G$-action.

\begin{definition}[Pseudo-free $G$-groupoid]
The \textit{pseudo-free} $G$-\textit{groupoid} is a functor
\[
\begin{tikzcd}[column sep=1.5pc,row sep=0pc]
\mathsf{Grpd}/\mathrm{B}G \arrow[r]
& G\text{-}\mathsf{Grpd}\\
\mathcal{Q} \arrow[r,mapsto] 
&\mathcal{G}(\mathcal{Q})~.
\end{tikzcd}
\]
Let $p: \mathcal{Q} \longrightarrow \mathrm{B}G$ be a groupoid above $\mathrm{B}G$, the free $G$-groupoid generated by $p: \mathcal{Q} \longrightarrow \mathrm{B}G$, denoted by $\mathcal{G}(\mathcal{Q})$, is defined as follows:
\begin{enumerate}
\medskip

\item The set of objects of $\mathcal{G}(\mathcal{Q})$ is given by $\mathrm{Ob}(\mathcal{Q}) \times G$. 

\medskip

\item The set of morphisms 
\[
\mathrm{Hom}_{\mathcal{G}(\mathcal{Q})}((x,g),(y,h)) \coloneqq \{f \in \mathrm{Hom}_{\mathcal{Q}}(x,y)~|~ g \bullet p(f) = h \}~,
\]
for $x,y$ in $\mathrm{Ob}(\mathcal{Q})$ and $g,h$ in $G$.

\medskip

\item The $G$-action $g \star -$ sends $(x,h)$ to $(x,gh)$ and sends any arrow $f: (x_1,h_1) \longrightarrow (x_2,h_2)$ to $f:(x_1,gh_1) \longrightarrow (x_2,gh_2)$. 
\end{enumerate}
\end{definition}

\begin{remark}
The pseudo-free $G$-construction and the semi-direct product construction do \textit{not} define adjoint functors. 
\end{remark}

\begin{remark}
Let $(x,g)$ be an object of $\mathcal{G}(\mathcal{Q})$, the automorphisms group $\mathrm{Aut}_{\mathcal{G}(\mathcal{Q})}((x,g))$ given by the automorphisms 
$f: x \longrightarrow x$ in $\mathcal{Q}$ such that $p(f) = \mathrm{id}_{p(x)}~.$
\end{remark}

\begin{example}
Let $\mathrm{B}_n$ be the braid group on $n$ strands. The morphism $\mathrm{B}_n \twoheadrightarrow \mathfrak{S}_n$ gives a morphism of groupoids 
$p:\mathrm{BB}_n \twoheadrightarrow \mathrm{B}\mathfrak{S}_n$. The free $\mathfrak{S}_n$-groupoid generated by $p$ is given by the groupoid 
$\mathcal{C}o\mathcal{B}(n)$, with its corresponding $\mathfrak{S}_n$-action. \hfill$\triangle$
\end{example}

We denote again $u\mathcal{C}om$ the terminal operad in the category of groupoids, given by the trivial groupoid in each arity. Since the classifying groupoid functor 
$\mathrm{B}(-)$ is strong monoidal, the moperad $\underline{G}$ in sets is sent to a moperad $\mathrm{B}\underline{G}$ over $u\mathcal{C}om$ in the category of 
groupoids. 

\medskip

Let $\mathcal{P}$ be an operad in groupoids. Since the above-mentioned adjunction is strong monoidal, any moperad $M$ over $\mathcal{P}$ together with a morphism 
of moperads $\phi: M \longrightarrow \mathrm{B}\underline{G}$ above the trivial morphism $\mathcal{P} \twoheadrightarrow u\mathcal{C}om$ will give, under the 
functor $\mathcal{G}(-)$, a moperad $\mathcal{G}(M)$ over $\mathcal{P}$ which comes equipped with a $\mathrm{B}\underline{G}$-action (in the sense of Definition 
\ref{def: G-action moperad} extended to the groupoid case). 

\begin{example}\label{example: cobN}
There is a morphism of moperads $\iota: \mathcal{C}o\mathcal{B}^{(1)} \longrightarrow \mathrm{B}\underline{\mathbb{Z}/N\mathbb{Z}}$ above the trivial morphism $\mathcal{C}o\mathcal{B} \twoheadrightarrow u\mathcal{C}om~.$ On objects, it sends any word $0 \hspace{1pt} \sigma$ in $\mathcal{C}o\mathcal{B}^{(1)}(n)$ to the single object $\{*\}$. On morphisms, it sends a braid between two words $0 \hspace{1pt} \sigma$ and $0 \hspace{1pt} \tau$ in $\mathcal{C}o\mathcal{B}^{(1)}(n)$ to the element $(\gamma_1, \cdots, \gamma_n)$ in $(\mathbb{Z}/N\mathbb{Z})^{\times n}$. The value of $\gamma_i$ is given by the number of loops that the $i$-th strand of the braid makes around the zeroth strand, counted module $N$. For instance, 
\[
\quad \quad \quad \quad \quad \mathcal{C}o\mathcal{B}(3) \longrightarrow \mathrm{B}(\mathbb{Z}/N\mathbb{Z})^3
\]
\vspace{0.1pc}
\[
\includegraphics[width=55mm,scale=1]{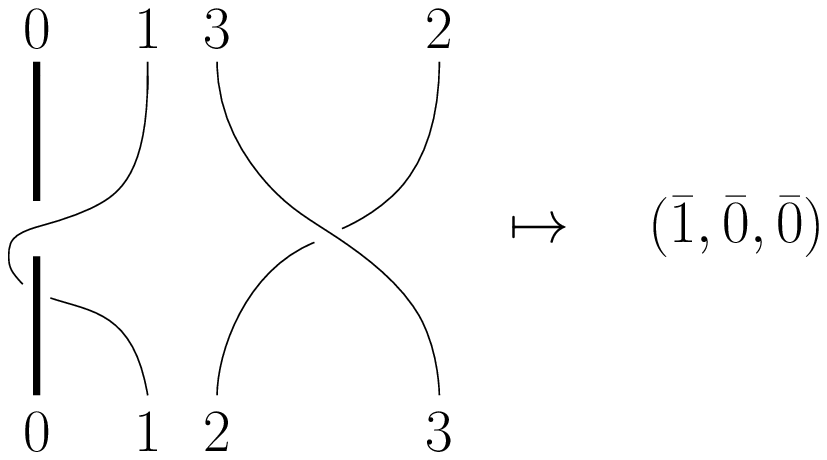}
\]
This allows us to define $\mathcal{C}o\mathcal{B}^{(N)}$ as the moperad over $\mathcal{C}o\mathcal{B}$ given, in arity $n$, by the free 
$(\mathbb{Z}/N\mathbb{Z})^n$-groupoid on the morphism $\iota(n): \mathcal{C}o\mathcal{B}^{(1)}(n) \longrightarrow \mathrm{B}((\mathbb{Z}/N\mathbb{Z})^n)$. 
Objects of $\mathcal{C}o\mathcal{B}^{(N)}(n)$ are given by words $0 \hspace{1pt} \sigma_1 \cdots \sigma_n$ labeled by $(\gamma_1, \cdots, \gamma_n)$ in 
$(\mathbb{Z}/N\mathbb{Z})^n$. We will denote them $0 \hspace{1pt} (\sigma_1)_{\gamma_1} \cdots (\sigma_n)_{\gamma_n}$. Morphisms between two labeled 
words $0 \hspace{1pt} (\sigma_1)_{\gamma_1} \cdots (\sigma_n)_{\gamma_n}$ and $0 \hspace{1pt} (\tau_1)_{\lambda_1} \cdots (\tau_n)_{\lambda_n}$ are given 
by braids which satisfy the following condition: the number of loops that the $i$-th strand does around the zeroth strand is $\lambda_i - \gamma_i$ modulo $N$ for all 
$1 \leq i \leq n$.  \hfill$\triangle$
\end{example}

Similarly to the previous example, there is also a morphism of moperads 
$\jmath: \mathcal{P}a\mathcal{B}^{(1)} \longrightarrow \mathrm{B}\underline{\mathbb{Z}/N\mathbb{Z}}$ above the 
trivial morphism $\pab \twoheadrightarrow u\mathcal{C}om$. It again sends parenthesized permutations to $\{*\}$ and braids between them to the numbers 
of loops around the zeroth strand modulo $N$.

\begin{definition}[Moperad of parenthesized cyclotomic braids]
The \textit{moperad of parenthesized braids} $\pab^{(N)}$ is the moperad over $\pab$ defined in each arity by
\[
\pab^{(N)}(n) \coloneqq \mathcal{G}\left(\jmath(n): \mathcal{P}a\mathcal{B}^{(1)}(n) \longrightarrow \mathrm{B}((\mathbb{Z}/N\mathbb{Z})^n)\right)~.
\]
Its moperad structure comes from the fact that $\jmath: \mathcal{P}a\mathcal{B}^{(1)} \longrightarrow \mathrm{B}\underline{\mathbb{Z}/N\mathbb{Z}}$ is a morphism of moperads.
\end{definition}

\begin{remark}
The description of $\mathcal{C}o\mathcal{B}^{(N)}$ given in Example \ref{example: cobN} applies \textit{mutatis mutandis} to $\pab^{(N)}$ as well. In this case, the only real difference is that objects of $\mathcal{C}o\mathcal{B}^{(N)}(n)$ are words $0 \hspace{1pt} \sigma_1 \cdots \sigma_n$ while objects in $\pab^{(N)}(n)$ are words $0 \hspace{1pt} \sigma_1 \cdots \sigma_n$ together with a \textit{maximal parenthesization}.
\end{remark}

The moperad $\widehat{\pab}^{(N)}(\kk)$ over $\widehat{\pab}(\kk)$ is obtained by applying the Malcev completion functor to the aforementioned moperad. It gives a moperad in the category of pro-unipotent $\kk$-groupoids. This will be \textit{our first cyclotomic player}. 

\subsection{Infinitesimal cyclotomic braids}\label{subsec: inf cyclo braids}
In the last section, we defined the moperad $\pab^{(N)}$ over $\pab$, which will be the first player necessary to define cyclotomic associators. Now we need the "infinitesimal version" of this moperad. From now on, we fix $\Gamma \coloneqq \mathbb{Z}/N\mathbb{Z}$.

\begin{definition}[Cyclotomic Drinfeld--Kohno algebras]\label{def: cyclotomic Kohno-Drinfeld}
The $n$-\textit{cyclotomic Drinfeld--Kohno} Lie algebra $\mathfrak{t}_n^{\Gamma}$ is given by the following presentation:
\begin{enumerate}
\item It is generated by elements $t_{0i}$ for $1\leq i \leq n$ and by elements $t_{ij}^{\alpha}$ for $1 \leq i,j \leq n$, where $i \neq j$, and for $\alpha$ in $\Gamma$. 

\medskip

\item These generators are subject to the following relations:
\begin{enumerate}
\item $t_{ij}^{\alpha} = t_{ji}^{-\alpha}~.$
\item $[t_{0i},t_{jk}^{\alpha}] = 0$ and $[t_{ij}^{\alpha},t_{kl}^{\beta}] = 0~.$
\item $[t_{ij}^{\alpha}, t_{ik}^{\alpha + \beta} + t_{jk}^{\beta}]= 0~.$
\item $[t_{0i}, t_{0j} + \sum_{\alpha \in \Gamma} t_{ij}^{\alpha}] =0~.$
\item $[t_{0i} + t_{0j} + \sum_{\beta \in \Gamma} t_{ij}^{\beta},t_{ij}^{\alpha}] =0~.$
\end{enumerate}
for all distinct $i,j,k,l$ and for all $\alpha,\beta$ in $\Gamma$.
\end{enumerate}
\end{definition}

\begin{proposition}[\cite{calaque20moperadic}]\label{cyclotomic moperad structure on Kohno-Drinfeld}
The collection $(\mathfrak{t}_n^{\Gamma})_{n\geq0}$ can be endowed with the following moperad structure over the operad 
$(\mathfrak{t}_n)_{n\geq0}$ in the symmetric monoidal category $(\mathsf{Lie},\oplus,0)$:
\begin{enumerate}
\item The action of the symmetric group $\mathfrak{S}_n$ on $\mathfrak{t}_n^{\Gamma}$ is given by :
\[
\sigma \bullet t_{ij}^{\alpha} \coloneqq t_{\sigma(i)\sigma(j)}^{\alpha} ~~ \text{and} ~~\sigma \bullet t_{0i} \coloneqq t_{0\sigma(i)}~.
\]
\item The partial composition maps $\{\circ_p: \mathfrak{t}_n^{\Gamma} \oplus \mathfrak{t}_m \longrightarrow \mathfrak{t}_{n+m-1}^{\Gamma}\}$
of the right module structure are given by the following components (we assume that $i<j$):
\begin{eqnarray*}
\mathfrak{t}_m \ni t_{ij} & \longmapsto & t_{i+p-1 \hspace{1pt} j+p-1}^{0} \\[0.35cm]
\mathfrak{t}_n^{\Gamma} \ni t_{ij}^{\alpha} & \longmapsto & 
\begin{cases*}
t_{i+m-1\hspace{1pt}j+m-1}^{\alpha} & if~~p < i~.\\ 
{\displaystyle \sum_{k=i}^{i+m-1}t_{k\hspace{1pt}j+m-1}^{\alpha}} & if~~p = i~. \\
t_{i\hspace{1pt}j+m-1}^{\alpha}  & if~~ i < p < j~.\\
{\displaystyle \sum_{k=j}^{j+m-1}t_{i\hspace{1pt}k}^{\alpha}} & if~~ p = j~.\\
t_{i\hspace{1pt}j}^{\alpha}  & if~~ j < p~.
\end{cases*}\\[0.35cm]
\mathfrak{t}_n^{\Gamma} \ni t_{0j} & \longmapsto &
\begin{cases*}
t_{0\hspace{1pt}j+m-1} & if ~~p < j~.\\
{\displaystyle \sum_{k=j}^{j+m-1}t_{0\hspace{1pt}k} + \sum_{\gamma \in \Gamma}\sum_{j\leq l< r<j+m}t_{l\hspace{1pt}r}^{\gamma}} & if~~ p = j~.\\
t_{0j} & if~~ j < p~.
\end{cases*}
\end{eqnarray*}


\item The monoidal structure maps $\{\circ_0: \mathfrak{t}_n^{\Gamma} \oplus \mathfrak{t}_m^{\Gamma} \longrightarrow \mathfrak{t}_{n+m}^{\Gamma}\}$ 
are given by the following components:
\begin{eqnarray*}
\mathfrak{t}_m^{\Gamma} \ni t_{0k} & \longmapsto & t_{0k}~.\\
\mathfrak{t}_m^{\Gamma} \ni t_{kl}^{\alpha} & \longmapsto & t_{kl}^{\alpha}~.\\
\mathfrak{t}_n^{\Gamma}\ni t_{0i} & \longmapsto & {\displaystyle t_{0i} + \sum_{\gamma \in \Gamma}\sum_{r\geq 1}^{m} t_{r\hspace{1pt}i}^{\gamma}}~.\\
\mathfrak{t}_n^{\Gamma} \ni t_{ij}^{\alpha} & \longmapsto & t_{ij}^{\alpha}~.\\
\end{eqnarray*}
\end{enumerate}
It comes equipped with the following action of $\Gamma$: let $(\gamma_1,\cdots, \gamma_i, \cdots, \gamma_n)$ be in $\Gamma^n$, then its action 
on the generators of $\mathfrak{t}_n^{\Gamma}$ is given by
\[
\left\{
    \arraycolsep=1.4pt\def\arraystretch{1.7}\begin{array}{llll}
        \gamma_i \bullet t_{0j} = t_{0j} ~~\mathrm{for} ~~1 \leq k \leq n~,\\
        \gamma_i \bullet t_{jk}^{\alpha}  = t_{jk}^{\alpha} ~~\mathrm{if}~~ i \neq j,k~,\\
        \gamma_i \bullet t_{jk}^{\alpha}  = t_{jk}^{\alpha + \gamma_i} ~~\mathrm{if}~~ i = j ~,\\
        \gamma_i \bullet t_{jk}^{\alpha}  = t_{jk}^{\alpha - \gamma_i} ~~\mathrm{if}~~ i = k ~.
    \end{array}
\right. 
\]
\end{proposition}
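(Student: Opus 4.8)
The plan is to follow the pattern of the proof of \cref{prop: operad structure on Kohno-Drinfeld}: break each of the structure maps into ``elementary'' Lie algebra morphisms, prove these are well defined by checking them on generators, assemble them by a coproduct/universal-property argument, and then verify the moperad axioms and the $\Gamma$-equivariance on generators only. Concretely, for the right-module composition I would write $\circ_p:\mathfrak t_n^\Gamma\oplus\mathfrak t_m\to\mathfrak t_{n+m-1}^\Gamma$ as the assembly of a ``coproduct type'' map $\mathfrak t_n^\Gamma\to\mathfrak t_{n+m-1}^\Gamma$ and an ``insertion type'' map $\mathfrak t_m\to\mathfrak t_{n+m-1}^\Gamma$ (the latter is simply $t_{ij}\mapsto t^0_{i+p-1,j+p-1}$, i.e. insertion of an ordinary $\mathfrak t_m$-block with all labels $0$), and similarly decompose $\circ_0:\mathfrak t_n^\Gamma\oplus\mathfrak t_m^\Gamma\to\mathfrak t_{n+m}^\Gamma$. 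The point to notice is that the restriction of the coproduct-type maps to the generators $t_{ij}^\alpha$ is \emph{literally} the formula of \cref{prop: operad structure on Kohno-Drinfeld} with the superscript $\alpha$ carried along unchanged, so all of that bookkeeping is already done; the only genuinely new ingredient is the behaviour on the $t_{0j}$'s, where an inserted block gets ``absorbed'' into the $0$-strand, producing the correction terms $\sum_\gamma\sum_{l<r}t^\gamma_{lr}$ (resp. $\sum_\gamma\sum_r t^\gamma_{ri}$). (When $N=1$ this recovers nothing new: $\mathfrak t_\bullet^{\{1\}}\cong\mathfrak t_{\bullet+1}$ and the structure is just the derived moperad $\mathfrak t'_\bullet$ of \cref{prop: derived moperad of an operad}; for general $N$ no such shortcut is available, whence the direct approach.)

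The first step is then to check that each elementary assignment respects the relations of \cref{def: cyclotomic Kohno-Drinfeld}, so that it is a Lie algebra morphism. Relations (a) and (b) are immediate, relation (c) is handled exactly as the infinitesimal pure braid relation was in the non-cyclotomic case, and the substance lies in relations (d) $[t_{0i},t_{0j}+\sum_\alpha t_{ij}^\alpha]=0$ and (e) $[t_{0i}+t_{0j}+\sum_\beta t_{ij}^\beta,t_{ij}^\alpha]=0$: one must see that they survive the block-absorbing substitutions. The key observation making this work is that the total ``cyclotomic infinitesimal braiding of a block'', namely $\sum_{k}t_{0k}+\sum_\gamma\sum_{l<r}t^\gamma_{lr}$ taken over the indices of an inserted block, is central for the bracket with anything supported on that block together with the $0$-strand --- which is precisely what (d) and (e) at the smaller level assert --- together with the fact that the relevant sums $\sum_{\gamma\in\Gamma}$ match after reindexing. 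Once each elementary map is known to be a Lie morphism, I would check using relations (b) that the images of the two halves commute inside $\mathfrak t_{n+m-1}^\Gamma$ (resp. $\mathfrak t_{n+m}^\Gamma$); by the universal property of the coproduct of Lie algebras this yields a morphism out of $\mathfrak t_n^\Gamma\amalg\mathfrak t_m$ that factors through the direct sum $\oplus$, giving the maps of the statement. This is the moperadic avatar of the general mechanism of \cref{appendixB}.

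With the structure maps constructed, the moperad axioms --- associativity and unitality of $\circ_0$ for $\circledast$, associativity and equivariance of the $\circ_i$ making $\mathfrak t_\bullet^\Gamma$ a right $(\mathfrak t_\bullet)$-module, and the mixed compatibility square between $\gamma_M$ and $\gamma_{\mathcal P}$ --- all reduce, since every map in sight is a Lie algebra morphism, to checking that two such morphisms agree on the generators $t_{0j}$ and $t_{ij}^\alpha$; each case is a short finite case analysis on the relative positions of the insertion spots, parallel to \cref{prop: operad structure on Kohno-Drinfeld} plus the $t_{0j}$-bookkeeping. The $\mathfrak S_n$-equivariance of the formulas is visible by inspection. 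Finally, for the $\Gamma$-action one checks that $\gamma_i\bullet(-)$ preserves relations (a)--(e) --- again only (d) and (e) need care, since the shifts $t_{ij}^\alpha\mapsto t_{ij}^{\alpha\pm\gamma_i}$ are absorbed by the sums $\sum_{\alpha\in\Gamma}$, $\sum_{\beta\in\Gamma}$ running over all of $\Gamma$ --- so it is an automorphism of $\mathfrak t_n^\Gamma$, and then that it is $\mathfrak S_n$-equivariant and intertwined appropriately by the $\circ_i$ and $\circ_0$, which is exactly \cref{def: G-action moperad} transported to the Lie setting. I expect the main obstacle to be precisely the stability of relations (d) and (e) under the block-absorbing substitutions in $\circ_p$ (the case $p=j$) and in $\circ_0$: this is the one place where the centrality of the cyclotomic infinitesimal braiding of a block and the reindexing of the $\Gamma$-sums have to be combined carefully, everything else being a routine extension of the non-cyclotomic computation.
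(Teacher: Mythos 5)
Your proposal is correct and takes essentially the same route as the paper: the paper defines exactly these insertion-coproduct morphisms (packaged as a presheaf of graded Lie algebras on $\mathrm{bFin}_{*,*}$, with the relation-checks credited to Enriquez), obtains the moperad structure for the coproduct of Lie algebras from the general mechanism of \cref{appendixB}, and descends to the direct sum by the same commuting-images argument you describe. Your generator-by-generator verification of the structure maps, the moperad axioms, and the $\Gamma$-action is just the unpacked, hands-on version of that formalism, with the genuinely delicate point (stability of relations (d) and (e) under the block-absorbing substitutions) correctly identified.
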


\begin{proof}
We have already seen in the Introduction that the insertion-coproduct morphisms gives $t_\bullet$ the structure of a presheaf of graded Lie algebras 
on $\mathrm{Fin}_*$. Following \cite{Enriquez07} one can extend it to a presheaf of graded Lie algebras on $\mathrm{bFin}_{*,*}$ in the following way 
(without loss of generality, we allow ourselves to work with a skeleton of $\mathrm{bFin}_{*,*}$): 
\begin{itemize}
\item The Lie algebra associated with $\{*=0,1,\dots,n\}$ is $\mathfrak{t}_n$. 
\item The Lie algebra associated with $\{*,0,1,\dots,n\}$ is $\mathfrak{t}_n^\Gamma$. 
\item For every doubly pointed map $f:\{*,0,1,\dots,m\}\to \{*,0,1,\dots,n\}$, the corresponding map $(-)^f:\to \mathfrak{t}_n^\Gamma\to\mathfrak{t}_m^\Gamma$ 
is defined by 
\[
(t_{ij}^\alpha)^f:=\sum_{\substack{k\in f^{-1}(i) \\ l\in f^{-1}(j)}}t_{kl}^\alpha
\]
and 
\[
(t_{0i})^f:=\sum_{j\in f^{-1}(i)}t_{0j}+\sum_{\substack{j,k\in f^{-1}(i) \\ j<k}}\sum_{\gamma\in\Gamma}t_{jk}^\gamma
+\sum_{\substack{j\in f^{-1}(0)\backslash\{0\} \\ k\in f^{-1}(i)}}\sum_{\gamma\in\Gamma} t_{jk}^\gamma~.
\]
\item For every doubly pointed map $f:\{*,0,1,\dots,m\}\to \{*=0,1,\dots,n\}$, the corresponding map $(-)^f:\mathfrak{t}_n\to\mathfrak{t}_m^\Gamma$ 
is defined by
\[
(t_{ij})^f:=\sum_{\substack{k\in f^{-1}(i) \\ l\in f^{-1}(j)}}t_{kl}^0~.
\]
\item For every pointed map $f:\{*=0,1,\dots,m\}\to \{*=0,1,\dots,n\}$, the corresponding map $(-)^f:\mathfrak{t}_n\to\mathfrak{t}_m$ is the usual 
insertion-coproduct morphism from \cref{def: insertion-coproduct morphisms}. \hfill$\triangle$
\end{itemize} 

This defines a presheaf on $\mathrm{bFin}_{*,*}$, and therefore by Appendix \ref{Appendix B3: moperads}, a moperad structure in the category of Lie algebras with the coproduct. Like in Proposition \ref{prop: operad structure on Kohno-Drinfeld}, the images of theses maps commute and therefore factor through the product of Lie algebras.
\end{proof}

\begin{definition}[$N$-chord diagram moperad]
The $N$-\textit{chord diagram moperad} $\mathcal{CD}_0^\Gamma$ is the moperad over $\mathcal{CD}$ given by applying the universal enveloping algebra 
to the cyclotomic Drinfeld--Kohno moperad. 
\end{definition}

In each arity, the moperad $\mathcal{CD}_0^\Gamma$ is given by
\[
\mathcal{CD}_0^\Gamma(n) = \mathfrak{U}(\mathfrak{t}_n^{\Gamma})~, 
\]
where $\mathfrak{U}(\mathfrak{t}_n^{\Gamma})$ is the algebra of $N$-chord diagrams on $n+1$ strands. Following \cite{Brochier13}, one can represent the elements of this algebra 
as chord diagrams on $n+1$ strands, numbered from $0$ to $n$, with labels on the last $n$ strands: each strand outside the zeroth one is labeled with elements 
of $\mathbb{Z}/N\mathbb{Z}$ such that their sum is $0$ in $\mathbb{Z}/N\mathbb{Z}~.$ For instance, the element 
$t_{0i}.t_{jk}^{\alpha}.t_{jl}^{\beta}$ in $\mathfrak{U}(\mathfrak{t}_n^{\Gamma})$ can be represented as 

\[
\includegraphics[width=70mm,scale=1]{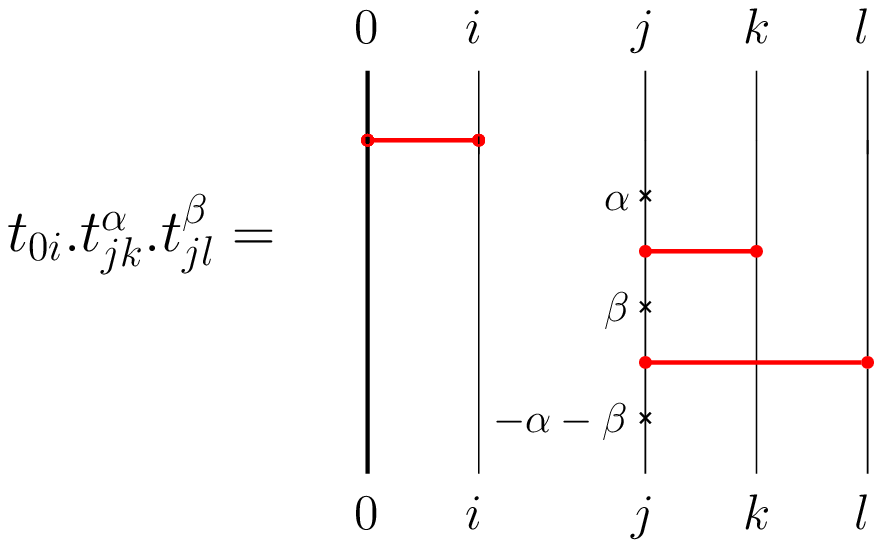}
\]
One can also represent all the relations that define the algebra $\mathfrak{U}(\mathfrak{t}_n^{\Gamma})$ in terms of equalities of such $N$-chord diagrams (see 
\cite{calaque20moperadic}). 

\medskip

We have that $\mathcal{CD}_0^\Gamma$ is a moperad over $\mathcal{CD}$ in the category $\mathsf{Cat}(\mathsf{Coalg}_\kk)$ of small categories enriched over counital cocommutative 
$\kk$-coalgebras, where both only have one object. In order to obtain a moperad over the operad $\pacd$, we need to add 
objects to $\mathcal{CD}_0^\Gamma$ first. Consider the moperad $\underline{\Gamma}$ over $u\mathcal{C}om$ in the category of sets. 
By applying the $\kk$-linear extension functor introduced in \cref{def: k-extension}, we obtain a moperad $\underline{\Gamma}_\kk$ over the terminal operad 
$u\mathcal{C}om$, this time in the category $\mathsf{Cat}(\mathsf{Coalg}_\kk)~.$ The category $\underline{\Gamma}_\kk(n)$ has $\Gamma^n$ a set of objects 
and $\kk$ as its cocommutative coalgebra of morphisms for any two objects.

\begin{definition}[Cyclotomic chord moperad]
The \textit{cyclotomic chord moperad} $\mathcal{CD}^\Gamma$ is defined as the Hadamard product 
\[
\mathcal{CD}^\Gamma \coloneqq \mathcal{CD}_0^\Gamma \otimes \underline{\Gamma}_\kk
\]
of moperads in the category $\mathsf{Cat}(\mathsf{Coalg}_\kk)~.$ It is naturally a moperad over $\mathcal{CD} \otimes u\mathcal{C}om \cong \mathcal{CD}~.$ It comes with a natural $\Gamma$-action given by the product of the $\Gamma$-actions on each component of the product. (We view $\Gamma$ acting on $\kk[\underline{\Gamma}]$ via translation on objects). 
\end{definition}

\begin{remark}\label{element L}
Let $(\alpha_1, \cdots, \alpha_n)$ and $(\beta_1,\cdots, \beta_n)$ be two objects in $\mathcal{CD}^\Gamma(n)$. The elements of the cocommutative coalgebra $\mathfrak{U}(\mathfrak{t}_n^{\Gamma})$ of morphisms between them can be represented as chord diagrams on $n+1$ strands, where the last $n$ strands are labeled by elements of $\mathbb{Z}/N\mathbb{Z}$ in the following way: the sum of the labels on the $i$-th strand must be equal to $\beta_i - \alpha_i$, for all $1 \leq i \leq n$. For example, the following chord diagram 
\[
\includegraphics[width=25mm,scale=1]{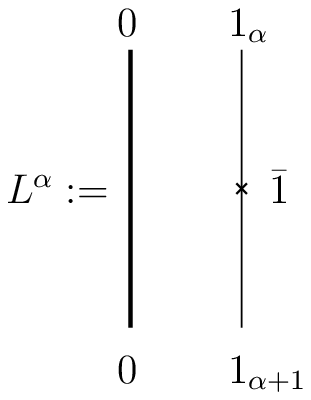}
\]
defines a morphism between $\alpha$ and $\alpha+\overline{1}$, for every $\alpha$ in $\mathcal{CD}^\Gamma(1) = \mathbb{Z}/N\mathbb{Z}$.
\end{remark}

Now recall the set-theoretical moperad $\mathcal{P}a_0$ over $\mathcal{P}a$ given in Example \ref{example: moperad Pa0}. By applying the $\kk$-linear extension functor to both, we obtain a moperad $(\mathcal{P}a_0)_\kk$ over $\mathcal{P}a_\kk$, this time in the category $\mathsf{Cat}(\mathsf{Coalg}_\kk)~.$ 

\begin{definition}[Cyclotomic parenthesized chord moperad]
The \textit{cyclotomic parenthesized chord moperad} $\pacd^\Gamma$ is defined by the Hadamard product 
\[
\pacd^\Gamma \coloneqq (\mathcal{P}a_0)_\kk \otimes \mathcal{CD}^\Gamma
\]
of moperads in the category $\mathsf{Cat}(\mathsf{Coalg}_\kk)~.$ It is naturally a moperad over $\mathcal{P}a_\kk \otimes \mathcal{CD} = \pacd$. The $\Gamma$-action on $\mathcal{CD}^\Gamma$ endows this product with a $\Gamma$-action as well.
\end{definition}

\begin{example}\label{element beta}
It follows from this definition that objects in $\pacd^\Gamma(n)$ are given by maximal parenthesizations on words 
$0 \hspace{1pt}(\sigma_1)_{\alpha_1} \cdots (\sigma_n)_{\alpha_n}$, where $\sigma$ is a permutation in $\mathfrak{S}_n$ and $(\alpha_1,\cdots,\alpha_n)$ 
is an element of $\Gamma^n$. Morphisms between two maximally parenthesized words $0 \hspace{1pt}(\sigma_1)_{\alpha_1} \cdots (\sigma_n)_{\alpha_n}$ 
and $0 \hspace{1pt}(\tau_1)_{\gamma_1} \cdots \allowbreak (\tau_n)_{\gamma_n}$ are given by chord diagrams on the unique strands that go from one permutation 
to the other, where each strand is labeled by elements of $\Gamma$, such that the sums of the labels of the $i$-th strand is equal to $\gamma_i - \alpha_i$, for all $1 \leq i \leq n$. Here are two distinguished morphisms of $\pacd^\Gamma$:
\[
\includegraphics[width=65mm,scale=1]{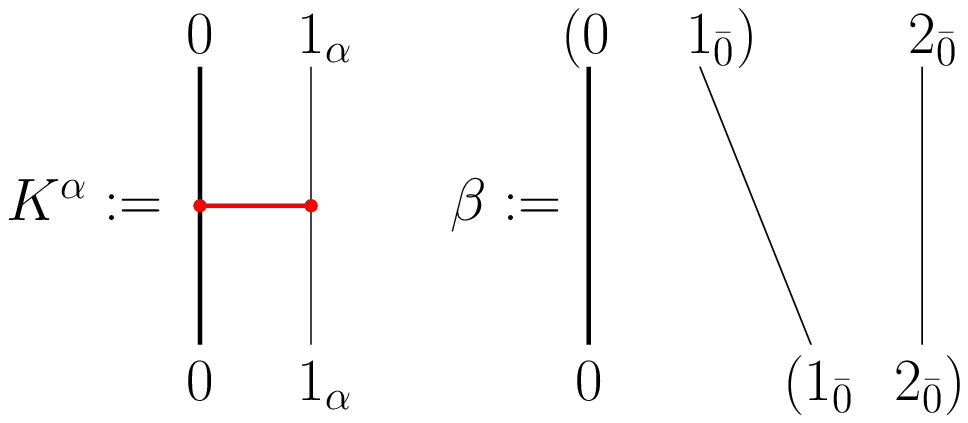}
\]
Here $K^\alpha$ is the endomorphism of $\alpha$ in $\pacd^\Gamma(1)$ which corresponds to $t_{01}$ and $\beta$ is the isomorphism between 
$(0\hspace{1pt} 1_{\bar{0} } )\hspace{1pt} 2_{\bar{0}}$ and $0 \hspace{1pt} (1_{\bar{0}} \hspace{1pt} 2_{\bar{0}})$ in $\pacd^\Gamma(2)$ 
which corresponds to the trivial chord diagram. \hfill$\triangle$
\end{example}

\begin{remark}\label{rmk: elements in pacd Gamma}
The elements $\beta$, $L^\alpha$ and $K^\alpha$ for all $\alpha$ in $\Gamma$ generate $\pacd^\Gamma$ as a moperad over $\pacd$ with a $\Gamma$-action. 
\end{remark}

Notice that the relations defining the cyclotomic Drinfeld--Kohno Lie algebras $\mathfrak{t}_n^{\Gamma}$ are homogeneous, therefore they inherit a weight 
filtration with generators in weight one. See \cref{rmk: weight filtration} for the standard case. For all $n \geq 0$, $\pacd^\Gamma(n)$ forms a Hopf groupoid, 
therefore if we want to obtain a pro-unipotent groupoid we ought to complete it. 

\medskip

We define $\widehat{\pacd}^\Gamma$ as the moperad over $\widehat{\pacd}$ in the category of categories enriched over complete cocommutative coalgebras 
obtained by completing each hom-set of $\pacd^\Gamma(n)$ with respect to the aforementioned weight filtration. It is a moperad in complete Hopf groupoids.

\medskip

We obtain a moperad $\mathrm{Grp}(\widehat{\pacd}^\Gamma)$ over $\mathrm{Grp}(\widehat{\pacd})$ by taking the group-like elements functor to both. This gives a moperad in the category of pro-unipotent $\kk$-groupoids. This will be \textit{our second cyclotomic player}. 

\subsection{Cyclotomic associators and Grothendieck--Teichmüller groups}\label{ssec-Cyclo associator and gts}
We are again able to present a purely (m)operadic version of cyclotomic associators. 

\begin{definition}[Cyclotomic associators] \label{Def: cyclotomic associator}
The set of \textit{cyclotomic associators} is given by 
\[
\mathrm{Assoc}^\Gamma(\kk) \coloneqq \mathrm{Iso}^+_{\mathsf{Mop}(\mathsf{p.u}\text{-}\mathsf{Grpd}_{\kk})}\left(\left(\widehat{\pab}^\Gamma(\kk),\widehat{\pab}(\kk)\right),\left(\mathrm{Grp}(\widehat{\pacd}^\Gamma),\mathrm{Grp}(\widehat{\pacd})\right)\right)^\Gamma~.
\]
An element of $\mathrm{Assoc}^\Gamma(\kk)$ amounts to the data of:

\medskip

\begin{enumerate}
\item an isomorphism $F$ between $\widehat{\pab}(\kk)$ and $\mathrm{Grp}(\widehat{\pacd})$ which is the identity on objects (an associator)~,

\medskip

\item an isomorphism $G$ between $\widehat{\pab}^\Gamma(\kk)$ and $\mathrm{Grp}(\widehat{\pacd}^\Gamma)$ of moperads which is $\Gamma$-equivariant 
and which is the identity on objects, lying above the isomorphism $F$. 
\end{enumerate}
\end{definition}

This definition entails that the set of cyclotomic associators is a bitorsor over the automorphisms groups of each moperad that are the identity morphism on objects. 
This allows us to define the cyclotomic versions of the Grothendieck--Teichmüller group and of the graded Grothendieck--Teichmüller group.

\begin{definition}[Cyclotomic Grothendieck--Teichmüller group]
The \textit{cyclotomic Grothendieck--Teichmüller group} over $\kk$ is given by 
\[
\mathrm{GT}^\Gamma(\kk) \coloneqq \mathrm{Aut}_{\mathsf{Mop}(\mathsf{p.u}\text{-}\mathsf{Grpd}_{\kk})}^+\left(\widehat{\pab}^\Gamma(\kk),\widehat{\pab}(\kk)\right)^\Gamma,
\]
that is, the group of $\Gamma$-equivariant automorphisms of the moperads of $\widehat{\pab}^\Gamma(\kk)$ which are the identity on objects.
\end{definition}

\begin{definition}[Cyclotomic graded Grothendieck--Teichmüller group]
The \textit{cyclotomic graded Grothendieck--Teichmüller group} over $\kk$ is given by 
\[
\mathrm{GRT}^\Gamma(\kk) \coloneqq \mathrm{Aut}_{\mathsf{MopCat(Coalg_\kk)}}^+\left(\widehat{\pacd}^\Gamma,\widehat{\pacd}\right)^\Gamma,
\]
that is, the group of $\Gamma$-equivariant automorphisms of the moperads of $\widehat{\pacd}^\Gamma$ which are the identity on objects.
\end{definition}

\subsection{More concrete descriptions}\label{ssec-more concrete description of cyclos}

\begin{theorem}[{\cite[Theorem 4.6]{calaque20moperadic}}]\label{thm: presentation moperad pab Gamma}
The moperad $\pab^\Gamma$, as a moperad over $\pab$ in groupoids endowed with a $\Gamma$-action, and having $\mathcal{P}a_0$ as its moperad of 
objects, admits the following presentation. The generating morphisms are 

\begin{enumerate}
\item The \textit{loop} $E$ around the frozen zeroth strand, pictorially given by
\[
\includegraphics[width=50mm,scale=1]{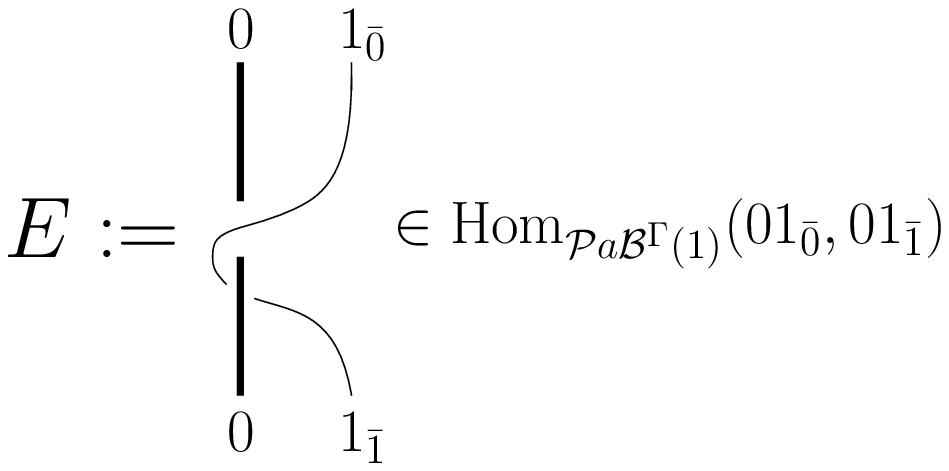}
\]
\item The \textit{associator} $\Psi$ which pictorially is given by the following braid
\[
\includegraphics[width=70mm,scale=1]{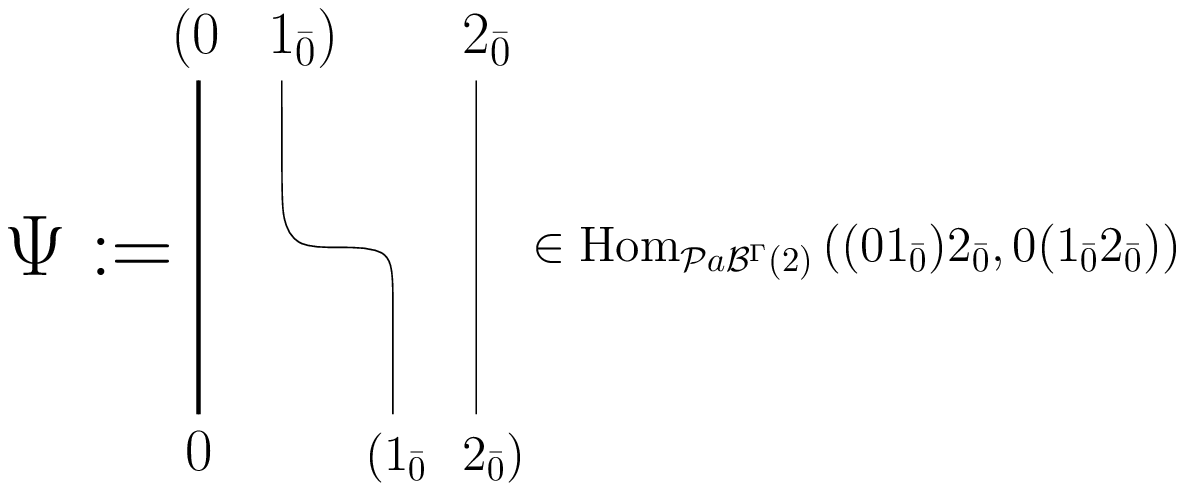}
\]
\end{enumerate}

They satisfy the following relations

\begin{enumerate}
\item Recall that $\pab(0)$ is the trivial groupoid $\{*\}$. The \textit{unital relation} states that
\[
\Psi \circ_1 \mathrm{id}_{\{*\}} = \mathrm{id}_{01_{\bar{0}}}~. 
\]
Notice that it implies that $\Psi \circ_2 \mathrm{id}_{\{*\}} = \mathrm{id}_{01_{\bar{0}}}.$

\item The \textit{mixed pentagon relation} amounts to the commutativity of the following diagram
\[
\begin{tikzcd}[column sep=2.5pc,row sep=2pc]
&((0 1_{\bar{0}}) 2_{\bar{0}}) 3_{\bar{0}} \arrow[dl,"\mathrm{id}_{0 1_{\bar{0}}}~\circ_0~\Psi",swap] \arrow[rd,"\Psi~\circ_0~\mathrm{id}_{0 1_{\bar{0}}}"]
& \\
(0 (1_{\bar{0}} 2_{\bar{0}})) 3_{\bar{0}} \arrow[d,"\Psi~\circ_1~\mathrm{id}_{12}",swap]
&
&(0 1_{\bar{0}})( 2_{\bar{0}} 3_{\bar{0}}) \arrow[d,"\Psi~\circ_2~\mathrm{id}_{12}"]\\
0 ((1_{\bar{0}} 2_{\bar{0}}) 3_{\bar{0}}) \arrow[rr,"\mathrm{id}_{0 1_{\bar{0}}}~\circ_1~\Phi"] 
&
&0 (1_{\bar{0}} (2_{\bar{0}} 3_{\bar{0}}))
\end{tikzcd}
\]

\item The \textit{twisted ribbon relation} amounts to the commutativity of the following diagram
\[
\begin{tikzcd}[column sep=2.5pc,row sep=2pc]
&(0 1_{\bar{0}}) 2_{\bar{0}} \arrow[dl,"\Psi",swap] \arrow[rd,"\mathrm{id}_{0 1_{\bar{0}}}~\circ_0~E"]
& \\
0 (1_{\bar{0}} 2_{\bar{0}}) \arrow[d,"\mathrm{id}_{0 1_{\bar{0}}}~\circ_1~E",swap]
&
&(0 1_{\bar{1}})2_{\bar{0}} \arrow[d,"E ~\circ_0~\mathrm{id}_{0 1_{\bar{0}}}"]\\
0 (1_{\bar{1}} 2_{\bar{1}}) \arrow[rr,"(\bar{1}\, \bar{1})~ \ast~\Psi^{-1}"] 
&
&(0 1_{\bar{1}})2_{\bar{1}}
\end{tikzcd}
\]

\item The \textit{twisted octagon relation} amounts to the commutativity of the following diagram
\[
\begin{tikzcd}[column sep=5pc,row sep=3pc]
&(0 1_{\bar{0}}) 2_{\bar{0}} \arrow[r,"E~ \circ_0~ \mathrm{id}_{0 1_{\bar{0}}}"] \arrow[dl,"\Psi",swap]
&(0 1_{\bar{0}}) 2_{\bar{1}} 
& \\
0 (1_{\bar{0}} 2_{\bar{0}}) \arrow[d,"\mathrm{id}_{0 1_{\bar{0}}} ~\circ_1 ~R",swap]
&
&
&0 (1_{\bar{0}} 2_{\bar{1}}) \arrow[ul,"(\bar{0}\, \bar{1})~ \ast~ \Psi^{-1}",swap] \\
0 (2_{\bar{0}} 1_{\bar{0}}) \arrow[rd,swap,"(21)~\bullet ~\Psi^{-1}"]
&
&
&0 (2_{\bar{1}} 1_{\bar{0}}) \arrow[u,"(\bar{1} \, \bar{0}) ~ \ast ~(\mathrm{id}_{0 1_{\bar{0}}} ~\circ_1 ~R^{-1})"] \\
&(0 2_{\bar{0}}) 1_{\bar{0}} \arrow[r,"(21)~ \bullet ~(\mathrm{id}_{0 1_{\bar{0}}} ~ \circ_0 ~E)"]
&(0 2_{\bar{1}}) 1_{\bar{0}} \arrow[ru,"(\bar{1} \, \bar{0}) ~\ast ~((21) ~\bullet ~\Psi)",swap]
&
\end{tikzcd}
\]

\end{enumerate} 
\end{theorem}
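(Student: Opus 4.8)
The plan is to mimic the proof of \cref{thm: presentation pab} in the setting of moperads carrying a $\Gamma$-action. Recall (\cref{prop: derived moperad of an operad} and the remarks that follow it) that $\pab^\Gamma=\pab^{(N)}$ is obtained from $\pab'$ by applying the pseudo-free $(\mathbb{Z}/N\mathbb{Z})^n$-groupoid construction in each arity, so that its underlying groupoid in arity $n$ has the maximally parenthesized words $0\,(\sigma_1)_{\gamma_1}\cdots(\sigma_n)_{\gamma_n}$ as objects, while the automorphism group of the leftmost word with trivial labels, $0\,1_{\bar 0}\cdots n_{\bar 0}$, is the cyclotomic pure braid group $\mathrm{P}^\Gamma_n$ of \cite{Enriquez07} --- equivalently, the subgroup of the pure braid group on $n+1$ strands with the zeroth strand frozen cut out by the congruences modulo $N$ on winding numbers around that strand. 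Since the moperad of objects is $\mathcal{P}a_0=\mathcal{P}a'$, which is freely generated, a presentation of the whole moperad over $\pab$ is equivalent --- exactly as in the operadic case --- to a presentation of this one automorphism group that is compatible with the module compositions $\circ_i$, the monoid compositions $\circ_0$, and the $\Gamma$-action; the usual bookkeeping then propagates it to all arities.

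\textbf{Generation.} First I would show that $E$, $\Psi$, the images of the generators $R,\Phi$ of $\pab$, and the $\Gamma$-action together generate every $\mathrm{Aut}_{\pab^\Gamma(n)}(0\,1_{\bar 0}\cdots n_{\bar 0})=\mathrm{P}^\Gamma_n$. Using the (mild extension of the) Artin presentation from \cref{thm: Artin's presentation}, $\mathrm{P}^\Gamma_n$ is generated by the braids $x_{ij}$ for $1\le i<j\le n$, which already lie in the sub-operad $\pab(n)$ and hence are built from $R$ and $\Phi$; by the loops $x_{0i}$ around the frozen strand; and by the labelled variants coming from the $\Gamma$-action. Each $x_{0i}$ is obtained from the elementary loop $E$ by transporting it through the tree of parenthesizations: one uses $\Psi$ (and $R,\Phi$) to bring the strand $i$ into the slot next to the frozen strand, applies $E$, and transports back, just as $x_{ij}$ is produced from $R$ and $\Phi$ in the proof of \cref{thm: presentation pab}. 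This establishes generation.

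\textbf{Completeness of the relations.} The conceptual statement is that $\pab^\Gamma$, equipped with $E,\Psi$, is the moperad freely describing a $\Gamma$-twisted braided module category over a braided monoidal category in the sense of \cite{Enriquez07,Brochier13,deCommer}, and that the four displayed relations are precisely the coherence constraints of such a structure: the mixed pentagon is the associativity coherence for the module action, the twisted octagon is the mixed hexagon/cylinder relation, and the twisted ribbon relation records the compatibility of $E$ with the module associator. Concretely I would check that pulling these four relations back to arity $n$ along the module and monoid composition maps and the $\Gamma$-action, and adjoining the known relations of $\pab$ (pentagon, hexagons, unit) transported in the same way, one recovers exactly Enriquez's presentation of $\mathrm{P}^\Gamma_n$; this is a Reidemeister--Schreier--type computation, made tractable by the fact that the whole moperad is controlled by arities $\le 4$ together with the module structure.

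\textbf{Main obstacle.} The delicate part is this last step: verifying that the mixed pentagon, twisted ribbon and twisted octagon --- relations living in low arity --- suffice, under the moperad/module compositions and $\Gamma$-equivariance, to generate all defining relations of the cyclotomic pure braid groups in every arity. This is the cyclotomic analogue of the non-trivial fact underlying \cref{thm: presentation pab} that the pentagon and hexagon relations alone imply Artin's relations in all $\mathrm{PB}_n$; here the extra frozen strand, the $\mathbb{Z}/N\mathbb{Z}$-labels, and the twisting by the $\Gamma$-action thicken the combinatorics, and correctly matching the labels and the $\pm$-decorations in the octagon relation is where most of the effort goes. The remaining steps --- that the four relations genuinely hold in $\pab^\Gamma$, and the generation statement above --- reduce to direct, if lengthy, diagram chases.
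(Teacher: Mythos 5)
Your outline is a plan rather than a proof, and the decisive step is missing. First, a point of comparison: the survey itself contains no proof of this statement --- it is quoted from \cite[Theorem 4.6]{calaque20moperadic}, where the argument is genuinely involved (it is intertwined with the topological description of $\pab^{(N)}$ as a fundamental groupoid of compactified twisted configuration spaces, cf.\ \cref{prop: iso avec PabN}), exactly as the presentation \cref{thm: presentation pab} rests on the non-formal coherence-type argument of Bar-Natan and Fresse rather than on bookkeeping. Your identification of the automorphism group of $0\,1_{\bar 0}\cdots n_{\bar 0}$ with $\mathrm{PB}_n^\Gamma$ and your generation step are essentially fine (with the caveat that $x_{0i}$ is not an element of $\mathrm{PB}_n^\Gamma$ but a morphism between differently labelled objects, so generation must be phrased at the groupoid level, which is indeed what the theorem requires).

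The genuine gap is in your ``Completeness of the relations'' step. You assert (i) that a presentation of the moperad is ``equivalent'' to a presentation of one automorphism group compatible with the $\circ_i$, $\circ_0$ and the $\Gamma$-action, (ii) that ``the whole moperad is controlled by arities $\leq 4$ together with the module structure'', and (iii) that pulling the four relations back to arity $n$ ``recovers exactly Enriquez's presentation of $\mathrm{PB}_n^\Gamma$'' by a Reidemeister--Schreier-type computation. None of these is proved, and together they are the entire content of the theorem: one must show that the map from the moperad freely generated (over $\pab$, with $\Gamma$-action, on the free object moperad $\mathcal{P}a_0$) by $E$ and $\Psi$ modulo the four relations onto $\pab^\Gamma$ is injective on morphisms in every arity, and this does not follow formally from freeness of $\mathcal{P}a_0$. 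Moreover, no presentation of $\mathrm{PB}_n^\Gamma$ is available as stated --- \cref{thm: Artin's presentation} only covers $\mathrm{PB}_n$ --- so before any such comparison you would have to produce one, e.g.\ by presenting $\mathrm{PB}_n^1\cong\mathrm{PB}_{n+1}$ and descending along $c_N:\mathrm{PB}_n^1\twoheadrightarrow\Gamma^n$ via Reidemeister--Schreier, and then carry out the induction on arity (organised by the module and moperad compositions, the $\Gamma$-equivariance, and the labels $\bar\alpha$ on strands) showing that the unital, mixed pentagon, twisted ribbon and twisted octagon relations, together with those of $\pab$, imply all relations of that presentation. Your ``main obstacle'' paragraph correctly names this difficulty but does not resolve it; an alternative legitimate route would be to invoke the universal property of braided module categories in the sense of \cite{Enriquez07,Brochier13}, but then proving that $\pab^\Gamma$ is the universal such object is again precisely the statement to be established.
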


\begin{remark}
For the sake of completeness, let us rewrite the above relations in the ``insertion-coproduct'' notation:

\begin{enumerate}
\item The unital relation states that $\Psi^{0,\emptyset,1_{\bar{0}}} = \mathrm{id}$.

\item The mixed pentagon relation can be written as
\[
\Psi^{01_{\bar{0}},2_{\bar{0}},3_{\bar{0}}}~\Psi^{0,1_{\bar{0}},2_{\bar{0}}3_{\bar{0}}}= \Psi^{0,1_{\bar{0}},2_{\bar{0}}}~\Psi^{0,1_{\bar{0}}2_{\bar{0}},3_{\bar{0}}}~\Phi^{1_{\bar{0}},2_{\bar{0}},3_{\bar{0}}}\Psi^{0,1_{\bar{0}},2_{\bar{0}}}~.
\]

\item The twisted ribbon relation can be written as 
\[
\Psi^{0,1_{\bar{0}},2_{\bar{0}}}~E^{0,1_{\bar{0}}2_{\bar{0}}}~(\Psi^{0,1_{\bar{1}},2_{\bar{1}}})^{-1}= E^{0,1_{\bar{0}}}~E^{01_{\bar{1}},2_{\bar{0}}}~.
\]

\item The twisted octagon relation can be written as 
\[
E^{01_{\bar{1}},2_{\bar{0}}} = \Psi^{0,1_{\bar{0}},2_{\bar{0}}}~R^{1_{\bar{0}},2_{\bar{0}}}~(\Psi^{0,2_{\bar{0}},1_{\bar{0}}})^{-1}~E^{0,2_{\bar{0}}}~\Psi^{0,2_{\bar{1}},1_{\bar{0}}}~R^{2_{\bar{1}},1_{\bar{0}}}~(\Psi^{0,1_{\bar{0}},2_{\bar{1}}})^{-1}~.
\]
\end{enumerate}
\end{remark}

\subsubsection*{Relative Malcev completions}

Using this presentation of the moperad $\pab^\Gamma$, our goal is going to be to describe cyclotomic associators. 
Recall that $\Gamma = \mathbb{Z}/N\mathbb{Z}~.$ First consider the free group $\mathbb{F}_2$ generated by two elements $x$ and $y$. There is a group morphism

\[
\left\{
\begin{tikzcd}[column sep=3.5pc,row sep=0pc]
\mathbb{F}_2 \arrow[r,"p",twoheadrightarrow]
&\mathbb{Z} \\
x \arrow[r,mapsto]
&1 \\
y \arrow[r,mapsto]
&0 \\
\end{tikzcd}\right.
\]
\vspace{0.1pc}

This gives a morphism $p_N: \mathbb{F}_2 \twoheadrightarrow \mathbb{Z}/N\mathbb{Z}$ by composing $p$ with the obvious projection.

\begin{lemma}
There is an isomorphism of groups 

\[
\mathrm{Ker}(p_N) \cong \mathbb{F}_{N+1}~,
\]
\vspace{0.1pc}

where $\mathbb{F}_{N+1}$ is a free group generated by $x^N$ and $x^a y x^{-a}$ for $a = 0, \cdots, N -1~.$
\end{lemma}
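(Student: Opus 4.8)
The plan is to use the theory of coverings of graphs, or equivalently the Reidemeister--Schreier rewriting process. Realize $\mathbb{F}_2 = \pi_1(X,\ast)$, where $X = S^1 \vee S^1$ is the rose with two petals, the two edge-loops being labelled by the generators $x$ and $y$. Since $p_N \colon \mathbb{F}_2 \twoheadrightarrow \mathbb{Z}/N\mathbb{Z}$ is surjective onto a group of order $N$, its kernel corresponds to a connected $N$-sheeted covering $\pi \colon \widetilde{X} \to X$, and $\mathrm{Ker}(p_N) \cong \pi_1(\widetilde{X})$. As $X$ is a graph, so is $\widetilde{X}$, hence $\pi_1(\widetilde{X})$ is free; it remains to compute its rank and exhibit an explicit basis.

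First I would describe $\widetilde{X}$ concretely. The fibre $\pi^{-1}(\ast)$ is identified with $\mathbb{Z}/N\mathbb{Z}$, say with vertices $v_0, \dots, v_{N-1}$. Since $p_N(x) = 1$, each $x$-edge of $X$ lifts to an edge $v_i \to v_{i+1}$ (indices mod $N$); since $p_N(y) = 0$, each $y$-edge lifts to a loop at each $v_i$. Thus $\widetilde{X}$ has $N$ vertices, $N$ edges forming a single oriented $x$-cycle, and $N$ $y$-loops, so $\chi(\widetilde{X}) = N - 2N = -N$; as $\widetilde{X}$ is connected, $\pi_1(\widetilde{X})$ is free of rank $1 - \chi(\widetilde{X}) = N+1$.

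Next I would exhibit the basis. Choose as spanning tree $T$ the path $v_0 \xrightarrow{x} v_1 \xrightarrow{x} \cdots \xrightarrow{x} v_{N-1}$ consisting of $N-1$ of the $x$-edges, rooted at $v_0$. The edges of $\widetilde{X}$ not lying in $T$ are the remaining $x$-edge $v_{N-1} \to v_0$ together with the $N$ loops $y_a$ at $v_a$ for $a = 0, \dots, N-1$; there are $N+1$ of them, matching the rank. The standard free basis of $\pi_1(\widetilde{X}, v_0)$ attached to $(T,e)$ --- ``traverse $T$ to the source of $e$, cross $e$, return to $v_0$ along $T$'' --- reads: for the $x$-edge $v_{N-1}\to v_0$ one gets $x^{N-1}\cdot x = x^N$, and for the loop $y_a$ one gets $x^{a}\cdot y \cdot x^{-a}$. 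These are precisely the claimed generators. Equivalently, one can run Reidemeister--Schreier with the Schreier transversal $\{1, x, \dots, x^{N-1}\}$ for $\mathrm{Ker}(p_N)$ in $\mathbb{F}_2$ and compute that the only nontrivial Schreier generators are $\gamma(x^{N-1},x) = x^N$ and $\gamma(x^a,y) = x^a y x^{-a}$.

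The main point is that there is no genuine obstacle here: the rank is forced by the Euler-characteristic count, and identifying the explicit generators is a matter of choosing a convenient spanning tree (or Schreier transversal) and tracking the bookkeeping. The only thing requiring care is that one must appeal to the graph-covering theorem (or the Reidemeister--Schreier theorem) to conclude that the exhibited elements form a \emph{free} basis, and not merely a generating set.
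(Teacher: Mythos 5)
Your argument is correct and complete: the covering of the two-petal rose determined by $p_N$ has $N$ vertices and $2N$ edges, the Euler-characteristic count forces $\mathrm{Ker}(p_N)$ to be free of rank $N+1$, and your choice of spanning tree (equivalently, the Schreier transversal $\{1,x,\dots,x^{N-1}\}$) yields exactly the basis $x^N$, $x^a y x^{-a}$, $a=0,\dots,N-1$. The paper states this lemma without proof, treating it as a standard fact, and the graph-covering/Reidemeister--Schreier argument you give is precisely the standard justification one would supply, including the point you rightly flag: that the theorem guarantees these elements form a free basis and not merely a generating set.
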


This gives an exact sequence of groups:

\[
\begin{tikzcd}
1 \arrow[r]
&\mathbb{F}_{N+1} \arrow[r,hookrightarrow]
&\mathbb{F}_{2}  \arrow[r,"p_N",twoheadrightarrow]
&\mathbb{Z}/N\mathbb{Z} \arrow[r]
&1~.
\end{tikzcd}
\]

Let $f: G \twoheadrightarrow H$ be a surjective morphism of groups, and suppose that $\mathrm{Ker}(f)$ is finitely generated as a group. 

\medskip

The \textit{relative} $\kk$\textit{-Malcev completion} $\widehat{f}: \widehat{G}(\kk,f) \twoheadrightarrow H$ of the morphism $f$ can be defined as the initial $\kk$-\textit{pro-unipotent group} whose $\kk$-points lie above $H$. See \cite[Section 1.1]{Enriquez07} for more details or \cite{Hain98} for the original construction. One can show that it fits into the following exact sequence: 

\[
\begin{tikzcd}
1 \arrow[r]
&\widehat{\mathrm{Ker}(f)}(\kk) \arrow[r,hookrightarrow]
&\widehat{G}(\kk,f)  \arrow[r,"\widehat{f}",twoheadrightarrow]
&H \arrow[r]
&1~.
\end{tikzcd}
\]

where $\widehat{\mathrm{Ker}(f)}(\kk)$ is the classical $\kk$-pro-unipotent completion. 

\begin{remark}
There is always a monomorphism of groups $\widehat{G}(\kk,f) \hookrightarrow \widehat{G}(\kk)~.$
\end{remark}

\begin{lemma}\label{lemma: semi direct relative completion}
There is an isomorphism of groups 

\[
\widehat{\mathbb{F}_{2}}(\kk,p_N) \cong \widehat{\mathbb{F}_{N+1}}(\kk) \ltimes \mathbb{Z}/N\mathbb{Z}
\]
\vspace{0.1pc}

where the semi-direct product on the right is given by the following action 

\[
\alpha \ast x^N = 0 \quad \text{and} \quad \alpha \ast x^a y x^{-a} =  x^{a + \alpha} y x^{-a -\alpha}~,
\]
\vspace{0.1pc}

for $\alpha$ in $\mathbb{Z}/N\mathbb{Z}~.$
\end{lemma}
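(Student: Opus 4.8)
The plan is to combine the structural facts about relative Malcev completions recalled above with the presentation of $\mathrm{Ker}(p_N)$ from the previous lemma. Writing $G:=\mathbb{F}_2$, $f:=p_N$ and $H:=\mathbb{Z}/N\mathbb{Z}$, the previous lemma tells us that $\mathrm{Ker}(p_N)=\mathbb{F}_{N+1}$ is finitely generated, so $\widehat{\mathbb{F}_2}(\kk,p_N)$ is defined and sits in the exact sequence
\[
1 \longrightarrow \widehat{\mathbb{F}_{N+1}}(\kk) \longrightarrow \widehat{\mathbb{F}_2}(\kk,p_N) \xrightarrow{\ \widehat{p_N}\ } \mathbb{Z}/N\mathbb{Z} \longrightarrow 1~,
\]
the conjugation action of the middle group restricting to an action on the pro-unipotent normal subgroup $\widehat{\mathbb{F}_{N+1}}(\kk)$. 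So it suffices to (i) split this sequence and (ii) compute the resulting $\mathbb{Z}/N\mathbb{Z}$-action on $\widehat{\mathbb{F}_{N+1}}(\kk)$.

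For (i), I would produce an explicit lift of $\overline{1}$ of order $N$. The image of $x^N\in\mathbb{F}_2$ lands in $\widehat{\mathbb{F}_{N+1}}(\kk)$ (it already lies in $\mathrm{Ker}(p_N)$), and since $\kk$ has characteristic zero this pro-unipotent element has a unique $N$-th root $z\in\widehat{\mathbb{F}_{N+1}}(\kk)$. Conjugation by the image of $x$ is an automorphism of $\widehat{\mathbb{F}_{N+1}}(\kk)$ fixing $x^N$, hence — by uniqueness of roots — it fixes $z$; thus $x$ and $z$ commute, so $\xi:=xz^{-1}$ satisfies $\xi^N=x^Nz^{-N}=1$ while $\widehat{p_N}(\xi)=\overline{1}$. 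Therefore $\overline{k}\mapsto\xi^k$ is a splitting. (Alternatively one may invoke the Levi decomposition for pro-algebraic groups, $\mathbb{Z}/N\mathbb{Z}$ being linearly reductive over $\kk$; the splitting is then unique up to conjugation by $\widehat{\mathbb{F}_{N+1}}(\kk)$, which is all that is needed.)

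For (ii), the action of $\overline{1}$ is conjugation by $\xi$. On the free generator $x^N$ one gets $\xi x^N\xi^{-1}=xz^{-1}x^Nzx^{-1}=x^N$, since $z$ commutes with $x^N$ and $x$ commutes with $x^N$. On a generator $x^ayx^{-a}$ (for $0\le a\le N-1$) one computes, using $x(x^ayx^{-a})x^{-1}=x^{a+1}yx^{-(a+1)}$ (with the convention that $x^Nyx^{-N}$ is rewritten in the given generators) and $xzx^{-1}=z$, that $\xi(x^ayx^{-a})\xi^{-1}=z^{-1}\,x^{a+1}yx^{-(a+1)}\,z$. Up to the inner automorphism $\mathrm{Ad}_{z^{-1}}$ — which is harmless, since conjugating $\xi$ still yields a Levi complement and does not change the isomorphism type of the semidirect product — this is precisely the action fixing $x^N$ and cyclically shifting $x^ayx^{-a}\mapsto x^{a+1}yx^{-(a+1)}$ displayed in the statement. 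Assembling the splitting with the identification of the action yields $\widehat{\mathbb{F}_2}(\kk,p_N)\cong\widehat{\mathbb{F}_{N+1}}(\kk)\ltimes\mathbb{Z}/N\mathbb{Z}$.

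The main obstacle is step (i): one must resist splitting the sequence by sending $\overline{1}$ to the image of $x$, because $\mathrm{Ad}_x$ has order $N$ only modulo inner automorphisms ($\mathrm{Ad}_x^{N}=\mathrm{Ad}_{x^N}\neq\mathrm{id}$), so the correction by the $N$-th root $z$ is genuinely needed; and then in step (ii) one has to check that the residual inner twist by $z$ does not affect the final isomorphism. Everything else is a routine computation inside $\mathbb{F}_{N+1}$ and its Malcev completion.
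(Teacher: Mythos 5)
Your step (i) is correct, and it is a nice explicit implementation of the paper's (very terse) argument that the sequence splits because $\mathbb{Z}/N\mathbb{Z}$ is finite and the kernel is pro-unipotent: the element $\xi=xz^{-1}$, with $z$ the unique $N$-th root of $x^N$ in $\widehat{\mathbb{F}_{N+1}}(\kk)$, is indeed a lift of $\overline{1}$ of order $N$. The gap is in step (ii), at the wrap-around generator. On the kernel one has $\mathrm{Ad}_\xi=\mathrm{Ad}_{z^{-1}}\circ\mathrm{Ad}_x$, but $\mathrm{Ad}_x$ is \emph{not} the action displayed in the statement: it sends $x^{N-1}yx^{-(N-1)}$ to $x^N y x^{-N}=x^N\cdot y\cdot (x^N)^{-1}$, a conjugate of the generator $y$ by the generator $x^N$, not $y$ itself, and indeed $\mathrm{Ad}_x$ has infinite order ($\mathrm{Ad}_x^N=\mathrm{Ad}_{x^N}\neq\mathrm{id}$), so it cannot be the action of a generator of $\mathbb{Z}/N\mathbb{Z}$; the statement's formula, read so as to define a $\mathbb{Z}/N\mathbb{Z}$-action, is the order-$N$ cyclic shift with $a+\alpha$ taken mod $N$. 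Concretely, $\mathrm{Ad}_\xi(x^{N-1}yx^{-(N-1)})=z^{N-1}yz^{-(N-1)}$ (using $x^N=z^N$), whereas twisting the stated image $y$ by $\mathrm{Ad}_{z^{-1}}$ gives $z^{-1}yz$; these differ, and in fact no single inner twist relates $\mathrm{Ad}_\xi$ to the stated action, nor does any lift of $\overline{1}$ act by the stated formula on the original generators (a centralizer computation in the free pro-unipotent group: such a lift would have to commute with several free generators at once). So the claim ``up to the inner automorphism $\mathrm{Ad}_{z^{-1}}$ this is precisely the stated action, and the residual inner twist is harmless'' fails as written, and conjugating $\xi$ cannot repair it.

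The missing (short) idea is a change of free generating set of the kernel rather than an inner twist: set $Y_a:=\xi^a y\xi^{-a}=z^{-a}\,(x^ayx^{-a})\,z^{a}$ for $0\le a\le N-1$. Together with $x^N$ these again topologically freely generate $\widehat{\mathbb{F}_{N+1}}(\kk)$, since they coincide with the old generators on the abelianization. Because $\xi^N=1$, conjugation by $\xi$ fixes $x^N$ and permutes $Y_0,\dots,Y_{N-1}$ cyclically, including $Y_{N-1}\mapsto \mathrm{Ad}_{\xi^N}(y)=Y_0$; with respect to this generating set the action is exactly the formula of the lemma, and transporting the identification of the kernel along the (non-inner) automorphism $x^ayx^{-a}\mapsto Y_a$ yields the asserted semidirect product. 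This linearization of the finite-order automorphism $\mathrm{Ad}_\xi$ is precisely what the paper hides behind ``one can check the formula by hand'', and it is the step your argument needs in place of the inner-twist claim.
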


\begin{proof}
The exact sequence 

\[
\begin{tikzcd}
1 \arrow[r]
&\widehat{\mathbb{F}_{N+1}}(\kk) \arrow[r,hookrightarrow]
&\widehat{\mathbb{F}_{2}}(\kk,p_N)  \arrow[r,"\widehat{p_N}",twoheadrightarrow]
&\mathbb{Z}/N\mathbb{Z} \arrow[r]
&1~.
\end{tikzcd}
\]

admits a splitting since the group $\mathbb{Z}/N\mathbb{Z}$ is finite. One can check the formula by hand.  
\end{proof}

\subsubsection*{Cyclotomic braid groups}

Recall that the \textit{cyclotomic pure braid groups} $\mathrm{PB}_n^\Gamma$ are given by the kernel of the following morphism

\[
\left\{
\begin{tikzcd}[column sep=3.5pc,row sep=0pc]
\mathrm{PB}_n^1 \arrow[r,"c_N",twoheadrightarrow]
&\Gamma^n \\
x_{0j} \arrow[r,mapsto]
&(\overline{0},\cdots,\overline{0},\overline{1},\overline{0},\cdots, \overline{0}) \\
x_{ij} \arrow[r,mapsto]
&(\overline{0},\cdots,\overline{0}) \\
\end{tikzcd}\right.
\]
\vspace{0.1pc}

where $(\overline{0},\cdots,\overline{0},\overline{1},\overline{0},\cdots, \overline{0})$ has $\overline{1}$ in the $j$-th position, and where $\mathrm{PB}_n^1$ 
denotes the pure braid group with one fixed strand and $n$ free strands. We therefore have a morphism of short exact sequences 
\[
\begin{tikzcd}
0 \arrow[r]
&\mathbb{Z} \arrow[r,hookrightarrow]\arrow[d,twoheadrightarrow]
&\mathrm{PB}_2^1  \arrow[r,twoheadrightarrow]\arrow[d,twoheadrightarrow,"c_N"]
& \mathbb{F}_2 \arrow[r]\arrow[d,twoheadrightarrow,"p_N"]
&1 \\
0 \arrow[r]
& \Gamma \arrow[r,hookrightarrow]
& \Gamma^2 \arrow[r,twoheadrightarrow]
& \Gamma \arrow[r]
& 0~.
\end{tikzcd}
\]
Recall the splitting\footnote{Up to a shift of the indices/strand labels (that corresponds to the canonical identification 
$\mathrm{PB}_2^1=\mathrm{PB}_3$) this is the splitting from Lemma \ref{lemma: PB_3 iso au produit de libres}. } 
$\mathbb{F}_2 \longrightarrow \mathrm{PB}_2^1$ given by $x\mapsto x_{01}$ and $y\mapsto x_{12}$ of the upper short exact sequence, 
that lifts the splitting of the lower one given by $\Gamma\ni\alpha\mapsto (\alpha,\overline{0})$. 
Passing to relative Malcev completions, we get a splitting $\widehat{\mathbb{F}_{2}}(\kk,p_N)  \longrightarrow \widehat{\mathrm{PB}_2^1}(\kk,c_N)$
of the short exact sequence 
\[
\begin{tikzcd}
0 \arrow[r]
& {\kk} \arrow[r,hookrightarrow]
& \widehat{\mathrm{PB}_2^1}(\kk,c_N)  \arrow[r,twoheadrightarrow]
& \widehat{\mathbb{F}_{2}}(\kk,p_N) \arrow[r]
& 1~,
\end{tikzcd}
\]
and, taking kernels, a splitting $\widehat{\mathbb{F}_{N+1}}(\kk) \longrightarrow \widehat{\mathrm{PB}_2^\Gamma}(\kk)$ of the short exact sequence 
\[
\begin{tikzcd}
0 \arrow[r]
& {\kk} \arrow[r,hookrightarrow]
& \widehat{\mathrm{PB}_2^\Gamma}(\kk)  \arrow[r,twoheadrightarrow]
& \widehat{\mathbb{F}_{N+1}}(\kk) \arrow[r]
& 1~.
\end{tikzcd}
\]
It can be checked that this induces an isomorphism of groups 
\[
\widehat{\mathrm{PB}_2^\Gamma}(\kk) \cong \kk \times \widehat{\mathbb{F}_{N+1}}(\kk) ~.
\]
Similarly, one can check that the center of $\mathfrak{t}_2^\Gamma$ is a one dimensional vector space generated by $t_{01} + t_{02} + \sum_{\alpha \in \Gamma} t_{12}^{\alpha}$, and that furthermore

\[
\mathfrak{t}_2^\Gamma \cong \frac{\mathcal{L}ie(t_{01},t_{02},t_{12}^\alpha ~|~ \alpha = 0, \cdots, N-1)}{(t_{01} + t_{02} + \sum_{\alpha \in \Gamma} t_{12}^{\alpha} ~~ \text{is central})}~.
\]
\vspace{0.1pc}

There is a canonical isomorphism of Lie algebras 
\[
\mathfrak{t}_2^\Gamma \cong \kk.c \oplus \mathfrak{f}_{N+1}~,
\]
where $c = t_{01} + t_{02} + \sum_{\alpha \in \Gamma} t_{12}^{\alpha}$ and where $\mathfrak{f}_{N+1}$ is generated by $ t_{01},t_{12}^\alpha$, for $\alpha = 0, \cdots, N-1$.

\medskip

Therefore an element in $\mathrm{exp}(\widehat{\mathfrak{t}}_2^{~\Gamma})$ can by written as a pair

\[
\left(\Xi(c),\Psi(t_{01},t_{12}^{\overline{0}}, \cdots, t_{12}^{\overline{N-1}})\right) \quad \text{in} \quad \kk \times \kk \langle \langle t_{01},t_{02},t_{12}^\alpha ~|~ \alpha = 0, \cdots, N-1 \rangle \rangle~,
\]
\vspace{0.1pc}

where $\Psi$ is a non-commutative formal power series in variables $t_{01}$ and $t_{12}^\alpha$ for $\alpha = 0, \cdots, N-1$.

\begin{theorem}[{\cite[Theorem 5.5]{calaque20moperadic}}]
There is a bijection between the set of cyclotomic associators $\mathrm{Assos}^\Gamma$ and the set of triples $(\lambda,\Phi,\Psi)$ where 

\begin{enumerate}
\item $(\lambda,\Phi)$ is a Drinfeld associator;

\medskip

\item $\Psi$ is a non-commutative formal series $\Psi(t_{01} ,t_{12}^{\overline{0}}, \cdots, t_{12}^{\overline{N-1}})$ such that 

\medskip

\begin{enumerate}
\item the following equation is satisfied in $\mathrm{exp}(\widehat{\mathfrak{t}}_3^{~\Gamma})$:
\[
\Psi\left(t_{01} + \sum_{\alpha \in \Gamma} t_{12}^{\alpha} + \sum_{\alpha \in \Gamma} t_{13}^{\alpha},t_{12}^{\overline{0}}, \cdots, t_{12}^{\overline{N-1}}\right)~ \Psi\left(t_{01},t_{12}^{\bar{0}} + t_{13}^{\overline{0}} , \cdots, t_{12}^{\overline{N-1}} +  t_{13}^{\overline{N-1}}\right) = 
\]
\[
\Psi\left(t_{01},t_{12}^{\overline{0}}, \cdots, t_{12}^{\overline{N-1}}\right)~ \Psi\left(t_{01} + t_{02} + \sum_{\alpha \in \Gamma} t_{12}^{\alpha},t_{12}^{\overline{0}}+ t_{13}^{\overline{0}},\cdots, t_{12}^{-\overline{N+1}}+t_{13}^{-\overline{N+1}}\right) ~ \Phi\left(t_{12},t_{23}\right)
\]

\medskip

\item the following equation is satisfied in $\mathrm{exp}(\widehat{\mathfrak{t}}_2^{~\Gamma})$:
\[
e^{\frac{\lambda}{N} t_{01}} ~ \Psi\left(t_{01},t_{12}^{\overline{0}}, \cdots, t_{12}^{\overline{N-1}}\right) ~ e^{\frac{\lambda}{2} t_{12}^{\bar{0}}} ~ \Psi\left(t_{01} ,t_{12}^{\overline{0}}, \cdots, t_{12}^{-\alpha}, \cdots, t_{12}^{-\overline{N+1}}\right)^{-1} ~ e^{\frac{\lambda}{N} t_{02}}
\]
\[
(\bar{0},\bar{1}) \ast \left(\Psi\left(t_{01},t_{12}^{\overline{0}}, \cdots, t_{12}^{-\alpha}, \cdots, t_{12}^{-\overline{N+1}}\right) ~ e^{\frac{\lambda}{2} t_{12}^{\overline{0}}}~ \Psi\left(t_{01},t_{12}^{\overline{0}}, \cdots, t_{12}^{\overline{N-1}}\right)^{-1} \right) = 1~.
\]
\end{enumerate}
\end{enumerate} 
\end{theorem}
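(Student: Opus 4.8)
The plan is to follow the pattern of the classical identification (the corollary following \cref{thm: presentation pab}): use the generators-and-relations presentation of $\pab^\Gamma$ from \cref{thm: presentation moperad pab Gamma} to pin down a $\Gamma$-equivariant moperad isomorphism $G:\widehat{\pab}^\Gamma(\kk)\to\mathrm{Grp}(\widehat{\pacd}^\Gamma)$, lying above an associator $F$ and equal to the identity on objects, by its values on the two generating morphisms $E$ and $\Psi$. By \cref{Def: cyclotomic associator} such a datum is a pair $(F,G)$; the component $F$ is an operadic associator, hence encoded by a pair $(\lambda,\Phi)$ with $\lambda\in\kk^\times$ and $\Phi$ a Drinfeld associator, via $F(R)=e^{\lambda t_{12}/2}X$ and $F(\Phi)=\Phi(t_{12},t_{23})\alpha$. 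Since, by \cref{thm: presentation moperad pab Gamma}, $\pab^\Gamma$ is generated over $\pab$ (with its $\Gamma$-action) by $E$ and $\Psi$, the morphism $G$ is determined by $G(E)$ and $G(\Psi)$, subject to the images of the unital, mixed pentagon, twisted ribbon and twisted octagon relations.

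First I would identify the possible shapes of $G(E)$ and $G(\Psi)$. Since $G$ is the identity on objects, $G(E)$ is a group-like element of $\mathrm{Hom}_{\widehat{\pacd}^\Gamma(1)}(\bar{0},\bar{1})$; as $\mathfrak{t}_1^\Gamma=\kk\,t_{01}$ is abelian, this hom-coalgebra is $\exp(\widehat{\mathfrak{t}}_1^\Gamma)\cdot L^{\bar{1}}$, so $G(E)=e^{\mu t_{01}}L^{\bar{1}}$ for a scalar $\mu$ to be determined (notation as in \cref{element L}). Likewise $G(\Psi)$ is a group-like element of $\mathrm{Hom}_{\widehat{\pacd}^\Gamma(2)}\big((01_{\bar{0}})2_{\bar{0}},\,0(1_{\bar{0}}2_{\bar{0}})\big)=\exp(\widehat{\mathfrak{t}}_2^\Gamma)\cdot\beta$ (notation as in \cref{element beta}), so, using the canonical splitting $\widehat{\mathfrak{t}}_2^\Gamma\cong\kk\,c\oplus\widehat{\mathfrak{f}}_{N+1}$ recalled above, it can be written $G(\Psi)=\big(\Xi(c),\Psi(t_{01},t_{12}^{\bar{0}},\dots,t_{12}^{\overline{N-1}})\big)\,\beta$. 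Pushing the unital relation $\Psi\circ_1\mathrm{id}_{\{*\}}=\mathrm{id}_{01_{\bar{0}}}$ through $G$ and using the explicit description of the associated quotient $\mathfrak{t}_2^\Gamma\to\mathfrak{t}_1^\Gamma$ forces $\Xi(c)=1$; thus $G(\Psi)$ is recorded by a noncommutative series $\Psi$ in the variables $t_{01}$ and $t_{12}^\alpha$ ($\alpha\in\Gamma$), which becomes the third entry of the triple.

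Next I would translate the three remaining relations. Applying $G$ to the mixed pentagon, twisted ribbon and twisted octagon diagrams of \cref{thm: presentation moperad pab Gamma}, and expanding each substituted term by means of the insertion-coproduct composition formulas for $(\mathfrak{t}_n^\Gamma)$ and $(\mathfrak{t}_n)$ of \cref{cyclotomic moperad structure on Kohno-Drinfeld}, the $\Gamma$-action formulas stated there, and the opposite-order convention of \cref{rmk: algebraic vs topological conventions}, I expect to obtain: equation (a) from the mixed pentagon; the constraint $\mu=\lambda/N$ from the twisted ribbon relation (morally, $N$ successive loops around the frozen strand assemble to the full twist, whose braiding coefficient is $\lambda$); and, with $\mu=\lambda/N$ now fixed, equation (b) from the twisted octagon relation. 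The relations of \cref{thm: presentation pab} internal to $\pab$ should impose nothing new, being equivalent to the statement that $(\lambda,\Phi)$ is a Drinfeld associator. Conversely, given $(\lambda,\Phi,\Psi)$ satisfying (a) and (b), I would set $G$ to be the identity on objects, to agree with the associator of $(\lambda,\Phi)$ on $\pab$, and to send $E\mapsto e^{\frac{\lambda}{N}t_{01}}L^{\bar{1}}$ and $\Psi\mapsto\Psi\,\beta$; reading the computations backwards shows the presentation relations hold, a direct check against the $\Gamma$-actions on both sides gives equivariance, and bijectivity follows since $G$ is bijective on objects and induces an isomorphism on associated graded hom-groups (using $\lambda\in\kk^\times$), hence is an isomorphism of pro-unipotent $\kk$-groupoids.

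The hard part will be the translation step --- in particular, massaging the twisted octagon diagram into exactly the shape of equation (b). This requires keeping careful track of the many insertion-coproduct substitutions (e.g.\ the effect of $\circ_0$, which sends $t_{01}$ to $t_{01}+\sum_{\alpha\in\Gamma}t_{12}^\alpha+\sum_{\alpha\in\Gamma}t_{13}^\alpha$), of the $\Gamma$-twists $\ast$, and of the product-reversal convention, together with verifying that the scalar appearing in $G(E)$ is genuinely forced to equal $\lambda/N$ rather than remaining a free parameter. Everything else is formal once the presentation of \cref{thm: presentation moperad pab Gamma} is granted.
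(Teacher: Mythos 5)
Your proposal follows essentially the same route as the paper's own sketch: determine $G$ by $G(E)=e^{\mu t_{01}}L^{\bar 1}$ and $G(\Psi)=g(\Psi)\beta$, use the splitting $\mathfrak{t}_2^\Gamma\cong \kk\, c\oplus\mathfrak{f}_{N+1}$ together with the unital relation to force $\Xi(c)=1$, and translate the remaining relations of \cref{thm: presentation moperad pab Gamma} into equations (a) and (b) plus the scalar constraint $\mu=\lambda/N$. The only cosmetic difference is that the paper obtains $\mu=\lambda/N$ by citing \cite[Lemma 5.7]{calaque20moperadic} rather than attributing it to a specific relation of the presentation, which does not change the structure of the argument.
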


\begin{proof}[Sketch of proof]
The data of an isomorphism $F: \widehat{\pab}(\kk) \qi \mathrm{Grp}(\widehat{\pacd})$ which is the identity on objects is equivalent to the data of a pair $(\lambda_1,\Phi)$ which is a Drinfeld associator.

\medskip

Let $G: \widehat{\pab}^\Gamma(\kk) \qi \mathrm{Grp}(\widehat{\pacd}^\Gamma)$ be an $\Gamma$-equivariant isomorphism of moperads above an associator $F$, which is also the identity on objects. The morphism $G$ is completely determined by the image of $E$ and $\Psi$. 

\medskip

The image of $E$ by $G$ has to be of the form 
\[
G(E) = e^{\lambda_2 t_{01}}L^{\bar{1}} \quad \text{in} \quad \mathrm{Hom}_{\pacd^\Gamma(1)}(01_{\bar{0}},01_{\bar{1}}) ~,
\]
where $e^{\lambda_2 t_{01}}$ is in $\mathrm{exp}(\widehat{\mathfrak{t}}_1^{~\Gamma}) \cong \kk$ and where is the $L^{\bar{1}}$ element described in \cref{element L}. One can show that $\lambda_2 = \lambda_1/N $, see \cite[Lemma 5.7]{calaque20moperadic}.

\medskip

The image of $\Psi$ by $G$ has to be of the form
\[
G(\Psi) = g(\Psi)\beta~, \quad \mathrm{in} \quad \mathrm{Hom}_{\pacd^\Gamma(2)}((01_{\bar{0}})2_{\bar{0}},0(1_{\bar{0}}2_{\bar{0}}))~,
\]
where $g(\Psi)$ is in $\mathrm{exp}(\widehat{\mathfrak{t}}_2^{~\Gamma})$ and where $\beta$ is the element of $\widehat{\pacd}^\Gamma$ described in \cref{element beta}. The isomorphism 
\[
\mathfrak{t}_2^\Gamma \cong \kk.c \oplus \mathfrak{f}_{N+1}~,
\]
allows us to canonically decompose 
\[
g(\Psi) = \left(\Xi(c),\Psi(t_{01} ,t_{12}^{\overline{0}}, \cdots, t_{12}^{\overline{N-1}})\right) \quad \text{in} \quad \mathrm{exp}(\widehat{\mathfrak{t}}_2^{~\Gamma}) \cong \kk \times \widehat{\mathbb{F}_{N+1}}(\kk) ~,
\]
where $c = t_{01} + t_{02} + \sum_{\alpha \in \Gamma} t_{12}^{\alpha}$. The morphism 
\[
- \circ_1 \mathrm{id}_{\{*\}}: \mathrm{Hom}_{\pacd^\Gamma(2)}((01_{\bar{0}})2_{\bar{0}},0(1_{\bar{0}}2_{\bar{0}})) \longrightarrow \mathrm{Hom}_{\pacd^\Gamma(1)}(01_{\bar{0}},01_{\bar{0}})
\]
is the one determined by the morphism $\mathfrak{t}_2^{~\Gamma} \twoheadrightarrow \mathfrak{t}_1^{~\Gamma}$ which sends $t_{01} ,t_{12}^{\alpha}$ to zero and $t_{02}$ to $t_{01}$. Its pre-composition with the isomorphism $\kk.c \oplus \mathfrak{f}_{N+1} \cong \mathfrak{t}_2^\Gamma$ is given by sending $t_{01} ,t_{12}^{\alpha}$ to zero and $c$ to $t_{01}$. The unital relation 
\[
\Psi \circ_1 \mathrm{id}_{\{*\}} = \mathrm{id}_{01_{\bar{0}}}
\]
therefore gives that $\Xi(c) = 1$, and thus that the element $g(\Phi)$ is simply given by the formal power series $\Psi(t_{01} ,t_{12}^{\overline{0}}, \cdots, t_{12}^{\overline{N-1}})$ in $\mathrm{exp}(\widehat{\mathfrak{f}}_{N+1})$. 

\medskip

Finally, one shows by direct computations, starting from the above presentation of $\pab^\Gamma$, that the relations in this presentation impose the relations in the theorem.
\end{proof}

\makeatletter
\newcommand{\ostar}{\mathbin{\mathpalette\make@circled\star}}
\newcommand{\make@circled}[2]{%
  \ooalign{$\m@th#1\smallbigcirc{#1}$\cr\hidewidth$\m@th#1#2$\hidewidth\cr}%
}
\newcommand{\smallbigcirc}[1]{%
  \vcenter{\hbox{\scalebox{0.77778}{$\m@th#1\bigcirc$}}}%
}
\makeatother






\begin{theorem}[{\cite[Section 2.2]{Enriquez07}}]
Let $\kk$ be a field of characteristic zero and let $\Gamma = \mathbb{Z}/N\mathbb{Z}$. Then the set of cyclotomic associators $\mathrm{Assoc}^\Gamma(\kk)$ is non empty. 
\end{theorem}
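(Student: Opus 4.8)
The plan is to imitate Drinfeld's proof of the existence of a genus-zero associator: produce an explicit cyclotomic associator over $\mathbb{C}$ by transcendental means, and then descend to an arbitrary field of characteristic zero using the (bi)torsor structure recorded after \cref{Def: cyclotomic associator}.

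\emph{Step 1: the cyclotomic KZ associator.} Following \cite{Enriquez07}, one introduces the cyclotomic Knizhnik--Zamolodchikov connection, an algebraic flat connection with logarithmic poles on the configuration spaces attached to $\mathbb{P}^1\setminus\bigl(\{0,\infty\}\cup\mu_N\bigr)$, whose residues are governed by the generators $t_{0i}$ and $t_{ij}^{\alpha}$ of the cyclotomic Drinfeld--Kohno Lie algebras $\mathfrak{t}_n^{\Gamma}$. Its regularized holonomies between suitable tangential base points provide, on the one hand, a candidate image for the generator $E$ (a regularized monodromy around $0$) and, on the other hand, a candidate image $\Psi_{KZ}$ for the generator $\Psi$ (a regularized holonomy along a real segment). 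The coefficients of $\Psi_{KZ}$ are regularized iterated integrals, that is, special values of multiple polylogarithms at $N$-th roots of unity. One then checks that this data satisfies the four relations of \cref{thm: presentation moperad pab Gamma} (equivalently, after the reduction of the previous theorem, equations (a) and (b) for a suitable $\lambda$): each such identity is obtained by expressing both sides as holonomies of the flat connection along homotopic paths, or along a contractible loop in an appropriate Fulton--MacPherson type compactification, and the $\Gamma$-equivariance is built in through the rotation action of $\mu_N$. Rescaling the $t$'s, one normalises $\lambda$ to $1$. This shows $\mathrm{Assoc}^{\Gamma}(\mathbb{C})\neq\emptyset$.

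\emph{Step 2: descent.} Fixing $\lambda=1$ (legitimate by the above rescaling, exactly as for Drinfeld associators), the functor $\kk\mapsto\mathrm{Assoc}^{\Gamma}(\kk)$ restricts to a torsor, defined over $\mathbb{Q}$, under the $\lambda=1$ part $\mathrm{GRT}_1^{\Gamma}$ of the cyclotomic graded Grothendieck--Teichmüller group. This group scheme is pro-unipotent: it is an extension of the classical $\mathrm{GRT}_1$ by the relative group controlling the choice of $\Psi$ once a Drinfeld associator has been fixed, and both are graded pro-unipotent $\mathbb{Q}$-group schemes. For any such group scheme the non-abelian cohomology set $H^1(\kk,-)$ vanishes over a field $\kk$ of characteristic zero (the group is a cofiltered limit of iterated extensions of $\mathbb{G}_a$, and $H^1(\kk,\mathbb{G}_a)=0$), so a torsor under it that is geometrically non-empty has a $\kk$-point. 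Our torsor, being a cofiltered limit of affine $\mathbb{Q}$-schemes of finite type, is non-empty over $\mathbb{C}$ by Step 1, hence over $\overline{\mathbb{Q}}$, hence over $\mathbb{Q}$; base-changing, $\mathrm{Assoc}^{\Gamma}(\kk)\neq\emptyset$ for every field $\kk$ of characteristic zero.

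Equivalently, one can argue fibrewise: by Drinfeld's theorem \cite{Dri90} there is a Drinfeld associator $(\lambda,\Phi)$ over $\kk$, and the fibre over it of the forgetful map $(\lambda,\Phi,\Psi)\mapsto(\lambda,\Phi)$ is a torsor under a pro-unipotent $\kk$-group, hence trivial once one knows it is geometrically non-empty --- which follows from Step 1 together with the surjectivity of $\mathrm{GT}^{\Gamma}\to\mathrm{GT}$. In either formulation, the real work is Step 1: verifying that the regularized holonomies of the cyclotomic KZ connection satisfy \emph{all} the defining relations, and in particular the twisted octagon relation of \cref{thm: presentation moperad pab Gamma}, which is the genuinely new phenomenon compared with the non-cyclotomic case and demands a careful bookkeeping of the monodromies around $0$, around $\infty$ and around the points of $\mu_N$, as well as of their compatibility with the $\Gamma$-action.
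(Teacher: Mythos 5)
Your proposal is correct and follows essentially the same route as the paper (and as Enriquez's original argument that the paper cites): construct the explicit cyclotomic KZ associator over $\mathbb{C}$ as a renormalized holonomy of the cyclotomic Knizhnik--Zamolodchikov connection, then descend to $\mathbb{Q}$ (hence to any $\kk$ of characteristic zero) using the torsor structure under the pro-unipotent (graded) cyclotomic Grothendieck--Teichmüller group. Your Step 2 merely spells out the descent mechanism that the paper leaves implicit.
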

There is an explicit construction of a cyclotomic associator over $\kk = \mathbb{C}$ given as the renormalized holonomy from $0$ to $1$ of a cyclotomic version 
of the Knizhnik--Zamolodchikov differential equation. One can then use again descent methods to prove existence over $\kk = \mathbb{Q}$. 

\begin{theorem}[{\cite[Section 4.5]{calaque20moperadic}}]
Elements of the cyclotomic Grothendieck--Teichmüller group $\mathrm{GT}^\Gamma$ are in bijection with triples

\[
(\mu,f,g) \quad \text{in} \quad  \kk^\times  \times \widehat{\mathbb{F}}_2(\kk) \times \widehat{\mathbb{F}}_{N+1}(\kk)
\]

where $(\mu,f)$ is an element of the Grothendieck--Teichmüller group $\mathrm{GT}(\kk)$ and where $g$ is seen as a word 
$g(x^N, y, xyx^{-1},\dots,  x^{N-1} y x^{1-N})$ which satisfies the following relations:

\begin{enumerate}

\item The element $g$ can be seen inside of $\widehat{PB}_3^\Gamma(\kk)$, the Malcev completion of the cyclotomic pure braid group. Let $x_{ij}$ denote the standard generators of this group, then $g$ satisfies  

\[
g(x_{01},x_{12})g(x_{01}x_{02},x_{13}x_{23})f(x_{12},x_{23}) = g(x_{02}x_{12},x_{23})g(x_{01},x_{12}x_{13})
\]
\vspace{0.1pc}

in $\widehat{PB}_3^\Gamma(\kk)$.

\medskip

\item Let $\alpha$ be equal to $\bar{1}$ in $\Gamma$. Then we have

\[
x^{\frac{\mu -1}{N}}g(x,y)y^{\frac{\mu +1}{N}}g(z,y)^{-1}z^{\frac{\mu -1}{N}} \alpha \ast \left(g(z,y)y^{\frac{\mu -1}{N}}g(x,y)^{-1}\right) =1~,
\]

if $xyz = 1$ in $\widehat{\mathbb{F}_{2}}(\kk,p_N) $, where the action of $\alpha$ on $g$ is given by Lemma \ref{lemma: semi direct relative completion}.
\end{enumerate}

\medskip

Under this bijection, the group structure of the Grothendieck--Teichmüller group is given by: 
\[
(\mu_1,f_1,g_1) \star (\mu_2,f_2,g_2) = \left(\mu,f,g\right)~,
\]
where

\[
\mu \coloneqq \mu_1\mu_2 \quad \text{and} \quad f \coloneqq f_1\left(x^{\mu_2},f_2(x,y)y^{\mu_2}f_2(y,x)\right)f_2(x,y)~,
\]

and

\[
g \coloneqq  g_1\left(x^{\frac{\mu_2-1}{N}},g_2(x,y) y^{\mu_2}g_2(x,y)^{-1}\right)g_2(x,y).
\]

And, under this bijection, its action on the set of cyclotomic associators is given by:

\[
(\mu,f,g) \bullet (\lambda,\Phi,\Psi) = \left(\mu\lambda, f\left(e^{\lambda t_{12}}, \Phi(t_{12},t_{23})e^{\lambda t_{23}}\Phi(t_{23}, t_{12})\right)\Phi(t_{12},t_{23}), \Upsilon \right)~,
\]

where 

\begin{align*}
\Upsilon \coloneqq \Psi(t_{12}, t_{23}^0, \cdots, t_{23}^{N-1})~ g \Bigg( e^{\lambda t_{12}}, \Psi(t_{12}, t_{23}^0, \cdots, t_{23}^{N-1})~ e^{\lambda t_{23}} ~ \Psi(t_{12}, t_{23}^0, \cdots, t_{23}^{N-1})^{-1},   \\
e^{(\lambda/N) t_{12}} ~ \Psi(t_{12}, t_{23}^1, \cdots, t_{23}^{N})~ e^{\lambda t_{23}^1} ~ \Psi(t_{12}, t_{23}^1, \cdots, t_{23}^N)^{-1} e^{-(\lambda/N) t_{12}}, \cdots, \\
e^{(\lambda(N-1)/N) t_{12}}~ \Psi(t_{12}, t_{23}^{N-1}, \cdots, t_{23}^{2N-2}) ~ e^{\lambda t_{23}^{N-1}} ~\Psi(t_{12}, t_{23}^{N-1}, \cdots, t_{23}^{2N-2})^{-1} ~ e^{-(\lambda(N-1)/N)t_{12}}\Bigg)~.
\end{align*}
\end{theorem}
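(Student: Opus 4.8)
The plan is to follow the blueprint of the proof of Theorem~\ref{presentation of GT}, replacing the presentation of $\pab$ from Theorem~\ref{thm: presentation pab} by the presentation of the moperad $\pab^\Gamma$ over $\pab$ given in Theorem~\ref{thm: presentation moperad pab Gamma}, and feeding in the relative Malcev completion computations recalled above. First I would note that an element $\Theta\in\mathrm{GT}^\Gamma(\kk)$ is a $\Gamma$-equivariant automorphism of the pair $(\widehat{\pab}^\Gamma(\kk),\widehat{\pab}(\kk))$ which is the identity on objects; forgetting the moperad structure, it restricts to an automorphism of $\widehat{\pab}(\kk)$ that is the identity on objects, hence to a pair $(\mu,f)\in\mathrm{GT}(\kk)$ by Theorem~\ref{presentation of GT}. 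By the presentation theorem, $\Theta$ is then determined by the images of the two remaining generators, and since it fixes objects we may write $\Theta(E)=\phi_E\cdot E$ and $\Theta(\Psi)=\phi_\Psi\cdot\Psi$ with $\phi_E\in\mathrm{Aut}_{\widehat{\pab}^\Gamma(1)}(01_{\bar{0}})$ and $\phi_\Psi\in\mathrm{Aut}_{\widehat{\pab}^\Gamma(2)}\big((01_{\bar{0}})2_{\bar{0}}\big)\cong\widehat{\mathrm{PB}_2^\Gamma}(\kk)$.

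Next I would pin down these two pieces of data. Since $\mathrm{Aut}_{\widehat{\pab}^\Gamma(1)}(01_{\bar{0}})\cong\widehat{\mathrm{PB}_1^\Gamma}(\kk)\cong\kk$, generated by the $N$-fold loop $x_{01}$ around the frozen strand, we have $\phi_E=x_{01}^{\rho}$ for a single scalar $\rho\in\kk$. The claim is that $\rho$ is forced to equal $\tfrac{\mu-1}{N}$, so that $\phi_E$ carries no independent information; I expect this rigidity --- the analogue of \cite[Lemma~5.7]{calaque20moperadic} --- to follow by reading off the leading degree-one part of the twisted ribbon and twisted octagon relations after pushing them through $\Theta$ and comparing with $\Theta(R)$, which is already dictated by $\mu$. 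For $\phi_\Psi$, the decomposition $\widehat{\mathrm{PB}_2^\Gamma}(\kk)\cong\kk\times\widehat{\mathbb{F}_{N+1}}(\kk)$ (with the central $\kk$ spanned by the full twist) lets me write $\phi_\Psi=(\xi,g)$; applying $\Theta$ to the unital relation $\Psi\circ_1\mathrm{id}_{\{*\}}=\mathrm{id}_{01_{\bar{0}}}$ and using that $-\circ_1\mathrm{id}_{\{*\}}$ realises the projection $\widehat{\mathrm{PB}_2^\Gamma}(\kk)\twoheadrightarrow\kk$ killing $\widehat{\mathbb{F}_{N+1}}(\kk)$ --- exactly as in the proof sketch of the cyclotomic associator theorem --- forces $\xi=1$. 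Hence $\phi_\Psi=g\in\widehat{\mathbb{F}_{N+1}}(\kk)$, which, through the explicit free generators $x^N$ and $x^ayx^{-a}$ of $\mathrm{Ker}(p_N)=\mathbb{F}_{N+1}$ (with $x=x_{01}$, $y=x_{12}$), I would record as a word $g(x^N,y,xyx^{-1},\dots,x^{N-1}yx^{1-N})$. This produces the underlying set bijection $\Theta\leftrightarrow(\mu,f,g)$, the defining relations of $\pab^\Gamma$ cutting out the admissible triples.

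Then I would translate the moperad relations of Theorem~\ref{thm: presentation moperad pab Gamma} into the conditions of the statement, by substituting $\Theta(R)$, $\Theta(\Phi)$ (dictated by $(\mu,f)$) together with $\Theta(E)$, $\Theta(\Psi)$. Feeding these into the mixed pentagon relation, read inside $\widehat{\mathrm{PB}_3^\Gamma}(\kk)$, should give condition (1); the twisted ribbon and twisted octagon relations should together yield condition (2), with the $\Gamma$-twist $\bar{1}\ast(-)$ coming from the $\Gamma$-action of Lemma~\ref{lemma: semi direct relative completion} and the exponents $\tfrac{\mu\pm1}{N}$ from $\Theta(E)=x_{01}^{(\mu-1)/N}E$. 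Conversely, as in \cite{FresseGT1,calaque20moperadic}, I would check that any triple with $(\mu,f)\in\mathrm{GT}(\kk)$ and $g\in\widehat{\mathbb{F}_{N+1}}(\kk)$ satisfying (1) and (2) assembles into a well-defined $\Gamma$-equivariant moperad endomorphism, which is automatically invertible because $(\mu,f)$ is and $g$ lies in a pro-unipotent group. Finally, the group law and the action on cyclotomic associators come out of composing two such $\Theta$'s (respectively, pre-composing an associator $F$ with $\Theta$) and tracking $E$ and $\Psi$: the $g$-component of the composite is obtained by exactly the substitution that produces the $f$-component, namely by plugging the $\Theta_1$-images of the generators (so that the first variable picks up a $\mu_2$-power and $y$ is replaced by a $g_2$-conjugate of $y^{\mu_2}$) into the word $g_1$ and then right-multiplying by $g_2$, while the new $\Psi$-component of $(\mu,f,g)\bullet(\lambda,\Phi,\Psi)$ is $g$ evaluated at the images $e^{\lambda t_{12}}$ and $\Psi\, e^{\lambda t_{23}^{\bar{k}}}\,\Psi^{-1}$ of the braid generators, i.e.\ the series $\Upsilon$.

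The main obstacle will be the rigidity statement $\rho=\tfrac{\mu-1}{N}$ and, especially, the bookkeeping in the translation of the twisted ribbon and twisted octagon relations: one must carefully track basepoints, parenthesisations and the $\Gamma$-labels $\bar{k}$ while pushing these relations through $\Theta$, and then recognise the outcome inside $\widehat{\mathbb{F}_{2}}(\kk,p_N)\cong\widehat{\mathbb{F}_{N+1}}(\kk)\ltimes\Gamma$; the rest is a routine, if lengthy, adaptation of the genus-zero arguments of \cite{FresseGT1}.
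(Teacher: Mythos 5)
The paper itself gives no proof of this theorem (it only cites \cite[Section 4.5]{calaque20moperadic}), but your proposal follows exactly the strategy the survey sketches for the neighbouring results (the explicit description of cyclotomic associators and of $\mathrm{GT}_{\mathrm{ell}}$) and that the cited reference carries out: restrict to $(\mu,f)\in\mathrm{GT}(\kk)$, reduce to the images of the generators $E$ and $\Psi$ via the presentation of $\pab^\Gamma$, kill the central component with the unital relation through $\widehat{\mathrm{PB}_2^\Gamma}(\kk)\cong\kk\times\widehat{\mathbb{F}_{N+1}}(\kk)$ to extract $g$, translate the mixed pentagon and twisted octagon/ribbon relations into conditions (1)--(2), and read off the group law and the action on cyclotomic associators by tracking generators. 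The one step you leave schematic --- that the exponent of $x_{01}$ in $\Theta(E)$ is forced to equal $\frac{\mu-1}{N}$ --- is precisely the analogue of \cite[Lemma 5.7]{calaque20moperadic} that you yourself point to, so the proposal is essentially the same approach as the (referenced) proof.
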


\begin{remark}
There is also an explicit description of the cyclotomic graded Grothendieck--Teichmüller group which can be obtained by computing the relations that elements of 
\cref{rmk: elements in pacd Gamma} satisfy.
\end{remark}

\subsection{Topological description of $\pab^\Gamma$}\label{subsec: topological cyclotomic}

Analogue statements to those mentioned in \S\ref{Subsection: Description topo} hold in the cyclotomic case. First, we introduce the configuration space 

\[
\mathrm{C}(\mathbb{C}^{\times},n) \coloneqq \left\{ (x_1, \cdots, x_n) \in \left(\mathbb{C}^{\times}\right)^n ~~|~~x_i \neq x_j~~\text{if}~~i \neq j \right\}/\mathbb{R}_{> 0}~.
\]
\vspace{0.1pc}

This space is isomorphic to the space of configurations of $n + 1$-points. Here the missing origin in $\mathbb{C}^{\times}$ is seen as a \textit{fixed} point in the configurations.

\medskip

We consider their Fulton-MacPherson compactifications $\overline{\mathrm{C}}(\mathbb{C}^{\times},n)$, which this time have a 
\textit{topological moperad structure} over the topological operad $\overline{\mathrm{C}}(\mathbb{C})$~. Indeed, one can compute that 

\[
\partial~\overline{\mathrm{C}}(\mathbb{C}^{\times},n) \cong \bigcup_{k \geq 1} \bigcup_{\mathrm{J_1} \sqcup \cdots \sqcup\mathrm{J_k} = [n]}  \overline{\mathrm{C}}(\mathbb{C}^{\times},[k]) \times \overline{\mathrm{C}}(\mathbb{C}^{\times},J_m) \times \left(\prod_{j =1, j \neq m}^k \overline{\mathrm{C}}(\mathbb{C},\mathrm{J}_j) \right)~,
\]

where $[n] = \{0, \cdots, n \}$ and where $J_m$ for $1 \leq m \leq k$ is the subset of $[n]$ which contains the element $0$. Using this decomposition of the 
boundary, one can write the moperad structure maps as obvious inclusions. We denote this topological moperad over $\overline{\mathrm{C}}(\mathbb{C})$ by $\overline{\mathrm{C}}(\mathbb{C}^{\times})$.

\begin{remark}
The moperad $\overline{\mathrm{C}}(\mathbb{C}^{\times})$ is also given by applying Proposition \ref{prop: derived moperad of an operad} to $\overline{\mathrm{C}}(\mathbb{C})$. 
\end{remark}

Recall $\mathcal{P}a_0$, the moperad over $\mathcal{P}a$ in the category of sets defined in \cref{example: moperad Pa0}. Now lets view this moperad as 
a topological moperad over $\mathcal{P}a$ where we impose the discrete topology both on $\mathcal{P}a_0$ and on $\mathcal{P}a$. 
 
\begin{lemma}
There is an inclusion of topological moperads 

\[
\epsilon: \mathcal{P}a_0 \hookrightarrow \overline{\mathrm{C}}(\mathbb{C}^{\times})
\]

over the topological operad $\mathcal{P}a$.
\end{lemma}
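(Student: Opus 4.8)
The plan is to mirror, in the moperad setting, the construction of the inclusion $\iota:\mathcal{P}a\hookrightarrow \overline{\mathrm{C}}(\mathbb{C})$ from \cref{lemme: morphisme topo operades}. Recall that an element of $\mathcal{P}a_0(n)$ is a maximally parenthesized word $0\,\sigma_1\cdots\sigma_n$, i.e.\ the datum of a permutation together with a binary bracketing of the $n+1$ symbols $0,\sigma_1,\dots,\sigma_n$ with $0$ always in leftmost position. First I would define $\epsilon$ on objects: to such a word I associate the configuration in $\mathbb{C}^\times$ obtained by placing the points $\sigma_1,\dots,\sigma_n$ (and the frozen origin, corresponding to $0$) on the real line $\mathbb{R}\subset\mathbb{C}$ in the order prescribed by the word, with the origin at $0$ and the other points at increasing real coordinates to the right, and then interpreting the parentheses in the Fulton--MacPherson compactification $\overline{\mathrm{C}}(\mathbb{C}^\times,n)$ as the instruction that bracketed clusters of points are infinitesimally close — exactly as in the picture following \cref{lemme: morphisme topo operades}. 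Since $0$ is the leftmost symbol and the origin is a fixed point of $\mathbb{C}^\times$, this is well defined, and it is visibly injective because distinct parenthesized words give distinct (combinatorial types of) strata.

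Next I would check that $\epsilon$ is a morphism of topological moperads over the operad map $\iota$. There are two structure maps to verify. The monoid multiplication $\circ_0:\mathcal{P}a_0(n)\otimes\mathcal{P}a_0(m)\to\mathcal{P}a_0(n+m)$, which inserts the second word into the $0$-spot of the first, must be sent to the moperad composition of $\overline{\mathrm{C}}(\mathbb{C}^\times)$, which by the boundary decomposition $\partial\,\overline{\mathrm{C}}(\mathbb{C}^\times,n)\cong\bigcup \overline{\mathrm{C}}(\mathbb{C}^\times,[k])\times\overline{\mathrm{C}}(\mathbb{C}^\times,J_m)\times\prod_{j\neq m}\overline{\mathrm{C}}(\mathbb{C},J_j)$ is precisely the inclusion of the stratum where the cluster containing the origin degenerates. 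Under $\epsilon$ this corresponds to shrinking the first word's configuration near the origin and blowing up the second word's configuration there — which is exactly what $\circ_0$ on parenthesized words does (it nests the bracketing of $0\,\tau$ inside the $0$-slot of $0\,\sigma$). Similarly the right $\mathcal{P}a$-module maps $\circ_i:\mathcal{P}a_0(n)\otimes\mathcal{P}a(m)\to\mathcal{P}a_0(n+m-1)$, inserting a bracketed word at the $i$-th non-frozen position, are sent to the strata in which a cluster \emph{not} containing the origin degenerates, and compatibility here reduces to the already-established fact that $\iota:\mathcal{P}a\hookrightarrow\overline{\mathrm{C}}(\mathbb{C})$ is an operad map, since the points being inserted live away from the origin. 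Continuity is automatic as $\mathcal{P}a_0$ carries the discrete topology.

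The main obstacle I expect is purely bookkeeping: making the identification of strata of $\overline{\mathrm{C}}(\mathbb{C}^\times,n)$ with the combinatorial data of parenthesized words precise enough that ``infinitesimally close clusters'' literally matches the tree structure of the bracketing, and checking that the insertion maps land in the correct boundary strata rather than merely nearby ones. One clean way to sidestep most of this is to invoke \cref{prop: derived moperad of an operad} and the two remarks immediately after the definition of $\overline{\mathrm{C}}(\mathbb{C}^\times)$: since $\overline{\mathrm{C}}(\mathbb{C}^\times)=\overline{\mathrm{C}}(\mathbb{C})'$ and $\mathcal{P}a_0=\mathcal{P}a'$ are the derivatives of $\overline{\mathrm{C}}(\mathbb{C})$ and $\mathcal{P}a$ respectively, the map $\epsilon$ is simply the derivative $\iota'$ of the operad inclusion $\iota$ from \cref{lemme: morphisme topo operades}. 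The derivative functor on species is compatible with the moperad structures induced by \cref{prop: derived moperad of an operad}, so $\epsilon=\iota'$ is automatically a morphism of topological moperads over $\mathcal{P}a$, and it is an inclusion because $\iota$ is. I would present the proof along these lines, spelling out the explicit picture only enough to convince the reader that $\epsilon(n)=\iota(n\sqcup\{*\})$ is the stated placement of points on the real line with the origin frozen.
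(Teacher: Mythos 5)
Your proposal is correct and, in its closing paragraph, is exactly the paper's argument: the paper proves the lemma in one line by combining \cref{lemme: morphisme topo operades} with \cref{prop: derived moperad of an operad}, i.e.\ $\epsilon$ is the derivative $\iota'$ of the operad inclusion, using that $\mathcal{P}a_0=\mathcal{P}a'$ and $\overline{\mathrm{C}}(\mathbb{C}^{\times})=\overline{\mathrm{C}}(\mathbb{C})'$. The explicit stratum-by-stratum verification you sketch first is a harmless elaboration that the paper omits.
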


\begin{proof}
Follows from Lemma \ref{lemme: morphisme topo operades} and Proposition \ref{prop: derived moperad of an operad}. 
\end{proof}

\begin{proposition}\label{prop: iso avec Pab1}
There is an isomorphism of moperads in the category of groupoids

\[
\pab^{(1)} \cong \Pi_1(\overline{\mathrm{C}}(\mathbb{C}^{\times}), \mathcal{P}a_0)~.
\]

over the operad $\pab$.
\end{proposition}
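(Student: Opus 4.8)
The plan is to mimic the proof of \cref{thm: pab iso topo} (the genus zero case), extending it from operads to moperads. First I would observe that both moperads in question have $\mathcal{P}a_0$ as their moperad of objects: for $\pab^{(1)}$ this holds by construction (it is the fake pull-back of $\mathcal{C}o\mathcal{B}^{(1)}$ along $\psi$, whose objects are those of $\mathcal{P}a_0$), and for $\Pi_1(\overline{\mathrm{C}}(\mathbb{C}^{\times}),\mathcal{P}a_0)$ this holds because taking the fundamental groupoid relative to a chosen set of basepoints retains exactly that set as objects, via the inclusion $\epsilon:\mathcal{P}a_0\hookrightarrow\overline{\mathrm{C}}(\mathbb{C}^{\times})$ of the previous lemma. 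So we may take the identity functor on objects and reduce the problem to identifying the morphism sets compatibly with all the structure maps ($\circ_0$, the $\circ_i$ right action of $\pab$, and the symmetric group actions).

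Next I would fix, for each $n$, the leftmost parenthesization of $0\,1\cdots n$ as a basepoint and construct an isomorphism
\[
\mathrm{PB}_n^1 \;\cong\; \mathrm{Aut}_{\pab^{(1)}(n)}(0(1(2\cdots n))) \;\cong\; \pi_1\big(\overline{\mathrm{C}}(\mathbb{C}^{\times}),\epsilon(0(1(2\cdots n)))\big)~,
\]
where $\mathrm{PB}_n^1$ is the pure braid group with one frozen strand. The first isomorphism is immediate from the definition of the fake pull-back together with the fact that $\mathcal{C}o\mathcal{B}^{(1)}(n)$ is the pseudo-free $\mathfrak{S}_n$-groupoid built from $\mathrm{BB}_n^1\twoheadrightarrow\mathrm{B}\mathfrak{S}_n$, so that automorphisms of any object in $\pab^{(1)}(n)$ lying over the identity permutation are precisely $\mathrm{PB}_n^1$. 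The second isomorphism is the topological input: it is the statement that $\overline{\mathrm{C}}(\mathbb{C}^{\times},n)$ is a $K(\pi,1)$ with $\pi=\mathrm{PB}_n^1$, together with an explicit identification sending the standard generator $x_{ij}$ (resp.\ $x_{0j}$) to the loop in which the $i$-th point (resp.\ the $j$-th point) travels once around the $j$-th point (resp.\ around the fixed origin), exactly as in the genus zero case but now with the extra frozen strand supplied by the deleted origin $0\in\mathbb{C}$. One can invoke that $\overline{\mathrm{C}}(\mathbb{C}^{\times})\qi\overline{\mathrm{C}}(\mathbb{C})'$ together with the fibration $\mathrm{Conf}_n(\mathbb{C}^{\times})\to\mathrm{Conf}_n(\mathbb{C})$ which identifies $\pi_1(\mathrm{C}(\mathbb{C}^{\times},n))$ with $\mathrm{PB}_{n+1}$ modulo the central full twist, compatibly with $\mathrm{PB}_n^1\cong\mathrm{PB}_{n+1}$.

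Having matched the automorphism groups at the chosen basepoints, I would then propagate the isomorphism to all objects using that both sides are connected groupoids in each arity (the braid picture connects any two parenthesized permutations; so does a path in the configuration space), and check that the resulting functors are compatible with the structure maps. Here I would invoke \cref{prop: derived moperad of an operad} twice, as suggested by the remarks in the text: $\pab^{(1)}=\pab'$ and $\overline{\mathrm{C}}(\mathbb{C}^{\times})=\overline{\mathrm{C}}(\mathbb{C})'$ are the derivatives of $\pab$ and $\overline{\mathrm{C}}(\mathbb{C})$ respectively, and taking the fundamental groupoid (relative to basepoints in $\mathcal{P}a$, resp.\ $\mathcal{P}a_0$) commutes with the derivative operation since the derivative is computed purely on the level of the underlying species by reindexing $I\mapsto I\sqcup\{*\}$. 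Therefore the isomorphism $\pab\cong\Pi_1(\overline{\mathrm{C}}(\mathbb{C}),\mathcal{P}a)$ of \cref{thm: pab iso topo} induces, by differentiating, the desired isomorphism $\pab^{(1)}=\pab'\cong\Pi_1(\overline{\mathrm{C}}(\mathbb{C}),\mathcal{P}a)'\cong\Pi_1(\overline{\mathrm{C}}(\mathbb{C}^{\times}),\mathcal{P}a_0)$, and all the moperad structure (the $\circ_0$ composition, the right $\pab$-action, the morphism from $\mathcal{M}\mathcal{U}nit$) is inherited from the operadic statement.

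The main obstacle I anticipate is the last compatibility check: showing that ``$\Pi_1$ commutes with the derivative'' on the nose as moperads, i.e.\ that the boundary-strata decomposition of $\partial\,\overline{\mathrm{C}}(\mathbb{C}^{\times},n)$ recorded in \S\ref{subsec: topological cyclotomic} corresponds, after applying $\Pi_1$, exactly to the $\circ_0$ and $\circ_i$ maps of the moperad $\pab^{(1)}=\pab'$ coming from the derived-moperad construction. Concretely one must verify that the inclusion of the stratum $\overline{\mathrm{C}}(\mathbb{C}^{\times},[k])\times\overline{\mathrm{C}}(\mathbb{C}^{\times},J_m)\times\prod_{j\neq m}\overline{\mathrm{C}}(\mathbb{C},J_j)$ indexed by the block $J_m\ni 0$ induces on fundamental groupoids the map ``insert into the frozen $0$-th leaf,'' matching $\circ_{\{*\}}$ of \cref{prop: derived moperad of an operad}, while the other blocks induce the right $\pab$-action $\circ_i$; this is a bookkeeping argument about how the frozen origin sits inside the Fulton--MacPherson compactification, but it is exactly where one could slip. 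Everything else — connectivity, the $K(\pi,1)$ property, the explicit generator correspondence — is a routine transcription of the genus zero argument.
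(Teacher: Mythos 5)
Your proposal is correct and is essentially the paper's own (very terse) proof: the paper simply notes that $\pab^{(1)}=\pab'$ and $\overline{\mathrm{C}}(\mathbb{C}^{\times})=\overline{\mathrm{C}}(\mathbb{C})'$ and deduces the statement from \cref{thm: pab iso topo} together with \cref{prop: derived moperad of an operad}, while your basepoint/automorphism-group identification is just the content of \cref{thm: pab iso topo} transcribed to the derived setting. One small correction to an aside: $\pi_1(\mathrm{C}(\mathbb{C}^{\times},n))$ is \emph{not} $\mathrm{PB}_{n+1}$ modulo the central full twist --- one quotients $\mathrm{Conf}_n(\mathbb{C}^{\times})$ only by the contractible group $\mathbb{R}_{>0}$, so the fundamental group is $\mathrm{PB}_{n+1}\cong\mathrm{PB}_n^1$ itself (if the full twist were killed, the automorphism groups on the two sides would not agree and the proposition would fail).
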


\begin{proof}
Follows from Theorem \ref{thm: pab iso topo} and Proposition \ref{prop: derived moperad of an operad}. 
\end{proof}

\subsubsection*{Twisted configuration spaces}

Let again $\Gamma = \mathbb{Z}/N\mathbb{Z}$. We consider the following \textit{twisted configuration spaces} given by 

\[
\mathrm{C}(\mathbb{C}^{\times},n, \Gamma) \coloneqq \left\{ (x_1, \cdots, x_n) \in \left(\mathbb{C}^{\times}\right)^n ~~|~~x_i \neq \zeta.x_j~~\text{if}~~i \neq j ~|~ \zeta \in \mu_N \right\}/\mathbb{R}_{> 0}~,
\]

where $\mu_N$ denotes the group given by the complex $N$-roots of the unit. 
\begin{remark}
One easily sees that the fundamental group of $\mathrm{C}(\mathbb{C}^{\times},n, \Gamma)$ is the cyclotomic pure braid group $\mathrm{PB}_n^\Gamma$. 
Actually, the hyperplanes defined by $x_i = \zeta.x_j$ and $x_i=0$ are the reflection hyperplanes associated with the complex reflection group $\Gamma^n\rtimes\mathbb{S}_n$. 
Therefore $\mathrm{PB}_n^\Gamma$ is the pure braid group associated with $\Gamma^n\rtimes\mathbb{S}_n$, in the sense of \cite{BMR}.  
\end{remark}

\medskip

These spaces admits Fulton-MacPherson compactifications $\overline{\mathrm{C}}(\mathbb{C}^{\times},n, \Gamma)$. This family also has a natural moperad 
structure over the operad $\overline{\mathrm{C}}(\mathbb{C})$, which is again given by a similar decomposition of its boundary. We denote by 
$\overline{\mathrm{C}}(\mathbb{C}^{\times}, \Gamma)$ this moperad, which is naturally endowed, as a moperad, with a $\Gamma$-action.

\medskip 

The moperad $\mathcal{P}a^\Gamma$ is the set theoretical moperad over $\mathcal{P}a$, where $\mathcal{P}a^\Gamma(n)$ is given by maximal parenthesizations 
on words $0 \hspace{1pt}(\sigma_1)_{\alpha_1} \cdots (\sigma_n)_{\alpha_n}$, where $\sigma$ is a permutation in $\mathfrak{S}_n$ and 
$(\alpha_1,\cdots,\alpha_n)$ is an element of $\Gamma^n$. It is naturally endowed, as a moperad, with a $\Gamma$-action. We view this construction in the category 
of topological spaces by endowing it with the discrete topology.

\begin{lemma}
There is a $\Gamma$-equivariant inclusion of topological moperads 

\[
\epsilon: \mathcal{P}a^\Gamma \hookrightarrow \overline{\mathrm{C}}(\mathbb{C}^{\times},\Gamma)
\]

over the topological operad $\mathcal{P}a$.
\end{lemma}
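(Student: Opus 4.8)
The plan is to build the inclusion $\epsilon\colon\mathcal{P}a^\Gamma\hookrightarrow\overline{\mathrm{C}}(\mathbb{C}^\times,\Gamma)$ directly on the level of points, mimicking the non-twisted construction. First I would describe, for each object of $\mathcal{P}a^\Gamma(n)$ — a maximally parenthesized word $0\,(\sigma_1)_{\alpha_1}\cdots(\sigma_n)_{\alpha_n}$ — an explicit point of $\overline{\mathrm{C}}(\mathbb{C}^\times,n,\Gamma)$. Forgetting the labels $\alpha_i$, the permutation together with its parenthesization determines a configuration of $n$ pairwise-distinct points on the real half-line $\mathbb{R}_{>0}\subset\mathbb{C}^\times$ (with nested parentheses recorded by infinitesimal closeness in the Fulton--MacPherson compactification), exactly as in \cref{lemme: morphisme topo operades}; this uses $\epsilon\colon\mathcal{P}a_0\hookrightarrow\overline{\mathrm{C}}(\mathbb{C}^\times)$ from the previous lemma. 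To incorporate the cyclotomic labels, I would rotate the $i$-th point by the root of unity $\zeta^{\alpha_i}$ where $\zeta=e^{2\pi i/N}$, i.e.\ place the $i$-th marked point at $\zeta^{\alpha_i} r_i$ for the chosen radii $r_i$. Since the $r_i$ are all distinct positive reals, the points $\zeta^{\alpha_i}r_i$ and $\zeta^{\alpha_j}r_j$ differ by a non-unit scaling as soon as $i\neq j$, so the configuration avoids every hyperplane $x_i=\mu\,x_j$, $\mu\in\mu_N$, and lies in $\mathrm{C}(\mathbb{C}^\times,n,\Gamma)$; on the compactified strata the same holds. This defines $\epsilon$ on objects; on morphisms there is nothing to do since $\mathcal{P}a^\Gamma$ has only identities.

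Next I would check that $\epsilon$ is a morphism of (topological) moperads over $\mathcal{P}a$. This amounts to three compatibilities, each of which I expect to be a routine unwinding of definitions: (i) compatibility with the right $\mathcal{P}a$-module structure, i.e.\ inserting an element of $\mathcal{P}a$ into one of the non-fixed slots $i$ — here the label $\alpha_i$ is copied to all the inserted strands, which matches the boundary inclusion $\gamma$ of $\overline{\mathrm{C}}(\mathbb{C}^\times,\Gamma)$ because rotating a tiny cluster of points around $0$ by $\zeta^{\alpha_i}$ is the same as rotating each of them; (ii) compatibility with the $\circ_0$ monoid structure, i.e.\ insertion into the $0$-th (frozen) slot, which on the configuration side is the operation ``bring a whole configuration near the origin,'' and on labels just concatenates the label tuples after multiplying by the host label — again this is precisely how the boundary stratum $\overline{\mathrm{C}}(\mathbb{C}^\times,[k])\times\overline{\mathrm{C}}(\mathbb{C}^\times,J_m)\times\prod\overline{\mathrm{C}}(\mathbb{C},J_j)$ is glued; (iii) the moperad axiom relating $\circ_0$ and the $\circ_i$, which follows formally once (i) and (ii) are checked, or alternatively by invoking \cref{prop: derived moperad of an operad} together with the fact — noted in the excerpt — that $\overline{\mathrm{C}}(\mathbb{C}^\times,\Gamma)$ and $\mathcal{P}a^\Gamma$ are (essentially) obtained from $\overline{\mathrm{C}}(\mathbb{C}^\times)$ and $\mathcal{P}a_0$ by the $\Gamma$-twisting/pseudo-free construction.

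Finally I would verify $\Gamma$-equivariance: the $\Gamma$-action on $\mathcal{P}a^\Gamma(n)$ shifts the labels $\alpha_i\mapsto\alpha_i+\gamma_i$, and the $\Gamma$-action on $\overline{\mathrm{C}}(\mathbb{C}^\times,\Gamma)$ rotates the $i$-th point by $\zeta^{\gamma_i}$; by construction $\epsilon$ intertwines these, since $\zeta^{\alpha_i+\gamma_i}r_i=\zeta^{\gamma_i}(\zeta^{\alpha_i}r_i)$. Injectivity is clear because a parenthesized permutation, together with its label tuple, is recovered from the configuration by reading off the cyclic argument of each point modulo $2\pi/N$, the radial order, and the infinitesimal-clustering pattern.

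The main obstacle, such as it is, will be bookkeeping in step (i)--(ii): making the choice of radii $r_i$ (and of the infinitesimal scales recording the parentheses) functorial enough that the moperad structure maps strictly match the topological boundary inclusions, rather than merely matching up to an isotopy. The cleanest way around this is probably not to fight with explicit radii at all but to deduce the lemma formally: observe that $\mathcal{P}a^\Gamma=\mathcal{G}(\mathcal{P}a_0\to\mathrm{B}\underline{\Gamma})$ and $\overline{\mathrm{C}}(\mathbb{C}^\times,\Gamma)$ is the corresponding pseudo-free $\Gamma$-construction on $\overline{\mathrm{C}}(\mathbb{C}^\times)$ relative to the ``winding number around $0$'' map, so that $\epsilon\colon\mathcal{P}a_0\hookrightarrow\overline{\mathrm{C}}(\mathbb{C}^\times)$ of the previous lemma, being compatible with those structural maps, induces $\epsilon\colon\mathcal{P}a^\Gamma\hookrightarrow\overline{\mathrm{C}}(\mathbb{C}^\times,\Gamma)$ by functoriality of $\mathcal{G}(-)$, with $\Gamma$-equivariance automatic. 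I would present the explicit picture for intuition and then give this functorial argument for the actual proof.
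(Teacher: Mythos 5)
The paper itself contains no proof of this lemma: unlike the untwisted analogue, which is deduced formally from \cref{lemme: morphisme topo operades} and \cref{prop: derived moperad of an operad}, the twisted statement is simply deferred to \cite{calaque20moperadic} (see the remark after \cref{prop: iso avec PabN}), so your proposal must stand on its own. Its outline is the expected one and is essentially right: send $0\,(\sigma_1)_{\alpha_1}\cdots(\sigma_n)_{\alpha_n}$ to the configuration whose $i$-th point sits on the ray of argument $2\pi\alpha_i/N$ at the radius prescribed by the untwisted embedding; since the moduli are pairwise distinct, the condition $x_i\neq\zeta x_j$ for all $\zeta\in\mu_N$ holds, and injectivity and $\Gamma$-equivariance are immediate, as you say.

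However, the route you say you would actually use has a genuine gap, and the difficulty you flag in the explicit route is real and needs a precise fix. The pseudo-free construction $\mathcal{G}(-)$ is defined in this paper only for groupoids over $\mathrm{B}G$, not for topological spaces or topological moperads, so ``$\overline{\mathrm{C}}(\mathbb{C}^{\times},\Gamma)$ is the pseudo-free $\Gamma$-construction on $\overline{\mathrm{C}}(\mathbb{C}^{\times})$'' is not a statement you can invoke; the actual relation between the two spaces on their interiors is the $N$-th power covering $x\mapsto x^N$, and extending it to the Fulton--MacPherson compactifications compatibly with all stratum-inclusion structure maps is exactly the kind of bookkeeping you were hoping to avoid (note that the differential of $x\mapsto x^N$ rotates infinitesimal clusters by $\arg(x^{N-1})$, and rotations are \emph{not} quotiented in the strata $\overline{\mathrm{C}}(\mathbb{C},J)$). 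The same phenomenon breaks strictness in your explicit recipe: if $\epsilon$ on nested objects is defined as a limit of rotated interior configurations, then the infinitesimal cluster attached at a point labelled $\alpha_i\neq\bar 0$ comes out rotated by $\zeta^{\alpha_i}$, whereas the structure map $\circ_i$ of $\overline{\mathrm{C}}(\mathbb{C}^{\times},\Gamma)$ glues in the unrotated configuration $\iota(p)$, a different boundary point. The repair is to define $\epsilon$ stratum-wise: rotate only the macroscopic points according to their labels and take every infinitesimal configuration away from $0$ to be the standard real-line one, while the cluster at the origin arising from $\circ_0$ is itself the image of the inserted twisted object (and there the label tuples simply concatenate --- the $0$-slot carries no label, so there is nothing to ``multiply by''). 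With that convention, compatibility with the $\circ_i$, with $\circ_0$, and with the $\Gamma$-action holds on the nose, and your argument goes through.
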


\begin{proposition}\label{prop: iso avec PabN}
There is a $\Gamma$-equivariant isomorphism of moperads in the category of groupoids

\[
\pab^{(N)} \cong \Pi_1(\overline{\mathrm{C}}(\mathbb{C}^{\times},\Gamma), \mathcal{P}a^\Gamma)~.
\]

over the operad $\pab$.
\end{proposition}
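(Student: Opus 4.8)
The plan is to mimic the proof strategy used for \cref{prop: iso avec Pab1} and \cref{prop: iso avec PabN}'s predecessor \cref{thm: pab iso topo}, combined with the $\mathcal{G}(-)$ (pseudo-free $G$-groupoid) construction. Concretely, all four objects at play --- $\pab^{(N)}$, $\Pi_1(\overline{\mathrm{C}}(\mathbb{C}^\times,\Gamma),\mathcal{P}a^\Gamma)$, $\pab^{(1)}$, and $\Pi_1(\overline{\mathrm{C}}(\mathbb{C}^\times),\mathcal{P}a_0)$ --- have the same underlying moperad of objects (labeled maximally parenthesized words $0\,(\sigma_1)_{\alpha_1}\cdots(\sigma_n)_{\alpha_n}$, resp. their unlabeled analogues), so one fixes the identity on objects and is reduced to identifying the hom-groupoids, $\Gamma$-equivariantly and compatibly with the moperad structure over $\pab$. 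First I would recall that by \cref{prop: iso avec Pab1} we already have a $\pab$-moperad isomorphism $\pab^{(1)}\cong\Pi_1(\overline{\mathrm{C}}(\mathbb{C}^\times),\mathcal{P}a_0)$, and that both $\pab^{(N)}$ and the topological right-hand side are built by the \emph{same} universal recipe from this $\pab^{(1)}$-level data: on the algebraic side $\pab^{(N)}(n)=\mathcal{G}\big(\jmath(n)\colon \pab^{(1)}(n)\to \mathrm{B}((\mathbb{Z}/N\mathbb{Z})^n)\big)$, and on the topological side $\Pi_1(\overline{\mathrm{C}}(\mathbb{C}^\times,\Gamma),\mathcal{P}a^\Gamma)(n)$ is the free $\Gamma^n$-groupoid associated to the covering map $\overline{\mathrm{C}}(\mathbb{C}^\times,\Gamma)\to$ (a quotient compactification sitting over) $\overline{\mathrm{C}}(\mathbb{C}^\times)$, which on $\pi_1$ is exactly a winding-number-mod-$N$ homomorphism to $(\mathbb{Z}/N\mathbb{Z})^n$.

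The key steps, in order, would be: (1) Identify the monodromy/covering data. The twisted configuration space $\mathrm{C}(\mathbb{C}^\times,n,\Gamma)$ is the quotient of an $N^n$-fold covering space of $\mathrm{C}(\mathbb{C}^\times,n)$ --- more precisely, $\mathrm{C}(\mathbb{C}^\times,n)$ is the quotient of $\mathrm{C}(\mathbb{C}^\times,n,\Gamma)$ by the free action of $\mu_N^n\cong\Gamma^n$ rescaling each coordinate by an $N$-th root of unity --- and passing to $\pi_1$ this realizes $\mathrm{PB}_n^\Gamma$ as the kernel of the map $\mathrm{PB}_n^1\to\Gamma^n$ counting winding numbers of each strand around $0$ modulo $N$, i.e.\ exactly the map $c_N$ and the combinatorial map $\jmath(n)$. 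This matches because, after Fulton--MacPherson compactification, the boundary decomposition of $\overline{\mathrm{C}}(\mathbb{C}^\times,n,\Gamma)$ over the one of $\overline{\mathrm{C}}(\mathbb{C}^\times,n)$ is strictly $\Gamma^n$-equivariant, so the covering is compatible with the moperad structure. (2) Apply the fundamental-groupoid functor $\Pi_1(-,\mathcal{P}a^\Gamma)$ and check that, by the universal property of the pseudo-free $G$-groupoid construction $\mathcal{G}(-)$ (and its strong monoidality, already used in \cref{example: cobN}), $\Pi_1(\overline{\mathrm{C}}(\mathbb{C}^\times,\Gamma),\mathcal{P}a^\Gamma)(n)\cong\mathcal{G}\big(\Pi_1(\overline{\mathrm{C}}(\mathbb{C}^\times),\mathcal{P}a_0)(n)\xrightarrow{\text{winding mod }N}\mathrm{B}(\Gamma^n)\big)$; here one uses that a covering space with deck group $\Gamma^n$ gives, on fundamental groupoids, precisely the pseudo-free $\Gamma^n$-groupoid on the classifying-map-to-$\mathrm{B}\Gamma^n$, since the total groupoid has objects = (point of base) $\times$ (fiber) = (object of $\mathcal{P}a_0$) $\times\,\Gamma^n$ and morphisms = base morphisms with prescribed lift. (3) Transport the isomorphism of \cref{prop: iso avec Pab1} through $\mathcal{G}(-)$: since $\mathcal{G}(-)$ is functorial and the winding-number maps $\jmath(n)$ on the algebraic side and on the topological side agree under that isomorphism (both are ``count loops of strand $i$ around the frozen strand mod $N$''), we get an induced isomorphism of moperads over $\pab$, identical on objects. (4) Check $\Gamma$-equivariance: the $\Gamma^n$-action on both sides is by translation on the $\Gamma^n$-labels / deck transformations, and $\mathcal{G}(-)$ sends this to itself by construction (\cref{def: G-action moperad} in the groupoid case); compatibility of the single $\Gamma$-action with the moperad structure follows since the covering $\overline{\mathrm{C}}(\mathbb{C}^\times,\Gamma)\to\overline{\mathrm{C}}(\mathbb{C}^\times)$ is $\Gamma$-equivariant for the moperad composition.

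I expect the main obstacle to be step (1)-(2): carefully establishing that the Fulton--MacPherson compactification $\overline{\mathrm{C}}(\mathbb{C}^\times,n,\Gamma)$ is genuinely a $\Gamma^n$-covering of (a model for) $\overline{\mathrm{C}}(\mathbb{C}^\times,n)$ \emph{compatibly with the boundary strata and hence with the moperad composition}, so that taking $\pi_1$ really produces the pseudo-free construction $\mathcal{G}(-)$ levelwise and operadically rather than just arity by arity. Near the boundary, where points collide or escape to $0$, one must verify that the winding-number data degenerates exactly the way the combinatorial labels do under $\circ_0$ and $\circ_i$; this is the content hidden behind ``a similar decomposition of its boundary'' in the statement, and is the place where one genuinely uses the geometry (reflection hyperplanes $x_i=\zeta x_j$ and $x_i=0$ of the complex reflection group $\Gamma^n\rtimes\mathbb{S}_n$, cf.\ \cite{BMR}) rather than pure formalism. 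Once that compatibility is in hand, everything else is a formal consequence of \cref{prop: iso avec Pab1}, \cref{prop: derived moperad of an operad}, and the strong monoidality of $\mathcal{G}(-)$ and $\Pi_1$.
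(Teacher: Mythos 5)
Note first that the survey does not actually prove this proposition: it is stated without proof, and the remark immediately following it defers to the proofs of Theorems 3.4 and 4.6 of \cite{calaque20moperadic}, so there is no in-paper argument to compare yours against line by line. What can be said is that your plan is consistent both with the way the paper handles the neighbouring cases and with the very definition of $\pab^{(N)}$: the $N=1$ statement (\cref{prop: iso avec Pab1}) is deduced formally from \cref{thm: pab iso topo} and \cref{prop: derived moperad of an operad}, and $\pab^{(N)}(n)$ is by definition $\mathcal{G}(\jmath(n))$, so the content of the proposition is exactly your steps (1)--(3): the coordinatewise $N$-th power map exhibits $\mathrm{C}(\mathbb{C}^{\times},n,\Gamma)$ as a $\Gamma^n$-cover of $\mathrm{C}(\mathbb{C}^{\times},n)$ whose monodromy is the winding-number map $c_N$ (equivalently $\jmath(n)$; compare the remark identifying $\pi_1(\mathrm{C}(\mathbb{C}^{\times},n,\Gamma))$ with $\mathrm{PB}_n^\Gamma$), and the fundamental groupoid of a $G$-cover, based at a chosen enumeration of the fibres over the basepoints, is precisely the pseudo-free groupoid $\mathcal{G}(-)$ of the monodromy map --- the $\Gamma$-equivariant inclusion $\epsilon:\mathcal{P}a^\Gamma\hookrightarrow\overline{\mathrm{C}}(\mathbb{C}^{\times},\Gamma)$ of the preceding lemma supplying that enumeration over $\mathcal{P}a_0$.

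Two refinements to your assessment of the difficulty. First, since the basepoints $\mathcal{P}a^\Gamma$ and $\mathcal{P}a_0$ sit in the boundary of the Fulton--MacPherson compactifications, you do not actually need the compactified map to be a $\Gamma^n$-covering on the nose: it suffices that the open configuration spaces include into their compactifications as homotopy equivalences, so that corner basepoints can be transported into the interior where the covering argument applies; this softens what you single out as the main obstacle. Second, the genuinely non-formal check is the one you correctly isolate at the end: that the deck/label data degenerates along boundary strata exactly as the combinatorial bookkeeping of $\circ_0$ and $\circ_i$ prescribes, and that everything is $\Gamma$-equivariant --- this is the verification carried out inside the cited proofs in \cite{calaque20moperadic}, and once it is written out your argument is complete.
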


\begin{remark}
Proofs of the above results can be found in \cite{calaque20moperadic}, in particular inside the proof of Theorem 3.4 and inside the proof of Theorem 4.6.
\end{remark}

\newpage


\section{Elliptic and ellipsitomic associators}\label{Section: Elliptic}

\subsection{Motivations and general context}\label{ssec: Motivations elliptic}

One can phrase a deformation problem extending \cref{Problem 1}, making use of the notion of 
an \textit{elliptic structure} over a braided monoidal category \cite{UniversalKZB09,Enriquez14}. According to \cite{Enriquez14} an elliptic structure over a braided 
monoidal category $\mathcal C$ is a functor $F:\mathcal C\to\mathcal C_1$ together with two natural automorphisms of $F(-\otimes -)$ satisfying certain relations 
that we won't specify here. These relations naturally live in $\mathrm{End}\big(F(\otimes^n)\big)$, that is an operadic module over $\mathrm{End}(\otimes^n)$. 

\medskip

For a finite dimensional Lie algebra $\mathfrak{g}$, following \cite{UniversalKZB09} we consider the algebra morphism from $\mathfrak{U}(\mathfrak{g})$ to the algebra of 
$\mathsf{D}(\mathfrak{g})$ of differential operators on $\mathfrak{g}$ sending a generator $x\in\mathfrak{g}$ to the linear vector field on $\mathfrak{g}$ given by $\mathrm{ad}_x=[x,-]$. 
The induction functor $F:=\mathsf{D}(\mathfrak{g})\otimes_{\mathfrak{U}(\mathfrak{g})}-$ then goes from $\mathcal C:=\mathsf{Rep}(\mathfrak{g})$ 
to $\mathcal C_1:=\mathsf{Rep}\big(\mathsf{D}(\mathfrak{g})\big)$. One then gets that 
\begin{eqnarray*}
\mathrm{End}\big(F(\otimes^n)\big) & \cong & \mathrm{Hom}\big(\otimes^n,\mathrm{Res}\circ F(\otimes^n)\big)
\cong\Big(\mathsf{D}(\mathfrak{g})\otimes_{\mathfrak{U}(\mathfrak{g})}\big(\mathfrak{U}(\mathfrak{g})^{\otimes n}\big)\Big)^{\mathfrak g}\,,
\end{eqnarray*}
where $\mathrm{Res}:\mathcal C_1\to \mathcal{C}$ is the restriction functor, that is right adjoint to $F$, and the algebra map $\mathfrak{U}(\mathfrak{g})\to \mathfrak{U}(\mathfrak{g})^{\otimes n}$ 
is the iterated coproduct\footnote{Note that, according to \cite[Remark 5.7]{UniversalKZB09}, there is an isomorphism 
\[
\Big(\mathsf{D}(\mathfrak{g})\otimes_{\mathfrak{U}(\mathfrak{g})}\big(\mathfrak{U}(\mathfrak{g})^{\otimes n}\big)\Big)^{\mathfrak g} \cong
\Big(\mathsf{D}(\mathfrak{g})\otimes\big(\mathfrak{U}(\mathfrak{g})^{\otimes (n-1)}\big)\Big)^{\mathfrak g}\,.
\]}. 
In this context we have a (quite trivial) elliptic structure, and any $t\in S^2(\mathfrak g)^{\mathfrak g}$ that is non-degenerate (meaning that the induced map 
$\mathfrak{g}^*\to\mathfrak{g}$ is an isomorphism) provides a first order deformation of it. A natural deformation problem is the following: can one extend this 
first order deformation to a formal one?

\medskip

Just as in \S\ref{ssec-1.2-universal} and \S\ref{ssec-cyclotomic-motivation}, there is a collection of graded Lie algebras 
$(\overline{\mathfrak{t}}_n^{~\mathrm{ell}})_{n\geq0}$, see \S\ref{ssec-4.3-ellipticCD}, together with insertion-coproduct type morphisms that makes 
an operadic module over the Drinfeld--Kohno operad $(\mathfrak{t}_n)_{n\geq0}$. It comes with an operadic module morphism 
$\overline{\mathfrak{t}}_\bullet^{~\mathrm{ell}}\longrightarrow \mathrm{End}\big(F(\otimes^\bullet)\big)$, making it a perfect location for finding ``universal'' 
solutions to our deformation problem, that we call \textit{elliptic associators}. 

\medskip

One can construct such universal solutions as holonomies of a genus one version of the KZ connection (known as the Knizhnik--Zamolodchikov--Bernard connection, 
see \cite{UniversalKZB09}). Such a connection depends on the choice of a complex structure on the torus, and actually extends to the whole moduli space of 
marked elliptic curves. As a result, we get that these holonomies have many interesting features: 
\begin{itemize}
\item They satisfy modular invariance properties (see \cite{Enriquez14}). 
\item Their coefficient are interesting numbers, involving iterated integrals of Eiseinstein series, that one can view as elliptic analogs of MZV \cite{Enriquez16}. 
As a consequence (of the defining relations of an elliptic associator), one gets relations between iterated integrals of Eiseinstein series and ordinary MZV. 
\item All of the above shall lift to some appropriate motivic context (presumbably the one of \cite{HainMatsumoto}). 
\end{itemize}

\medskip

Note that the notion of an elliptic structure has been generalized to higher genus in \cite{HumbertThese}, under the name \textit{genus $g$ structure} 
($g=1$ corresponding to the elliptic case), in relation with the study of invariants for tangles in a thickened surface. 

\subsection{The module of parenthesized elliptic braids}\label{ssec: parenthesized elliptic braids}

Our goal is to build an elliptic version of $\pab$, much in the same way as in Subsection \ref{subsection: pab}. The elliptic version $\pab_{\mathit{ell}}$ is going to 
be a right module over the operad $\pab$. In order to construct it, we will need to consider the \textit{pure braid group of the torus}. 

\medskip

Let $\mathbb{T}$ be the topological $2$-torus. The configuration space of $n$-points in the torus $\mathbb{T}$ is defined as follows:

\[
\mathrm{Conf}_n(\mathbb{T}) \coloneqq \left\{ (x_1, \cdots, x_n) \in \mathbb{T}^n~~|~~x_i \neq x_j~~\text{if}~~i \neq j \right\}~.
\]
\vspace{0.1pc}

Its \textit{reduced version} 

\[
\mathrm{C}(\mathbb{T},n) \coloneqq \mathrm{Conf}_n(\mathbb{T})/\mathbb{T}~,
\]
\vspace{0.1pc}

is obtained by considering configurations of $n$-points modulo the action of the torus on itself by multiplication, using its abelian group structure. It is a path-connected topological space for $n \geq 0$. It comes equipped with an action of the symmetric group $\mathfrak{S}_n$, given by the permutation of the indices of the points.

\begin{definition}[Reduced braid group of the torus]
The \textit{reduced pure braid group of the torus group} $\overline{\mathrm{PB}}_n(\mathbb{T})$ on $n$-strands is given by the fundamental group 

\[
\overline{\mathrm{PB}}_n(\mathbb{T}) \coloneqq \pi_1(\mathrm{C}(\mathbb{T},n))~.
\]
\end{definition}

Let us describe this group. It is generated by elements $\{a_i\}$ and $\{b_i\}$ for $1 \leq i \leq n$, which correspond to $i$-th point following the trajectory given by each of the generators of the fundamental group of the torus $\pi_1(\mathbb{T})$. They can be depicted as follows:
\[
\includegraphics[width=100mm,scale=1]{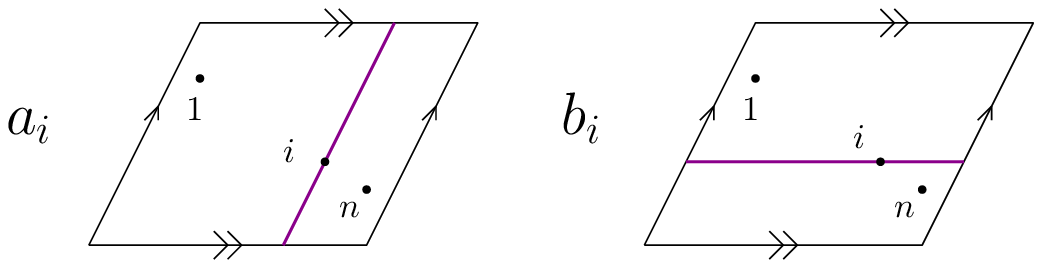}
\]

For all $n$, there is a canonical morphism of groups 
\[
\begin{tikzcd}[column sep=1.5pc,row sep=0pc]
\mathrm{PB}_n \arrow[r]
&\overline{\mathrm{PB}}_n(\mathbb{T}) \\
x_{ij} \arrow[r,mapsto] 
& \left[ a_j^{-1},b_i^{-1} \right] ~,
\end{tikzcd}
\]

induced by the inclusion of the plane into the fundamental domain of the torus. Thus any generator $x_{ij}$ of the pure braid group can be seen as an element in the reduced pure braid group of the torus, and it still corresponds to the $i$-th point moving around the $j$-th point.

\medskip

Without giving a full presentation of $\overline{\mathrm{PB}}_n(\mathbb{T})$, let us mention some of the important relations that the generators $\{a_i\}$ and $\{b_i\}$ satisfy. First notice that these generators commute, that is 
\[
[a_i,a_j] = [b_i,b_j] = 1
\]
for $i <j$, and we have that $a_1 \cdots a_n =b_1 \cdots b_n = 1$. Finally, for all $1 \leq i \leq n$, we have the relations

\[
a_{i+1} = \sigma_{i}^{-1}a_i \sigma_{i}^{-1} \quad \text{and} \quad b_{i+1} = \sigma_{i}^{-1} b_{i} \sigma_{i}^{-1}~,
\]
\vspace{0.1pc}

which are satisfied in the \textit{reduced braid group of the torus} $\overline{\mathrm{B}}_n(\mathbb{T})$, defined as  

\[
\overline{\mathrm{B}}_n(\mathbb{T}) \coloneqq \pi_1(\mathrm{C}(\mathbb{T},n)/\mathfrak{S}_n)~,
\]
\vspace{0.1pc}

and where $\sigma_{i}$ is the following generator:

\[
\includegraphics[width=65mm,scale=1]{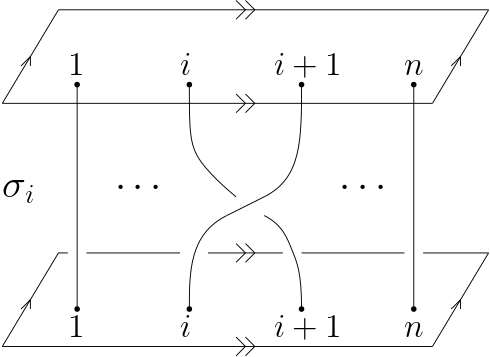}
\]
\vspace{0.1pc}

There is an elliptic analogue of the operad in the category of groupoids $\cob$, defined in Example \ref{example: cob}. The symmetric sequence in the category of groupoids $(\cob_{\mathit{ell}}(n))_{n \geq 0}$ given as follows.

\medskip

\begin{enumerate}
\item The objects of $\cob_{\mathit{ell}}(n)$ are elements of the symmetric group $\mathfrak{S}_n$.

\medskip

\item Morphisms between two permutations $\sigma$ and $\tau$ in $\mathfrak{S}_n$ are given by reduced pure braids on the torus going from $\sigma$ to $\tau$. 
\end{enumerate}

\medskip

The $\mathfrak{S}_n$-module $\cob_{\mathit{ell}}(n)$ has a canonical right module structure over the operad $\cob$, that is, there are maps 

\[
\{\circ_i: \cob_{\mathit{ell}}(n) \times \cob(k) \longrightarrow \cob_{\mathit{ell}}(n+k-1)\}
\]
\vspace{0.1pc}

for $1 \leq i \leq n$ which satisfy the obvious axioms. These functors are given on objects by the insertion of permutations and on morphisms by the inserting of a pure braid going from $\sigma'$ to $\tau'$ into the $i$-th spot of a reduced pure 
braid on the torus going from $\sigma$ to $\tau$. 

\medskip

There is an obvious morphism $\varphi: \mathrm{Ob}(\mathcal{P}a) \longrightarrow \mathrm{Ob}(\cob_{\mathit{ell}})$ of symmetric sequences in sets which simply forgets the parenthesis on permutations.

\begin{definition}[The right module $\pab_{\mathit{ell}}$]
The symmetric sequence in the category of groupoids $\pab_{\mathit{ell}}$ is defined to be the fake pull-back of $\cob_{\mathit{ell}}$ along the morphism 
$\varphi$. It inherits a right module structure over the operad $\pab$ coming from the right module structure of $\cob_{\mathit{ell}}$.
\end{definition}

\begin{remark}
In addition to the elements $R$ and $\Phi$ described in Theorem \ref{thm: presentation pab} which are also present in $\pab_{\mathit{ell}}$, there are two other distinguished elements. The object $(12)$ in $\pab_{\mathit{ell}}(2)$ has 
this time two non-trivial automorphisms, denoted by $A$ and $B$: $A$, resp.~$B$, is induced by the element $a_1=a_2^{-1}$, resp.~$b_1=b_2^{-1}$, in $\overline{\mathrm{PB}}_2(\mathbb{T})$. 
\end{remark}

The Malcev completion $\widehat{\pab}_{\mathit{ell}}(\kk)$ is a right module over $\widehat{\pab}(\kk)$ in the underlying category of pro-unipotent $\kk$-groupoids, since the Malcev completion functor is strong monoidal. This gives our \textit{first elliptic layer}.

\subsection{The module of elliptic chord diagrams}\label{ssec-4.3-ellipticCD}

Here we define the "infinitesimal version" of the right module $\pab_{\mathit{ell}}$, given in terms of elliptic chord diagrams.

\begin{definition}[Elliptic Drinfeld--Kohno algebras]\label{def: elliptic Drinfeld-Kohno}
The $n$-\textit{elliptic Drinfeld--Kohno algebra} $\mathfrak{t}_n^{\mathrm{ell}}$ is the bigraded Lie algebra given by the following presentation:

\medskip

\begin{enumerate}
\item It is generated by elements $\{t_{ij}\}$ for $1 \leq i \neq j \leq n$ in bidegree $(1,1)$, by elements $\alpha_i$ for $1 \leq i \leq n$ in bidegree $(1,0)$ and by elements $\beta_i$ for $1 \leq i \leq n$ in bidegree $(0,1)$.

\medskip

\item The generators are subject to the following relations:

\medskip

\begin{enumerate}
\item $t_{ij} = t_{ji}$. 
\medskip
\item $[t_{ij},t_{kl}] =0$.
\medskip
\item $[t_{ij},t_{ik} + t_{jk}] = 0$.
\medskip
\item $[\alpha_i,\beta_j] = t_{ij}$.
\medskip
\item $[\alpha_i,\alpha_j] = [\beta_i, \beta_j] = 0$.
\medskip
\item $[\alpha_i,\beta_i] = \displaystyle - \sum_{j~|~ j \neq i} t_{ij}$.
\medskip
\item $[\alpha_i, t_{jk}] = [\beta_i,t_{jk}] = 0$. 
\medskip
\item $[\alpha_i + \alpha_j, t_{ij} ] = [\beta_i + \beta_j, t_{ij}] = 0$.

\end{enumerate}
\end{enumerate}
\end{definition}

\begin{remark}
The elements $\sum_i \alpha_i$ and $\sum_i \beta_i$ are central elements in $\mathfrak{t}_n^{\mathrm{ell}}$.
\end{remark}

\begin{proposition}\label{prop: right mod structure}
The family $(\mathfrak{t}_n^{\mathrm{ell}})_{n\geq0}$ can be endowed with a right module structure over the operad $(\mathfrak{t}_n)_{n \geq 0}$ in the category of graded Lie algebras.

\begin{enumerate}
\item The action of the symmetric group $\mathfrak{S}_n$ on $\mathfrak{t}_n^{\mathrm{ell}}$ is given by :
\[
\sigma \bullet t_{ij} \coloneqq t_{\sigma(i)\sigma(j)}~~,~~ \sigma \bullet \alpha_i \coloneqq \alpha_{\sigma(i)} ~~ \text{and} ~~ \sigma \bullet \beta_i \coloneqq \beta_{\sigma(i)}~.
\]
\item The partial composition maps 
\[
\{\circ_p: \mathfrak{t}_n^{\mathrm{ell}} \oplus \mathfrak{t}_m \longrightarrow \mathfrak{t}_{n+m-1}^{\mathrm{ell}}\}
\]

of the right module structure are given by the following components (where we assume $i<j$):
\begin{eqnarray*}
\mathfrak{t}_m \ni t_{ij} & \longmapsto & t_{i+p-1 \hspace{1pt} j+p-1}~. \\[0.35cm]
\mathfrak{t}_n^{\mathrm{ell}} \ni t_{ij} & \longmapsto & 
\begin{cases*}
        t_{i+m-1\hspace{1pt}j+m-1} & if~~p < i~.\\
        {\displaystyle \sum_{k=i}^{i+m-1}t_{k\hspace{1pt}j+m-1}} & if~~ p = i~.\\
        t_{i\hspace{1pt}j+m-1} & if~~ i < p < j~.\\
        {\displaystyle \sum_{k=j}^{j+m-1}t_{i\hspace{1pt}k}} & if~~ p = j~.\\
        t_{i\hspace{1pt}j} & if~~ j < p~.
\end{cases*}\\[0.35cm]
\mathfrak{t}_n^{\mathrm{ell}} \ni \alpha_i & \longmapsto & 
\begin{cases*}
\alpha_{i+m-1} & if~~p < i~.\\
{\displaystyle \sum_{k=i}^{i+m-1}\alpha_k} & if~~p = i~.\\
\alpha_{i} & if~~i < p~.
\end{cases*}\\[0.35cm]
\mathfrak{t}_n^{\mathrm{ell}} \ni \beta_i & \longmapsto & 
\begin{cases*}
\beta_{i+m-1} & if~~p < i~.\\
{\displaystyle \sum_{k=i}^{i+m-1}\beta_k} & if~~p = i~.\\
\beta_{i} & if~~i < p~.
\end{cases*}
\end{eqnarray*}

\end{enumerate}
\end{proposition}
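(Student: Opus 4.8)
The plan is to follow the same strategy as in the proof of \cref{prop: operad structure on Kohno-Drinfeld}, now in the setting of operadic right modules. First I would, for each $n,m$ and each $1\le p\le n$, construct two Lie algebra morphisms separately. A ``splitting'' morphism $\circ_p^{(1)}\colon\mathfrak{t}_n^{\mathrm{ell}}\to\mathfrak{t}_{n+m-1}^{\mathrm{ell}}$, obtained by duplicating the $p$-th strand into $m$ consecutive strands; this is where $\alpha_p\mapsto\sum_{k=p}^{p+m-1}\alpha_k$, $\beta_p\mapsto\sum_{k=p}^{p+m-1}\beta_k$, $t_{pj}\mapsto\sum_{k}t_{k\,j+m-1}$, and the unchanged/shifted formulas for the other generators, come from. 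And an ``insertion'' morphism $\circ_p^{(2)}\colon\mathfrak{t}_m\to\mathfrak{t}_{n+m-1}^{\mathrm{ell}}$ sending $t_{ab}$ to $t_{a+p-1\,b+p-1}$. As in \cref{lemma: morphisms pushing forward}, to see these are well defined it suffices to check that the images of the generators satisfy the defining relations of \cref{def: elliptic Drinfeld-Kohno}. For $\circ_p^{(2)}$ this is immediate, since its image only involves $t$-generators with indices in a single block, on which relations (a)--(c) are the usual Drinfeld--Kohno ones. For $\circ_p^{(1)}$ the only non-formal verifications are the ones involving the $\alpha_i,\beta_i$, namely (d), (f), (g), (h). For instance, taking $p=i$, one computes that $[\alpha_i,\beta_i]$ is sent to $\sum_{k,l=i}^{i+m-1}[\alpha_k,\beta_l]=-\sum_{k=i}^{i+m-1}\sum_{r\neq k}t_{kr}+\sum_{k\neq l}t_{kl}$, whose ``internal'' $t$-terms cancel, leaving exactly $-\sum_{k=i}^{i+m-1}\sum_{r\notin[i,i+m-1]}t_{kr}$, which is the image of $-\sum_{j\neq i}t_{ij}$; the other cases are analogous.

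Next, by the universal property of the coproduct of Lie algebras one assembles these into $\circ_p^{(1)}\amalg\circ_p^{(2)}\colon\mathfrak{t}_n^{\mathrm{ell}}\amalg\mathfrak{t}_m\to\mathfrak{t}_{n+m-1}^{\mathrm{ell}}$, which induces a right module structure over $(\mathfrak{t}_n)_{n\geq0}$ for the coproduct of Lie algebras, see \cref{appendixB}. The key point, exactly as in \cref{prop: operad structure on Kohno-Drinfeld}, is then that the images of $\circ_p^{(1)}$ and $\circ_p^{(2)}$ commute inside $\mathfrak{t}_{n+m-1}^{\mathrm{ell}}$, so that this morphism factors through the direct sum $\mathfrak{t}_n^{\mathrm{ell}}\oplus\mathfrak{t}_m$ and yields the partial composition $\circ_p$ written in the statement. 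Checking this commutation on generators uses relations (b) and (g) (brackets of generators supported on disjoint index sets vanish) together with (c) and (h); for example $[\circ_p^{(1)}(\alpha_p),\circ_p^{(2)}(t_{ab})]=\sum_{k=p}^{p+m-1}[\alpha_k,t_{a+p-1\,b+p-1}]$ vanishes since only $k=a+p-1$ and $k=b+p-1$ contribute and $[\alpha_{a+p-1}+\alpha_{b+p-1},t_{a+p-1\,b+p-1}]=0$, and similarly with $\beta$ in place of $\alpha$ and with $t$-generators in place of $t_{ab}$.

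Finally I would record the $\mathfrak{S}_n$-equivariance, which is immediate from the formulas for the action in item (1), and the associativity and unitality axioms relating $\circ_p$ to the operadic composition of $(\mathfrak{t}_n)_{n\geq0}$ from \cref{prop: operad structure on Kohno-Drinfeld}; these amount to a direct generator-by-generator comparison of the two sides, of the same nature as in the operadic case, and are most cleanly obtained by packaging the whole construction as a presheaf (of graded Lie algebras) on the relevant indexing category and appealing to \cref{appendixB}. Preservation of the bigrading is visible from the formulas. I expect the main obstacle to be the bookkeeping in the commutation and well-definedness steps: because relation (f) feeds $t_{ij}$-terms back into brackets of $\alpha$'s and $\beta$'s, one must carefully track which ``internal'' $t$-terms cancel when a strand is split, and confirm that the mixed brackets $[\alpha,\,t]$ and $[\beta,\,t]$ produced by the inserted $\mathfrak{t}_m$ genuinely vanish --- which is precisely what relations (g) and (h) are designed to guarantee.
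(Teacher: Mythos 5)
Your proposal is correct and follows essentially the same route as the paper's proof: define the splitting (coproduct-type) and insertion morphisms on generators, package them as a presheaf on $\mathrm{Fin}_{*,\not*}$ / invoke \cref{appendixB} to get a right module structure for the coproduct of Lie algebras, and then descend to the direct sum because the two images commute, exactly as in \cref{prop: operad structure on Kohno-Drinfeld}. The only difference is that you spell out the generator-level verifications (well-definedness via relations (d)--(h) and the commutation check) which the paper leaves implicit, and these computations are accurate.
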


\begin{proof}
Using the formulas above, one can define a presheaf on the category $\mathrm{Fin}_{*,\not*}$. As explained in the Appendix \ref{Appendix B2: modules}, this induces a right module structure for the cocartesian monoidal structure, which descends to the cartesian monoidal structure on Lie algebras since the images of the insertion morphisms commute with the images of the coproduct morphisms. See proofs of Propositions \ref{prop: operad structure on Kohno-Drinfeld} and \ref{cyclotomic moperad structure on Kohno-Drinfeld} for analogue arguments.
\end{proof}

\begin{definition}[Reduced elliptic Drinfeld--Kohno algebras]
The $n$-\textit{reduced elliptic Drinfeld--Kohno algebra} $\overline{\mathfrak{t}}_n^{~\mathrm{ell}}$ is given by 

\[
\overline{\mathfrak{t}}_n^{~\mathrm{ell}} \coloneqq \frac{\mathfrak{t}_n^{\mathrm{ell}}}{\left(\sum_i \alpha_i~,~\sum_i \beta_i \right)}~.
\]
\end{definition}

The right module structure defined in Proposition \ref{prop: right mod structure} descends to a right module structure on the symmetric sequence $(\overline{\mathfrak{t}}_n^{~\mathrm{ell}})_{n \geq 0}$ in the category of graded Lie algebras given by the \textit{reduced} elliptic Drinfeld--Kohno algebras.

\begin{definition}[The right module of reduced elliptic chord diagrams]
The \textit{right module of reduced elliptic chord diagrams} $\mathcal{CD}_{\mathrm{ell}}$ is the right module over the operad of chord diagrams $\mathcal{CD}$ obtained by applying the universal enveloping algebra functor to the right module of \textit{reduced} elliptic Drinfeld--Kohno algebras.
\end{definition}

Morphisms in $\mathcal{CD}_{\mathrm{ell}}(n)$ can be represented as chord diagrams on $n$-strands, with two extra types of chords which correspond to the added generators $\alpha_i$ and $\beta_i$. These extra chords can be depicted as follows:

\[
\includegraphics[width=100mm,scale=1]{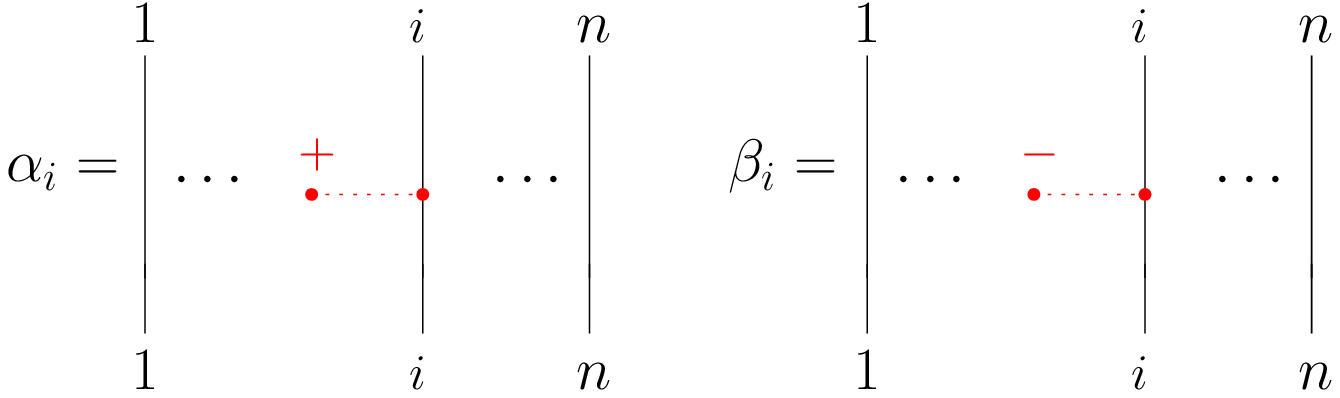}
\]

As an example, the relation (f) in the Definition \ref{def: elliptic Drinfeld-Kohno} can be represented in terms of chord diagrams as follows:

\[
\includegraphics[width=125mm,scale=1]{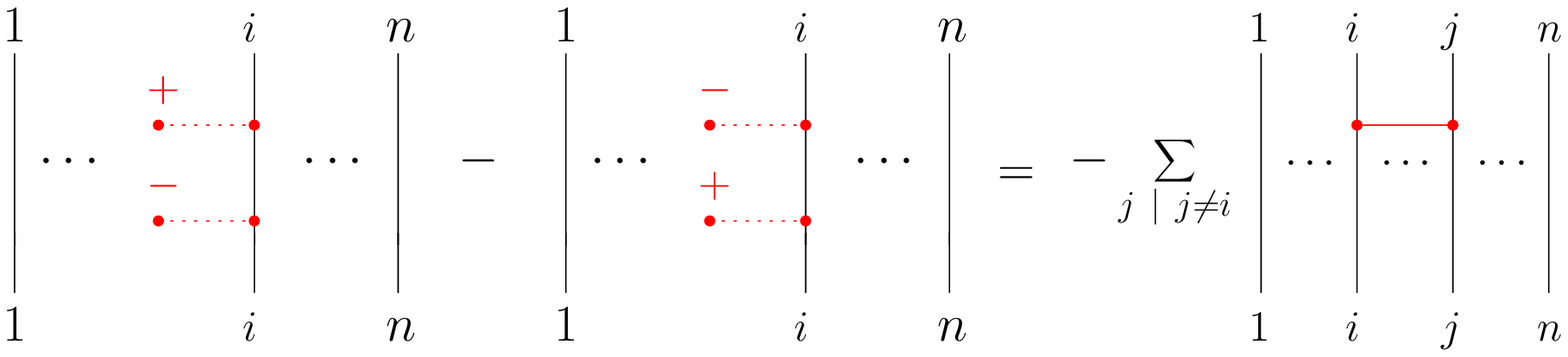}
\]

Recall the set-theoretical operad $\mathcal{P}a$ of parenthesized permutations. We consider its $\kk$-linear extension $(\mathcal{P}a)_\kk$ to the underlying category of small categories enriched in cocommutative $\kk$-coalgebras, constructed in Definition \ref{def: k-extension}. It is canonically a right module over itself.

\begin{definition}[Right module of parenthesized reduced elliptic chord diagrams]
The \textit{right module of parenthesized elliptic chord diagrams} $\mathcal{P}a\mathcal{CD}_{\mathrm{ell}}$ is defined as the Hadamard tensor product

\[
\mathcal{P}a\mathcal{CD}_{\mathrm{ell}} \coloneqq (\mathcal{P}a)_\kk \otimes \mathcal{CD}_{\mathrm{ell}}~,
\]
\vspace{0.1pc}

in the category of symmetric sequences in the category $\mathsf{Cat}(\mathsf{Coalg}_\kk)$. It comes equipped with a right module structure over the operad $\mathcal{P}a\mathcal{CD}$ since this operad is given by the Hadamard product of $(\mathcal{P}a)_\kk$ and $\mathcal{CD}$.
\end{definition}

\begin{example}
There are two distinguished endomorphisms of the object $(12)$ in $\mathcal{P}a\mathcal{CD}_{\mathrm{ell}}(2)$, called $\alpha$ and $\beta$, which are given by $1 \otimes \alpha_1$ and $1 \otimes \beta_1$, and which can be represented as follows:

\[
\includegraphics[width=75mm,scale=1]{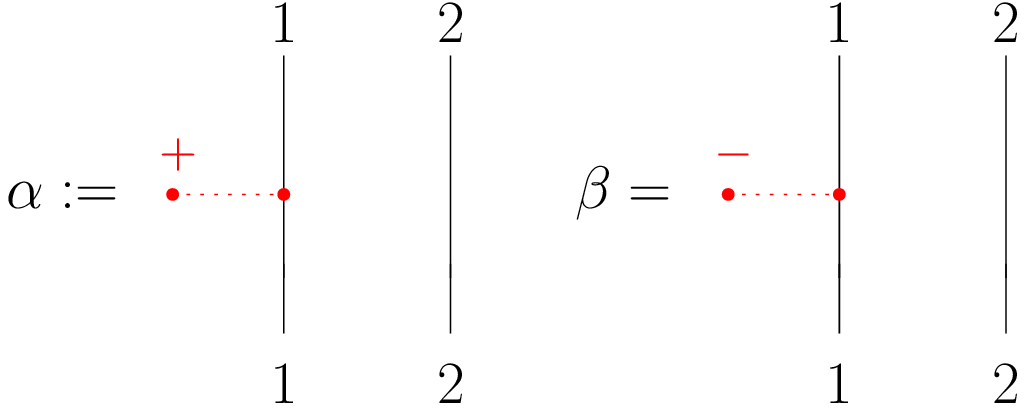}
\]
\hfill$\triangle$
\end{example}

\begin{remark}\label{rmk:presentation of PaCD elliptic}
The morphisms $\alpha$ and $\beta$ generate $\mathcal{P}a\mathcal{CD}_{\mathrm{ell}}$ as a right module over $\mathcal{P}a\mathcal{CD}$ the underlying category of $\mathsf{Cat}(\mathsf{Coalg}_\kk)$, and in fact one can give an explicit presentation of this right module. 
\end{remark}

The relations that define the elliptic Drinfeld--Kohno Lie algebras $\mathfrak{t}_n^{\mathrm{ell}}$ are homogeneous with respect to the bigrading imposed on generators. Hence it has a canonical filtration given by the total degree of the bifiltration. 

\medskip

We define $\widehat{\pacd}_{\mathrm{ell}}$ as the right module over $\widehat{\pacd}$ in the category of categories enriched over complete cocommutative coalgebras obtained by completing each hom-set of $\pacd_{\mathrm{ell}}(n)$ with 
respect to the aforementioned filtration. It defines a right module in the underlying category of complete Hopf groupoids. 

\medskip

We get a right module $\mathrm{Grp}(\widehat{\pacd}_{\mathrm{ell}})$ over $\mathrm{Grp}(\widehat{\pacd})$ in the underlying category of pro-unipotent $\kk$-groupoids by applying the group-like element functor on both sides. This will be our \textit{second elliptic player}. 

\subsection{Elliptic associators}\label{ssec: elliptic associators}
These two elliptic players enable us to define elliptic associators purely in operadic terms.

\begin{definition}[Elliptic associators] \label{Def: elliptic associator}
The set of \textit{elliptic associators} is given by 
\[
\mathrm{Assoc}_{\mathrm{ell}}(\kk) \coloneqq \mathrm{Iso}^+_{\mathsf{RMod}(\mathsf{p.u}\text{-}\mathsf{Grpd}_{\kk})}\left(\left(\widehat{\pab}_{\mathrm{ell}}(\kk),\widehat{\pab}(\kk)\right),\left(\mathrm{Grp}(\widehat{\pacd}_{\mathrm{ell}}),\mathrm{Grp}(\widehat{\pacd})\right)\right)~.
\]
An elements in $\mathrm{Assoc}_{\mathrm{ell}}(\kk)$ amounts to the data of:

\medskip

\begin{enumerate}
\item an isomorphism $F$ between $\widehat{\pab}(\kk)$ and $\mathrm{Grp}(\widehat{\pacd})$ which is the identity on objects (an associator)~,

\medskip

\item an isomorphism $S$ between $\widehat{\pab}_{\mathrm{ell}}(\kk)$ and $\mathrm{Grp}(\widehat{\pacd}_{\mathrm{ell}})$ of right modules which is the identity on objects, and which lies above the isomorphism $F$. 
\end{enumerate}
\end{definition}

This definition entails that the set of elliptic associators is a bitorsor over the automorphisms groups of each right module that are the identity morphism on objects. This allows us to define, as in the previous cases, the elliptic Grothendieck--Teichmüller group and the elliptic graded Grothendieck--Teichmüller group.

\begin{definition}[Elliptic Grothendieck--Teichmüller group]
The \textit{elliptic Grothendieck--Teichmüller group} over $\kk$ is given by 
\[
\mathrm{GT}_{\mathrm{ell}}(\kk) \coloneqq \mathrm{Aut}_{\mathsf{RMod}(\mathsf{p.u}\text{-}\mathsf{Grpd}_{\kk})}^+\left(\left(\widehat{\pab}_{\mathrm{ell}}(\kk),\widehat{\pab}(\kk)\right)\right),
\]
that is, the group of automorphisms of right modules of $\widehat{\pab}_{\mathrm{ell}}(\kk)$ which are the identity on objects.
\end{definition}

\begin{definition}[Elliptic graded Grothendieck--Teichmüller group]
The \textit{elliptic graded Grothendieck--Teichmüller group} over $\kk$ is given by 
\[
\mathrm{GRT}_{\mathrm{ell}}(\kk) \coloneqq \mathrm{Aut}_{\mathsf{RModCat(Coalg_\kk)}}^+\left(\left(\widehat{\pacd}_{\mathrm{ell}},\widehat{\pacd}\right)\right),
\]
that is, the group of automorphisms of right modules of $\widehat{\pacd}_{\mathrm{ell}}$ which are the identity on objects.
\end{definition}

\subsection{More concrete descriptions}\label{ssec: more concrete elliptic}

\begin{theorem}[{\cite[Theorem 3.3]{calaque2020ellipsitomic}}]\label{thm: presentation pab elliptic}
The right module $\pab_\mathrm{ell}$, as a right module over $\pab$ in groupoids having $\mathcal{P}a$ as a $\mathcal{P}a$-right module of objects, admits the following presentation. Thegenerating morphisms 
are the endomorphisms $A$ and $B$ of $(12)$ in $\pab_\mathrm{ell}(2)$. 
They satisfy the following relations

\begin{enumerate}

\item The \textit{nonagon relation} for $A$ amounts to the commutativity of the following diagram

\medskip
\hspace{-8.5pc}
\begin{tikzcd}[column sep=4pc,row sep=2pc]
&(12)3 \arrow[ld,"\mathrm{id}_{(12)3}",swap] \arrow[r,"\Phi"]
&1(23) \arrow[r,"A~ \circ_2 ~ \mathrm{id}_{12}"]
&1(23) \arrow[rd,"\tilde{R} ~ \circ_2 ~ \mathrm{id}_{12}"]
&   \\
(12)3 
&
&
&
&(23)1 \arrow[d,"(231) ~\bullet~ \Phi"] \\
3(12) \arrow[u,"(312) ~\bullet~  (\tilde{R}~ \circ_2 ~ \mathrm{id}_{12})"]
&
&
&
&2(31) \arrow[ld,"(231) ~\bullet~  (A~ \circ_2 ~ \mathrm{id}_{12})"]\\
&3(12) \arrow[ul,"(312) ~\bullet~  (A~ \circ_2 ~ \mathrm{id}_{12})"] 
&(31)2 \arrow[l,"(312) ~\bullet~  \Phi",swap] 
&2(31) \arrow[l,"(231) \bullet  (\tilde{R} \circ_2 \mathrm{id}_{12})",swap]
&
\end{tikzcd}

where $\tilde{R}$ is given by $((21) \bullet R)^{-1}$. 

\medskip

\item The \textit{nonagon relation} for $B$ amounts to the above diagram, where one replaces $A$ with $B$.

\medskip

\item The \textit{mixed relation} amounts to the commutativity of the following diagram

\[
\begin{tikzcd}[column sep=5pc,row sep=3pc]
(12)3 \arrow[rr," f g f^{-1} g^{-1} "] \arrow[rd,"\mathrm{id}_{12} ~\circ_1 ~R",swap] 
&
&(12)3 \\
&(21)3 \arrow[ur,"\mathrm{id}_{12} ~\circ_1 ~((21) ~\bullet ~ R)",swap] 
&
\end{tikzcd}
\]

where the arrow $f$ is given by the following composition 

\[
\begin{tikzcd}[column sep=5pc,row sep=3pc]
(12)3 \arrow[r,"\Phi"]
&1(23) \arrow[r," A ~\circ_2 ~\mathrm{id}_{12}"]
&1(23) \arrow[r,"\Phi^{-1}"]
&(12)3~,
\end{tikzcd}
\]

and where the arrow $g$ is given by the following composition

\vspace{1pc}
\hspace{-7.5pc}
\begin{tikzcd}[column sep=4.25pc,row sep=3pc]
(12)3 \arrow[r,"\mathrm{id}_{12} ~\circ_1 ~\tilde{R}"] 
&(21)3 \arrow[r,"(213) ~\bullet ~ \Phi"] 
&2(13) \arrow[r,"(213) \bullet (B \circ_2  \mathrm{id}_{12})"] 
&2(13) \arrow[r,"((213) ~\bullet ~ \Phi)^{-1}"] 
&(21)3 \arrow[r,"\mathrm{id}_{12} ~\circ_1 ~((21) ~\bullet ~ \tilde{R})"]
&(12)3~.
\end{tikzcd}
\end{enumerate} 
\end{theorem}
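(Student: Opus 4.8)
The plan is to reduce the statement to a presentation of the reduced pure braid groups of the torus $\overline{\mathrm{PB}}_n(\mathbb{T})$, suitably intertwined with the operadic right-module data over $\pab$, and then to run an induction on the arity $n$ using the fibration that forgets the last configuration point. Since the object part of $\pab_{\mathrm{ell}}$ is $\mathcal{P}a$, the free operad in sets on a single binary operation, every groupoid $\pab_{\mathrm{ell}}(n)$ is connected and all of its automorphism groups are isomorphic to $\overline{\mathrm{PB}}_n(\mathbb{T})$. Any morphism between two distinct objects is obtained from an endomorphism of a fixed standard object (say the left-combed parenthesization of the identity permutation) by conjugating with morphisms that already lie in $\pab$ — hence, by \cref{thm: presentation pab}, with morphisms generated by $R$ and $\Phi$ under the right $\pab$-action. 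Thus it suffices to show: (i) the automorphism group of the standard object of $\pab_{\mathrm{ell}}(n)$ is generated, through the $\pab$-module operations, by $A$, $B$, $R$ and $\Phi$; and (ii) all relations among these are consequences of the nonagon relations for $A$ and $B$, the mixed relation, and the relations of $\pab$.

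For generation, recall that $\overline{\mathrm{PB}}_n(\mathbb{T})$ is generated by the classes $a_i$, $b_i$ ($1\le i\le n$) together with the images $x_{ij}$ of the Artin generators of $\mathrm{PB}_n$. The $x_{ij}$ already lie in the image of $\pab$ inside $\pab_{\mathrm{ell}}$ (via the module unit $\pab(n)\cong\pab_{\mathrm{ell}}(1)\times\pab(n)\to\pab_{\mathrm{ell}}(n)$) and are therefore generated by $R$ and $\Phi$. In arity two one has, up to $a_1=a_2^{-1}$ and $b_1=b_2^{-1}$, that $a_1=A$ and $b_1=B$; inserting identities via the partial composition maps $\circ_p\colon\pab_{\mathrm{ell}}(n)\times\pab(k)\to\pab_{\mathrm{ell}}(n+k-1)$ produces automorphisms in every arity, and combining these with the braiding-induced generators $\sigma_i$ and the torus relations $a_{i+1}=\sigma_i^{-1}a_i\sigma_i^{-1}$, $b_{i+1}=\sigma_i^{-1}b_i\sigma_i^{-1}$ yields all $a_i$ and $b_i$. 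This settles (i).

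For completeness of the relations, the plan is to induct on $n$ using the split fibration $\mathrm{C}(\mathbb{T},n)\to\mathrm{C}(\mathbb{T},n-1)$ obtained by forgetting the last point, whose fiber is the torus punctured at $n-1$ points (so with free $\pi_1$ of rank $n$) and whose section adds the last point at a continuously varying standard location. This gives a semidirect product decomposition $\overline{\mathrm{PB}}_n(\mathbb{T})\cong\mathbb{F}_n\rtimes\overline{\mathrm{PB}}_{n-1}(\mathbb{T})$, so a presentation of $\overline{\mathrm{PB}}_n(\mathbb{T})$ is obtained from one of $\overline{\mathrm{PB}}_{n-1}(\mathbb{T})$ — known by the inductive hypothesis to follow from the listed relations, propagated through the module structure — by adjoining the free generators $a_n$, $b_n$, $x_{in}$ and the conjugation relations expressing the action of $\overline{\mathrm{PB}}_{n-1}(\mathbb{T})$ on $\mathbb{F}_n$. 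Because this action is supported on configurations involving at most three or four points at a time, each such conjugation relation can be rewritten, after transporting along $\Phi$ and $R$, as an identity taking place in $\pab_{\mathrm{ell}}(3)$ or $\pab_{\mathrm{ell}}(4)$; one then checks, diagram by diagram, that it is a formal consequence of the two nonagon relations, of the mixed relation, and of the pentagon and hexagon relations. The base cases are immediate: $\pab_{\mathrm{ell}}(1)$ is trivial and $\mathrm{Aut}_{\pab_{\mathrm{ell}}(2)}((12))\cong\overline{\mathrm{PB}}_2(\mathbb{T})$ is free on $A$ and $B$, the reduced configuration space of two points on $\mathbb{T}$ being a once-punctured torus, so no relation is imposed in arity two.

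The main obstacle is exactly this last verification: translating the full set of conjugation (surface-braid) relations of $\overline{\mathrm{PB}}_n(\mathbb{T})$ into operadic-module language and checking, without omission, that each is generated by the three families of relations in the statement — a finite but delicate bookkeeping, whose genus-zero analogue is carried out in \cite[Chapter 6]{FresseGT1}, and which is best organized by reducing via the fibration to the interaction of a single pair $(A,B)$ with the braiding, precisely what the nonagon and mixed relations encode. A cleaner alternative, once the topological model of \S\ref{ssec: topological description elliptic} is available, is to identify $\pab_{\mathrm{ell}}$ with the fundamental-groupoid right module of a Fulton--MacPherson-type compactification of configurations on the torus and to read the presentation off a Van Kampen analysis of its stratification, exactly as in the genus-zero case; this is the route followed in \cite{calaque2020ellipsitomic}.
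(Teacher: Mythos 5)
Your overall strategy (reduce to the automorphism group of a base object in each arity, use the split Fadell--Neuwirth fibration to write $\overline{\mathrm{PB}}_n(\mathbb{T})\cong\mathbb{F}_n\rtimes\overline{\mathrm{PB}}_{n-1}(\mathbb{T})$, and induct) is a sensible skeleton, and it is essentially the genus-one analogue of the genus-zero argument of \cite[Chapter 6]{FresseGT1}, close in spirit to Enriquez's algebraic treatment in \cite{Enriquez14}. Note, however, that this survey does not prove the statement at all: it quotes it from \cite[Theorem 3.3]{calaque2020ellipsitomic}, where (as the remark in \S\ref{ssec: topological description elliptic} explains) $\pab_{\mathrm{ell}}$ is \emph{defined} topologically and then identified with Enriquez's algebraically presented object. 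So you are proposing a different, more hands-on route than the cited source.

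The problem is that your route has a genuine gap exactly where the theorem lives. Completeness of the relation set is asserted, not proved: you say that each conjugation relation expressing the action of $\overline{\mathrm{PB}}_{n-1}(\mathbb{T})$ on the free fiber $\mathbb{F}_n$ ``can be rewritten\dots as an identity in $\pab_{\mathrm{ell}}(3)$ or $\pab_{\mathrm{ell}}(4)$'' and then ``one checks, diagram by diagram,'' that it follows from the two nonagon relations, the mixed relation and the relations of $\pab$. That check \emph{is} the theorem; deferring it as bookkeeping leaves nothing established. Worse, the induction cannot even start as written, because you never fix a presentation of $\overline{\mathrm{PB}}_n(\mathbb{T})$ (the survey deliberately lists only a few of its relations), so the list of conjugation relations to be verified is unspecified, and the claim that the $\overline{\mathrm{PB}}_{n-1}$-action on the fiber generators is ``supported on at most three or four points'' is plausible but unjustified. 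Two further steps are also missing: soundness, i.e.\ an actual verification that the nonagon and mixed relations hold for $A$ and $B$ in $\pab_{\mathrm{ell}}$ (easy, but it must be done, e.g.\ by drawing the corresponding reduced torus braids); and the construction of the comparison map from the right $\pab$-module in groupoids freely generated (over the module of objects $\mathcal{P}a$) by $A$ and $B$ modulo the three relations, which is the object against which your arity-by-arity group-theoretic comparison must be run. Filling these in amounts to redoing the computation of \cite{Enriquez14}/\cite{calaque2020ellipsitomic}; alternatively, the cleaner path you mention at the end — identify $\pab_{\mathrm{ell}}$ with $\Pi_1(\overline{\mathrm{C}}(\mathbb{T}),\mathcal{P}a)$ and read the presentation off the stratification — is precisely the route of the cited paper, not an argument you have supplied.
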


We now deduce from this presentation a more explicit description of elliptic associators. 

\medskip

Observe that the reduced elliptic Drinfeld--Kohno algebra $\overline{t}_2^{~\mathrm{ell}}$ is a free Lie algebra on two generators, $\alpha \coloneqq \alpha_1$ and $\beta \coloneqq \beta_2$. 
Therefore an element in $\widehat{\mathfrak{U}}(\overline{t}_2^{~\mathrm{ell}})$ can be written a non-commutative formal power series in variables $\alpha_1$ and $\beta_2$.

\begin{theorem}[{\cite[Theorem 3.9]{calaque2020ellipsitomic}}]\label{thm: explicit ellip assos}
There is a bijection between the set of elliptic associators $\mathrm{Assos}_{\mathrm{ell}}(\kk)$ and the set of quadruples $(\lambda,\Phi,A_{+},A_{-})$ where 

\begin{enumerate}
\item $(\lambda,\Phi)$ is a Drinfeld associator;

\medskip

\item $A_{+}$ and $A_{-}$ are group-like formal power series in two  non-commutative variables, which satisfy the following equations in $\widehat{\mathfrak{U}}(\overline{t}_3^{~\mathrm{ell}})$: 

\begin{align*}
\mathrm{(a)}\quad\qquad\qquad\qquad\qquad\Phi(t_{12},t_{23})~ A_{\pm}(\alpha_1,\beta_2 + \beta_3)~e^{-\lambda(t_{12} + t_{13})/2} \\
\Phi(t_{23},t_{13})~A_{\pm}(\alpha_2,\beta_3 + \beta_1)~e^{-\lambda(t_{23} + t_{12})/2} \\
\Phi(t_{13},t_{12})~ A_{\pm}(\alpha_3,\beta_1 + \beta_2)~e^{-\lambda(t_{13} + t_{23})/2}~=~1
\end{align*}

and

\begin{align*}
\mathrm{(b)}\qquad\qquad e^{\lambda.t_{12}} = \left[\Phi(t_{12},t_{23})~ A_{+}(\alpha_1,\beta_2 + \beta_3)~\Phi(t_{12},t_{23})^{-1}~,\right. \\
\left.~e^{-\lambda.t_{12}/2}~\Phi(t_{12},t_{13})~A_{-}(\alpha_2,\beta_1 + \beta_3)~\Phi(t_{12},t_{13})^{-1}~e^{-\lambda.t_{12}/2}\right]~
\end{align*}
where the bracket denotes the commutator. 
\end{enumerate} 
\end{theorem}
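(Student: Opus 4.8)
The plan is to derive the explicit description of $\mathrm{Assoc}_{\mathrm{ell}}(\kk)$ from the presentation of the right module $\pab_{\mathrm{ell}}$ given in \cref{thm: presentation pab elliptic}, in exactly the same style as the sketches of proof of the corresponding statements in the genus zero and cyclotomic cases. First I would observe that an elliptic associator is a pair $(F,S)$ with $F$ an isomorphism $\widehat{\pab}(\kk)\qi\mathrm{Grp}(\widehat{\pacd})$ which is the identity on objects, hence a Drinfeld associator $(\lambda,\Phi)$ by the corollary in \S\ref{ssec-1.6}; and $S$ an isomorphism of right modules $\widehat{\pab}_{\mathrm{ell}}(\kk)\qi\mathrm{Grp}(\widehat{\pacd}_{\mathrm{ell}})$ lying above $F$ and being the identity on objects. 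By \cref{thm: presentation pab elliptic}, the right module $\pab_{\mathrm{ell}}$ is generated over $\pab$ by the two automorphisms $A$ and $B$ of $(12)$ in $\pab_{\mathrm{ell}}(2)$, so (after Malcev completion) $S$ is completely determined by the images $S(A)$ and $S(B)$.

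Next I would identify the target hom-groupoid. Since $S$ is the identity on objects and lies above $F$, the elements $S(A),S(B)$ must lie in $\mathrm{Grp}\big(\widehat{\pacd}_{\mathrm{ell}}(2)\big)$ as group-like endomorphisms of $(12)$; using the Hadamard decomposition $\pacd_{\mathrm{ell}}=(\mathcal{P}a)_\kk\otimes\mathcal{CD}_{\mathrm{ell}}$ and the observation in the text that $\overline{\mathfrak{t}}_2^{~\mathrm{ell}}$ is the free Lie algebra on $\alpha_1$ and $\beta_2$, one gets $\mathrm{Aut}_{\pacd_{\mathrm{ell}}(2)}^{\mathrm{grp}}(12)\cong\exp\big(\widehat{\mathfrak{U}}(\overline{\mathfrak{t}}_2^{~\mathrm{ell}})\big)$, i.e.\ the set of group-like formal power series in the two noncommuting variables $\alpha_1,\beta_2$. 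So writing $A_{+}:=S(A)$ and $A_{-}:=S(B)$ yields the two group-like series appearing in \cref{thm: explicit ellip assos}, and this already establishes the bijection on the level of data: a right-module isomorphism $S$ above $(\lambda,\Phi)$ is the same as a choice of $(A_{+},A_{-})$ subject to the constraint that the defining relations of $\pab_{\mathrm{ell}}$ are preserved.

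Then I would translate the three relations of \cref{thm: presentation pab elliptic} under $S$. The nonagon relation for $A$ is a commuting nonagon in $\pab_{\mathrm{ell}}(3)$; applying $S$ (which sends $R\mapsto e^{\lambda t_{12}/2}X$, $\Phi\mapsto\Phi(t_{12},t_{23})\alpha$, $A\mapsto A_{+}$, and is compatible with the $\circ_i$ and with the symmetric-group action) and then cancelling the purely combinatorial $\mathcal{P}a$-part, the nine edges become the nine factors in equation (a) for $A_{+}$; one reads off the exponents $-\lambda(t_{12}+t_{13})/2$ etc.\ from the images of $\tilde R=((21)\bullet R)^{-1}$. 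The nonagon for $B$ gives equation (a) for $A_{-}$. The mixed relation, a commuting square in $\pab_{\mathrm{ell}}(3)$ whose two composite edges $f$ and $g$ are themselves built from $\Phi,\tilde R,A,B$, becomes under $S$ exactly the commutator identity (b): the edge $f$ maps to $\Phi(t_{12},t_{23})A_{+}(\alpha_1,\beta_2+\beta_3)\Phi(t_{12},t_{23})^{-1}$ (the $\circ_2$-insertion $A\circ_2\mathrm{id}_{12}$ accounts for the $\beta_2+\beta_3$), and the edge $g$ maps to the conjugate of $A_{-}(\alpha_2,\beta_1+\beta_3)$ by $e^{-\lambda t_{12}/2}\Phi(t_{12},t_{13})$, while the square ``$fgf^{-1}g^{-1}=\mathrm{id}_{12}\circ_1((21)\bullet R)\cdot(\mathrm{id}_{12}\circ_1 R)^{-1}$'' becomes $e^{\lambda t_{12}}=[\ ,\ ]$. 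Conversely, any $(A_{+},A_{-})$ satisfying (a) and (b) defines such an $S$, since these are precisely the relations one must check.

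I expect the main obstacle to be the \emph{bookkeeping} in the third step: carefully tracking how each of the insertion-coproduct morphisms $\circ_p$ on $\overline{\mathfrak{t}}_3^{~\mathrm{ell}}$ acts on the generators $\alpha_i,\beta_i,t_{ij}$ (the formulas in \cref{prop: right mod structure}, passed to the reduced quotient), and how the symmetric group acts, so that the images of the edges $f$, $g$ and of the nonagon edges come out with exactly the arguments $(\alpha_1,\beta_2+\beta_3)$, $(\alpha_2,\beta_3+\beta_1)$, $(\alpha_3,\beta_1+\beta_2)$ and the correct half-exponentials $e^{-\lambda(t_{ij}+t_{ik})/2}$. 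The conceptual content is light — it is the same ``apply $S$, cancel the $\mathcal{P}a$-part, read off the equation'' mechanism used for Drinfeld and cyclotomic associators — but making the indices match requires the explicit right-module structure maps and a correct dictionary between the operadic $\circ_i$-notation and the insertion-coproduct notation $(-)^{I_1,\dots,I_n}$. Once the translation of one nonagon edge and one of $f,g$ is done carefully, the rest follows by the evident symmetry, and I would simply refer to \cite{calaque2020ellipsitomic} for the remaining routine verifications.
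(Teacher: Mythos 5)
Your proposal follows essentially the same route as the paper's own sketch: use \cref{thm: presentation pab elliptic} to reduce $S$ to the images of $A$ and $B$, identify these with group-like series in $\widehat{\mathfrak{U}}(\overline{t}_2^{~\mathrm{ell}})\cong$ power series in $\alpha_1,\beta_2$, and match the two nonagon relations with equation (a) and the mixed relation with equation (b). Your extra detail on translating the edges $f$ and $g$ and on the insertion bookkeeping is consistent with the paper, which likewise defers the explicit verifications to \cite{calaque2020ellipsitomic}.
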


\begin{remark}
The above description of an elliptic associator corresponds to the original definition given by Enriquez in \cite{Enriquez14}, up to taking inverses (because of our unusual convention for products). 
\end{remark}

\begin{proof}[Sketch of proof]
An isomorphism $S$ between $\widehat{\pab}_{\mathrm{ell}}(\kk)$ and $\mathrm{Grp}(\widehat{\pacd}_{\mathrm{ell}})$ is completely determined by the images 
of the generators $A$ and $B$. 
These images give two group-like elements in $\widehat{\mathfrak{U}}(\overline{t}_2^{~\mathrm{ell}})$, that is to say two group-like non-commutative formal 
power series in two variables. 
The relations above correspond precisely to the relations induced by the presentation of Theorem \ref{thm: presentation pab elliptic} (more precisely, (a) corresponds 
to the two nonagon relations, and (b) corresponds to the mixed relation).
\end{proof}

\begin{theorem}[{\cite{Enriquez14}}]
Let $\kk$ be a field of characteristic zero. The set of elliptic associators is non empty. 
\end{theorem}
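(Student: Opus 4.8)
The strategy is to exhibit an explicit elliptic associator over $\kk=\mathbb{C}$, and then descend to a field of characteristic zero by a torsor argument. By \cref{thm: explicit ellip assos}, it suffices to produce a quadruple $(\lambda,\Phi,A_+,A_-)$ satisfying the two systems of equations (a) and (b) in $\widehat{\mathfrak{U}}(\overline{t}_3^{~\mathrm{ell}})$. For the ordinary associator piece $(\lambda,\Phi)$ we invoke Drinfeld's existence theorem over $\mathbb{C}$ (the KZ associator $\Phi_{KZ}$ rescaled to a $1$-associator), so the work is concentrated in constructing the two group-like series $A_\pm$ compatible with a chosen $\Phi$.

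\textbf{Construction over $\mathbb{C}$.} The key input is the universal Knizhnik--Zamolodchikov--Bernard (KZB) connection on the (reduced) configuration space of points on an elliptic curve, which depends holomorphically on a modular parameter $\tau$ in the upper half-plane; this is exactly the setting referenced in \S\ref{ssec: Motivations elliptic} and goes back to \cite{UniversalKZB09}. One first fixes $\tau$ and considers the flat connection with values in $\overline{\mathfrak{t}}_n^{~\mathrm{ell}}$ on $\mathrm{C}(\mathbb{T},n)$. I would then define $A_+$ and $A_-$ as the renormalized (regularized) holonomies of this connection along the two standard cycles $a$ and $b$ of the torus, computed on the configuration space of two points, in the same way $\Phi_{KZ}$ arises as a regularized holonomy of the genus-zero KZ connection along the interval $[0,1]$. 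The equations (a) and (b) of \cref{thm: explicit ellip assos} are then \emph{forced}: they are the translation, via the presentation of $\pab_\mathrm{ell}$ in \cref{thm: presentation pab elliptic}, of the flatness of the KZB connection together with the relations in the reduced pure braid group of the torus $\overline{\mathrm{PB}}_n(\mathbb{T})$ (the nonagon relations come from the interaction of the cycle generators with the symmetry $R$ and the associator $\Phi$, and the mixed relation encodes $[a_j^{-1},b_i^{-1}]=x_{ij}$). Concretely: holonomy is a morphism from $\widehat{\pab}_\mathrm{ell}(\mathbb{C})$ to $\mathrm{Grp}(\widehat{\pacd}_\mathrm{ell})$ over the KZ morphism, it is the identity on objects, and it is an isomorphism because the connection has regular singularities and unipotent monodromy; hence it \emph{is} an elliptic associator, and unravelling \cref{thm: explicit ellip assos} yields the required $(\lambda,\Phi_{KZ},A_+,A_-)$. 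This settles non-emptiness over $\mathbb{C}$.

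\textbf{Descent to an arbitrary characteristic-zero field.} Having one point over $\mathbb{C}$, one uses that $\mathrm{Assoc}_\mathrm{ell}(\kk)$ is a bitorsor over the pro-unipotent (times $\mathbb{G}_m$) groups $\mathrm{GT}_\mathrm{ell}(\kk)$ and $\mathrm{GRT}_\mathrm{ell}(\kk)$, as recorded just after \cref{Def: elliptic associator}. First descend from $\mathbb{C}$ to $\mathbb{Q}$: the torsor is defined over $\mathbb{Q}$, the acting pro-unipotent group is a projective limit of affine spaces, hence $H^1$ vanishes and a $\mathbb{Q}$-point exists (this is the same argument Drinfeld uses, and the same as in Enriquez \cite{Enriquez14}); alternatively, one can run descent directly at the level of the defining equations (a)--(b), which are a pro-system of polynomial equations with $\mathbb{Q}$-coefficients whose $\mathbb{C}$-solution set is a torsor under a pro-unipotent $\mathbb{Q}$-group, so a $\mathbb{Q}$-solution exists. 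Then for any field $\kk\supseteq\mathbb{Q}$ of characteristic zero, extend scalars: a $\mathbb{Q}$-associator gives a $\kk$-associator.

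\textbf{Main obstacle.} The genuinely hard part is the analytic/combinatorial input over $\mathbb{C}$: one must verify that the regularized KZB holonomies $A_\pm$ together with $\Phi_{KZ}$ satisfy \emph{precisely} equations (a) and (b) — i.e. that the dictionary between the topological presentation of $\pab_\mathrm{ell}$ (\cref{thm: presentation pab elliptic}) and the Lie-algebraic relations of $\overline{\mathfrak{t}}_n^{~\mathrm{ell}}$ is compatible with holonomy, and that the regularization at the elliptic analogue of the punctures is well-defined and group-like. This is exactly the content of Enriquez's construction in \cite{Enriquez14}, and the clean way to present it is to black-box that paper: cite \cite{Enriquez14} for the existence of the KZB-holonomy elliptic associator over $\mathbb{C}$, note that its defining equations there match (a)--(b) up to the inverse/convention change flagged in the remark following \cref{thm: explicit ellip assos}, and then only spell out the (formal, easy) descent argument. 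By contrast, the passage $\mathbb{C}\rightsquigarrow\mathbb{Q}\rightsquigarrow\kk$ is routine torsor theory and requires no new ideas beyond the bitorsor structure already established.
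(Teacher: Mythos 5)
Your proposal follows essentially the same route as the paper: the paper's proof sketch is exactly the construction of an explicit elliptic associator over $\mathbb{C}$ as renormalized holonomies of the elliptic KZB connection along the $A$- and $B$-cycles (citing \cite{UniversalKZB09,Enriquez14} for the analytic content), followed by descent to $\mathbb{Q}$ via the bitorsor structure over the (graded) elliptic Grothendieck--Teichm\"uller groups. Your elaboration of the torsor/descent step and the matching of the holonomy relations with equations (a)--(b) of \cref{thm: explicit ellip assos} is consistent with what the paper black-boxes to \cite{Enriquez14}.
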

For every choice of a complex structure on the torus (i.e.~for every elliptic curve), there is an explicit construction of an elliptic associator over $\kk=\mathbb{C}$ given as renormalized holonomies (along $A$- and $B$-cycles) 
of an elliptic version of the Knizhnik--Zamolodchikov differential equation, studied in \cite{UniversalKZB09}. One can then use descent methods, once again, to prove existence over $\kk=\mathbb{Q}$. 

\begin{remark}
The above still makes sense for the nodal curve, where one gets back the usual Knizhnik--Zamolodchikov differential equation (up to a change of coordinate, see \cite{UniversalKZB09}). The holonomies can therefore be expressed using a Drinfeld associator. 
Explicit formulas are given in \cite{UniversalKZB09,Enriquez14}) and provide an elliptic associator for \textit{any} given Drinfeld associator. In other words, the forgetful map $\mathrm{Assos}_{\mathrm{ell}}(\kk)\to \mathrm{Assos}(\kk)$ 
has a section. 
\end{remark}

\medskip

The presentation of $\pab_{\mathrm{ell}}$ as a right module also allows us to give an explicit description of the elliptic Grothendieck--Teichmüller group.

\begin{theorem}[{\cite[Section 3.7]{calaque2020ellipsitomic}}]\label{thm: explicit ellip GT}
Elements of the elliptic Grothendieck--Teichmüller group $\mathrm{GT}_{\mathrm{ell}}$ are in bijection with quadruples 

\[
(\mu,f,g_{+},g_{-}) \quad \text{in} \quad \kk^\times \times \widehat{\mathbb{F}}_2(\kk) \times \widehat{\mathbb{F}}_2(\kk)\times \widehat{\mathbb{F}}_2(\kk)~,
\]
\vspace{0.1pc}

where $(\mu,f)$ is an element of the Grothendieck--Teichmüller group $\mathrm{GT}$, and where $g_{\pm}$ are seen as words $g_{\pm}(A,B)$ which satisfy the following equations 

\begin{enumerate}
\medskip

\item $\left(f(\sigma_1^2,\sigma_2^2) g_{\pm}(a_1,b_1)(\sigma_1 \sigma_2^2 \sigma_1)^{\frac{-\mu-1}{2}} \sigma_1^{-1}\sigma_2^{-1} \right)^3 =1~,$

\medskip

\item $u^2 = \left[g_{+}(a_1,b_1), u^{-1}g_{-}(a_1,b_1)u^{-1} \right]~,$

\medskip
\end{enumerate}

in the reduced braid group of the torus $\overline{\mathrm{B}}_3(\mathbb{T})$, where the bracket denotes the commutator and where

\[
u = f(\sigma_1^2,\sigma_2^2)^{-1} \sigma_1^\mu f(\sigma_1^2,\sigma_2^2)~.
\] 

\medskip

Under this bijection, the group structure of the elliptic Grothendieck--Teichmüller group is given by: 
\[
(\mu_1,f_1,(g_1)_{\pm}) \star (\mu_2,f_2,(g_2)_{\pm}) = (\mu,f,g_{\pm})~,
\]

where 

\[
\mu = \mu_1\mu_2~, \quad \text{where} \quad f(x,y) = f_1\left(x^{\mu_2},f_2(x,y)y^{\mu_2}f_2(y,x)\right)f_2(x,y)~,
\]
\vspace{0.1pc}

and where 

\[
g_{\pm}(A,B) = (g_1)_{\pm}((g_2)_{+}(A,B),(g_2)_{-}(A,B))~.
\]
\vspace{0.1pc}

And, under this bijection, its action on the set of elliptic associators is given by:
\[
(\mu,f,g_{\pm}) \bullet (\lambda,\Phi,A_{\pm}) =
\]
\[
\left(\mu \lambda, f\left(e^{\lambda t_{12}}, \Phi(t_{12},t_{23})e^{\lambda t_{23}}\Phi(t_{23}, t_{12})\right)\Phi(t_{12},t_{23}),g_{\pm}(A_{+},A_{-})\right)~.
\]
\medskip
\end{theorem}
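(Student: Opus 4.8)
The plan is to derive the theorem from the presentation of $\pab_{\mathrm{ell}}$ given in \cref{thm: presentation pab elliptic}, in complete analogy with the way \cref{presentation of GT} is obtained from \cref{thm: presentation pab}, and then to read off the group law and the action on associators using \cref{thm: explicit ellip assos}. By definition an element of $\mathrm{GT}_{\mathrm{ell}}$ is an automorphism of the right module $\widehat{\pab}_{\mathrm{ell}}(\kk)$ over $\widehat{\pab}(\kk)$ which is the identity on objects; unwinding this, it is the same as a pair consisting of an element $(\mu,f)\in\mathrm{GT}$, inducing an automorphism $F_{(\mu,f)}$ of $\widehat{\pab}(\kk)$, together with an automorphism $S$ of the right $\widehat{\pab}(\kk)$-module $\widehat{\pab}_{\mathrm{ell}}(\kk)$ lying above $F_{(\mu,f)}$ and fixing objects. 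Since \cref{thm: presentation pab elliptic} exhibits $\pab_{\mathrm{ell}}$ as generated over $\pab$ by the two endomorphisms $A$ and $B$ of the object $(12)$, the automorphism $S$ is entirely determined by $S(A)$ and $S(B)$, which are again automorphisms of $(12)$ in $\widehat{\pab}_{\mathrm{ell}}(2)$.

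First I would identify $\mathrm{Aut}_{\widehat{\pab}_{\mathrm{ell}}(2)}\big((12)\big)$. The reduced configuration space $\mathrm{C}(\mathbb{T},2)$ is a once-punctured torus, so $\overline{\mathrm{PB}}_2(\mathbb{T})$ is free on $a_1$ and $b_1$; hence this automorphism group is the Malcev completion $\widehat{\mathbb{F}}_2(\kk)$, freely (pro-unipotently) generated by $A$ and $B$, and one may write $S(A)=g_+(A,B)$ and $S(B)=g_-(A,B)$ for group-like series $g_\pm\in\widehat{\mathbb{F}}_2(\kk)$. The key point is then that the assignment $A\mapsto g_+$, $B\mapsto g_-$ extends to a right $\widehat{\pab}(\kk)$-module automorphism above $F_{(\mu,f)}$ if and only if the triples $\big(g_\pm,F_{(\mu,f)}(\Phi),F_{(\mu,f)}(R)\big)$ satisfy the two nonagon relations and the mixed relation of \cref{thm: presentation pab elliptic}: necessity because these relations hold in $\pab_{\mathrm{ell}}$, and sufficiency because by that theorem all relations of $\pab_{\mathrm{ell}}$ over $\pab$ are consequences of them (the resulting endomorphism being automatically invertible, as one checks just as in the classical case). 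Next I would substitute the images $F_{(\mu,f)}(R)=R^{\mu}$ and the standard $f$-twist $F_{(\mu,f)}(\Phi)$ coming from \cref{presentation of GT} into the nonagon and mixed diagrams, and rewrite the resulting identities inside the Malcev-completed reduced braid group of the torus $\overline{\mathrm{B}}_3(\mathbb{T})$, using $\sigma_i^2=x_{i\,i+1}$ together with $a_{i+1}=\sigma_i^{-1}a_i\sigma_i^{-1}$ and $b_{i+1}=\sigma_i^{-1}b_i\sigma_i^{-1}$. The $\mu$-th power of the braiding together with the $f$-conjugation then collapse precisely into the element $u=f(\sigma_1^2,\sigma_2^2)^{-1}\sigma_1^\mu f(\sigma_1^2,\sigma_2^2)$, the two nonagon relations become relation (1), and the mixed relation becomes relation (2).

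The remaining assertions are formal once this parametrization is in place. For the group law, composing two such automorphisms the base parts compose by \cref{presentation of GT}, giving $\mu=\mu_1\mu_2$ and $f=f_1\big(x^{\mu_2},f_2(x,y)y^{\mu_2}f_2(y,x)\big)f_2(x,y)$; evaluating the composite module automorphism on $A$ and $B$ (the composition order being fixed by the paper's conventions) yields $g_\pm(A,B)=(g_1)_\pm\big((g_2)_+(A,B),(g_2)_-(A,B)\big)$. For the action on associators, \cref{thm: explicit ellip assos} identifies an elliptic associator with a quadruple $(\lambda,\Phi,A_+,A_-)$, where $A_\pm$ are the images of $A,B$ under the module isomorphism onto $\mathrm{Grp}(\widehat{\pacd}_{\mathrm{ell}})$; precomposing that isomorphism with the automorphism $(\mu,f,g_\pm)$ sends $A\mapsto g_+(A_+,A_-)$ and $B\mapsto g_-(A_+,A_-)$, while the base pair $(\lambda,\Phi)$ transforms by the classical $\mathrm{GT}$-action, and this gives exactly the displayed formula for $(\mu,f,g_\pm)\bullet(\lambda,\Phi,A_\pm)$.

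I expect the main obstacle to be the middle step: performing the two diagram chases — through the nine edges of the nonagon and through the mixed relation — with the $\mathrm{GT}$-twisted $\Phi$ and $R$ inserted, while keeping every convention consistent (left versus right $\mathfrak{S}_n$-action, concatenation versus composition of morphisms, the exact normalization of $F_{(\mu,f)}$ on $\Phi$ and $R$, and the passage from the pure group $\overline{\mathrm{PB}}_3(\mathbb{T})$ to $\overline{\mathrm{B}}_3(\mathbb{T})$ via the half-twists $\sigma_i$), so that the output is literally equations (1) and (2) with $u$ exactly as stated. A minor but necessary preliminary is to check that $\overline{\mathfrak{t}}_2^{~\mathrm{ell}}$ and $\overline{\mathrm{PB}}_2(\mathbb{T})$ are genuinely free of rank two, which is what guarantees that the $g_\pm\in\widehat{\mathbb{F}}_2(\kk)$ are the correct parameters and that there are no constraints beyond (1) and (2).
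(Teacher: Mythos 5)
Your proposal follows essentially the same route as the paper's own sketch: fix $(\mu,f)\in\mathrm{GT}$, note that a lift to an automorphism of the right module $\widehat{\pab}_{\mathrm{ell}}(\kk)$ is determined by the images $g_\pm(A,B)$ of the generators $A$ and $B$ from \cref{thm: presentation pab elliptic}, observe that the two nonagon relations and the mixed relation are exactly the constraints (yielding equations (1) and (2), the latter up to a conjugation by $f(\sigma_1^2,\sigma_2^2)$), and obtain the group law and the action on elliptic associators by explicit computation via \cref{thm: explicit ellip assos}. This matches the paper's argument, including your flagged caveat that the translation of the mixed relation into equation (2) requires care with the $f$-conjugation and the normalization of the images of $R$ and $\Phi$.
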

\begin{proof}[Main steps of proof]
The proof of the above theorem goes along the following lines: 
\begin{itemize}
\item Once the data of $(\mu,f)\in \mathrm{GR}$ is fixed, lifting the corresponding automorphism of $\widehat{\pab}(\kk)$ to an automorphism 
of the right $\widehat{\pab}(\kk)$-module $\widehat{\pab}_{\mathrm{ell}}(\kk)$ requires to provide images $g_+(A,B)$ and $g_-(A,B)$ of $A$ and $B$, respectively; 
\item According to Theorem \ref{thm: presentation pab elliptic} these images, together with the images of $R$ and $\Phi$ determined by $(\mu,f)$, should satisfy the two nonagon relations and the mixed reation;
\item One can easily check that equation (1) in the above theorem corresponds to the two nonagon relations. Conjugating with $f(\sigma_1^2,\sigma_2^2)$, one can then rewrite equation (2) in an equivalent form that directly 
corresponds to the mixed relation: 
\[
\sigma_1^{2\mu}=\left[f(\sigma_1^2,\sigma_2^2)g_+(a_1,b_1)f(\sigma_1^2,\sigma_2^2)^{-1}\,,\,\sigma_1^{-\mu}f(\sigma_1^2,\sigma_2^2)g_-(a_1,b_1)f(\sigma_1^2,\sigma_2^2)^{-1}\sigma_1^{-\mu}\right]\,;
\]
\item The remaining statements (about the group structure and the action on elliptic associators) are explicit calculations. 
\end{itemize}
\end{proof}

\begin{remark}
Using Remark \ref{rmk:presentation of PaCD elliptic}, one can also give an explicit description of the \textit{elliptic graded Grothendieck--Teichmüller group} $\mathrm{GRT}_{\mathrm{ell}}$.
\end{remark}

\subsection{Topological description of $\pab_{\mathrm{ell}}$}\label{ssec: topological description elliptic}
We consider the Fulton-MacPherson compactifications $\overline{\mathrm{C}}(\mathbb{T},\mathrm{I})$ of the reduced configuration spaces of the torus $\mathrm{C}(\mathbb{T},\mathrm{I})$, where $\mathrm{I}$ is a set with $n$ elements. One can compute that the boundary 

\[
\partial~\overline{\mathrm{C}}(\mathbb{C},\mathrm{I}) \cong \bigcup_{ k \geq 0} \bigcup_{\mathrm{J_1} \sqcup \cdots \sqcup\mathrm{J_k} = \mathrm{I}}  \overline{\mathrm{C}}(\mathbb{T},[k]) \times \left(\prod_{j =1}^k \overline{\mathrm{C}}(\mathbb{C},\mathrm{J}_j) \right)
\]
\medskip

where $[k] = \{1, \cdots, k \}$. Therefore the inclusions 

\[
\gamma_{\mathrm{J_1}, \cdots, \mathrm{J_k}}:  \overline{\mathrm{C}}(\mathbb{T},[k]) \times \left(\prod_{j =1}^k \overline{\mathrm{C}}(\mathbb{C},\mathrm{J}_j) \right) \hookrightarrow \partial~\overline{\mathrm{C}}(\mathbb{T},\mathrm{J_1} \sqcup \cdots \sqcup \mathrm{J_k}) \hookrightarrow \overline{\mathrm{C}}(\mathbb{T},\mathrm{J_1} \sqcup \cdots \sqcup \mathrm{J_k})~.
\]
\medskip

endow the family $\big(\overline{\mathrm{C}}(\mathbb{T},\mathrm{I})\big)_{n\geq0}$ with a right module structure over the operad 
$\overline{\mathrm{C}}(\mathbb{C})$. We denote this right module by $\overline{\mathrm{C}}(\mathbb{T})$.

\medskip

Consider again $\mathcal{P}a$, viewed as a right module over itself in the category of topological spaces when endowed with the discrete topology. 
There is an inclusion of topological right modules over $\mathcal{P}a$
\[
\iota: \mathcal{P}a \hookrightarrow \overline{\mathrm{C}}(\mathbb{T})~,  
\]
that arrizes from the embedding $(0,1)\hookrightarrow S^1\times S^1=\mathbb{T}$ given by $t\mapsto (\overline{t},\overline 0)$. 

\begin{theorem}
There is an isomorphism of right modules over $\pab$ in the category of groupoids
\[
\pab_{\mathrm{ell}} \cong \Pi_1(\overline{\mathrm{C}}(\mathbb{T}), \mathcal{P}a)~.
\]
\end{theorem}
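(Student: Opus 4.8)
The plan is to mirror the proof of Theorem~\ref{thm: pab iso topo} (the genus zero case) exactly, using that the right $\pab$-module structure on both sides is compatible with the operadic structures that were already identified there. First I would observe that both $\pab_{\mathrm{ell}}$ and $\Pi_1(\overline{\mathrm{C}}(\mathbb{T}), \mathcal{P}a)$ have the \emph{same} underlying symmetric sequence of objects, namely $\mathcal{P}a$: on the left this is the definition of the fake pull-back along $\varphi$, and on the right it is the set of points $\iota(\mathcal{P}a(n))\subset\overline{\mathrm{C}}(\mathbb{T},n)$ chosen as basepoints. Moreover the right $\pab$-module structure on objects is $\mathcal{P}a$ acting on itself on both sides: on the left by definition of the fake pull-back, and on the right because the inclusion $\iota\colon\mathcal{P}a\hookrightarrow\overline{\mathrm{C}}(\mathbb{T})$ is a map of topological right $\mathcal{P}a$-modules over $\mathcal{P}a$ (using $\mathcal{P}a\hookrightarrow\overline{\mathrm{C}}(\mathbb{C})$ from Lemma~\ref{lemme: morphisme topo operades}) and $\Pi_1$ is strong monoidal. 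So I would take the identity morphism on the object-modules and reduce the statement to an isomorphism of the automorphism groups, compatibly with the module structure maps.

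Next I would identify the automorphism groups. Fixing as basepoint the leftmost parenthesization $(((12)3)\cdots n)$, on the left $\mathrm{Aut}_{\pab_{\mathrm{ell}}(n)}((((12)3)\cdots n))$ is by the definition of the fake pull-back equal to $\mathrm{Aut}_{\cob_{\mathit{ell}}(n)}((12\cdots n))=\overline{\mathrm{PB}}_n(\mathbb{T})$, the reduced pure braid group of the torus. On the right it is $\pi_1\big(\overline{\mathrm{C}}(\mathbb{T},n),\iota((((12)3)\cdots n))\big)$. Since the Fulton--MacPherson compactification $\overline{\mathrm{C}}(\mathbb{T},n)$ is homotopy equivalent to $\mathrm{C}(\mathbb{T},n)$ (the compactification only adds boundary strata of positive codimension, and the open stratum is a deformation retract onto which the inclusion is a homotopy equivalence), we have $\pi_1(\overline{\mathrm{C}}(\mathbb{T},n))\cong\pi_1(\mathrm{C}(\mathbb{T},n))=\overline{\mathrm{PB}}_n(\mathbb{T})$ by definition. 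The desired isomorphism $\overline{\mathrm{PB}}_n(\mathbb{T})\xrightarrow{\ \sim\ }\pi_1(\overline{\mathrm{C}}(\mathbb{T},n))$ is the tautological one, sending $a_i$ (resp.~$b_i$) to the loop in which the $i$-th point travels along the first (resp.~second) cycle of the torus while the others stay put, and sending $x_{ij}\mapsto[a_j^{-1},b_i^{-1}]$ to the loop where the $i$-th point encircles the $j$-th --- exactly as in the genus zero proof for the generators $x_{ij}$ of $\mathrm{PB}_n$.

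Finally I would check compatibility with the right $\pab$-module structure. By Theorem~\ref{thm: pab iso topo} the identification on the $\pab$-side (the automorphism groups $\mathrm{PB}_n$ inserted into the free strands) is already known to match, so what remains is that the module composition maps $\circ_i\colon\pab_{\mathrm{ell}}(n)\times\pab(k)\to\pab_{\mathrm{ell}}(n+k-1)$ correspond to the operadic inclusions $\gamma_{\mathrm{J}_1,\dots,\mathrm{J}_k}$ restricted to the relevant boundary facet of $\overline{\mathrm{C}}(\mathbb{T})$. On objects both are the insertion of parenthesized permutations; on morphisms, the description of $\partial\,\overline{\mathrm{C}}(\mathbb{T},\mathrm{I})$ as a union of products $\overline{\mathrm{C}}(\mathbb{T},[k])\times\prod_j\overline{\mathrm{C}}(\mathbb{C},\mathrm{J}_j)$ shows that inserting a plane configuration (a pure braid in $\mathrm{PB}_k$) into the $i$-th point of a torus configuration is, on fundamental groups, precisely the geometric insertion of braids that defines $\circ_i$ on $\cob_{\mathit{ell}}$. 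This is a standard diagram chase once the boundary combinatorics are in place. I would also remark (as done for the cyclotomic case in Proposition~\ref{prop: iso avec Pab1}) that the whole statement can be deduced more formally from Theorem~\ref{thm: pab iso topo} together with the general formalism of Appendix~\ref{appendixB} relating right modules to presheaves on $\mathrm{Fin}_{*,\not*}$, since both $\pab_{\mathrm{ell}}$ and $\overline{\mathrm{C}}(\mathbb{T})$ arise from that formalism applied to genus one configuration data.

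The main obstacle is the homotopy equivalence $\overline{\mathrm{C}}(\mathbb{T},n)\simeq\mathrm{C}(\mathbb{T},n)$ and, more subtly, verifying that the chosen basepoints $\iota(\sigma)$ --- which genuinely live on the boundary of the compactification, not in the open stratum --- still compute the correct fundamental group and that loops based there realize the generators $a_i,b_i,x_{ij}$ as claimed; this requires a careful local analysis of the compactification near the parenthesization strata, exactly as in the genus zero argument but with the non-simply-connected torus replacing the plane. Everything else is bookkeeping that transfers verbatim from Theorem~\ref{thm: pab iso topo}.
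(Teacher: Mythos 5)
Your direct argument is sound and is essentially the genus-one analogue of the sketch given for \cref{thm: pab iso topo}: same module of objects $\mathcal{P}a$ on both sides, identification of automorphism groups $\overline{\mathrm{PB}}_n(\mathbb{T})\cong\pi_1\big(\overline{\mathrm{C}}(\mathbb{T},n)\big)$ (using that the Fulton--MacPherson compactification deformation retracts onto its interior, together with the local analysis near the boundary basepoints $\iota(\sigma)$ that you rightly single out), and matching of the insertions $\circ_i$ with the boundary inclusions $\gamma_{\mathrm{J}_1,\dots,\mathrm{J}_k}$. The paper, however, does not argue this way: it gives no proof here at all, and the accompanying remark explains that in \cite{calaque2020ellipsitomic} the right module $\pab_{\mathrm{ell}}$ is \emph{defined} by the topological formula, so the real content is the comparison with Enriquez's algebraic definition \cite{Enriquez14}, carried out via the presentation by generators and relations of \cref{thm: presentation pab elliptic} rather than via a direct groupoid comparison. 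Your route buys a self-contained proof within the survey's conventions (where $\pab_{\mathrm{ell}}$ is built from $\cob_{\mathit{ell}}$, i.e.\ from genuine reduced torus braids), at the cost of the boundary-basepoint verification; the paper's route outsources exactly that work to the references.

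One caveat: your closing suggestion that the statement could be obtained more formally, in the style of \cref{prop: iso avec Pab1} or via the formalism of Appendix \ref{appendixB}, does not apply here. The cyclotomic statement followed formally because both $\overline{\mathrm{C}}(\mathbb{C}^{\times})$ and $\pab^{(1)}$ are \emph{derivatives} of the genus-zero objects in the sense of \cref{prop: derived moperad of an operad}, whereas $\overline{\mathrm{C}}(\mathbb{T})$ is not the derivative of $\overline{\mathrm{C}}(\mathbb{C})$; and Appendix \ref{appendixB} produces operadic structures in cocartesian categories (the Drinfeld--Kohno side), not fundamental groupoids of configuration spaces. So the direct geometric argument you give first is genuinely needed, and that side remark should be dropped.
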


\begin{remark}
In \cite{calaque2020ellipsitomic}, $\pab_{\mathrm{ell}}$ is in fact \textit{defined} using its topological description. 
In \cite{Enriquez14}, Enriquez proves that the algebraic definition satisfies a presentation by generators and relations, that is essentially the content of Theorem \ref{thm: presentation pab elliptic} (even though Enriquez doesn't use operads). 
The paper \cite{calaque2020ellipsitomic} implicitely contains a translation of Enriquez's definition in terms of operads, and the proof that it coincides with the topological definition. 
\end{remark}

\subsection{An overview of the ellipsitomic case}\label{ssec: overview of the ellipsitomic case}
One can think of the ellipsitomic case as a combination of the elliptic and the cyclotomic cases. The constructions performed in this case in order to define ellipsitomic associators follow the same steps as in the previous cases. For precise statements, we refer to \cite[Sections 4--6]{calaque2020ellipsitomic}.

\medskip

\subsubsection*{First player}

Let $\Gamma \coloneqq \mathbb{Z}/N\mathbb{Z} \times \mathbb{Z}/M\mathbb{Z}$, where $N,M \geq 1$. Recall that the fundamental group of the topological 
torus $\mathbb{T}$ is $\mathbb{Z} \times \mathbb{Z}$. Therefore $\Gamma$ can be obtained as a quotient of the fundamental group of the torus, meaning there 
is an unique covering space 

\[
p^\Gamma : \tilde{\mathbb{T}} \twoheadrightarrow \mathbb{T}~,
\]

associated to the canonical quotient map  $\mathbb{Z} \times \mathbb{Z}\twoheadrightarrow \Gamma$. We consider the reduced associated $\Gamma$-twisted configuration space 

\[
\mathrm{C}(\mathbb{T},n, \Gamma ) \coloneqq \left\{ (x_1, \cdots, x_n) \in \left(\tilde{\mathbb{T}}\right)^n ~~|~~p^\Gamma(x_i) \neq p^\Gamma(x_j)~~\text{if}~~i \neq j \right\}/ \tilde{\mathbb{T}}~.
\]

It has a natural $\Gamma$-action and a right module structure over the operad of configuration spaces. One can define the right module of \textit{parenthesized ellipsitomic braids} as the fundamental groupoid 

\[
\pab^\Gamma_{\mathrm{ell}} \coloneqq \Pi_1(\mathrm{C}(\mathbb{T},n, \Gamma), \mathcal{P}a^\Gamma )~,
\]
\vspace{0.1pc}

where $\mathcal{P}a^\Gamma$ is a very similar construction to that of Subsection \ref{subsec: topological cyclotomic}. It is a right module over $\pab$ endowed with a compatible $\Gamma$-action. It is generated as right module with a diagonally trivial 
$\Gamma$-action by two morphisms $A: 1_{(\bar{0},\bar{0})}2_{(\bar{0},\bar{0})} \longrightarrow 1_{(\bar{1},\bar{0})}2_{(\bar{0},\bar{0})}$ and 
$B: 1_{(\bar{0},\bar{0})}2_{(\bar{0},\bar{0})} \longrightarrow 1_{(\bar{0},\bar{1})}2_{(\bar{0},\bar{0})}$ are in $\pab^\Gamma_{\mathrm{ell}}(2)$. 

\medskip

In fact, one can give an explicit presentation of $\pab^\Gamma_{\mathrm{ell}}$ analogue to Theorem \ref{thm: presentation pab elliptic}, see \cite[Theorem 4.5]{calaque2020ellipsitomic}. After considering the appropriate Malcev completions, this gives the first ellipsitomic player. 

\medskip

\subsubsection*{Second player}
One defines the \textit{infinitesimal ellipsitomic braids} Lie algebra $\mathfrak{t}_n^{\mathrm{ell},\Gamma}$ by adding generators $\{t_{ij}^\gamma\}$ for 
$\gamma \in \Gamma$ in weight $(1,1)$ to the generators in Definition \ref{def: elliptic Drinfeld-Kohno} and then slightly modifying the elliptic relations they satisfy. 

\medskip

This symmetric sequence in the category of graded Lie algebras with a $\Gamma$-action has a right module structure over the operad $\{\mathfrak{t}_n\}$, given by analogous formulas to those of Proposition \ref{prop: right mod structure}, and which is compatible with the $\Gamma$-action. 

\medskip

Once again, one considers a \textit{reduced} version $\overline{\mathfrak{t}}_n^{\mathrm{ell},\Gamma}$ of these algebras by modding out the central elements $\sum_i \alpha_i$ and $\sum_i \beta_i$. 
One defines the symmetric sequence of \textit{ellipsitomic chord diagrams} $\mathcal{CD}_{\mathrm{ell}}^{\Gamma}$ by taking the universal enveloping algebra functor. 

\medskip

One then defines the right module $\mathcal{P}a\mathcal{CD}_{\mathrm{ell}}^{\Gamma}$ of \textit{parenthesized ellipsitomic chord diagrams} by performing 
analogue constructions to those already explained in Subsection \ref{subsec: inf cyclo braids}. It has once again a compatible $\Gamma$-action. By considering 
the completion with respect to the total weight filtration, one gets the second ellipsitomic player.

\begin{remark}
Contrary to the cyclotomic case, there is no \textit{frozen strand} neither on ellipsitomic braids nor on ellipsitomic chord diagrams. 
\end{remark}

\subsubsection*{Defining ellipsitomic associators}

Once this machinery is setup, the rest is completely analogue to the previous cases.

\begin{definition}[Ellipsitomic associators] \label{Def: ellipsitomic associator}
The set of \textit{ellipsitomic associators} is given by 
\[
\mathrm{Assoc}_{\mathrm{ell}}^{\Gamma}(\kk) \coloneqq \mathrm{Iso}^+_{\mathsf{RMod}(\mathsf{p.u}\text{-}\mathsf{Grpd}_{\kk})}\left(\left(\widehat{\pab}^\Gamma_{\mathrm{ell}}(\kk),\widehat{\pab}(\kk)\right),\left(\mathrm{Grp}(\widehat{\pacd}^\Gamma_{\mathrm{ell}}),\mathrm{Grp}(\widehat{\pacd})\right)\right)^\Gamma~.
\]
This amounts to the data of:

\medskip

\begin{enumerate}
\item an isomorphism $F$ between $\widehat{\pab}(\kk)$ and $\mathrm{Grp}(\widehat{\pacd})$ which is the identity on objects (an associator)~,

\medskip

\item an isomorphism $W$ between $\widehat{\pab}^\Gamma_{\mathrm{ell}}(\kk)$ and $\mathrm{Grp}(\widehat{\pacd}^\Gamma_{\mathrm{ell}})$ of right modules which is $\Gamma$-equivariant and is the identity on objects, and which lies above the isomorphism $F$. 
\end{enumerate}
\end{definition}

It is naturally a bitorsor over the \textit{ellipsitomic Grothendieck--Teichmüller group} $\mathrm{GT}_{\mathrm{ell}}^\Gamma(\kk)$ and the 
\textit{ellipsitomic Grothendieck--Teichmüller group} $\mathrm{GRT}^\Gamma_{\mathrm{ell}}(\kk)$, whose definitions should be straightforward by now. 

\medskip 

Using the presentation of $\widehat{\pab}^\Gamma_{\mathrm{ell}}$ mentioned before, one can give an explicit description of the set of ellipsitomic associators analogue to Theorem \ref{thm: explicit ellip assos} and an explicit description of the set of elements in the ellipsitomic Grothendieck--Teichmüller group analogue to Theorem \ref{thm: explicit ellip GT}. There is also an explicit description of elements in the \textit{graded} version of the ellipsitomic Grothendieck--Teichmüller group. 

\begin{theorem}[{\cite[Section 6]{calaque2020ellipsitomic}}]
Let $\kk$ be a field of characteristic zero. The set of ellipsitomic associators $\mathrm{Assoc}^{\Gamma}_{\mathrm{ell}}(\kk)$ is non empty. 
\end{theorem}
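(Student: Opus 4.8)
The plan is to follow the same two-step strategy that worked in the classical, cyclotomic and elliptic cases: first produce an explicit ellipsitomic associator over $\kk=\mathbb{C}$ by differential-geometric means, then descend to $\mathbb{Q}$, and finally invoke the functoriality of the $\mathbb{Q}$-points construction to get one over an arbitrary characteristic-zero field $\kk$. Concretely, one fixes a complex structure on the torus $\mathbb{T}$, i.e.\ an elliptic curve $E_\tau$, together with the $\Gamma$-covering $p^\Gamma:\tilde{\mathbb{T}}\twoheadrightarrow\mathbb{T}$, so that $E_\tau$ carries an action of $\Gamma=\mathbb{Z}/N\mathbb{Z}\times\mathbb{Z}/M\mathbb{Z}$ by translations by torsion points. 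On the twisted configuration space $\mathrm{C}(\mathbb{T},n,\Gamma)$ one has an ellipsitomic version of the Knizhnik--Zamolodchikov--Bernard connection, valued in the completed enveloping algebra $\widehat{\mathfrak{U}}(\overline{\mathfrak{t}}_n^{\mathrm{ell},\Gamma})$; this is the connection referred to in \S\ref{ssec: Motivations elliptic} and constructed (in the ellipsitomic generality) in \cite{EllipsitomicKZB}. Its renormalized holonomies along the $A$- and $B$-cycles, together with the KZ-type holonomy on the boundary stratum (the nodal degeneration, which recovers an ordinary Drinfeld associator as in the Remark after Theorem \ref{thm: explicit ellip assos}), produce group-like series that I would assemble into a quadruple-type datum $(\lambda,\Phi,A_+^\gamma,\dots)$.

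The key steps, in order, are: (1) check that the ellipsitomic KZB connection is flat and $\Gamma$-equivariant, which reduces to a computation with the defining relations of $\mathfrak{t}_n^{\mathrm{ell},\Gamma}$ (the modified elliptic relations plus the new $t_{ij}^\gamma$); (2) define the renormalized holonomies, controlling the logarithmic singularities at the diagonals and at the punctures coming from torsion points, so that the regularization is well-defined; (3) verify that the resulting series satisfy the axioms of an ellipsitomic associator --- by the concrete description analogous to Theorem \ref{thm: explicit ellip assos}, this means an ellipsitomic analogue of the nonagon relations (a) and the mixed relation (b), each of which should follow, as in the classical case, from the triviality of the holonomy of a flat connection along a contractible loop in a suitable configuration space, combined with $\Gamma$-equivariance; (4) translate this analytic solution, via Proposition \ref{prop: iso avec PabN}-type topological identifications and the Malcev completion being strong monoidal, into an honest element of $\mathrm{Iso}^+_{\mathsf{RMod}(\mathsf{p.u}\text{-}\mathsf{Grpd}_{\mathbb{C}})}$ that is $\Gamma$-equivariant and the identity on objects; (5) descend from $\mathbb{C}$ to $\mathbb{Q}$ using that $\mathrm{Assoc}^\Gamma_{\mathrm{ell}}$ is a (bi)torsor under the pro-unipotent (times $\mathbb{G}_m$) groups $\mathrm{GT}^\Gamma_{\mathrm{ell}}$ and $\mathrm{GRT}^\Gamma_{\mathrm{ell}}$, so that nonemptiness over $\mathbb{C}$ plus the torsor property and a standard Galois-descent / $H^1$-vanishing argument for pro-unipotent groups gives nonemptiness over $\mathbb{Q}$; (6) base-change $\mathbb{Q}\hookrightarrow\kk$.

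I expect the main obstacle to be step (2)--(3): setting up the renormalization of the ellipsitomic KZB holonomies carefully enough that the defining relations come out exactly right. In the elliptic case this is already the technical heart of \cite{UniversalKZB09,Enriquez14}, and the presence of the cyclotomic twisting $\Gamma$ introduces extra punctures (the nontrivial torsion points) and extra boundary strata, so one must be attentive both to which contractible loops are available in $\mathrm{C}(\mathbb{T},n,\Gamma)$ and to the bookkeeping of the $\Gamma$-labels $\gamma$ on the generators $t_{ij}^\gamma$; getting the equivariance and the ``$A_\pm^\gamma$'' indexing to match the presentation of $\widehat{\pab}^\Gamma_{\mathrm{ell}}$ from \cite[Theorem 4.5]{calaque2020ellipsitomic} is where the real work lies. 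Steps (1), (4), (5), (6) are, by contrast, essentially formal given the framework developed in \S\ref{ssec: parenthesized elliptic braids}--\S\ref{ssec: more concrete elliptic} and the torsor structure; in particular the descent argument is identical in form to the ones already invoked for the classical, cyclotomic and elliptic cases.
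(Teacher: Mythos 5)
Your proposal follows essentially the same route as the paper: an analytic construction over $\mathbb{C}$ via renormalized holonomies of the ellipsitomic KZB connection on the $\Gamma$-twisted configuration spaces of the torus (the content of \cite{EllipsitomicKZB}), followed by descent to $\mathbb{Q}$ using the torsor structure over the ellipsitomic (graded) Grothendieck--Teichm\"uller groups, and base change to arbitrary $\kk$ of characteristic zero. You also correctly locate the technical heart (the regularization of the holonomies and matching the resulting series against the presentation of $\widehat{\pab}^\Gamma_{\mathrm{ell}}$), which is exactly the part the paper delegates to \cite{EllipsitomicKZB} and \cite{calaque2020ellipsitomic}.
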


As in all the previous cases (``usual'' associators, cyclotomic associators, elliptic associators), the proof of the existence goes into two steps: (1) one proves that ellipsitomic associators exists over $\mathbb{C}$ using analytic methods, 
(2) one deduces that they exist over $\mathbb{Q}$ using descent methods. The second step isn't very difficult, and relies on the fact that ellipsitomic associators form a torsor over the ellipsitomic (graded) Grothendieck--Teichmüller group. 
The first step is the most difficult one: in \cite{EllipsitomicKZB}, a flat connection is constructed on the $\Gamma$-twisted configuration space of the torus, whose renormalized holonomies along appropriate paths gives rise to an ellipsitomic associator. 

\subsubsection*{Concluding remarks}

The ellipsitomic constructions satisfy some compatibilities with surjective morphisms $\Gamma_1 \twoheadrightarrow \Gamma_2$, that could probably be phrased in functorial terms. 
In would be interesting to investigate this, even in the simplest case of isomorphisms $\Gamma_1 \tilde\to \Gamma_2$, this would allow to define and study ellipsitomic associators associated 
with any finite quotient map $\mathbb{Z}^2\to \Gamma$. 

\medskip

A question remains open: what is the Lie theoretic deformation problem to which ellipsitomic associators provide a universal solution? 
Section 5 of \cite{EllipsitomicKZB} already provides some insight: realizations of (some aspects of) the second ellipsitomic player into Lie theoretic terms are given. 

\medskip

The connection involved in the construction of ellipsitomic associators given in \cite{EllipsitomicKZB} depends on the choice of an elliptic curves equipped with a $\Gamma$-level structure. 
Moreover, this connection extends to the whole moduli space of elliptic curves with $\Gamma$-level structure and marked points. As a result, the corresponding ellipsitomic associators 
(obtained as suitable renormalized holonomies) should have many interesting features similar to the ones of elliptic associators: 
\begin{itemize}
\item Their coefficients are interesting numbers, that should be expressed as iterated integrals of Eiseinstein series associated with a congruence subrgoup $\mathrm{SL}^\Gamma_2\subset\mathrm{SL}_2(\mathbb{Z})$. 
\item Modular invariance (with respect to the congruence subgroup $\mathrm{SL}^\Gamma_2$) of these Eiseistein series have been investigated in the last section of \cite{EllipsitomicKZB}. It would be interesting to understand 
the modular invariance properties of ellipsitomic associators themselves. 
\item All of this should probably lift to a more abstract motivic context, yet to be determined. 
\end{itemize}

\appendix

\newpage


\section{Pro-unipotent completions}\label{Subsection: Pro-unipotent completions}
An \textit{unipotent algebraic group} $G$ over $\kk$ is an algebraic group which embeds into $\mathbb{UT}_n$, the algebraic group given by the upper-triangular matrices, for some $n$ in $\mathbb{N}$. A classical result states that the category of affine algebraic groups over $\kk$ is equivalent to the category of commutative Hopf algebras over $\kk$ which are 
finitely generated as algebras. This gives that the category of pro-affine algebraic groups over $\kk$ is to the category of commutative Hopf algebras over $\kk$, without any finite generation assumptions.

\medskip

Likewise, one can show that the category of unipotent algebraic groups over $\kk$ is equivalent to the category of commutative Hopf algebras over $\kk$ which are finitely generated as algebras and which are \textit{conilpotent} as coalgebras. 

\begin{proposition}
There is a contravariant equivalence of categories between the category of commutative conilpotent Hopf algebras over $\kk$ and the pro-category of unipotent 
algebraic groups over $\kk$:
\[
\begin{tikzcd}[column sep=5pc,row sep=3pc]
            \mathsf{Hopf}\text{-}\mathsf{alg}^{\mathsf{conil}} \arrow[r, shift left=1.1ex, "\mathrm{Spec}(-)"{name=F}] &\mathsf{pro}(\mathsf{UniGrp})^{\mathsf{op}}~, \arrow[l, shift left=.75ex, "\mathcal{O}(-)"{name=U}]
\end{tikzcd}
\]
given by taking the spectrum $\mathrm{Spec}(-)$ of the underlying commutative algebra and by taking the global sections of the structure sheaf $\mathcal{O}(-)$.
\end{proposition}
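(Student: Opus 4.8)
The plan is to bootstrap from the ``finite type'' equivalence recalled just above, $\mathsf{UniGrp}\simeq(\mathsf{Hopf}\text{-}\mathsf{alg}^{\mathsf{conil},\mathrm{fg}})^{\mathsf{op}}$, where the superscript $\mathrm{fg}$ records finite generation as a $\kk$-algebra, and then to pass to ind/pro-completions. Since $\mathsf{pro}(\mathsf{UniGrp})^{\mathsf{op}}=\mathsf{ind}\big((\mathsf{UniGrp})^{\mathsf{op}}\big)\simeq\mathsf{ind}\big(\mathsf{Hopf}\text{-}\mathsf{alg}^{\mathsf{conil},\mathrm{fg}}\big)$, it will be enough to prove that the ``take the colimit'' functor $\mathsf{ind}\big(\mathsf{Hopf}\text{-}\mathsf{alg}^{\mathsf{conil},\mathrm{fg}}\big)\to\mathsf{Hopf}\text{-}\mathsf{alg}^{\mathsf{conil}}$ is an equivalence. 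First I would record that filtered colimits of commutative conilpotent Hopf algebras exist and are computed on underlying vector spaces: filtered colimits commute with the tensor products entering the comultiplication and with the antipode, and conilpotency (exhaustion by the coradical filtration, with coradical reduced to $\kk$) is stable under filtered colimits because simple subcoalgebras are finite-dimensional and hence land in a single stage. Unwinding the two equivalences then produces the functors in the statement: $\mathcal{O}$ of a pro-unipotent group $\varprojlim_j G_j$ is $\varinjlim_j\mathcal{O}(G_j)$, and $\mathrm{Spec}(H)$ is the formal cofiltered limit of the $\mathrm{Spec}(H_i)$, where $H_i$ runs over the finitely generated sub-Hopf-algebras of $H$.

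For essential surjectivity I would invoke the classical fact that every commutative Hopf algebra $H$ is the filtered union of its finitely generated sub-Hopf-algebras, together with the observation that these inherit conilpotency. The argument: by the fundamental theorem of coalgebras each $x\in H$ lies in a finite-dimensional subcoalgebra $C$; the subalgebra generated by $C\cup S(C)$ is then a finitely generated sub-bialgebra stable under $S$ (here commutativity gives $S^2=\mathrm{id}$), hence a sub-Hopf-algebra, and a subcoalgebra of a conilpotent coalgebra is again conilpotent. This gives $H\cong\varinjlim_i H_i$ and hence $\mathcal{O}\circ\mathrm{Spec}\cong\mathrm{id}$. Conversely, the images of the structure maps $\mathcal{O}(G_j)\to\varinjlim_j\mathcal{O}(G_j)$ are cofinal among the finitely generated sub-Hopf-algebras of the colimit (finitely many elements of a filtered colimit come from a single stage), which yields $\mathrm{Spec}\circ\mathcal{O}\cong\mathrm{id}$ at the level of pro-objects.

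Full faithfulness I would reduce, via the finite type equivalence applied term by term and the standard formula for hom-sets in ind/pro-categories, to a routine manipulation of iterated limits and colimits; the one substantial input is that each finitely generated conilpotent commutative Hopf algebra $A$ is a compact object of $\mathsf{Hopf}\text{-}\mathsf{alg}^{\mathsf{conil}}$, i.e.\ that $\mathrm{Hom}_{\mathsf{Hopf}}(A,-)$ commutes with filtered colimits. For this I would write $A=\kk[x_1,\dots,x_n]/I$ with $I$ finitely generated by Noetherianity, and note that a Hopf-algebra morphism $A\to B$ is the same as a tuple $(b_1,\dots,b_n)\in B^n$ satisfying the finitely many relations coming from a set of generators of $I$ together with the finitely many equations in $B$ and $B\otimes B$ expressing compatibility of the $x_i$ with $\varepsilon$, $\Delta$ and $S$ (these suffice, since $\varepsilon,\Delta,S$ are algebra (anti)morphisms); such a finitely defined functor of $B$ commutes with filtered colimits, which are computed on underlying vector spaces compatibly with tensor products.

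The hard part will be exactly this compactness statement: it is where the finite-generation hypothesis (as opposed to mere finiteness) and the Noetherianity of polynomial algebras enter essentially, and it is what makes the comparison of hom-sets on the two sides go through. Everything else is either formal (the behaviour of ind/pro-completions and their interplay with $\mathrm{Spec}$ and $\mathcal{O}$) or belongs to the classical structure theory of commutative conilpotent Hopf algebras, which I would take together with the finite type equivalence assumed in the excerpt.
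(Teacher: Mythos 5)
The paper does not actually prove this proposition: it is recalled as a classical fact and the reader is referred to the survey of Vezzani, so there is no in-paper argument to compare against. Your proposal is a correct and essentially complete plan along the standard lines one would use to supply a proof. The decomposition is the right one: identify $\mathsf{pro}(\mathsf{UniGrp})^{\mathsf{op}}$ with $\mathsf{ind}$ of the finitely generated conilpotent commutative Hopf algebras via the finite-type equivalence stated just before the proposition, and then show that the colimit functor to $\mathsf{Hopf}\text{-}\mathsf{alg}^{\mathsf{conil}}$ is an equivalence by the usual two ingredients: (i) every commutative conilpotent Hopf algebra is the filtered union of its finitely generated sub-Hopf algebras (your argument via the fundamental theorem of coalgebras, $S$-stability using $S^2=\mathrm{id}$ in the commutative case, and stability of conilpotency under subcoalgebras is correct), and (ii) finitely generated objects are compact, which you correctly identify as the only non-formal input and prove via Noetherianity plus the fact that the Hopf compatibility of an algebra map can be checked on generators, with filtered colimits computed on underlying vector spaces and commuting with $\otimes$. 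Two minor remarks: the antipode condition in your compactness argument is redundant, since any bialgebra morphism between Hopf algebras automatically intertwines the antipodes; and your closing observation about cofinality of the images $\mathcal{O}(G_j)$ is not needed once full faithfulness and essential surjectivity of $\mathsf{ind}(\mathsf{Hopf}^{\mathsf{conil},\mathrm{fg}})\to\mathsf{Hopf}\text{-}\mathsf{alg}^{\mathsf{conil}}$ are in place, though it does no harm. With the finite-type equivalence taken as given (as the paper itself does), your outline would compile into a complete proof.
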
 

\begin{remark}
We refer to the survey \cite{Vezzani} on this subject for more details.
\end{remark}

\subsection{How to pro-unipotently complete?}
By the above discussion, it becomes clear that a \textit{pro-unipotent completion} over $\kk$ amounts to the construction of a 
commutative conilpotent Hopf algebra over $\kk$, which is universal in a suitable sense.

\medskip

Let us construct it for a finitely generated abstract group $G$. The first step is to take the group algebra $\kk[G]$ of $G$ over $\kk$, which is naturally a cocommutative Hopf algebra. Now the goal is to dualize it in such a way that 
it gives a commutative conilpotent Hopf algebra. Notice that, under finiteness assumptions, complete algebras are dual to conilpotent coalgebras in a sense that is explained below.

\begin{definition}[$I$-adic completion] \label{Def: Canonical completion}
Let $A$ be an augmented $\kk$-algebra, let $I$ be its augmentation ideal. The $I$\textit{-adic completion} of $A$ is given by the following limit
\[
A^\wedge_I \coloneqq \lim_{n \in \mathbb{N}^*} A/I^n~,
\]
taken in the category of $\kk$-algebras.
\end{definition}

This pro-nilpotent $\kk$-algebra can be dualized under some minor finiteness assumptions.

\begin{definition}[Topological dual] 
Let $A$ be an augmented $\kk$-algebra, let $I$ be its augmentation ideal and suppose $A/I^n$ is finite dimensional over $\kk$ for all $n \geq 1$. The \textit{topological dual} of $A$ is given by 
\[
A^\vee_I \coloneqq \colim{n \in \mathbb{N}^*}(A/I^n)^*~,
\]
where each $(A/I^n)^*$ is the coalgebra given by the linear dual of $A/I^n$ and where the colimit is taken in the category of coalgebras. 
\end{definition}

The topological dual $A^\vee$ is a conilpotent coalgebra. Indeed, it is pretty straightforward that $(A/I^n)^*$ is conilpotent for all $n$, and since the category of conilpotent coalgebras is stable under colimits, so is $A^\vee$. The topological dual functor sends comonoids to monoids, thus the topological dual of a Hopf algebra 
is a conilpotent Hopf algebra. 

\begin{remark}
Note that the algebra given by the linear dual of $A^\vee_I$ is precisely the completed algebra $A^\wedge_I$. 
\end{remark}

\begin{lemma}
Let $A$ be an augmented $\kk$-algebra and let $I$ be its augmentation ideal. If $I/I^2$ is finite dimensional over $\kk$, then $A/I^n$ is finite dimensional 
over $\kk$ for all $n \geq 1$. 
\end{lemma}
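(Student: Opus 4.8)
The plan is to reduce the statement to a standard fact about filtered modules: if the associated graded of a filtration is finitely generated in each degree, then each quotient by a step of the filtration is finite-dimensional. Here the relevant filtration on $A$ is the $I$-adic one, and the associated graded ring is $\mathrm{gr}_I(A) = \bigoplus_{n \geq 0} I^n/I^{n+1}$, which is generated as a $\kk$-algebra by its degree-one piece $I/I^2$ (because $I^n$ is, by definition, the $\kk$-span of $n$-fold products of elements of $I$, hence $I^n/I^{n+1}$ is spanned by the images of $n$-fold products of elements of $I/I^2$). So the first step is to observe this surjection of graded algebras
\[
\mathrm{Sym}(I/I^2) \twoheadrightarrow \mathrm{gr}_I(A)\, ,
\]
or rather its non-commutative analogue $T(I/I^2)\twoheadrightarrow \mathrm{gr}_I(A)$, which does not even require commutativity of $A$.

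Second, I would use the hypothesis that $I/I^2$ is finite-dimensional over $\kk$. Then $T(I/I^2)$ (equivalently $\mathrm{Sym}(I/I^2)$ in the commutative case) has finite-dimensional graded pieces: the degree-$n$ piece of $T(I/I^2)$ is $(I/I^2)^{\otimes n}$, of dimension $(\dim_\kk I/I^2)^n < \infty$. Since $\mathrm{gr}_I(A)$ is a quotient of this, each graded piece $I^n/I^{n+1}$ is finite-dimensional over $\kk$ as well.

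Third, I would conclude by induction on $n$ that $A/I^n$ is finite-dimensional. The base case $n=1$ is $A/I \cong \kk$, which is one-dimensional (here I use that $A$ is augmented, so $I$ is the kernel of the augmentation $A \to \kk$ and $A/I \cong \kk$). For the inductive step, the short exact sequence of $\kk$-vector spaces
\[
0 \longrightarrow I^{n}/I^{n+1} \longrightarrow A/I^{n+1} \longrightarrow A/I^{n} \longrightarrow 0
\]
exhibits $A/I^{n+1}$ as an extension of two finite-dimensional spaces (the left-hand term by step two, the right-hand term by the inductive hypothesis), hence finite-dimensional. This closes the induction and proves the lemma.

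There is no real obstacle here; the only point requiring a little care is the claim that $\mathrm{gr}_I(A)$ is generated in degree one, and for that one must be slightly careful about what "$I^n$" means — it is the $\kk$-linear span (not just the set) of products $a_1 \cdots a_n$ with $a_i \in I$, so that multiplication induces a surjection $(I/I^2)^{\otimes n} \to I^n/I^{n+1}$. Commutativity of $A$ is not needed for the argument, though it is of course available in the intended application. Everything else is bookkeeping with short exact sequences of vector spaces.
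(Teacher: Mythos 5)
Your proposal is correct and follows essentially the same route as the paper: the key point in both is the multiplication-induced $\kk$-linear surjection $(I/I^2)^{\otimes n}\twoheadrightarrow I^n/I^{n+1}$, and your induction via the short exact sequences $0\to I^n/I^{n+1}\to A/I^{n+1}\to A/I^n\to 0$ is just the "dévissage" step the paper leaves implicit.
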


\begin{proof}
By a straightforward \textit{dévissage} argument, $A/I^n$ is finite dimensional for every $n$ if and only if $I^n/I^{n+1}$ is finite dimensional. For every 
$n \geq 1$, there is a $\kk$-linear surjection $(I/I^2)^{\otimes n} \twoheadrightarrow I^n/I^{n+1}$ given by the multiplication, which implies that 
$I^n/I^{n+1}$ is finite dimensional over $\kk$.
\end{proof}

Let $I$ be the augmentation ideal of $\kk[G]$. One computes that $I/I^2 \cong G^{ab} \otimes \kk$, where $G^{ab}$ denotes the abelianization of $G$. 
Thus if $G$ is a finitely generated group, $I/I^2$ is finite dimensional over $\kk$ and $\kk[G]_I^{\wedge}$ admits a topological dual. 

\begin{example}\label{example: PBn fini}
It follows from \cite{Arnold} that $\mathrm{H}_1(\mathrm{Conf}_n(\mathbb{C});\mathbb{Z})$ is a free $\mathbb{Z}$-module, generated by $\frac{n(n-1)}{2}$-generators. Therefore the abelianization of the pure braid group $\mathrm{PB}_n$ is given by 
\[
\mathrm{PB}_n^{ab} \cong \mathbb{Z}^{\frac{n(n-1)}{2}}~.
\]
Hence $\kk[\mathrm{PB}_n]$ admits a topological dual for all $n$ in $\mathbb{N}$. \hfill$\triangle$
\end{example}

\begin{definition}[Pro-unipotent completion]
Let $G$ be a finitely generated abstract group. Its \textit{pro-unipotent completion} $G^{\mathrm{uni}}$ is the pro-unipotent algebraic group given by
\[
G^{\mathrm{uni}} \coloneqq \mathrm{Spec}(\kk[G]_I^{\vee})~.
\]
\end{definition}

\begin{remark}
The pro-unipotent group $G^{\mathrm{uni}}$ is the initial object in the category of pro-unipotent groups $U$ endowed with a group morphism $G \longrightarrow U(\kk)$, where $U(\kk)$ refer to the $\kk$-points of $U$.
\end{remark}

\subsection{Malcev completion}
Very closely related to the pro-unipotent completion of an abstract group $G$ is the notion of its Malcev completion. 

\begin{definition}[Group-like elements]
Let $A$ be a Hopf-algebra. An element $x$ in $A$ is said to be \textit{group-like} if $\Delta(x) = x \otimes x$ and $\epsilon(x) =1$. The set of group-like elements 
of $A$ is denoted by $\mathrm{Grp}(A)$.
\end{definition}

Group-like elements form a group, where the multiplication is given by the associative product of $A$, and where the inverse is given by the antipode map of the Hopf-algebra.

\begin{definition}[Malcev completion]
Let $G$ be a finitely generated abstract group. Its \textit{Malcev completion} $\widehat{G}(\kk)$ is the abstract group 
\[
\widehat{G}(\kk)\coloneqq\mathrm{Grp}(\kk[G]_I^{\wedge})
\]
given by the group-like elements of the $I$-adic completion of $\kk[G]$. 
\end{definition}

The Malcev completion of a group $G$ corresponds to the $\kk$-points of its pro-unipotent completion.

\begin{proposition}
Let $G$ be a finitely generated abstract group. There is a group isomorphism
\[
G^{\mathrm{uni}}(\kk) \cong \widehat{G}(\kk)~.
\]
\end{proposition}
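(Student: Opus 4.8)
The plan is to unwind the definitions on both sides and exhibit a natural bijection between $G^{\mathrm{uni}}(\kk)$ and $\widehat{G}(\kk)=\mathrm{Grp}(\kk[G]_I^\wedge)$, compatible with the group structures. Recall that $G^{\mathrm{uni}}=\mathrm{Spec}(\kk[G]_I^\vee)$, where $\kk[G]_I^\vee=\colim_n(\kk[G]/I^n)^*$ is the topological dual of the $I$-adic completion of the group Hopf algebra $\kk[G]$, and its finite-generation hypotheses are guaranteed by the lemma computing $I/I^2\cong G^{\mathrm{ab}}\otimes\kk$ together with $G$ finitely generated. By definition $G^{\mathrm{uni}}(\kk)=\mathrm{Hom}_{\mathsf{alg}}(\kk[G]_I^\vee,\kk)$, the set of $\kk$-algebra characters of the commutative Hopf algebra $\kk[G]_I^\vee$; this set inherits a group structure from the comultiplication on $\kk[G]_I^\vee$ (convolution of characters). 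So the whole content is the classical duality identity
\[
\mathrm{Hom}_{\mathsf{alg}}\big((\kk[G]_I^\vee),\kk\big)\;\cong\;\mathrm{Grp}\big((\kk[G]_I^\vee)^\vee\big)\;=\;\mathrm{Grp}\big(\kk[G]_I^\wedge\big),
\]
where the last equality uses the remark in the excerpt that the linear dual of $A_I^\vee$ is the completed algebra $A_I^\wedge$.

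First I would set up the elementary linear-algebra duality at finite level: for each $n$, writing $A_n:=\kk[G]/I^n$ (finite dimensional by the lemma), one has $A_n^*$ a finite-dimensional coalgebra and a canonical natural isomorphism $\mathrm{Hom}_{\mathsf{coalg}}(\kk,A_n^*)\cong\mathrm{Hom}_{\mathsf{alg}}(A_n,\kk)$, both sides being in bijection with the \emph{grouplike} elements of $A_n^*$ — indeed a linear functional $\xi\in A_n^*$ is grouplike (i.e.\ $\Delta_{A_n^*}(\xi)=\xi\otimes\xi$ and $\varepsilon(\xi)=1$) precisely when the corresponding map $A_n\to\kk$ is multiplicative and unital, since $\Delta_{A_n^*}$ is dual to the multiplication of $A_n$. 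Then I would pass to the (co)limit over $n$: on the one hand $\kk[G]_I^\vee=\colim_n A_n^*$ as coalgebras, and a grouplike element of a filtered colimit of coalgebras lies in one of the terms, so $\mathrm{Grp}(\kk[G]_I^\vee)=\colim_n\mathrm{Grp}(A_n^*)$; on the other hand $\mathrm{Hom}_{\mathsf{alg}}(\kk[G]_I^\vee,\kk)$ can be identified by dualizing: an algebra map out of $\colim_n A_n^*$ corresponds to a compatible family, and grouplikeness is a finite-level condition. Dualizing once more and using $\kk[G]_I^\wedge=\lim_n A_n$ with $(\kk[G]_I^\vee)^*=\kk[G]_I^\wedge$, a grouplike element of $\kk[G]_I^\vee$ is exactly the same datum as a grouplike element of $\kk[G]_I^\wedge$ (a compatible system of grouplikes in each $A_n$, equivalently an element $x\in\lim_n A_n$ with $\Delta x=x\otimes x$ and $\varepsilon x=1$, where $\Delta$ is the \emph{completed} coproduct landing in $\kk[G]_I^\wedge\widehat\otimes\kk[G]_I^\wedge$).

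Next I would check that this bijection is a group homomorphism. On the $\mathrm{Spec}$ side the group law on $\mathkk$-points is convolution: $(\chi_1\star\chi_2)(a)=(\chi_1\otimes\chi_2)(\Delta_{\kk[G]_I^\vee} a)$. Transporting through the finite-level identifications, this becomes, for grouplikes $x_1,x_2\in\kk[G]_I^\wedge$, the product $x_1x_2$ using the associative (completed) multiplication of $\kk[G]_I^\wedge$ — and $x_1x_2$ is again grouplike because $\Delta$ is an algebra map; the unit corresponds to $1\in\kk[G]_I^\wedge$ and the inverse to the antipode, matching exactly the group structure on $\mathrm{Grp}(\kk[G]_I^\wedge)=\widehat{G}(\kk)$ described in the excerpt. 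Functoriality in $G$ (for the record) follows since every construction in sight — $\kk[G]$, $I$-adic filtration, completion, dual, $\mathrm{Spec}$, $\mathrm{Grp}$ — is functorial.

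I expect the genuinely delicate point to be the bookkeeping around (co)limits and the interchange of $\lim$ and $\colim$ with the grouplike-elements functor: one must use that each $A_n=\kk[G]/I^n$ is finite dimensional (so that $(A_n)^{**}=A_n$ and coalgebra/algebra duality is honest), that filtered colimits of coalgebras are computed at the level of underlying vector spaces and that grouplikes are detected there, and that the completed tensor product $\kk[G]_I^\wedge\widehat\otimes\kk[G]_I^\wedge=\lim_{n}(A_n\otimes A_n)$ is the correct target for the coproduct so that ``$\Delta x=x\otimes x$'' is well-posed. Once these coherences are in place, the statement is a formal consequence; everything else is routine. A clean way to package it is to invoke directly the contravariant equivalence between commutative conilpotent Hopf algebras and $\mathsf{pro}(\mathsf{UniGrp})$ stated earlier in the appendix: under that equivalence, $\kk$-points of the pro-unipotent group $\mathrm{Spec}(H)$ correspond to $\kk$-algebra characters of $H$, which by finite-level duality are the grouplikes of $H^\vee$, and $H^\vee=\kk[G]_I^\wedge$ when $H=\kk[G]_I^\vee$ — giving $G^{\mathrm{uni}}(\kk)\cong\mathrm{Grp}(\kk[G]_I^\wedge)=\widehat G(\kk)$ as groups, which is what we wanted.
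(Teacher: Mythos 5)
Your route is the same as the paper's: realize $G^{\mathrm{uni}}(\kk)=\mathrm{Hom}_{\mathsf{Alg}_\kk}(\kk[G]_I^{\vee},\kk)$ inside $\mathrm{Hom}_{\mathsf{Vect}_\kk}(\kk[G]_I^{\vee},\kk)\cong\kk[G]_I^{\wedge}$ and observe that multiplicativity and unitality of a functional translate into group-likeness of the corresponding element for the completed coproduct. Your explicit check that convolution of characters matches the product of group-like elements (and unit/antipode match) is a useful addition, since the paper leaves the compatibility of the group laws implicit.

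One intermediate claim in your second paragraph is, however, false as stated and should be removed. The group-like elements of the coalgebra $\kk[G]_I^{\vee}$ are not the relevant object: a group-like $\xi\in\kk[G]_I^{\vee}$ lies in some $(\kk[G]/I^n)^*$ and, being group-like for the coproduct dual to the multiplication, is an algebra character of $\kk[G]$ vanishing on $I^n$; since $(\chi(g)-1)^n=\chi((g-1)^n)=0$ forces $\chi(g)=1$, the only such character is the counit, so $\mathrm{Grp}(\kk[G]_I^{\vee})=\colim{n}\mathrm{Grp}\big((\kk[G]/I^n)^*\big)$ is the trivial group and is certainly not ``the same datum as a grouplike element of $\kk[G]_I^{\wedge}$''. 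The confusion is between the two duality statements: algebra characters of $H$ correspond to group-likes of the (linear) dual of $H$, not to group-likes of $H$ itself. Relatedly, ``a compatible system of grouplikes in each $A_n$'' is not literally meaningful: the coproduct of $\kk[G]$ does not descend to $A_n=\kk[G]/I^n$ (one only has $\Delta(I^n)\subset\sum_{i+j=n}I^i\otimes I^j$), so $A_n$ is not a coalgebra and $A_n^*$ is not closed under convolution; the algebra structure exists only on the colimit $\kk[G]_I^{\vee}$, and the group-like condition only makes sense for the completed coproduct on $\kk[G]_I^{\wedge}$, as you do note in your parenthesis. None of this damages your final packaging, which states the correct duality, namely that $\kk$-algebra characters of $\kk[G]_I^{\vee}$ are exactly the group-like elements of its linear dual $\kk[G]_I^{\wedge}$ --- and that is precisely the paper's proof.
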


\begin{proof}
There is an inclusion
\[
G^{\mathrm{uni}}(\kk) \coloneqq \mathrm{Hom}_{\mathsf{Alg_\kk}}(\kk[G]_I^{\vee}, \kk) \subset \mathrm{Hom}_{\mathsf{Vect_\kk}}(\kk[G]_I^{\vee}, \kk) \cong \kk[G]_I^{\wedge}~.
\]
Thus the $\kk$-points of the pro-unipotent completion of $G$ will be given by certain kind of element in $\kk[G]_I^{\wedge}$. One can check that $x$ in $\kk[G]_I^{\wedge}$ is a group-like element if and only if $x$ in $\mathrm{Hom}_{\mathsf{Vect_\kk}}(\kk[G]_I^{\vee}, \kk)$ is a morphism of $\kk$-algebras.
\end{proof}

\begin{remark}
Let $B$ be a commutative $\kk$-algebra. One can check that 
\[
G^{\mathrm{uni}}(B) = \mathrm{Hom}_{\mathsf{Alg_\kk}}(\kk[G]_I^{\vee}, B) \cong \mathrm{Grp}(\kk[G]_I^{\wedge} \widehat{\otimes} B)~,
\]
that is, the $B$-points of $G^{\mathrm{uni}}$ can be computed as the group-like elements of the complete Hopf algebra 
\[
\kk[G]_I^{\wedge} \widehat{\otimes} B \coloneqq \lim_{n \in \mathbb{N}^*} \left(\kk[G]/I^n \otimes B\right)~.
\]
So a "generalized" Malcev completion allows us to reconstruct the whole pro-unipotent algebraic group $G^{\mathrm{uni}}$.
\end{remark}

\begin{remark}
The Malcev completion of a group $G$ is a pro-nilpotent uniquely divisible group. Which we will refer to, by a slight abuse of language, as an \textit{abstract pro-unipotent} $\kk$\textit{-group}.
\end{remark}

\begin{proposition}
The Malcev completion defines a strong monoidal functor
\[
\widehat{(-)}(\kk): \left(\mathsf{Grp}^{\mathsf{f.g}},\times,\{e\}\right) \longrightarrow \left(\mathsf{p.u}\text{-}\mathsf{Grp}^{\mathsf{f.g}}_{\kk},\times,\{e\}\right)~,
\]
from the category of finitely generated abstract groups to the category of finitely generated abstract pro-unipotent $\kk$-groups. 
\end{proposition}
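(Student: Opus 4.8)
The plan is to check separately that $\widehat{(-)}(\kk)$ is a functor landing in the claimed target, that it carries the two structure isomorphisms of a monoidal functor, and that these satisfy the coherence constraints. First I would establish functoriality. A group homomorphism $\phi\colon G\to H$ induces a morphism of cocommutative Hopf algebras $\kk[\phi]\colon\kk[G]\to\kk[H]$ carrying the augmentation ideal $I_G$ into $I_H$, hence $I_G^n$ into $I_H^n$ for all $n$, and therefore a morphism of complete Hopf algebras $\kk[G]^\wedge_{I_G}\to\kk[H]^\wedge_{I_H}$; restricting to group-like elements (which are preserved by Hopf algebra morphisms) gives a group homomorphism $\widehat{G}(\kk)\to\widehat{H}(\kk)$. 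Since $\kk[-]$, the $I$-adic completion, and $\mathrm{Grp}(-)$ are each functorial, identities and composites are preserved. That the target is correct follows from the computation recalled above that $I/I^2\cong G^{\mathrm{ab}}\otimes\kk$ is finite dimensional when $G$ is finitely generated, so that $\kk[G]^\wedge_I$ admits a topological dual and $\widehat{G}(\kk)$ is a finitely generated abstract pro-unipotent $\kk$-group; I would also record that a finite product of such groups is again of this type.

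Next come the structure isomorphisms. The unit constraint is immediate: $\kk[\{e\}]=\kk$, with augmentation ideal $0$, $I$-adic completion $\kk$, and $\mathrm{Grp}(\kk)=\{1\}$, so $\{e\}\xrightarrow{\ \sim\ }\widehat{\{e\}}(\kk)$. The binary constraint is the substantial point. I would start from the canonical Hopf algebra isomorphism $\kk[G\times H]\cong\kk[G]\otimes\kk[H]$, under which the augmentation ideal is $I_{G\times H}=I_G\otimes\kk[H]+\kk[G]\otimes I_H$; using the finiteness lemma recalled above (each $\kk[G]/I_G^n$ and $\kk[H]/I_H^n$ is finite dimensional), I would identify
\[
\kk[G\times H]^\wedge_{I_{G\times H}}\ \cong\ \kk[G]^\wedge_{I_G}\ \widehat{\otimes}\ \kk[H]^\wedge_{I_H}
\]
as complete Hopf algebras. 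Then I would prove that for complete (pro-nilpotent, conilpotently dual) Hopf algebras $A,B$ over the characteristic zero field $\kk$ there is a natural isomorphism $\mathrm{Grp}(A\,\widehat{\otimes}\,B)\cong\mathrm{Grp}(A)\times\mathrm{Grp}(B)$: given a group-like $z$, set $x:=(\mathrm{id}_A\otimes\epsilon_B)(z)$ and $y:=(\epsilon_A\otimes\mathrm{id}_B)(z)$, which are group-like; then $z\cdot(x\otimes y)^{-1}$ is a group-like element killed by both $\mathrm{id}_A\otimes\epsilon_B$ and $\epsilon_A\otimes\mathrm{id}_B$, so its logarithm is a primitive lying in $\mathrm{Prim}(A)\,\widehat{\otimes}\,1\ \oplus\ 1\,\widehat{\otimes}\,\mathrm{Prim}(B)$ with both components vanishing, whence $z=x\otimes y$; conversely $(x,y)\mapsto x\otimes y$. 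Composing these identifications yields the natural isomorphism $\widehat{G}(\kk)\times\widehat{H}(\kk)\cong\widehat{G\times H}(\kk)$, and naturality in $G$ and $H$ is a routine diagram chase.

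Finally I would check the coherence axioms (the pentagon for associativity and the two triangles for the unit), which all follow by transporting the corresponding coherences for the completed tensor product of complete Hopf algebras and for the isomorphisms $\kk[G\times H]\cong\kk[G]\otimes\kk[H]$ and $\kk[\{e\}]\cong\kk$; this is bookkeeping rather than mathematics. An alternative route, which bypasses the Hopf-algebraic computation, uses the universal property recorded above: $\widehat{G}(\kk)\cong G^{\mathrm{uni}}(\kk)$ is initial among pro-unipotent groups equipped with a homomorphism from $G$, and one checks that $\widehat{G}(\kk)\times\widehat{H}(\kk)$ enjoys the universal property of $\widehat{G\times H}(\kk)$, using that ``two elements commute'' and ``two homomorphisms agree'' are Zariski-closed conditions while the image of $G\times H$ is dense; coherence is then automatic from uniqueness. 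I expect the main obstacle in the first, hands-on approach to be precisely the analytic input behind the binary constraint --- that $I$-adic completion converts $\kk[G]\otimes\kk[H]$ into the completed tensor product, together with the splitting of group-like elements of a completed tensor product --- so, depending on how self-contained one wants the appendix to be, the universal-property argument may be preferable.
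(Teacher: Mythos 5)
Your proposal is correct and is essentially the paper's own proof written out in full: the paper disposes of the proposition in one line by asserting that each step of the construction (group algebra, $I$-adic completion, group-like elements) commutes with the cartesian product, and your argument is precisely that verification, including the only point with real content, namely $\kk[G\times H]^\wedge_{I}\cong\kk[G]^\wedge_{I_G}\,\widehat{\otimes}\,\kk[H]^\wedge_{I_H}$ and the splitting of group-like elements of a completed tensor product (your logarithm argument works, granted the standard lemma on primitives of a completed tensor product; one can also get $z=x\otimes y$ directly by applying $(\mathrm{id}_A\otimes\epsilon_B\otimes\epsilon_A\otimes\mathrm{id}_B)\circ\Delta$, which is the identity map). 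The universal-property alternative you sketch is a reasonable variant but not what the paper does.
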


\begin{proof}
It is straightforward to check that the various steps of its construction commute with the cartesian product.
\end{proof}

\begin{remark}\label{rmk: not fully faithful inclusion of kk-groups}
The inclusion of abstract pro-unipotent $\kk$-groups into all groups is \textit{not} fully faithful in general, except when $\kk = \mathbb{Q}$. See \cite[Appendix A]{QuillenRHT} for more details. 
\end{remark}

\subsection{Malcev completion of groupoids.}\label{Completion pro-unipotente des groupoides}
There is a generalization of this procedure from groups to groupoids. Given a finitely generated groupoid $\mathcal{G}$, one can produce a category $\kk[\mathcal{G}]$ in the following way:

\begin{enumerate}
\medskip

\item The set of objects of $\kk[\mathcal{G}]$ is the same as the set of objects of $\mathcal{G}$. 

\medskip

\item Given two object $g,g'$ in $\mathcal{G}$,
\[
\mathrm{Hom}_{\kk[\mathcal{G}]}(g,g') \coloneqq \kk[\mathrm{Hom}_\mathcal{G}(g,g')]~.
\]
\end{enumerate}

Notice that for every object $g$ in $\kk[\mathcal{G}]$, its endomorphisms $\mathrm{End}_{\kk[\mathcal{G}]}(g)$ form a Hopf algebra. In general, 
$\mathrm{Hom}_{\kk[\mathcal{G}]}(g,g')$ has a cocommutative coalgebra structure compatible with the composition of morphisms. The category 
$\kk[\mathcal{G}]$ forms a \textit{Hopf groupoid}. See \cite[Chapter 9]{FresseGT1} for more on this notion. 

\medskip

For any $g,g'$ in $\kk[\mathcal{G}]$, we can complete the $\mathrm{Hom}_{\kk[\mathcal{G}]}(g,g')$ with respect to its augmentation ideal and get a category $\kk[\mathcal{G}]^\wedge$ enriched in cocommutative coalgebras, which forms a \textit{complete Hopf groupoid}. There is an analogue group-like element functor which produces a groupoid from a Hopf groupoid. 

\medskip

The groupoid obtained $\widehat{\mathcal{G}}(\kk)$ given by the group-like elements of $\kk[\mathcal{G}]^\wedge$ is called the 
\textit{Malcev completion} of $\mathcal{G}$. Notice that for each object $g$ in $\widehat{\mathcal{G}}(\kk)$, the set of automorphisms of $g$ is the $\kk$-points of a pro-unipotent group. In a slight abuse of terminology, we will refer to it as a \textit{pro-unipotent} $\kk$-\textit{groupoid}.

\medskip

The Malcev completion functor forms a strong monoidal endofunctor 
\[
\widehat{(-)}(\kk): \left(\mathsf{Grpd}^{\mathsf{f.g}},\times,\{e\}\right) \longrightarrow \left(\mathsf{p.u}\text{-}\mathsf{Grpd}^{\mathsf{f.g}}_{\kk},\times,\{e\}\right)~,
\]
from the category of finitely generated groupoids to the category of finitely generated pro-unipotent $\kk$-groupoids.

\subsection{How to compute the Malcev completion ?}
Over a field of characteristic zero, a pro-unipotent group is completely determined by its Lie algebra. Therefore, one way to compute the Malcev completion of a $G$ is by first computing its Lie algebra and then integrating it. 

\begin{definition}[Primitive elements]
Let $A$ be a Hopf-algebra. An element $x$ in $A$ is said to be \textit{primitive} if $\Delta(x) = 1 \otimes x + x \otimes 1$. The set of primitive elements of 
$A$ is denoted by $\mathrm{Prim}(A)$.
\end{definition}

\begin{remark}
Since $\kk$ is a field of characteristic different than $2$, the condition $\Delta(x) = 1 \otimes x + x \otimes 1$ implies that any primitive element is in the 
augmentation ideal of $A$. Indeed, if $x$ is a primitive element, we have that

\[
\epsilon(x) = (\epsilon \otimes \epsilon) \circ \Delta(x) = (\epsilon \otimes \epsilon)(1 \otimes x + x \otimes 1) = 2\epsilon(x)~,
\]
\vspace{0.1pc}

hence $\epsilon(x) = 0$. 
\end{remark}

The primitive elements of a Hopf-algebra form a Lie algebra, where the bracket is given by the skew-symmetrization of the associative product in $A$.

\begin{definition}[Exponential map]
Let $A$ be a Hopf-algebra and let $I$ be its augmentation ideal. Let 
\[
\widehat{I} \coloneqq \lim_{n \in \mathbb{N}^*} I/I^n
\]
denote the augmentation ideal of $A_I^{\wedge}$. The \textit{exponential map} is given by
\[
\begin{tikzcd}[column sep=1.5pc,row sep=0pc]
\mathrm{exp}: \widehat{I} \arrow[r]
&A_I^{\wedge} \\
~~~~~~~~x \arrow[r,mapsto] 
&\displaystyle \sum_{n \geq 0}\frac{x^n}{n!}~.
\end{tikzcd}
\]
\end{definition}

\begin{proposition}
Let $G$ be a finitely generated group. The exponential map defines an isomorphism of groups

\[
\mathrm{exp}: \left(\mathrm{Prim}(\kk[G]_I^{\wedge}),\mathrm{BCH},0 \right) \qi \widehat{G}(\kk)~,
\]
\vspace{0.1pc}

where the group law $\mathrm{BCH}$ is ~given by the Baker--Campbell--Hausdorff formula. The inverse of this bijection is given by the logarithm map. 
\end{proposition}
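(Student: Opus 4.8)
The plan is to show that $\exp$ and $\log$ define mutually inverse bijections between the topologically nilpotent elements $\widehat{I}$ and the subset $1+\widehat{I}$ of $\kk[G]_I^\wedge$, that they restrict to mutually inverse bijections between $\mathrm{Prim}(\kk[G]_I^\wedge)$ and $\mathrm{Grp}(\kk[G]_I^\wedge)=\widehat{G}(\kk)$, and that on primitives the associative product corresponds under $\exp$ to the Baker--Campbell--Hausdorff product. First I would record the formal preliminaries. Since $\widehat{I}=\lim_n I/I^n$ is the augmentation ideal of the complete algebra $\kk[G]_I^\wedge$, every $x\in\widehat{I}$ is topologically nilpotent, so the series $\exp(x)=\sum_{n\geq0}x^n/n!$ and $\log(1+y)=\sum_{n\geq1}(-1)^{n+1}y^n/n$ converge $I$-adically (this is where $\mathrm{char}\,\kk=0$ enters), and the formal identities $\log\circ\exp=\mathrm{id}$, $\exp\circ\log=\mathrm{id}$, valid over any $\mathbb{Q}$-algebra, exhibit $\exp\colon\widehat{I}\to1+\widehat{I}$ and $\log\colon1+\widehat{I}\to\widehat{I}$ as mutually inverse bijections. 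I would also note that the coproduct extends to a continuous morphism of complete algebras $\Delta\colon\kk[G]_I^\wedge\to\kk[G]_I^\wedge\,\widehat{\otimes}\,\kk[G]_I^\wedge$, that $\epsilon$ vanishes on $\widehat{I}$, and that a group-like element is invertible with inverse its antipode, so that $\widehat{G}(\kk)$ is indeed a subgroup of the units of $\kk[G]_I^\wedge$.

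Next I would check the compatibility with the Hopf structure. If $x\in\mathrm{Prim}$, then $\Delta(x)=x\otimes1+1\otimes x$ is a sum of two commuting elements, so continuity of $\Delta$ and the exponential identity for commuting elements give $\Delta(\exp x)=\exp(\Delta x)=\exp(x\otimes1)\exp(1\otimes x)=(\exp x)\otimes(\exp x)$, while $\epsilon(\exp x)=\exp(\epsilon x)=1$; hence $\exp x$ is group-like. Conversely, if $g\in\widehat{G}(\kk)$, then $\epsilon(g)=1$ forces $g\in1+\widehat{I}$, and writing $g\otimes g=(g\otimes1)(1\otimes g)$ as a product of commuting factors, one gets $\Delta(\log g)=\log(\Delta g)=\log(g\otimes1)+\log(1\otimes g)=(\log g)\otimes1+1\otimes(\log g)$, so $\log g\in\mathrm{Prim}$. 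Combined with the bijectivity from the first step, this shows that $\exp$ restricts to a bijection $\mathrm{Prim}(\kk[G]_I^\wedge)\xrightarrow{\sim}\widehat{G}(\kk)$ with inverse $\log$.

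Finally I would treat the group law. The Baker--Campbell--Hausdorff series $\mathrm{BCH}(x,y)=x+y+\tfrac12[x,y]+\cdots$ has degree-$n$ term a Lie polynomial in $x$ and $y$, hence (for $x,y\in\widehat{I}$) an element of $\widehat{I}^{\,n}$, so the series converges in $\kk[G]_I^\wedge$; and since $\mathrm{Prim}=\ker\big(\Delta-(\mathrm{id}\otimes1+1\otimes\mathrm{id})\big)$ is a closed Lie subalgebra, $\mathrm{BCH}(x,y)\in\mathrm{Prim}$ whenever $x,y\in\mathrm{Prim}$. The identity $\exp(x)\exp(y)=\exp\big(\mathrm{BCH}(x,y)\big)$ holds in the completed free associative algebra $\kk\langle\!\langle X,Y\rangle\!\rangle$ on two noncommuting variables and specializes to $\kk[G]_I^\wedge$ along the unique continuous algebra map sending $X,Y$ to $x,y$; thus $\exp$ is a homomorphism from $(\mathrm{Prim},\mathrm{BCH})$ to $(\widehat{G}(\kk),\cdot)$. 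Since $\mathrm{BCH}$ is associative with neutral element $0$ and inverse $x\mapsto-x$ (Hausdorff's theorem), $(\mathrm{Prim}(\kk[G]_I^\wedge),\mathrm{BCH},0)$ is a group and $\exp$ is an isomorphism of groups with inverse $\log$, as claimed.

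The only genuinely delicate point is the topological bookkeeping underlying the above: the convergence of the three series in the $I$-adic topology, the continuity of $\Delta$, and a correct model of the completed tensor product in which ``$\Delta$ is an algebra morphism'' and the exponential/logarithm identities for commuting elements remain valid. Everything else is classical Hopf-algebraic and Baker--Campbell--Hausdorff manipulation available over any field of characteristic zero. Alternatively one could sidestep the hands-on estimates by first establishing the statement in the universal case $A=\kk[F_r]$ for a free group (or directly for the completed tensor algebra), where $\mathrm{Prim}$ is the free complete Lie algebra by the Milnor--Moore/Friedrichs criterion, and then transporting it to arbitrary finitely generated $G$ by functoriality of the completion.
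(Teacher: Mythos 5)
Your proposal is correct and follows essentially the same route as the paper's own proof: show that $\exp$ sends primitives to group-like elements via $\Delta(\exp x)=\exp(x\otimes 1+1\otimes x)$, use the universal property of the Baker--Campbell--Hausdorff series to get the homomorphism property, and invert by the logarithm applied to $g=1+(g-1)$ with $g-1$ in the augmentation ideal. You merely spell out details the paper leaves as "one can check" (that $\log g$ is primitive, convergence in the $I$-adic topology, and the completed tensor product in which the group-like condition is read), which is sound and in the same spirit.
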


\begin{proof}
Let $x$ be an primitive element. Its image under the exponential map is a group-like element in $\mathrm{Grp}(\kk[G]_I^{\wedge})$:
\[
\Delta(\mathrm{exp}(x)) = \mathrm{exp}(1 \otimes x + x \otimes 1) = \mathrm{exp}(x) \otimes \mathrm{exp}(x)~.
\]
Furthermore, the exponential map is a morphism of groups when one considers the group structure on $\mathrm{Prim}(\kk[G]_I^{\wedge})$ given by the 
Baker--Campbell--Hausdorff formula, as it is the universal formula such that
\[
\mathrm{exp}(\mathrm{BCH}(x,y)) = \mathrm{exp}(x).\mathrm{exp}(y)~.
\]
Let $g$ be a group-like element. Notice that $\epsilon(g) =1$ implies that $(g-1)$ is in the augmentation ideal. Thus one can define the logarithm on group-like 
elements as 

\[
\mathrm{log}(g) = \mathrm{log}(1 - (g-1)) = \sum_{n \geq 1}\frac{(g-1)^n}{n}~.
\]

One can check that it is a morphism of groups, which is the inverse of the exponential map.
\end{proof}

This description of $\widehat{G}(\kk)$ gives a canonical morphism of groups 
\[
\begin{tikzcd}[column sep=1.5pc,row sep=0pc]
G \arrow[r]
&\widehat{G}(\kk) \\
g \arrow[r,mapsto] 
&\mathrm{exp}\big(\mathrm{log}(g)\big)~,
\end{tikzcd}
\]
since $g$ is group-like element of $\kk[G]_I^{\wedge}$. 

\begin{remark}
In particular, the element $g^\lambda$, for $\lambda$ in $\kk$, makes sense in $\widehat{G}(\kk)$. Indeed, one can define it as 
$g^\lambda \coloneqq \mathrm{exp}\big(\lambda.\mathrm{log}(g)\big)$.
\end{remark}

\begin{example}\leavevmode

\begin{enumerate}
\item One has that $\widehat{\mathbb{Z}}(\kk) \cong \kk$ and that $\widehat{\mathbb{Z}/m\mathbb{Z}}(\kk) = 0$.

\medskip

\item In general, if $G$ is an abelian group, then $\widehat{G}(\kk) \cong G \otimes_{\mathbb{Z}} \kk$. 

\medskip

\item The Malcev completion $\widehat{\mathbb{F}_n}(\kk)$ of the free group $\mathbb{F}_n$ generated by $x_1, \cdots, x_n$ is generated by the 
expressions $\{x_i^\lambda\}$ for $\lambda$ is in $\kk$ and $1 \leq i \leq n$.

\medskip

\item Using Lemma \ref{lemma: PB_3 iso au produit de libres}, one can compute that $\widehat{\mathrm{PB}_3}(\kk) \cong \widehat{\mathbb{F}_2}(\kk) \times \kk$. 
\hfill$\triangle$
\end{enumerate}
\end{example}


\section{Operads in cocartesian categories}\label{appendixB}

Let $\mathcal C$ be a category with finite colimits. We view is as a symmetric monoidal category: the monoidal product is the coproduct $\amalg$ and the unit is the initial object $\emptyset$. The main goal of this appendix is to show how to construct
operadic-like objects in $(\mathcal C,\amalg,\emptyset)$ from presheaves on certain categories of finite sets with values in $\mathcal C$. This formalism of presheaves just encodes a family of objects in $\mathcal{C}$ with types of insertion-coproduct morphisms. We deal with three cases: operads, operadic modules, and moperads. 

\subsection{The case of operads}\label{Appendix B1: operads}
Let $\mathrm{Fin}_*$ be the category of finite pointed sets, together with maps that respect the base point. This category is canonically equivalent to the category of finite sets with \textit{partially defined maps}. We consider two distinguished classes of morphisms in $\mathrm{Fin}_*$ (viewed as finite sets with partially defined maps):
\begin{itemize}
\item \emph{Active} morphisms are the totally defined map. 

\item \emph{Inert} morphisms are the partially defined bijections.  
\end{itemize}

\begin{example}
Let $J\subset K$ be an inclusion of finite sets. The partially defined map $\mathrm{id}_{J}^K: K \longrightarrow J$ which is given by the identity on $J$ is a partially defined bijection. We will refer to this class of maps as \textit{partially defined equalities}. \hfill$\triangle$
\end{example}

The pair (\emph{Inert}, \emph{Active}) is an orthogonal factorization system, meaning in particular that every morphism factors (uniquely up to a unique isomorphism) as an inert morphism followed by an active one. If, moreover, we require that the inert map is a partially defined equality, then the factorization becomes strictly unique. Now observe that active morphisms are generated by:
\begin{itemize}
\item bijections;
\item and contracting maps: if $I,J$ are set and $i\in I$, the contracting map $c_i:I\backslash\{i\}\sqcup J\to I$ is defined 
as $c_i(j)=j$ if $j\notin J$ and $c_i(j)=i$ if $j\in J$. Note that when $J$ is empty, the contracting map is injective. 
\end{itemize}

\begin{theorem}\label{thm: presheaves induce operads}
Any functor $\mathrm{Fin}_*^{op} \longrightarrow \mathcal C$ induces an operad in $(\mathcal C,\amalg,\emptyset)$.
\end{theorem}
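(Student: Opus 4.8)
The plan is to show that a functor $P\colon\mathrm{Fin}_*^{\mathrm{op}}\to\mathcal C$ automatically packages the data of an operad in $(\mathcal C,\amalg,\emptyset)$, and that the operad axioms follow from functoriality. First I would set up the underlying symmetric sequence: define $\mathcal P(n):=P(\{*,1,\dots,n\})$, with the $\mathbb S_n$-action induced by the (contravariant) functoriality applied to the bijections of $\{1,\dots,n\}$ fixing $*$. Since $\mathcal P$ takes values in $\mathcal C$ viewed with $\amalg$ as monoidal product, the structure maps we need to produce are partial compositions $\circ_i\colon\mathcal P(n)\amalg\mathcal P(m)\to\mathcal P(n+m-1)$, or equivalently maps out of a coproduct, hence a \emph{pair} of maps $\mathcal P(n)\to\mathcal P(n+m-1)$ and $\mathcal P(m)\to\mathcal P(n+m-1)$. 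This is exactly what the factorization system provides: following the discussion right before the statement, the ``coproduct type'' map $\mathcal P(n)\to\mathcal P(n+m-1)$ is $P$ applied to the contracting active morphism $c_i\colon\{*,1,\dots,n+m-1\}\to\{*,1,\dots,n\}$ that collapses $\{i,\dots,i+m-1\}$ onto $i$, while the ``insertion type'' map $\mathcal P(m)\to\mathcal P(n+m-1)$ is $P$ applied to the inert (partially defined bijection / partially defined equality) morphism $\{*,1,\dots,n+m-1\}\to\{*,1,\dots,m\}$ identifying $\{i,\dots,i+m-1\}$ with $\{1,\dots,m\}$ and sending everything else to $*$. Together these assemble, by the universal property of $\amalg$, into $\circ_i$.

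\textbf{Key steps.} After defining the candidate operad structure, I would verify the three families of axioms. (1) \emph{Unit}: the one-element-target arrows $\{*,1\}\to\{*,1\}$ force $\mathcal P(1)$ to carry a unit; one checks the unit $\eta\colon\mathbb 1=\emptyset\text{-coproduct}\to\mathcal P(1)$ — more precisely the map picking out $P$ of the identity — is a two-sided unit for $\circ_i$, which reduces to the identity axiom of $P$ together with the fact that $c_i$ composed with an inert inclusion is the identity. (2) \emph{Equivariance}: compatibility of $\circ_i$ with the $\mathbb S_n\times\mathbb S_m$-actions follows from functoriality of $P$ applied to the obvious commuting squares of pointed bijections and contracting/inert maps. (3) \emph{Associativity}: the two ways of composing three operations correspond to two factorizations of a single composite morphism in $\mathrm{Fin}_*$; the nested $\circ_i(\circ_j-)$ versus $\circ_j(-,\circ_k-)$ identities both come from decomposing the same active-then-inert data, so $P$ of the common morphism witnesses the equality. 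Here I would invoke the orthogonal factorization system $(\textbf{Inert},\textbf{Active})$ together with the strict uniqueness of the factorization when the inert part is a partially defined equality, so that the bookkeeping of which strand goes where is unambiguous; and I would use that active morphisms are generated by bijections and contracting maps, reducing all the associativity pentagons to a finite check on these generators.

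\textbf{Main obstacle.} The conceptual content is light, but the bookkeeping is the real work: one must be careful that the coproduct-type and insertion-type maps extracted from $P$ genuinely descend to a \emph{well-defined} single map $\mathcal P(n)\amalg\mathcal P(m)\to\mathcal P(n+m-1)$ with the correct source/target re-indexing, and that the two possible orders of iterated composition match \emph{on the nose} rather than up to a relabeling isomorphism. The cleanest way is to phrase everything intrinsically over finite sets $I$ rather than $\{1,\dots,n\}$ (as the paper does elsewhere, e.g.\ in \S\ref{subsec-1.2-universal} and Appendix~\ref{appendixB} for moperads): set $\mathcal P(I):=P(\{*\}\sqcup I)$, define $\circ$ along the evident active map $\{*\}\sqcup(I\backslash\{i\})\sqcup J\to\{*\}\sqcup I$ and inert map, and then associativity becomes literally the statement that two composites of morphisms in $\mathrm{Fin}_*$ agree, which they do because both equal the canonical active-part of the triple composite. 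I expect the proof to occupy little space once this reformulation is in place, essentially reducing to ``apply $P$ to a commutative diagram in $\mathrm{Fin}_*$,'' with the only genuine subtlety being the verification that the unit and equivariance constraints interact correctly with the factorization system.
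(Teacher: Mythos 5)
Your proposal follows essentially the same route as the paper's sketch: index by finite sets, take the partial composition $\circ_i$ to be the coproduct of $P$ applied to the contracting (active) map and $P$ applied to the partially defined equality (inert), and reduce the equivariance, unit and associativity axioms to functoriality applied to relations among these generating morphisms of $\mathrm{Fin}_*$. The only difference is expository — you make the factorization-system bookkeeping more explicit than the paper does — so the argument matches.
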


\begin{proof}[Sketch of proof]
Let $\mathcal O:\mathrm{Fin}_*^{op}\to\mathcal C$ be a functor. We first restrict it to $\mathrm{Fin}_{bij}$, the category of finite sets with (totally defined) bijections; we thus get a specie of structure. 

\medskip

We define the partial compositions as 
\[
\circ_i:=\mathcal O(c_i)\amalg \mathcal O(\mathrm{id}_{J}^K): \mathcal O(I)\amalg \mathcal O(J)\to \mathcal O\big((I\backslash\{i\})\sqcup J\big)~,
\]

where $K=(I\backslash\{i\})\sqcup J$. One can check that equivariance, associativity and unitality relations defining an operad are implied by relations among generating morphisms of $\mathrm{Fin}_*^{op}$ (these generating morphisms being bijections, contracting maps, and partial equalities). 
\end{proof}

Suppose that $\mathcal C$ carries a symmetric monoidal structure $\otimes$ such that the identity functor is a symmetric colax monoidal functor $(\mathcal C,\amalg,\emptyset)\to(\mathcal C,\otimes,\mathbb{1})$. It might not send operads to operads in general. Nevertheless, if each $\circ_i$ factors through $\mathcal O(I)\otimes\mathcal O(J)$, then $\mathcal O$ becomes an operad in $(\mathcal C,\otimes,\mathbb{1})$. 

\begin{example}\label{example: symmetric monoidal cocartesian descent}
Here are two examples of such a symmetric monoidal category: 
\begin{itemize}
\item The category of unitial associative $\kk$-algebras, with monoidal product being the tensor product; 
\item The category of Lie $\kk$-algebras, with monoidal product being the direct sum (that is the cartesian monoidal structure). 
\end{itemize}
In both cases, the condition for a map $A_1\amalg A_2\to B$ to factor through the monoidal product of $A_1$ and $A_2$ is that the images 
of $A_1\to B$ and $A_2\to B$ commute. \hfill$\triangle$
\end{example}
This explains why the insertion-coproduct morphisms we consider in the main body of the paper define operads in 
Lie/associative $\kk$-algebras: 
\begin{itemize}
\item Insertion-coproduct morphisms define operads for the cocartesian monoidal structure; 
\item The image of insertion morphisms commutes with the image of coproduct morphisms, making the operad structure descend to the 
monoidal structure of interest. 
\end{itemize}

\subsection{The case of operadic modules}\label{Appendix B2: modules}
Let $\mathrm{Fin}_{*,\not*}$ be the following category: 
\begin{itemize}
\item Objects are finite sets with or without pointing; 
\item Morphisms are maps preserving the pointing. 
\end{itemize}
In other words, every object of $\mathrm{Fin}_{*,\not*}$ is isomorphic to $\{1,\dots,n\}$ or $\{*,1,\dots,n\}$, where $*$ is the base point, for some $n\geq0$. For every finite set $I$ there are two objects, denoted by $I$ and $I^+$, which are given by the finite set without or with a base point. There are three types of morphisms: pointed maps between pointed sets, maps between sets without pointing, and maps from a non-pointed set to a pointed one. Concretely, we have that:
\begin{itemize}
\item Morphisms from $I$ to $J$ are totally defined maps $I\to J$;
\item Morphisms from $I^+$ to $J^+$ are partially defined maps $I\to J$; 
\item Morphisms from $I$ to $J^+$ are partially defined maps $I\to J$;
\item There are no morphisms from $I^+$ to $J$. 
\end{itemize}

As we explained in Appendix \ref{Appendix B1: operads}, partially defined maps admit an orthogonal factorization system made of inert maps and active maps. Active maps are precisely totally defined maps, and these are generated by bijections and contracting maps. 

\begin{remark}\label{remarque-pedante}
There is a more pedantic (but useful!) way of defining $\mathrm{Fin}_{*,\not*}$. Consider the functor $F:\Delta^1\to \mathrm{Cat}$ representing the functor $(-)^+:\mathrm{Fin}\to\mathrm{Fin}_*$ sending $I$ to $I^+$, where $\mathrm{Fin}$ is the category of finite sets with totally defined maps. Then $\mathrm{Fin}_{*,\not*}\to \Delta^1$ is the cocartesian fibration associated to $F$. 
\end{remark}

\begin{theorem}
Any functor $\mathrm{Fin}_{*,\not*}^{op} \longrightarrow \mathcal C$ induces an operad together with a right operadic module over it in $(\mathcal C,\amalg,\emptyset)$.
\end{theorem}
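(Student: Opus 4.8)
The strategy is to mimic the proof of \cref{thm: presheaves induce operads}, treating the two ``halves'' of $\mathrm{Fin}_{*,\not*}$ separately and then relating them. First I would restrict the given functor $\mathcal{M}:\mathrm{Fin}_{*,\not*}^{op}\to\mathcal C$ along the two obvious inclusions: the full subcategory spanned by the pointed objects $I^+$ is equivalent to $\mathrm{Fin}_*$, and the restriction of $\mathcal{M}$ there produces, by \cref{thm: presheaves induce operads}, an operad $\mathcal O$ in $(\mathcal C,\amalg,\emptyset)$ with $\mathcal O(I):=\mathcal{M}(I^+)$. The full subcategory spanned by the non-pointed objects $I$ is $\mathrm{Fin}$ itself; restricting $\mathcal{M}$ there and further to $\mathrm{Fin}_{bij}$ gives a specie of structure $M$ with $M(I):=\mathcal{M}(I)$, which will be the underlying symmetric sequence of the right module.

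\textbf{Key steps.} Next I would write down the structure maps. Just as for operads, the relevant morphisms in $\mathrm{Fin}_{*,\not*}$ are generated by bijections, contracting maps $c_i$, and partially defined equalities $\mathrm{id}_J^K$; here $c_i$ and $\mathrm{id}_J^K$ must be taken among morphisms of the form $I\to J^+$ (from a non-pointed set to a pointed one) as well as among morphisms $I^+\to J^+$. For $i\in I$ and a finite set $J$, with $K=(I\setminus\{i\})\sqcup J$, the morphism $c_i: K\to I$ viewed with target pointed, composed with $\mathrm{id}_K^K$, lands in the ``mixed'' hom-set $\mathrm{Hom}(K,I^+)$; I would therefore set
\[
\circ_i:=\mathcal{M}(c_i)\amalg\mathcal{M}(\mathrm{id}_J^K): M(I)\amalg \mathcal O(J)=\mathcal{M}(I)\amalg\mathcal{M}(J^+)\longrightarrow \mathcal{M}(K)=M(K)~,
\]
using that $\mathcal{M}(c_i)$ has source $\mathcal{M}(I)=M(I)$ and target $\mathcal{M}(K)$, and $\mathcal{M}(\mathrm{id}_J^K)$ has source $\mathcal{M}(J^+)=\mathcal O(J)$ and target $\mathcal{M}(K)$. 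Then I would check the three families of axioms: (i) $\mathfrak S$-equivariance of $\circ_i$, following from naturality of $\mathcal{M}$ with respect to bijections; (ii) the two associativity relations for a right module (``$\circ_i$ then $\circ_j$'' with $j$ inside and outside the inserted block), which reduce to commuting triangles of contracting maps in $\mathrm{Fin}_{*,\not*}$, exactly as in the operad case but now with one ``leg'' of the diagram living on the non-pointed side; (iii) unitality, using that the pointing-preserving inclusion of the one-element pointed set picks out the operad unit of $\mathcal O$, and that $\mathcal{M}$ sends the relevant identity-on-the-point map to it. Finally, as in \cref{example: symmetric monoidal cocartesian descent}, if $\mathcal C$ additionally carries a symmetric monoidal structure $\otimes$ with the identity a colax functor $(\mathcal C,\amalg,\emptyset)\to(\mathcal C,\otimes,\mathbb 1)$, and each $\circ_i$ factors through $M(I)\otimes\mathcal O(J)$, one obtains a right module in $(\mathcal C,\otimes,\mathbb 1)$; this is what makes Propositions \ref{prop: right mod structure} work.

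\textbf{Main obstacle.} The genuinely new bookkeeping compared to \cref{thm: presheaves induce operads} is organizing the ``mixed'' morphisms $I\to J^+$ and verifying that the compatibility square in the definition of an operadic module — the one expressing that the $\mathcal O$-action and the $\mathfrak S$-equivariance interact correctly, together with the associativity constraint mixing $\circ_i$ on $M$ with $\gamma$ on $\mathcal O$ — is implied by a single relation in $\mathrm{Fin}_{*,\not*}$ between composites of contracting maps. Concretely, one must see that composing $c_i:K\to I$ (mixed) with a contracting map $c_k:L\to J$ (in $\mathrm{Fin}$, hence also pointed) in the two possible orders yields the same morphism in $\mathrm{Fin}_{*,\not*}$; this is a routine but slightly fiddly check because the source of one composite is non-pointed while intermediate objects on the operad side are pointed. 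The pedantic description in \cref{remarque-pedante} of $\mathrm{Fin}_{*,\not*}$ as the cocartesian fibration over $\Delta^1$ classifying $(-)^+:\mathrm{Fin}\to\mathrm{Fin}_*$ is the clean way to keep this straight: an operadic module is precisely the data of an operad (the fiber over $1$) acting on a symmetric sequence (the fiber over $0$) compatibly with the transition functor, and a $\mathrm{Fin}_{*,\not*}^{op}$-presheaf unwinds to exactly that, with all coherences inherited from functoriality. I expect writing this identification carefully to be the part that takes real effort; everything else is a direct transcription of the operad argument.
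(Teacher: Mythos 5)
Your proposal is correct and follows essentially the same route as the paper: restrict along the two subcategories to get the operad $\mathcal O$ (via the pointed objects and the previous theorem) and the symmetric sequence (via $\mathrm{Fin}$), define $\circ_i$ from the contracting maps together with the partial equalities landing in the mixed hom-sets, and deduce the module axioms from relations among generating morphisms. The only cosmetic difference is that you apply the functor directly to the mixed morphism $\mathrm{id}_J^K\colon K\to J^+$, whereas the paper factors this through the natural transformation $f\colon \mathcal O((-)^+)\Rightarrow \mathcal M$ extracted from the cocartesian-fibration description, which you also invoke.
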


\begin{proof}[Sketch of proof]
It follows from Remark \ref{remarque-pedante} that a functor $\mathrm{Fin}_{*,\not*}^{op}\to\mathcal C$ amounts to the data of a triple $(\mathcal M,\mathcal O,f)$ where 
\begin{itemize}
\item $\mathcal O$ is a functor $\mathrm{Fin}_*^{op}\to\mathcal C$, which by Theorem \ref{thm: presheaves induce operads} induces an operad in $(\mathcal C,\amalg,\emptyset)$;

\item $\mathcal M$ is a functor $\mathrm{Fin}^{op}\to\mathcal C$;

\item $f:\mathcal O((-)^+)\Rightarrow \mathcal M$ is a natural transformation.
\end{itemize}

Since totally defined maps are generated by bijections and contracting maps, the functor $\mathcal M$ induces a species of structure when we restrict it to bijections. Furthermore, it comes equipped with the following maps 
\[
\mathcal M(c_i): \mathcal M(I) \longrightarrow \mathcal M\big((I\backslash\{i\})\sqcup J\big)~,
\]
induced by the contracting maps. On the other hand, we consider the following composition of morphisms:
\[
\begin{tikzcd}
\mathcal O(J)  \arrow[r,"O(\mathrm{id}_{J}^K)"]
&\mathcal O\big((I\backslash\{i\})\sqcup J\big) \arrow[r,"f"]
&\mathcal M\big((I\backslash\{i\})\sqcup J\big)~,
\end{tikzcd}
\]
where $K=(I\backslash\{i\})\sqcup J$. These two aforementioned maps assemble into the following structure maps 
\[
\circ_i: \mathcal M(I)\amalg \mathcal O(J)\to \mathcal M\big((I\backslash\{i\})\sqcup J\big)~.
\]
Once again, one can check that all the axioms of a right module over an operad are satisfied, by using all the relations amongst the generation morphisms of the source category.
\end{proof}

\begin{remark}
Like in Appendix \ref{Appendix B1: operads}, the structure defined on the cocartesian monoidal structure can descend to other types of monoidal structures under certain conditions. In particular, the elliptic insertion-coproduct morphisms that we consider in the main body of the paper define a right module structure for the cocartesian monoidal structure in Lie/associative $\kk$-algebras. Since the image of insertion morphisms commutes with the image of coproduct morphisms, they induce a right module structure in the monoidal structure of interest. 
\end{remark}

\subsection{The case of moperads}\label{Appendix B3: moperads}
Let $\mathrm{bFin}_{*,*}$ be the following category: 
\begin{itemize}
\item Objects are finite sets with one or two pointings,
\item Morphisms are maps preserving the pointings. 
\end{itemize}
In other words, every object of $\mathrm{bFin}_{*,*}$ is isomorphic to either a pointed set $\{0=*,1,\dots,n\}$ or a double-pointed set $\{*,0,1,\dots,m\}$, where $*$ is the first base point and $0$ is the second base point. For every finite set $I$ there are two objects, denoted by $I^+$ and $I^{++}$, which are given by the finite set with a base point or with two base points. There are three types of morphisms: pointed maps between pointed sets, maps between sets without pointing, and maps from a non-pointed set to a pointed one. Concretely, we have that:
\begin{itemize}
\item Morphisms from $I^+$ to $J^+$ are partially defined maps $I\to J$; 
\item Morphisms from $I^{++}$ to $J^{+}$ are partially defined maps $I\to J$;
\item Morphisms from $I^{++}$ to $J^{++}$ are partially defined pointed maps $I^+ \to J^+$;
\item There are no morphisms from $I^+$ to $J^{++}$. 
\end{itemize}

\begin{example}
If $I,J$ are finite sets, there are pointed-contraction maps $c_0^1:I^+ \sqcup J^+\to I^+$ and $c_0^2:I^+ \sqcup J^+\to J^+$. The first map $c_0^1$ is defined by sending $I^+$ to itself and $J^+$ to the base point of $I^+$; the second map $c_0^2$ is defined by sending $J^+$ to itself and $I^+$ to the base point of $J^+$.
\end{example}

\begin{theorem}
Any functor $\mathrm{bFin}_{*,*}^{op} \longrightarrow \mathcal C$ induces an operad together with a moperad over it in $(\mathcal C,\amalg,\emptyset)$.
\end{theorem}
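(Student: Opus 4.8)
The plan is to follow the pattern of \cref{thm: presheaves induce operads} and of its operadic–module analogue: first unpack what a presheaf on $\mathrm{bFin}_{*,*}$ amounts to, then read off the operad and the moperad structure from a short list of generating morphisms, and finally check the axioms against the relations among those generators. As in \cref{remarque-pedante}, the category $\mathrm{bFin}_{*,*}$ is the total category of a cocartesian fibration over $\Delta^1$ whose fibre over $0$ is the full subcategory on the doubly-pointed objects $I^{++}$ and whose fibre over $1$ is the full subcategory on the singly-pointed objects $I^+$, the cocartesian morphisms being the forgetful maps $I^{++}\to I^+$ that send the second base point to the first. Hence a functor $\mathcal F\colon\mathrm{bFin}_{*,*}^{op}\to\mathcal C$ is the same datum as a triple $(M,\mathcal O,f)$, in which $\mathcal O$ is a functor $\mathrm{Fin}_*^{op}\to\mathcal C$ — and therefore an operad in $(\mathcal C,\amalg,\emptyset)$ by \cref{thm: presheaves induce operads} — in which $M$ is the presheaf on the doubly-pointed part (a species of structures, hence a symmetric sequence, once restricted to bijections), and in which $f\colon\mathcal O\Rightarrow M$ is the natural transformation induced by the cocartesian morphisms. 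This $M$ will be the underlying collection of the moperad we construct.

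Next I would write down the structure maps from the relevant active and inert generating morphisms of $\mathrm{bFin}_{*,*}$, namely bijections, ordinary and pointed contraction maps, and partial equalities. For the monoid structure I would use the pointed-contraction maps $c_0^1, c_0^2$ of the Example above, which collapse one summand into the second base point of the other, and set $\circ_0:=M(c_0^1)\amalg M(c_0^2)\colon M(I)\amalg M(J)\to M(I\sqcup J)$. For the right-module structure, given $i$ in a finite set $I$ and a finite set $J$, I would use the ordinary contraction map $c_i\colon(I\backslash\{i\})\sqcup J\to I$ — a doubly-pointed morphism, as it fixes both base points — together with the partial equality $\mathrm{id}_J^K\colon K\to J$ with $K=(I\backslash\{i\})\sqcup J$, read here as a morphism $K^{++}\to J^+$; then $\circ_i:=M(c_i)\amalg g_i\colon M(I)\amalg\mathcal O(J)\to M(K)$, where $g_i\colon\mathcal O(J)\to M(K)$ is $\mathcal O(\mathrm{id}_J^K)$ followed by the component $f_K$. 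Equivariance of all these maps under the symmetric groups is immediate from the definitions.

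Finally I would check the moperad axioms by tracing each one back to a commuting square of generating morphisms of $\mathrm{bFin}_{*,*}$, exactly as in the proofs of \cref{thm: presheaves induce operads} and its module analogue. Concretely: that $(M,\circ_0)$ is a unital $\circledast$-monoid follows from the composition relations among the $c_0^1,c_0^2$ and the unit morphism $\mathcal{M}\mathcal{U}nit\to M$ attached to the empty set; the right $\mathcal O$-module axioms for $\{\circ_i\}$ follow, as before, from the relations between the $c_i$'s, the partial equalities and the bijections, together with naturality of $f$; and the mixed compatibility square relating $\gamma_M$ and $\gamma_{\mathcal O}$ comes from the single relation expressing that collapsing one subset into the second base point commutes with contracting another subset into an ordinary point. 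Since $(\mathrm{Inert},\mathrm{Active})$ is an orthogonal factorization system on $\mathrm{bFin}_{*,*}$ and the active morphisms are generated by bijections and the (pointed) contraction maps, no further relations intervene. As in \cref{example: symmetric monoidal cocartesian descent}, if $\mathcal C$ carries a second symmetric monoidal structure $\otimes$ for which the identity functor is colax monoidal and all the structure maps above factor through the corresponding $\otimes$-products — i.e.\ the images of their two legs commute — then the moperad structure descends to $(\mathcal C,\otimes,\mathbb{1})$; this is what makes $(\mathfrak{t}_n^{\Gamma})_{n\geq0}$ a moperad in $(\mathsf{Lie},\oplus,0)$. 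I expect the last bookkeeping step to be the only real point of care: the mixed square between the monoid and module structures is the one genuinely new coherence compared to the operad and operadic-module cases, and one must match it against the correct composite of pointed contractions, ordinary contractions and partial equalities — everything else is a routine transcription of the previous two proofs.
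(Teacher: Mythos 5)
Your proposal is correct and follows essentially the same route as the paper's own proof: unpack the presheaf on $\mathrm{bFin}_{*,*}^{op}$ into the triple consisting of the operad presheaf on $\mathrm{Fin}_*^{op}$, the module presheaf on doubly-pointed sets, and the natural transformation $\mathcal{O}\Rightarrow \mathcal{M}((-)^+)$ (whose direction you identify correctly), then build $\circ_0$ from the pointed-contraction maps $c_0^1,c_0^2$ and $\circ_i$ from ordinary contractions, partial equalities and that transformation, checking the axioms against relations among these generators. Your explicit cocartesian-fibration-over-$\Delta^1$ framing of $\mathrm{bFin}_{*,*}$ is only a mild elaboration of the paper's Remark \ref{remarque-pedante}, not a genuinely different argument.
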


\begin{proof}[Sketch of a proof]
A functor $\mathrm{bFin}_{*,*}^{op}\to\mathcal C$ amounts to the data of a triple $(\mathcal M,\mathcal O,f)$ where 
\begin{itemize}
\item $\mathcal O$ is a functor $\mathrm{Fin}_*^{op}\to\mathcal C$, which by Theorem \ref{thm: presheaves induce operads} induces an operad in $(\mathcal C,\amalg,\emptyset)$;

\item $\mathcal M$ is a functor $\mathrm{pFin}_{*}^{op} \to \mathcal C$ from the opposite category of pointed sets with partially defined pointed maps to $\mathcal C$;

\item $g:\mathcal O(-)\Rightarrow \mathcal M((-)^+)$ is a natural transformation.
\end{itemize}

The functor $\mathcal M$ induces a species of structure by considering its precomposition with the functor $(-)^+$ and restricting it to bijections. 

\medskip

The right modules structure over $\mathcal O(-)$ is constructed as follows. On one hand, we consider the image of the contracting maps $c_i$ by the functor $(-)^+$:
\[
\mathcal M((c_i)^+): \mathcal M(I^+) \longrightarrow \mathcal M\big(\big((I\backslash\{i\})\sqcup J\big)^+\big)~,
\]
induced by the contracting maps. On the other hand, we consider the following composition of morphisms:
\[
\begin{tikzcd}
\mathcal O(J)  \arrow[r,"O(\mathrm{id}_{J}^K)"]
&\mathcal O\big((I\backslash\{i\})\sqcup J\big) \arrow[r,"g"]
&M\big(\big((I\backslash\{i\})\sqcup J\big)^+\big)~,
\end{tikzcd}
\]
where $K=(I\backslash\{i\})\sqcup J$. These two aforementioned maps assemble into the following structure maps 
\[
\circ_i: \mathcal M(I^+) \amalg \mathcal O(J) \longrightarrow \mathcal M\big(\big((I\backslash\{i\})\sqcup J\big)^+\big)~.
\]
One can check that all the axioms of a right module over an operad are satisfied, by using all the relations amongst the generation morphisms of the source category.

\medskip

The monoidal structure of $\mathcal{M}$ is constructed using the pointed-contraction maps. It is given by 
\[
\circ_0\coloneqq \mathcal M(c_0^1) \amalg \mathcal{M}(c_0^2): \mathcal M(I^+) \amalg \mathcal M(J^+) \longrightarrow \mathcal M\big(I^+ \sqcup J^+\big)~.
\]
Using the relations that the pointed-contraction maps satisfy, one can show that they define a monoid structure and that furthermore it is compatible with the right module structure over $\mathcal O$, thus forming a moperad. 
\end{proof}

\begin{remark}
Like in Appendix \ref{Appendix B1: operads}, the structure defined on the cocartesian monoidal structure can descend to other types of monoidal structures under certain conditions. In particular, the cyclotomic insertion-coproduct morphisms that we consider in the main body of the paper define a right module structure for the cocartesian monoidal structure in Lie/associative $\kk$-algebras. Since the image of insertion morphisms commutes with the image of coproduct morphisms, they induce a right module structure in the monoidal structure of interest. 
\end{remark}

\newpage

\bibliographystyle{halpha}
\bibliography{zebribs}
\end{document}